\documentclass[11pt,reqno]{amsart}
\usepackage[left=0.9in,top=0.9in,right=0.9in,bottom=0.9in]{geometry}
\usepackage{amsaddr}

\usepackage{amsfonts,amsmath,amsthm,amssymb,latexsym,mathrsfs,stmaryrd}
\usepackage{hyperref}
\usepackage{mathtools}
\usepackage{graphicx}
\usepackage{subcaption}
\usepackage{cleveref}
\usepackage{soul}

\usepackage{tikz}\usetikzlibrary{cd,fit,matrix,arrows,decorations.pathmorphing, shapes.geometric, calc}
\tikzset{commutative diagrams/.cd}
\usepackage{quiver}

\tikzset{commutative diagrams/.cd}

\numberwithin{equation}{section}

\newtheorem{theorem}{Theorem}[section]
\newtheorem{corollary}[theorem]{Corollary}
\newtheorem{lemma}[theorem]{Lemma}
\newtheorem{proposition}[theorem]{Proposition}
\newtheorem{conjecture}[theorem]{Conjecture}

\theoremstyle{definition}
\newtheorem{definition}[theorem]{Definition}
\newtheorem{definition-theorem}[theorem]{Definition-Theorem}
\newtheorem{example}[theorem]{Example}

\newtheorem{notation}[theorem]{Notation}

\newtheorem{question}[theorem]{Question}

\newtheorem{problem}[theorem]{Problem}
\newtheorem{remark}[theorem]{Remark}

\theoremstyle{remark}
\newtheorem*{warning}{Warning}
\newtheorem*{remark*}{Remark}

\usepackage{enumitem}
\setenumerate[1]{label=\textup{(\alph*)}}
\setenumerate[2]{label=\textup{(\roman*)}}

\newcommand\Z{{\mathbb Z}}
\newcommand\N{{\mathbb N}}
\newcommand\Q{{\mathbb Q}}
\newcommand\C{{\mathbb C}}
\newcommand\A{{\mathbb A}}
\newcommand\R{{\mathbb R}}
\newcommand\F{{\mathbb F}}
\def\P{{\mathbb P}}
\newcommand\Fp{{\mathbb F}_p}
\newcommand\Fq{{\mathbb F}_q}

\newcommand\Zp{{{\mathbb Z}_p}}

\def\O{\mathcal O}

\newcommand{\Bot}{\operatorname{Bot}_{\alpha}}

\newcommand{\Spec}{\operatorname{Spec}}
\newcommand{\gr}{\operatorname{gr}}

\newcommand{\cH}{\mathscr H}

\newcommand{\tcP}{\widetilde{\mathcal P}}
\newcommand{\tr}{{t_{r}}}
\newcommand{\tc}{{t_{c}}}
\newcommand{\qtil}{\mathsf Q}

\DeclareMathOperator{\im}{im}

\DeclareMathOperator{\Hom}{Hom}
\DeclareMathOperator{\End}{End}
\DeclareMathOperator{\Aut}{Aut}

\DeclareMathOperator{\GL}{GL}
\DeclareMathOperator{\Gr}{Gr}
\DeclareMathOperator{\Ext}{Ext}

\DeclareMathOperator{\Mat}{Mat}
\newcommand\resp{{\textit{resp. }}}
\newcommand\subeq{\subseteq}
\newcommand\cotimes{\widehat{\otimes}}

\newcommand\supeq{\supseteq}

\newcommand{\diag}{\mathrm{diag}}

\newcommand\bbar{\overline} 
\newcommand\tl{\widetilde} 
\newcommand\hhat{\widehat} 
\newcommand\onto{\twoheadrightarrow}
\newcommand\incl{\hookrightarrow}
\newcommand{\map}[1][]{{\xrightarrow{#1}}} 

\DeclarePairedDelimiter{\abs}{\lvert}{\rvert}
\DeclarePairedDelimiter{\norm}{\lVert}{\rVert}
\DeclarePairedDelimiter{\set}{\{}{\}}
\DeclarePairedDelimiter{\pairing}{\langle}{\rangle}
\DeclarePairedDelimiter{\parens}{\lparen}{\rparen}

\DeclarePairedDelimiter\floor{\lfloor}{\rfloor}

\newcommand{\defn}{\textbf}

\newcommand{\alert}[1]{{{\color{red}{#1}}}} 

\def\funceq{Theorem~1.5}

\newcommand{\qbinom}[2]{{#1\brack #2}}

\newcommand{\cW}{\mathcal{W}}
\renewcommand{\k}{\Bbbk}
\renewcommand{\L}{\mathbb{L}}

\newcommand{\KVar}[1]{K_0(\mathrm{Var}_{#1})}

\newcommand{\Quot}{\mathrm{Quot}}

\newcommand{\Hilb}{\mathrm{Hilb}}

\newcommand{\init}{\mathit{in}}

\newcommand{\bmat}[1]{\begin{bmatrix}#1\end{bmatrix}}

\usepackage{accents}

\newcommand{\Fl}{\mathrm{Fl}}
\newcommand{\calZ}{\mathcal{Z}}

\newcommand{\calV}{\mathcal{V}}

\newcommand{\bG}{\mathbb{G}}
\newcommand{\Gm}{{\bG_m}}
\newcommand{\dinv}{\mathtt{dinv}}

\newcommand{\BB}{Bia{\l}ynicki-Birula~}

\DeclareMathOperator*{\Prob}{Prob}
\newcommand{\E}{\mathbb{E}}

\usepackage{leftindex}

\begin{document}
\title{Quot scheme of points on torus knot singularities}

\author{Yifeng Huang}
\address{Dept.\ of Mathematics, Emory University}
\email{yifeng.huang@emory.edu}

\author{Ruofan Jiang}
\address{Dept.\ of Mathematics, University of California, Berkeley}
\email{ruofanjiang@berkeley.edu}

\author{Alexei Oblomkov}
\address{Dept.\ of Mathematics and Statistics, University of Massachusetts, Amherst}
\email{oblomkov@umass.edu}

\date{\today}

\begin{abstract}
    For $\gcd(a,b)=1$, we show that the moduli space of $m$-codimensional $\k[\![T^a,T^b]\!]$-submodules of $\k[\![T]\!]^n$ is paved by affine cells, by proving that each Bia\l ynicki-Birula stratum of a closed related moduli space with respect to the natural $\bG_m$-action is an affine bundle over the fixed point locus and that the fixed point locus is an iterated Grassmannian bundle. As an application, we determine the motive of this moduli space in the Grothendieck ring of varieties in terms of an explicit two-variable series $N_{a,b;n}(q,t)$, and use it to explicit compute the groupoid volume of the category of finite modules over $\Fq[\![T^a,T^b]\!]$.

    The series $N_{a,b;n}$ carries the conjectures we then formulate. At $n=\infty$ we conjecture a bi-infinite family of Rogers--Ramanujan type identities by specializing the $t$-variable; we identify their product side with the normalized character of a module over the $\cW$-algebra minimal model $\cW_a(a,a+b)$, and observe a connetion to colored Jones tails. At $n<\infty$ we conjecture that $N_{a,b;n}$ is computed by the bottom $\alpha$-row of the trigraded $S^n$-colored HOMFLY homology of the torus knot $T(a,b)$, and that this same bottom row also computes the Quot schemes of finite codimensional $\k[\![T^a,T^b]\!]$-submoudles of $\k[\![T^a,T^b]\!]^n$ and the punctual Hilbert schemes of the non-reduced curve $(Y^a-X^b)^n=0$; the three quantities are special values at three points of the trigrading, and when $n=1$ they recover both the conjectures of Oblomkov--Rasmussen--Shende and of Kivinen--Trinh.
    Finally we conjecture that the one direction of the trigrading these three points do not see is a perverse filtration on the moduli spaces themselves, and we verify its prediction for a smooth germ at $n=2$ by computing the decomposition theorem for the $\GL_2$ spectral-curve family.
\end{abstract}

\maketitle
\setcounter{tocdepth}{1}
 \tableofcontents

\section{Introduction}
In this paper, for positive integers $\gcd(a,b)=1$ and an arbitrary field $\k$, we consider the germ of the $(a,b)$-torus knot singularity
\[ R=\frac{\k[\![X,Y]\!]}{(Y^a-X^b)}\simeq \k[\![T^a,T^b]\!]\]
and study the (reduced structure of the) \defn{punctual Quot scheme} of a finitely generated $R$-module $M$, defined as
\[ \Quot_m^R(M) = \Quot_m(M) = \{M'\subeq_R M: \dim_\k M/M'=m\}.\]
It is projective (though in general not smooth) and can be realized as a closed subset of a Grassmannian (see, e.g., \cite{huangjiang2023torsionfree}). We focus on the \emph{high-rank regime}, namely, where 
\[M=E^n \text{ with } E=\k[\![T]\!] \text{ or } R.\]
The Quot scheme $\Quot_m(R^n)$ is a literal high-rank generalization of the \defn{punctual Hilbert scheme} $\Hilb_m(R)=\Quot_m(R^1)$, while $\{\Quot_m^R(\k[\![T]\!]^n)\}_{m\geq 0}$ is in spirit a high-rank generalization of the compactified Jacobian. The rank 1 story is already a rich subject with many connections to fields including but not limited to knot theory, geometric representation theory, algebraic combinatorics, and algebraic statistics \cite{bushnellreiner1980zeta,gorskymazin2013compactified1,gorskymazin2014compactified2,maulik2016stable,maulikyun2013macdonald,oblomkovshende2012,OblomkovRasmussenShende12,solomon1977zeta,yun2013orbital}. The case of torus knot singularities is the simplest examples to illustrate the connection: the Poincar\'e polynomials of $\Hilb_m(R)$ and $\Quot_m^R(\k[\![T]\!])$ can be packaged by the $a\times b$ rational $q,t$-Catalan numbers \cite{gorskymazin2013compactified1,lusztigsmelt1989}, which is essentially the HOMFLY-PT polynomial of the $(a,b)$-torus knot. However, as the first and the second authors pointed out in \cite{huangjiang2023torsionfree}, explicit understanding of the Quot scheme faces extreme challenges in the high-rank case. Here are two obvious things to do and how they fail: 1. The torus action of $\bG_m^n$ on $\Quot_m^R(E^n)$ induced from its action on $E^n=E\otimes_\k \k^n$ is hard to extract invariants finer than Euler characteristic due to the singularity of the Quot scheme; 2. The method of $p$-adic integration, which is good at proving rationality and functional equation for the point-count aspect of $\Quot_m^R(E^n)$ over finite fields $\k=\Fq$, is not fruitful so far in explicit computation. It is made worse by the fact that the integration recipe takes as input the set of isomorphism classes of torsion-free $R$-modules, but this set is inaccessible because the module theory over $\k[\![T^a,T^b]\!]$ is \emph{wild} for most $(a,b)$. The state of the art in the high-rank computation is the case $a=2$, solved in \cite{huangjiang2023torsionfree}, but the method there essentially exploits the ``finite CM type'' of the $A_n$ singularities, which is impossible to be generalized to arbitrary $(a,b)$.

As our main structural result, we prove the following. In this paper, a \defn{cell decomposition} or an \defn{affine paving} refers to a decomposition of a $\k$-variety into finitely many locally closed subspaces, each of which is isomorphic to an affine space over $\k$; to avoid distraction, we do not attempt to prove whether the cell decomposition is filtered (i.e., the closure of each cell is a union of cells).

\begin{theorem}\label{thm:paving}
    Let $\gcd(a,b)=1$ and $R=\k[\![T^a,T^b]\!]$. Then for all $m,n\geq 0$, $\Quot_m^R(\k[\![T]\!]^n)$ admits a cell decomposition.
\end{theorem}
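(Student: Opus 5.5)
We will use the $\Gm$-action and a \BB type decomposition, following the strategy announced in the abstract. Put $E=\k[\![T]\!]$. Let $\Gm$ act on $E^n$ by $\lambda\cdot T^i e_j = \lambda^{i} T^i e_j$, or with a generic weight shift $\lambda^{Ni+w_j}$ on the $j$-th summand. This action preserves $R=\k[\![T^a,T^b]\!]$ and the $R$-module structure up to twisting, so it induces an action on $\Quot_m^R(E^n)$. Every $M'$ of codimension $m$ contains $T^{c}E^n$ for some $c$ depending only on $m$ and on the conductor of $R$. Hence $\Quot_m^R(E^n)$ is a closed subvariety of the Grassmannian $\Gr(E^n/T^cE^n)$, and it is projective. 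The issue is that it is singular, so the \BB theorem does not apply directly.

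\textbf{Step 1 (fixed locus).} Describe $\Quot_m^R(E^n)^{\Gm}$. With generic weights, a fixed submodule is a direct sum of graded pieces. Concretely, it is determined by a sequence of subspaces $V_i\subseteq \k^n$, where $V_i$ is the coefficient space of $T^i$. These satisfy $T^aV_i\subseteq V_{i+a}$ and $T^bV_i\subseteq V_{i+b}$, that is, $V_i\subseteq V_{i+a}$ and $V_i\subseteq V_{i+b}$, together with $\sum_i (n-\dim V_i)=m$. Fix the dimension vector $d_i=\dim V_i$. The conditions $V_i\subseteq V_{i+a}\cap V_{i+b}$ come from the semigroup $\langle a,b\rangle$. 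Order the indices $i$ from large to small and choose $V_i$ inside the already chosen space $V_{i+a}\cap V_{i+b}$. This exhibits each component as an iterated fibration. The main point to check is that the dimension of $V_{i+a}\cap V_{i+b}$ is constant on the stratum. I would try to reorganize the choice so that each step chooses a subspace of a fixed flag member. The goal is an iterated Grassmannian bundle, as the abstract indicates, and each such bundle is paved by Schubert cells.

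\textbf{Step 2 (attracting sets).} For each fixed point $M_0$, consider the attracting set $\{M' : \lim_{\lambda\to 0}\lambda\cdot M'=M_0\}$. The limit is the initial (lowest $T$-degree) module. So the attracting set consists of submodules $M'$ whose leading-term module equals $M_0$, in the sense of Gröbner degenerations. We then show that the stratum is an affine bundle over the component of the fixed locus containing $M_0$. To do this, parametrize $M'$ by generators of the form $v_i T^i+\text{higher terms}$, with $v_i$ running over bases of $V_i$ modulo $V_{i-1}$-type data. The higher coefficients are taken in complements of the $V_j$.

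We must show that the $R$-stability conditions impose affine-linear constraints on these tail coefficients, with rank depending only on the fixed-point data. This uses the fact that multiplication by $T^a$ and $T^b$ is triangular with respect to the $T$-adic filtration. We solve the constraints degree by degree, and at each stage the unknowns enter linearly. This is the main obstacle: proving that the solution space has constant dimension over the fixed component, so that we get an affine bundle and not just a constructible family. The reason to expect this is that the obstructions are governed by the $V_i\subseteq V_{i+a}\cap V_{i+b}$ structure, which is locally trivial by Step 1.

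\textbf{Step 3 (conclusion).} Projectivity implies that every point has a limit, so $\Quot_m^R(E^n)$ is the disjoint union of the attracting sets, which are locally closed. Each attracting set is an affine bundle over an iterated Grassmannian bundle. Such a bundle is Zariski-locally trivial, being built from vector bundles and Grassmannian bundles of vector bundles. Pulling back the Schubert paving of the base then gives a paving by affine spaces, which proves Theorem~\ref{thm:paving}.
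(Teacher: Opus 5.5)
Your proposal has a genuine gap: Step~1 is false on the full Quot scheme, and Step~2 has no mechanism behind it. For the $T$-scaling action, the fixed components of $\Quot^R_m(\tl R^n)$ are flag varieties over the truncated poset $\N_{a,b}$, and these can be singular. Take $(a,b)=(2,3)$, $n=3$, $(\dim V_0,\dots,\dim V_4)=(1,1,2,2,2)$ and $V_d=\k^3$ for $d\ge 5$, so $m=7$. Then $V_2\subseteq V_4$ forces $V_2=V_4$, and what remains is exactly $V_0,V_1\subseteq V_3\cap V_4$. This is the butterfly variety of Remark~\ref{rmk:poset-flag}. It is connected but has two $4$-dimensional components: $\{V_0=V_1\}$, and the closure of $\{V_0\neq V_1\}$, on which $V_3=V_4=V_0+V_1$. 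So it is singular and not an iterated Grassmannian bundle; in your top-down ordering, $\dim V_3\cap V_4$ is precisely the quantity that jumps. Your ``generic weights'' variant makes the fixed points isolated, but then you need Bia{\l}ynicki-Birula cells on a singular variety, and nothing supplies them.

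The missing idea is to pass first to the extension fiber. The map $L\mapsto \tl R L$ stratifies $\Quot^R_m(\tl R^n)$ into pieces $E_R(\tl R^n;l)\times I_\mu$, where $I_\mu$ is an Iwahori cell, hence an affine space. On $E_R(\tl R^n;l)$ one has $\init_0(L)=\k^n$, so $V_d=\k^n$ for all $d\in\langle a,b\rangle$. The fixed points are then flags on the gap poset $G$, which is an upper set of a rectangular grid and hence tame. Only there do the fixed components become iterated Grassmannian bundles.

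Step~2 correctly names the obstacle, but a degree-by-degree linear system need not be solvable at all. Saying the obstructions are ``governed by $V_i\subseteq V_{i+a}\cap V_{i+b}$'' does not supply a reason. The paper's argument is as follows.
\begin{enumerate}
\item The degree-$r$ equation is $D_1({}_r\phi)={}_r\rho$, where $D_1$ is the Koszul differential for $T^a,T^b$ acting on $\Hom_\k(\bbar V,W)$. That complex is identified with the poset-quiver cochain complex $C^\bullet(V[-r],W)$.
\item Given the lower-degree equations, ${}_r\rho$ is automatically a cocycle, $D_2({}_r\rho)=0$, because $T^a$ and $T^b$ commute.
\item $H^1$ vanishes by the grid acyclicity theorem for an injective flag and a surjective flag on a rectangular grid. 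This applies only because $W$ is supported on $G$, which again needs $V_0=\k^n$.
\end{enumerate}
The solution space then has dimension $\dim\Hom_Q(V[-r],W)$, an Euler characteristic depending only on $(\mathbf n;n)$. This gives constant rank, and Gr\"obner coordinates relative to a fixed complement $W$ give Zariski-local triviality. Away from $V_0=\k^n$ the poset $\N_{a,b}$ does not embed in a grid, and this input is unavailable. The remark following the acyclicity theorem shows that exactness at $C^1$ is genuinely sensitive to the support of $W$.
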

An explicit description of the cell decomposition will be given in \S\ref{sub:singaffinegrass}.

The motivic generating function is the natural framework to keep track of the enumerative content of a cell decomposition. Let $\KVar{\k}$ be the Grothendieck ring of $\k$-varieties, generated by classes $[X]$ for $\k$-varieties $X$ as an abelian group, with scissor relation $[X]=[Z]+[X\setminus Z]$ for closed subscheme $Z\subeq X$ and multiplication $[X]\cdot [Y]=[X\times_\k Y]$.
Define the Lefschetz motive $\L=[\A^1]$. As a result, if a variety $X$ has an affine paving with $a_i$ cells of dimension $i$ for each $i\geq 0$, then we have $[X]=\sum_{i\geq 0} a_i \L^i$. 

\begin{theorem}\label{thm:quot-zeta-tilde}
    Let $\gcd(a,b)=1$ and $R=\k[\![T^a,T^b]\!]$. For all $n\geq 0$, define the generating function
    \[ \calZ^R_{\k[\![T]\!]^n}(t)\coloneqq \sum_{m\geq 0} [\Quot_m^R(\k[\![T]\!]^n)]\, t^m \in \KVar{\k}[\![t]\!].\]
    Then
    \[ \calZ^R_{\k[\![T]\!]^n}(t) = \frac{1}{(1-t)(1-\L t)\cdots (1-\L^{n-1}t)} \L^{n^2\delta} \, t^{n\delta} N_{a,b;n}(\L^{-1},t^{-1}), \]
    where $\delta=(a-1)(b-1)/2=\dim_\k \tl R/R$ and $N_{a,b;n}(q,t)$ is an explicit polynomial defined in \eqref{eq:q,t-sum-finite}.
\end{theorem}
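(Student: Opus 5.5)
We would derive Theorem~\ref{thm:quot-zeta-tilde} from the cell decomposition of Theorem~\ref{thm:paving}, in the explicit form promised in \S\ref{sub:singaffinegrass}. The first step is a reduction to a closed, finite-dimensional moduli space. Every $R$-submodule $M'\subeq \k[\![T]\!]^n$ of codimension $m$ contains $T^{N}\k[\![T]\!]^n$ for $N\gg m$. More usefully, $M'\supeq \mathfrak c M'$, where $\mathfrak c=T^{2\delta}\k[\![T]\!]$ is the conductor. Let $L=\k[\![T]\!]M'$ be the $\k[\![T]\!]$-saturation; it is a lattice with $\mathfrak c L\subeq M'\subeq L$. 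We then stratify $\Quot_m^R(\k[\![T]\!]^n)$ by the lattice $L$.

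The lattices $L\subeq \k[\![T]\!]^n$ of codimension $j$ form $\Quot_j^{\k[\![T]\!]}(\k[\![T]\!]^n)$, whose motivic generating series is the classical $\prod_{i=0}^{n-1}(1-\L^i t)^{-1}$. Since $L\cong \k[\![T]\!]^n$, the fiber over $L$ is isomorphic to the space $X_{m-j}$ of $R$-submodules $M'\subeq \k[\![T]\!]^n$ with $\k[\![T]\!]M'=\k[\![T]\!]^n$ and codimension $m-j$. This space is independent of $L$ and is a closed subvariety of $\Gr(\k[\![T]\!]^n/T^{2\delta}\k[\![T]\!]^n)$. The stratification is Zariski locally trivial, because we can trivialize $L$ locally on the Quot scheme of lattices. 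Hence
\[
\calZ^R_{\k[\![T]\!]^n}(t)=\prod_{i=0}^{n-1}(1-\L^it)^{-1}\cdot \sum_k [X_k]t^k ,
\]
and it remains to show that $\sum_k [X_k]t^k=\L^{n^2\delta}t^{n\delta}N_{a,b;n}(\L^{-1},t^{-1})$. Note that $X_k$ is empty for $k>n\delta$ and $X_{n\delta}$ is the single point $R^n$. This is consistent with $N$ being a polynomial whose $t^{0}$ term contributes $t^{n\delta}$.

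To compute $[X_k]$ we use the $\Gm$-action $T\mapsto \lambda T$, which preserves $R$, together with a generic cocharacter of the torus $\Gm^n$. This gives a $\Gm$-action on the projective variety $\bigsqcup_k X_k$. By the structural results behind Theorem~\ref{thm:paving}, each Bia\l ynicki-Birula stratum is an affine bundle over its fixed component, and the fixed locus is an iterated Grassmannian bundle. Therefore $[X_k]=\sum_{F}[F]\,\L^{d^+_F}$, where $[F]$ is a product of Gaussian binomials in $\L$ and $d^+_F$ is the rank of the attracting bundle. The fixed points are the graded submodules $M'=\bigoplus_{s}M'_s T^s$ with $M'_s\subeq\k^n$. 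The conditions are $M'_s\subeq M'_{s+a}\cap M'_{s+b}$ and $M'_s=\k^n$ for $s\ge 2\delta$. Such data are flags indexed by the semigroup poset of $\langle a,b\rangle$ relative to its gaps. Recording $d_s=\dim M'_s$, the fixed component with given dimension vector is a tower of Grassmannians $\Gr(d_s,\ \cdot)$, and its class is a product of $q$-binomials. The exponent of $t$ is $k=\sum_{s<2\delta}(n-d_s)$. The attracting dimension $d^+$ is computed from the $\Gm$-weights of the tangent space, namely the positive-weight part of $\Hom_R(M',\k[\![T]\!]^n/M')$ restricted appropriately. Our plan is to compute it as a quadratic form in the $d_s$, counting pairs $(s,s')$ with $s'>s$ in a way dictated by the semigroup.

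The final step is to match $\sum_{\underline d}\prod\qbinom{\cdot}{\cdot}_{\L}\,\L^{d^+(\underline d)}\,t^{\sum(n-d_s)}$ with the definition \eqref{eq:q,t-sum-finite}. This requires substituting $q=\L^{-1}$ and $t\mapsto t^{-1}$, and multiplying by $\L^{n^2\delta}t^{n\delta}$. It amounts to the identity $\sum_{s<2\delta}n=2n\delta$ reorganized, together with the symmetry $\qbinom{N}{k}_{\L}=\L^{k(N-k)}\qbinom{N}{k}_{\L^{-1}}$. We expect the main obstacle to be the exact computation of the attracting dimension $d^+$ as a clean quadratic form, together with verifying the bookkeeping that turns it into the exponents in $N_{a,b;n}$. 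We would first test the computation on $n=1$ against the rational $q,t$-Catalan formula, and on $a=2$ against the known results of \cite{huangjiang2023torsionfree}.
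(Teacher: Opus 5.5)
Your reduction to the extension fiber $X_k=E_R(\tl R^n;k)$ via $M'\mapsto\tl RM'$, which produces the factor $1/(t;\L)_n$, matches the paper (Lemma~\ref{lem:ext-fiber-to-quot}). So does your use of Bia\l ynicki-Birula for $T\mapsto\lambda T$, with fixed components given by flags on the gap poset $G$. The gap is in the ranks $d^+$, which carry essentially all the content of the theorem.

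You propose to read $d^+$ off the positive-weight part of the Zariski tangent space $\Hom_R(M',\tl R^n/M')$. That argument works for \emph{smooth} varieties. But $E_R(\tl R^n;k)$ is not known to be smooth, and on a singular variety positive-weight tangent directions may be obstructed, so they only bound the attracting fiber from above. Even granting that the strata are affine bundles, the rank cannot be read off the tangent space.

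The missing idea is the paper's unobstructedness argument:
\begin{itemize}
\item In Gr\"obner coordinates the attracting fiber over $V$ is cut out by the triangular system $D_1({}_r\phi)={}_r\rho$, $r\ge1$. Here ${}_r\rho$ depends only on ${}_1\phi,\dots,{}_{r-1}\phi$, and, given the earlier equations, satisfies $D_2({}_r\rho)=0$ because $T^a$ and $T^b$ commute.
\item Since $V_0=\k^n$, the quotient $W\cong\tl R^n/V$ is supported on $G$, which is an upper set in a rectangular grid $Q$. Hence the degree-$r$ Koszul complex becomes $C^\bullet_Q(V[-r],W)$ (Lemma~\ref{lem:koszul-vs-fundamental} and the proof of Theorem~\ref{thm:bb-detailed}).
\item Its exactness in degrees $1$ and $2$ (Theorem~\ref{thm:grid-acyclicity}) gives solvability at every step, with solution space of dimension $\dim\Hom_R(V,\tl R^n/V)_r$.
\end{itemize}
So your formula for $d^+$ is actually right, but only because of this vanishing. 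The same exactness is what turns $\dim\Hom_R(V,\tl R^n/V)_r$ into the Euler characteristic $\sum_i(-1)^i\dim C^i$. That is the quadratic form $B(\mathbf n;n)$ of \eqref{eq:bb-fiber-dim}, which depends only on the dimension vector. Without $\Ext^1=0$ it could depend on the relative position of the flag.

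Several other steps are asserted rather than proved.
\begin{itemize}
\item A ``tower of Grassmannians'' is not automatic for a non-chain poset; the butterfly poset gives a singular flag variety. It requires a good traverse of $G$, here $x^-=x-a$, $x^+=x+b$.
\item Mixing in a generic cocharacter of $\Gm^n$ would make the fixed points isolated, not flag varieties.
\item The final matching is not just $\qbinom{N}{k}_{\L}=\L^{k(N-k)}\qbinom{N}{k}_{\L^{-1}}$. You need $[\Fl_G(\mathbf n;n)]=\qbinom{n}{\mathbf n}_{\L;G}$ in the specific form \eqref{eq:q-binom-gap}, which is a M\"obius-function computation (Lemma~\ref{lem:q-binom-gap}).
\item You also need the identity $B(\mathbf n;n)+D_G(\mathbf n;n)=n^2\delta-\dinv(\mathbf n)$ of Lemma~\ref{lem:dinv-simplification}. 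Its proof uses $1_\Gamma=(1-T^{ab})/((1-T^a)(1-T^b))$ and a check that the linear term is supported off $G$.
\end{itemize}

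Finally, two claims in your setup are false.
\begin{itemize}
\item $X_k$ is open, not closed, in the Quot scheme: for $n=1$ and $(a,b)=(2,3)$ one gets $X_1\cong\A^1$.
\item $X_{n\delta}$ is not the point $R^n$ but the orbit $\GL_n(\tl R)\cdot R^n\cong\A^{n^2\delta}$. This matches the coefficient $\L^{n^2\delta}$ of $t^{n\delta}$ in the target formula, so your sanity check as written actually contradicts the statement you are proving.
\end{itemize}
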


The case of $\Quot_m(R^n)$ appears to be considerably harder; as of now, we do not know whether $\Quot_m(R^n)$ has polynomial point counts over finite fields (except when $a=2$). However, a functional equation \cite[\funceq]{huangjiang2023torsionfree} allows us access to the $t=q^{-n}$ specialization of the point-count generating function. For each fixed prime power $q$, define
\begin{equation}\label{eq:quot-free-mod}
    Z_{a,b;n}(q,t)\coloneqq \sum_{m\geq 0} \#\Quot_m(\Fq[\![T^a,T^b]\!]^n)(\Fq)\, t^m\in \Z[\![t]\!];
\end{equation}
for ease of reference, the set in question has a direct definition below:
\[ \Quot_m(\Fq[\![T^a,T^b]\!]^n)(\Fq) = \{M'\subeq_{\Fq[\![T^a,T^b]\!]} \Fq[\![T^a,T^b]\!]^n: \dim_{\Fq} \Fq[\![T^a,T^b]\!]^n/M'=m\}. \]

\begin{theorem}\label{thm:quot-zeta-special}
    Let $\gcd(a,b)=1$ and $q$ be a prime power.
    Then for all $n\geq 0$,
    \[ Z_{a,b;n}(q,q^{-n}) = \frac{1}{(1-q^{-1})\cdots (1-q^{-n})} N_{a,b;n}(q^{-1},1). \]
\end{theorem}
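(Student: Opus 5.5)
We deduce Theorem~\ref{thm:quot-zeta-special} from Theorem~\ref{thm:quot-zeta-tilde} by means of the functional equation \cite[\funceq]{huangjiang2023torsionfree}, which ties the Quot zeta function of $R^n$ to that of the normalization $\tl R^n=\k[\![T]\!]^n$.

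\textbf{Step 1 (the functional equation).} Over $\k=\Fq$ we use the form of \cite[\funceq]{huangjiang2023torsionfree} that expresses $Z_{a,b;n}(q,t)$ as a sum over the finitely many lattices $L$ with $R^n\subeq L\subeq \tl R^n$, weighted by counts of ambient quotients. Its decisive property is that at $t=q^{-n}$ it collapses to a statement that does not depend on the particular lattice: $Z^R_{R^n}(q^{-n})$ equals $Z^R_{\tl R^n}(q^{-n})$ up to the explicit factor $q^{-n^2\delta}\,t^{-n\delta}|_{t=q^{-n}}$, or something of that shape. The reason is that $\tl R^n/R^n$ has length $n\delta$, and that at $t=q^{-n}$ the groupoid weights $\#\Aut^{-1}$ coming from $p$-adic integration line up with the index $[\tl R^n:L]$. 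I would first pin down the precise normalization of this identity from \cite{huangjiang2023torsionfree}.

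\textbf{Step 2 (point-count version of Theorem~\ref{thm:quot-zeta-tilde}).} By Theorem~\ref{thm:paving}, each $\Quot_m^R(\Fq[\![T]\!]^n)$ has an affine paving, so its class in $\KVar{\Fq}$ is a polynomial in $\L$. Applying the point-count realization $\L\mapsto q$ to Theorem~\ref{thm:quot-zeta-tilde} gives
\[ \sum_m \#\Quot_m^R(\Fq[\![T]\!]^n)(\Fq)\,t^m=\frac{q^{n^2\delta}t^{n\delta}N_{a,b;n}(q^{-1},t^{-1})}{(1-t)(1-qt)\cdots(1-q^{n-1}t)}. \]

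\textbf{Step 3 (specialize).} Set $t=q^{-n}$. Then $t^{-1}=q^n$, the prefactor becomes $q^{n^2\delta}q^{-n^2\delta}=1$, and the denominator becomes $(1-q^{-n})(1-q^{-n+1})\cdots(1-q^{-1})$, which is exactly the denominator in the claim. This leaves $N_{a,b;n}(q^{-1},q^{n})$, whereas the claim has $N_{a,b;n}(q^{-1},1)$.

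\textbf{Main obstacle.} The bookkeeping in Steps 1 and 3 must therefore account for a discrepancy between $t^{-1}=q^n$ and $1$. Two resolutions are possible. One is that the functional equation supplies a factor that rescales the $t$-variable, for example by passing from the $R$-module $\tl R^n$ to its dual or to a shift by the conductor, which replaces $t\mapsto q^{-n}t$ inside $N$. The other is that $N_{a,b;n}(q,t)$, as defined in \eqref{eq:q,t-sum-finite}, is a sum in which the power of $t$ is paired with the power of $q$ in a way that makes $q^{n\cdot(\text{something})}t^{(\text{something})}$ invariant. I would inspect the definition \eqref{eq:q,t-sum-finite} to find the homogeneity, plausibly that each term carries $q^{nk}t^{-k}$ or similar, and combine it with the exact statement of \cite[\funceq]{huangjiang2023torsionfree}. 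The likely correct form of that equation is that the specialization $Z_{R^n}(q^{-n})$ equals the sum over lattices $L$ of $q^{-n\dim(\tl R^n/L)}$ times a constant. This relates $Z_{a,b;n}(q,q^{-n})$ to the $t=1$ evaluation of the $\tl R^n$ generating function with the pole factor removed, which would account for the appearance of $N_{a,b;n}(q^{-1},1)$. Verifying this matching of normalizations is the crux; everything else is formal substitution.
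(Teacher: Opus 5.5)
There is a genuine gap. Your Steps 2 and 3 are computed correctly, and they show that this route cannot close. Evaluating the $\tl R^n$ series at $t=q^{-n}$ gives $N_{a,b;n}(q^{-1},q^n)$, and this genuinely differs from $N_{a,b;n}(q^{-1},1)$. Take $(a,b)=(2,3)$, $n=1$: then $G=\{1\}$ and $N_{2,3;1}(q,t)=1+qt$, so the two values are $2$ and $1+q^{-1}$. Concretely, for the cusp $Z^R_{R}(q^{-1})=(1+q^{-1})/(1-q^{-1})$, while $Z^R_{\tl R}(q^{-1})=2/(1-q^{-1})$.

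So no homogeneity hidden in \eqref{eq:q,t-sum-finite} can rescue the argument. The relation you posit in Step 1, that $Z^R_{R^n}(q^{-n})$ agrees with $Z^R_{\tl R^n}(q^{-n})$ up to an explicit factor, is false; your factor $q^{-n^2\delta}t^{-n\delta}|_{t=q^{-n}}$ is just $1$. The functional equation \cite[\funceq]{huangjiang2023torsionfree} does not compare $R^n$ with $\tl R^n$ at all. It is a self-duality of the $R^n$ side alone: $N^R_{R^n}(t)=q^{n^2\delta}t^{2n\delta}N^R_{R^n}(q^{-n}t^{-1})$ (Theorem~\ref{thm:pointcount-func-eq}).

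The missing ingredient is that the $R^n$ and $\tl R^n$ sides are matched at the center $t=1$, after normalization. This is a separate theorem, not a consequence of the functional equation. By \cite[Theorem~6.2 and Corollary~6.4]{huangjiang2023torsionfree} (Theorem~\ref{thm:rationality}), $\mathcal N^R_{R^n}(t)=(t;\L)_n\calZ^R_{R^n}(t)$ is a polynomial, and $\mathcal N^R_{R^n}(1)=\mathcal N^R_{\tl R^n}(1)$. At $t=1$, Theorem~\ref{thm:quot-zeta-tilde} gives $\mathcal N^R_{\tl R^n}(1)=\L^{n^2\delta}N_{a,b;n}(\L^{-1},1)$; this is where the second argument $1$ comes from.

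The functional equation then swaps $t=1$ and $t=q^{-n}$:
\[ N^R_{R^n}(q^{-n})=q^{-n^2\delta}N^R_{R^n}(1)=N_{a,b;n}(q^{-1},1). \]
Dividing by $(q^{-n};q)_n=(1-q^{-n})\cdots(1-q^{-1})$ gives the claim. Your closing guess about ``the $t=1$ evaluation of the $\tl R^n$ generating function with the pole factor removed'' points in this direction. The proof only goes through once you invoke the center-matching identity \eqref{eq:match-at-center} and use \eqref{eq:func-eq} in its $R^n$-self-dual form.
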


The significance of the $t=q^{-n}$ specialization lies in that its $n\to \infty$ limit has additional interpretations. Consider the following two quantities in $[0,\infty]$ defined by countable sums of nonnegative real numbers:
\[ S_1 \coloneqq \sum_{M\in \mathbf{FinMod}_R/{\sim}} \frac{1}{\abs{\Aut_R(M)}},\]
the ``groupoid volume'' of the category of finite-cardinality modules over $R=\Fq[\![T^a,T^b]\!]$, and
\[ S_2 \coloneqq \sum_{n=0}^\infty \frac{\abs*{\set*{(A,B):A,B\in \mathrm{Nilp}_n(\Fq), AB=BA, A^a=B^b}}}{\abs{\mathrm{GL}_n(\Fq)}},\]
a weighted number of ``nilpotent matrix points'' of $X^a=Y^b$. Then we have the following formula.
\begin{theorem}\label{thm:groupoid-vol}
    Let $\gcd(a,b)=1$ and $q$ be a prime power. Then
    \[ S_1=S_2=\frac{1}{(1-q^{-1})(1-q^{-2})\cdots} N_{a,b;\infty}(q^{-1},1),\]
    where $N_{a,b;\infty}$ is an explicit sum defined in \eqref{eq:q,t-sum-infinite}.
\end{theorem}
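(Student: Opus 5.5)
We take $n\to\infty$ in Theorem~\ref{thm:quot-zeta-special} and identify the limit with $S_1$ and with $S_2$ by standard counting arguments.

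\emph{Step 1: $S_1$ as a limit of Quot counts.} Fix a finite $R$-module $M$ with $\dim_{\Fq}M=m$. Submodules $M'\subeq R^n$ with $R^n/M'\cong M$ correspond to surjections $R^n\onto M$ modulo $\Aut_R(M)$. Their number is therefore $\#\mathrm{Surj}_R(R^n,M)/\abs{\Aut_R(M)}$. Since $R^n$ is free, $\Hom_R(R^n,M)=M^n$, and a map is surjective if and only if the induced map to $M/\mathfrak m M$ is, by Nakayama. Writing $d=\dim M/\mathfrak m M$, this gives
\[ \#\mathrm{Surj}_R(R^n,M)=q^{mn}\prod_{i=0}^{d-1}(1-q^{i-n}). \]
Summing over $M$ with weight $t^m$ at $t=q^{-n}$, we get
\[ Z_{a,b;n}(q,q^{-n})=\sum_M \frac{\prod_{i<d(M)}(1-q^{i-n})}{\abs{\Aut M}}. \]
Each term is nonnegative and increases to $1/\abs{\Aut M}$ as $n\to\infty$ (the product is zero when $d>n$). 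Monotone convergence then gives $S_1=\lim_n Z_{a,b;n}(q,q^{-n})$.

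\emph{Step 2: $S_1=S_2$.} A finite $R$-module of $\Fq$-dimension $n$ is the same as $\Fq^n$ with commuting nilpotent $A,B$ satisfying $A^a=B^b$. Here $A,B$ are the actions of $T^b,T^a$ (equivalently $Y,X$); nilpotence makes the action extend to the completion $R$. Isomorphism classes are $\GL_n$-orbits, and the stabilizer of such a pair is $\Aut_R(M)$. Orbit–stabilizer gives $\sum_{M,\dim M=n}1/\abs{\Aut M}=\#\{(A,B)\}/\abs{\GL_n(\Fq)}$, and summing over $n$ gives $S_1=S_2$.

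\emph{Step 3: the limit of the right-hand side.} By Theorem~\ref{thm:quot-zeta-special}, it remains to show that $N_{a,b;n}(q^{-1},1)\to N_{a,b;\infty}(q^{-1},1)$. The factor $\prod_{i\le n}(1-q^{-i})^{-1}$ clearly converges to $\prod_{i\ge1}(1-q^{-i})^{-1}$. For the $N$ term, I would read off from \eqref{eq:q,t-sum-finite} and \eqref{eq:q,t-sum-infinite} that $N_{a,b;n}$ is a finite truncation of the same sum of nonnegative terms (in $q^{-1}$, evaluated at $q^{-1}<1$). Alternatively, it may involve $q$-binomials $\qbinom{n}{k}_{q^{-1}}$ that increase monotonically to $1/(q^{-1};q^{-1})_k$. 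In either case monotone (or dominated) convergence yields the termwise limit, with the value possibly $+\infty$ consistently on both sides.

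\emph{Main obstacle.} The hardest step is Step 3, which depends on the exact shape of $N_{a,b;n}$. If the finite-$n$ terms are not visibly nonnegative and monotone in $n$, I would first try to bound each coefficient uniformly in $n$ by the corresponding $n=\infty$ coefficient. I would then check that $N_{a,b;\infty}(x,1)$ converges absolutely for $\abs{x}<1$, so that dominated convergence applies.
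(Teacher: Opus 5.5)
Your proposal is correct and is essentially the paper's argument: Theorem~\ref{thm:quot-zeta-special}, plus monotone convergence in $n$ of $\sum_M P_n(M)/\abs{\Aut_R M}$, plus the orbit--stabilizer identification of $S_2$. Your explicit Nakayama count of $\#\mathrm{Surj}_R(R^n,M)$ replaces Lemma~\ref{lem:monotone}, and you rightly skip the Mertens step in Theorem~\ref{thm:convergence-pipeline}, which the paper needs only for the power series $N_{R;\infty}(t)$.

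To settle your hedged Step~3: $N_{a,b;n}$ is not a truncation of $N_{a,b;\infty}$. Instead, \eqref{eq:q-binom-gap} gives $\qbinom{n}{\mathbf{n}}_{q;G}=\frac{(q;q)_n}{(q;q)_{n-n_f}}\qbinom{\infty}{\mathbf{n}}_{q;G}$ for $n\geq n_f$ (and $0$ otherwise). So at $q^{-1}$ each summand is a fixed nonnegative number (nonnegative by Lemma~\ref{lem:q-binom-tame}) times $\prod_{j=n-n_f+1}^{n}(1-q^{-j})$, which increases to $1$. Monotone convergence then gives the limit in $[0,\infty]$; finiteness is the separate input of Remark~\ref{rmk:pos-def}.
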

\begin{remark}\label{rmk:pos-def}
    By the positive definiteness theorem \cite{huang2026dinv}, $N_{a,b;\infty}(q^{-1},1)<\infty$, so that $S_1=S_2<\infty$. 
\end{remark}
\begin{remark}\label{rmk:coh-zeta}
    In a broader context, $S_1=S_2$ is the $t=1$ special value of the \defn{Coh zeta function}
    \[ Z_{R;\infty}(t)\coloneqq \sum_M \frac{1}{\abs{\Aut_R M}}\, t^{\dim_{\Fq} M} \in \Q[\![t]\!],\]
    where $M$ ranges over isomorphism classes of finite-cardinality $R$-modules, for $R=\Fq[\![T^a,T^b]\!]$. Theorem~\ref{thm:groupoid-vol} and Remark~\ref{rmk:pos-def} imply that $Z_{R;\infty}(t)$ has radius of convergence at least $1$, and thus so does the normalized version
    \[ N_{R;\infty}(t) \coloneqq Z_{R;\infty}(t)/Z_{\tl R;\infty}(t)=(1-q^{-1} t)\cdots (1-q^{-n}t) Z_{R;\infty}(t).\]
    In \cite{huang2023mutually}, the first author asked if $N_{R;\infty}(t)$ always has infinite radius of convergence, at least if $R$ is the germ of a plane curve singularity over $\Fq$. The weaker problem of deciding if the radius is at least $1$ already appears deep even in the special case of torus knot singularities: though a cancellation-free summation formula of $Z_{R,\infty}(1)$ in terms of $N_{a,b;\infty}$ is available, the convergence at $t=1$ is still an analytical miracle (Remark~\ref{rmk:pos-def}) that requires a nontrivial positive definiteness theorem \cite{huang2026dinv}, not mentioning the combinatorial simplification (Lemma~\ref{lem:dinv-simplification}) that was crucially used to derive the current formula of $Z_{R,\infty}(1)$ in the first place. 
\end{remark}

A remarkable aspect of the series $N_{a,b;\infty}(q,t)$ is its observed factorizability in the $t=1$ specialization. For $a,b\in \Z_{\geq 1}$, not necessarily coprime, define the ``charge function'' $r_{a,b}:\Z\to \Z$ by
\[ r_{a,b}(i)\coloneqq \min\{a,b,\mathrm{dist}(ai,(a+b)\Z)\}+1_{(a+b)\Z}(i)-1,\]
where $\mathrm{dist}(ai,(a+b)\Z)$ is the distance from $ai$ to the closest multiple of $a+b$. See Remark~\ref{rmk:charge} for a list of elementary properties of $r_{a,b}$ and an alternative way to view it. Importantly, $r_{a,b}$ is $a+b$-periodic: $r_{a,b}(i)=r_{a,b}(i+a+b)$. 

Define
\[ P_{a,b}(q)\coloneqq \prod_{i=1}^\infty (1-q^n)^{-r_{a,b}(i)}.\]
Suppose that $a<b$, we will see that $P_{a,b}(q)=\overline{\chi}^{a,a,a+b}_{(j_0,j_1,...,j_{a-1})}(q)$, the right hand side being the normalized character of a rather special representation of a vertex operator algebra $\cW_a(a,a+b)$; cf. Section~\ref{subsec:product}. 


\begin{conjecture}\label{conj:rr-type}
For $\gcd(a,b)=1$, we have
\[ N_{a,b;\infty}(q,1)=P_{a,b}(q).\]
\end{conjecture}

This conjecture appears to be very deep and defies all proof attempts by connecting to known theories. The case $a=2,b=2m+1$ corresponds to the celebrated Andrews--Gordon identity \cite[Corollary 7.8]{andrewspartitions}
\[ \sum_{n_1\geq \dots\geq n_m\geq 0} \frac{q^{n_1^2+\dots+n_m^2}}{(q;q)_{n_1-n_2}\cdots (q;q)_{n_{m-1}-n_m}(q;q)_{n_m}} = \frac{(q^{m+1},q^{m+2},q^{2m+3};q^{2m+3})_\infty}{(q;q)_\infty},\] 
an $\mathrm{A}_1$-Rogers--Ramanujan identity. In recent series of works by the first author, Kenny Lau, Ken Ono, and Peter Paule \cite{hlop2026,lauono2026}, it is first discovered and then proven that the $a=3$ case can be reduced to certain $\mathrm{A}_2$-Rogers--Ramanujan identities due to Warnaar \cite{warnaar2021ag}. For $a<b$ in general, Conjecture~\ref{conj:rr-type} should give a new framework of $\mathrm{A}_{a-1}$-Rogers--Ramanujan identities; that it lives in arbitrary rank is partly what makes it exceptionally exciting. 

This is only part of the story. Further experiments suggest that ``linear-term'' modifications of $N_{a,b;\infty}(q,1)$ also yield infinite products of modulus $a+b$. More precisely, from \eqref{eq:q,t-sum-infinite} we have
    \[
    N_{a,b;\infty}(q,1)=\sum_{\mathbf{n}\in \Z^G} \qbinom{\infty}{\mathbf{n}}_{q;G} \, q^{\dinv(\mathbf{n})},\]
    where $\mathbf{n}$ is a vector indexed by the gap set $G=\N\setminus\langle a,b\rangle$ and $\dinv$ is a quadratic form. Enumerate the gaps as $g_1<\dots<g_\delta$, where $\delta=\abs{G}=(a-1)(b-1)/2$. For $0\leq r\leq \delta$, define
    \[ N_{a,b}^{(r)}(q)=\sum_{\mathbf{n}\in \Z^G} \qbinom{\infty}{\mathbf{n}}_{q;G} \, q^{\dinv(\mathbf{n})+n_{g_1}+\dots+n_{g_r}}.\]
    Note that the extreme cases are $N_{a,b}^{(0)}(q)=N_{a,b;\infty}(q,1)$, the subject of Conjecture~\ref{conj:rr-type}, and $N_{a,b}^{(\delta)}(q)=N_{a,b;\infty}(q,q)$, which appears in later discussions; see Remark~\ref{sub:Jonestail} and \eqref{eq:diagonal}. 
    The following conjecture generalizes Conjecture~\ref{conj:rr-type}.
    \begin{conjecture}\label{conj:RR_general}
For coprime $a<b$, and $0\leq r\leq (a-1)(b-1)/2$, we have          \[N_{a,b}^{(r)}(q)=P^{(r)}_{a,b}(q):=\prod_{i=1}^\infty (1-q^i)^{-p_{a,b}^{(r)}(i)}\]
    for some nonnegative $(a+b)$-periodic function $p_{a,b}^{(r)}$ which makes $P^{(r)}_{a,b}(q)$ a normalized $\cW_a(a,a+b)$-character\footnote{We have an algorithm to explictly compute $p_{a,b}^{(r)}(i)$ and the corresponding character, which is too complicated to be presented here.}. 
    \end{conjecture}
    
    For $a=2$, this covers the full Andrews--Gordon family
    \[ \sum_{n_1\geq \dots\geq n_m\geq 0} \frac{q^{n_1^2+\dots+n_m^2+n_{m-r+1}+\dots+n_m}}{(q;q)_{n_1-n_2}\cdots (q;q)_{n_{m-1}-n_m}(q;q)_{n_m}} = \frac{(q^{m+1-r},q^{m+2+r},q^{2m+3};q^{2m+3})_\infty}{(q;q)_\infty}.\]
    See Example~\ref{ex:twoclassicalRRs} for the case where $(a,b)=(2,3)$.
\begin{remark}[Connection to colored Jones polynomials]\label{sub:Jonestail}

It turns out that $P^{(\delta)}_{a,b}(q)$ equals the normalized 
character $\overline{\chi}^{a,a,a+b}_{(b+1,1,1,...,1)}(q)$ associated to the $\cW_a(a,a+b)$-representation with highest weight $b\Lambda_0$. It turns out that this character appears in the limit of colored Jones polynomials: from the computation in  \cite[Theorem 6.1, Remark 6.4(4)]{Kanade_old}, $\overline{\chi}^{a,a,a+b}_{(b+1,1,1,\dots,1)}(q)$ is essentially the colored Jones tail of the toric knot $T(a,a+b)$ colored by the symmetric product $\mathfrak{sl}_a$-representation with highest weight $j\Lambda_1$, for any $j\geq 1$. Therefore, $N_{a,b;\infty}(q,q)$, which arises from the algebraic geometry of $T(a,b)$, computes the colored Jones tail of $T(a,a+b)$. We expect that this is related to a more general mechanism linking 
$N_{a,b;n}$ to colored knot homology.
\end{remark}

\subsection{Speculations on a trivariate refinement}\label{sec:speculations}

Let $R$ be a planar curve germ with normalization $\tl R$. For $E=R$ or $\tl R$, recall the rank-$n$ Quot zeta function
\[
  \calZ^R_{E^n}(t)=\sum_{m\geq0}[\Quot^R_m(E^n)]t^m
\]
and its normalization
\[
  \mathcal N^R_{E^n}(t)=\frac{\calZ^R_{E^n}(t)}{\calZ^{\tl R}_{\tl R^n}(t)}.
\]
For $R=\k[\![T^a,T^b]\!]$ we compute $\mathcal N^R_{\tl R^n}(t)$ completely, but our results determine only one specialization of $\mathcal N^R_{R^n}(t)$. Computations for $(2,b)$ torus-link singularities in \cite{huang2025inert,chernhuang2025} indicate that neither normalized zeta function determines the other. They appear instead as two specializations of one trivariate polynomial. This polynomial also accounts for the functional equation and the rank-to-infinity stabilization that are visible on the $R^n$ side but not on the $\tl R^n$ side.

\begin{conjecture}[Master polynomial]\label{conj:three-var}
Let $R$ be a plane curve germ over $\k$. Suppose that, for every $n\geq1$ and $E=R,\tl R$, there is a polynomial $N_{E^n}(q,t)\in\Z[q,t]$ such that
\[
  \mathcal N^R_{E^n}(t)=N_{E^n}(\L,t).
\]
We expect this hypothesis to hold for every plane curve germ. Put $\delta=\dim_\k(\tl R/R)$. Then there is a polynomial $H_{R,n}(q,t,u)\in\Z[q,t,u]$ with the following properties.
\begin{enumerate}
  \item \emph{Functional equation:}
  \[
    H_{R,n}(q,t,u)=q^{\delta n^2}(tu)^{\delta n}H_{R,n}\bigl(q,q^{-n}u^{-1},q^{-n}t^{-1}\bigr).
  \]

  \item \emph{Interpolation:}
  \begin{align}
    N_{\tl R^n}(q,t)&=q^{\delta n^2}t^{\delta n}H_{R,n}(q^{-1},1,t^{-1})=H_{R,n}(q^{-1},q^nt,q^n),\\
    N_{R^n}(q,t)&=q^{\delta n^2}t^{2\delta n}H_{R,n}(q^{-1},t^{-1},t^{-1})=H_{R,n}(q^{-1},q^nt,q^nt).\label{eq:master-Rn}
  \end{align}

  \item \emph{Limit:} the coefficientwise limit
  \[
    H_{R,\infty}(q,t,u)=\lim_{n\to\infty}H_{R,n}(q,t,u)
  \]
  exists in $\Z[\![q,t,u]\!]$. Moreover, $H_{R,\infty}(q,t,u)$ converges for all $q,t,u\in \C$ with $\abs{q}<1$.

  \item \emph{Rank-one degeneracy:} $H_{R,1}(q,t,u)\in\Z[qt,u]$.

  \item \emph{Rogers--Ramanujan type identity:} $H_{R,\infty}(q,1,1)$ has an infinite-product expansion with periodic exponents.

  \item \emph{Cyclic sieving:} if $r\mid n$ and $\zeta_r$ is a primitive $r$-th root of unity, then
  \[
    H_{R,n}(\zeta_r,t,u)=H_{R,1}(1,t^r,u^r)^{n/r}.
  \]
  In particular,
  \[
    H_{R,n}(1,t,u)=H_{R,1}(1,t,u)^n.
  \]
\end{enumerate}
\end{conjecture}

The rescaled numerator
\[
  N_{R,n}(q,t)\coloneqq N_{R^n}(q,q^{-n}t)
\]
is recovered by
\[
  N_{R,n}(q,t)=H_{R,n}(q^{-1},t,t).
\]
Consequently,
\[
  N_{R,\infty}(q,t)=H_{R,\infty}(q^{-1},t,t),
\]
and the left-hand side is known to converge to the normalized Coh zeta function; see Remark~\ref{rmk:coh-zeta}. The functional equation above specializes to the functional equation of $N_{R^n}(q,t)$ from \cite{huangjiang2023torsionfree}; see also Theorem~\ref{thm:pointcount-func-eq}. In rank one, the degeneracy gives $N_R(q,t)=N_{\tl R}(qt,t)$, which is the Hilb-vs-Quot conjecture of Kivinen--Trinh \cite{kivinentrinh2023}.

Even without an intrinsic construction, the master polynomial may be of independent interest. It provides a framework for trivariate deformations of the finitized Rogers--Ramanujan type polynomials $H_{R,n}(q,1,1)$ that carry additional symmetry. For a torus-knot singularity it is also a candidate for a high-rank rational $q,t,u$-Catalan polynomial. In rank one, the third variable is degenerate, so it reduces to the usual rational $q,t$-Catalan polynomial. In higher rank, a direct combinatorial construction remains open.

We next propose topological and geometric constructions over $\C$. Write
\[
  R=\C[\![X,Y]\!]/(f),\qquad R_f^{(n)}=\C[\![X,Y]\!]/(f^n),
\]
and let $b(R)$ be the number of branches of $R$. Define the motivic Hilbert series and its normalized numerator by
\begin{align*}
  Z^{\Hilb}_{f;n}(t)&\coloneqq\sum_{m\geq0}[\Hilb_m(R_f^{(n)})]t^m,\\
  \cH_{f;n}(t)&\coloneqq(t;\L t)_n^{b(R)}Z^{\Hilb}_{f;n}(t).
\end{align*}
Whenever there is a polynomial $\cH_{f;n}(q,t)\in\Z[q,t]$ satisfying $\cH_{f;n}(t)=\cH_{f;n}(\L,t)$, we use this notation for a polynomial representative.
\begin{remark}
    In \cite{huangjiang2023torsionfree}, the first two authors prove that $\mathcal{N}^R_{\tl R^n}(t)$ and $\mathcal{N}^R_{R^n}(t)$ are polynomials in $\KVar{\k}[t]$. We conjecture that $\cH_{f;n}(t)\in \KVar{\k}[t]$ as well.
\end{remark}

Let
\[
  \mathcal N^{\mathrm p}_{\tl R^n}(q,t,u),\quad \cH^{\mathrm p}_{f;n}(q,t,u)
\]
be the normalized virtual perverse weight series defined in \eqref{eq:perv-normalized-quot}--\eqref{eq:perv-normalized}. Here $q$ records weight, $t$ records colength, and $u$ records perverse degree. Since the perverse filtration is exhaustive, setting $u=1$ gives the ordinary virtual weight realization, without any purity assumption. Thus, whenever the corresponding motivic numerators admit polynomial representatives,
\[
  N_{\tl R^n}(q,t)=\mathcal N^{\mathrm p}_{\tl R^n}(q,t,1),\qquad \cH_{f;n}(q,t)=\cH^{\mathrm p}_{f;n}(q,t,1).
\]
Purity and evenness would identify the underlying virtual weight series with ordinary Poincar\'e series. A compatible affine paving would further explain the predicted motivic polynomiality. Neither assertion by itself implies coefficientwise positivity due to normalization.

If $R$ is unibranched, let $K$ be its algebraic knot. Write $\tcP^K_{S^n}(\alpha,\qtil,\tr,\tc)$ for the Poincar\'e polynomial of the reduced quadruply graded HOMFLY homology colored by the one-row partition $(n)$, in the tilde normalization of Gorsky--Gukov--Sto\v si\'c \cite{GGS}, and put
\[
  \Bot\,\tcP^K_{S^n}(\qtil,\tr,\tc)\coloneqq[\alpha^{2n\delta}]\tcP^K_{S^n}(\alpha,\qtil,\tr,\tc).
\]

\begin{conjecture}[Three frameworks over $\C$]\label{conj:colorHOMFLYconj}
The following geometric expressions coincide:
\begin{align}
  H_{R,n}(q,t,u)&=q^{\delta n^2}u^{\delta n}\mathcal N^{\mathrm p}_{\tl R^n}\bigl(q^{-1},tu^{-1},q^nu\bigr),\label{eq:master-perv-quot}\\
  &=q^{\delta n^2}u^{\delta n}\cH^{\mathrm p}_{f;n}\bigl(q^{-1}t^{-1}u,tu^{-1},q^nu\bigr).\label{eq:master-perv-hilb}
\end{align}
If $R$ is unibranched, they also coincide with the knot-theoretic expression
\begin{equation}\label{eq:master-knot}
  H_{R,n}(q,t,u)=q^{\delta n^2}t^{\delta n}\Bot\,\tcP^K_{S^n}\bigl(u^{-1/2},u^{1/2}t^{-1/2},q^{-1/2}\bigr).
\end{equation}
The resulting polynomial satisfies Conjecture~\ref{conj:three-var}. Moreover, $\cH_{f;n}(t)$ admits a polynomial representative in $\Z[\L,t]$, and the cohomology entering the two perverse series is pure and concentrated in even degrees.
\end{conjecture}

The two geometric formulas immediately give the geometric predictions:
\begin{enumerate}
  \item[(a)] $\cH^{\mathrm p}_{f;n}(q,t,u)=\mathcal N^{\mathrm p}_{\tl R^n}(qt,t,u)$;
  \item[(b)] $\cH_{f;n}(q,t)=N_{\tl R^n}(qt,t)$;
  \item[(c)] $N_{R^n}(q,t)=t^{n\delta}\cH^{\mathrm p}_{f;n}(q,1,t)=t^{n\delta}\mathcal N^{\mathrm p}_{\tl R^n}(q,1,t)$.
\end{enumerate}
Here (a) is a perverse high-rank Hilb-vs-Quot identity, (b) is its unrefined motivic counterpart and is obtained at the virtual-weight level by setting $u=1$, while (c) follows from \eqref{eq:master-Rn} and would give a new geometric method to compute $N_{R^n}$ if true.

In the unibranched case, the knot-theoretic formula predicts
\begin{align}
  N_{\tl R^n}(q,t)&=t^{n\delta}\Bot\,\tcP^K_{S^n}\bigl(q^{-n/2},t^{-1/2},q^{1/2}\bigr),\label{eq:intro-spec-quot-tilde}\\
  N_{R^n}(q,t)&=t^{n\delta}\Bot\,\tcP^K_{S^n}\bigl(t^{1/2},1,q^{1/2}\bigr),\label{eq:intro-spec-quot-R}\\
  \cH_{f;n}(q,t)&=t^{n\delta}\Bot\,\tcP^K_{S^n}\bigl((qt)^{-n/2},t^{-1/2},(qt)^{1/2}\bigr).\label{eq:intro-spec-hilb}
\end{align}
At $n=1$, the geometric predictions recover the Hilb-vs-Quot conjecture. In the unibranched case, the knot-theoretic prediction recovers the bottom-row form of the ORS conjecture. Detailed consequences and evidence are collected in Section~\ref{sec:further}.

\begin{remark}[Arithmetic setup and the multibranched case]
Conjecture~\ref{conj:colorHOMFLYconj} supplies geometric and topological models over $\C$, but not over a finite field or for an arithmetic order. The computations of \cite{huang2025inert,chernhuang2025} suggest that the normalized arithmetic Quot zeta functions $\nu^R_{R^n}(s)$ and $\nu^R_{\tl R^n}(s)$ are likewise interpolated by a master function with one additional variable, and that this function interacts nontrivially with Galois twists. It would be natural to seek an arithmetic counterpart of Conjecture~\ref{conj:colorHOMFLYconj}.

The works \cite{huang2025inert,chernhuang2025} also suggest that arithmetic considerations may be relevant in the geometry of the multibranched case. Conjecture~\ref{conj:colorHOMFLYconj} does not provide a link-theoretic or combinatorial formula for $H_{R,n}(q,t,u)$ for multibranched singularities. For the $(2,2m)$ torus-link singularity
\[
  R_{2,2m}=\C[\![X,Y]\!]/(Y^2-X^{2m}),
\]
\cite{chernhuang2025} predicts a formula for $H_{R_{2,2m},n}(q,t,u)$:
\[
  H_{R_{2,2m},n}(q,t,u)=\sum_{k_1,\dots,k_m}q^{\sum_{i=1}^m k_i^2}(tu)^{\sum_{i=1}^m k_i}(u^{-1};q^{-1})_{k_1}\qbinom{n}{k_1}_q\qbinom{k_1}{k_2}_q\cdots \qbinom{k_{m-1}}{k_m}_q.
\]
This polynomial can have negative coefficients; already $H_{R_{2,2},1}(q,t,u)=1-qt+qtu$. Thus a na\"ive interpretation of its coefficients as dimensions of cohomology groups cannot hold in the multibranched case. On the other hand, for the quadratic twisted model
\[
  R'=R'_{2,2m}=\F_q[\![X,Y]\!]/\bigl((Y-\alpha X^m)(Y-\beta X^m)\bigr)\simeq\F_q[\![T]\!]+T^m\F_{q^2}[\![T]\!],
\]
where $\alpha\ne\beta\in\F_{q^2}$ are Frobenius conjugate over $\F_q$, \cite{chernhuang2025} predicts that a polynomial $H_{R',n}(q,t,u)$ interpolating $\nu^{R'}_{R'^n}(s)$ and $\nu^{R'}_{\tl R'^n}(s)$ is given by 
\[
  H_{R'_{2,2m},n}(q,t,u)=H_{R_{2,2m},n}(q,-t,-u),
\]
which has nonnegative coefficients. 
\end{remark}

\subsubsection{The knot-theoretic model and the knots--quivers formula}\label{subsec:intro-knot}

For $R=\C[\![T^a,T^b]\!]$ with $\gcd(a,b)=1$, the polynomial computed in this paper is the slice
\[
  N_{a,b;n}(q,t)=H_{R,n}(q,1,t).
\]
Consequently, \eqref{eq:master-knot}, together with GGS self-symmetry, identifies $N_{a,b;n}$ with a two-dimensional slice of the bottom row of the $S^n$-colored homology of $T(a,b)$. The specialization $q=1$ is compatible with refined exponential growth, while the diagonal $q=t$ recovers the bottom row of the symmetric-colored HOMFLY polynomial. These are consequences and consistency checks of Conjectures~\ref{conj:three-var} and \ref{conj:colorHOMFLYconj}, not additional organizing conjectures.

The knots--quivers correspondence of Kucharski--Reineke--Sto\v si\'c--Su\l kowski \cite{KRSS} gives a conjectural finite presentation of the same colored bottom row. We use it only as a computational model for the knot-theoretic term in Conjecture~\ref{conj:colorHOMFLYconj}. For torus knots, the rank-one case of Conjecture~\ref{conj:colorHOMFLYconj} identifies its quiver vertices with the order filters of the gap poset $G$, which also index the torus-fixed points of the poset flag varieties appearing in Theorem~\ref{thm:bb}. Rank one determines the vertex gradings and the diagonal of the quiver matrix, whereas its off-diagonal entries are genuinely higher-rank data, conjecturally encoded by the $R^n$ Quot slice and the perverse refinement. Section~\ref{subsec:Nhat} records the precise dictionary and the independent checks for $T(3,4)$, $T(3,5)$, and $T(3,7)$. For $T(3,4)$, the explicit master-polynomial formula in Remark~\ref{rmk:H-34-KRSS} goes further: its quadratic exponent determines all off-diagonal entries and recovers the KRSS matrix of \cite[Eq.~(5.41)]{KRSS}.

\subsubsection{Three geometric slices}\label{subsec:intro-three}

The rank-one identity $\Hilb_m(R)=\Quot^R_m(R)$ has two natural rank-$n$ extensions: one raises the module rank to $\Quot^R_m(R^n)$, and the other thickens the curve to $\Hilb_m(R_f^{(n)})$. Together with $\Quot^R_m(\tl R^n)$, the object studied in this paper, these are the three geometric specializations displayed after Conjecture~\ref{conj:colorHOMFLYconj}. Their relations are therefore consequences of the master framework. In particular, item~(b) is the higher-rank Hilb-vs-Quot prediction, and item~(c) proposes a new geometric route from either perverse model to the presently inaccessible numerator $N_{R^n}$.

On the knot side, the $R^n$ specialization lies on $\tr=1$, while the $\tl R^n$ and thickened-Hilbert specializations lie on $\qtil\tc^n=1$. The latter two are related by $q\mapsto tq$, or motivically by $\L\mapsto t\L$, including the branch-dependent change from $(t;\L)_n^{b(R)}$ to $(t;\L t)_n^{b(R)}$. The two subtori do not determine the full trigraded bottom row; the missing transverse coordinate is supplied by the perverse grading.

\subsubsection{The perverse grading}\label{subsec:intro-perv}
The proposed construction starts from a family of curves in a smooth surface whose central fiber is the germ in question and whose relevant relative Hilbert or Quot scheme is proper with smooth total space. The perverse Leray filtration refines the two series by a third variable. In the Hilbert-side coordinates, the grading dictionary is
\[
  t=\tr^{-2},\qquad q=(\tr\tc)^2,\qquad u=(\qtil\tc^n)^{-2}.
\]
Thus $u$ is exactly the coordinate transverse to $\qtil\tc^n=1$. Since this change of variables is an isomorphism of three-dimensional tori, either perverse formula in Conjecture~\ref{conj:colorHOMFLYconj} determines the full bottom row.

Section~\ref{sss:perv} defines the refinement using virtual perverse weight polynomials, so that $u=1$ recovers the ordinary virtual weight realization without a purity assumption. Conjecture~\ref{conj:colorHOMFLYconj} separately predicts motivic polynomiality and pure even cohomology; due to normalization, they do not predict coefficientwise positivity of $\mathcal{N}^{\mathrm{p}}_{\tl R^n}$ or $\cH_{f;n}^{\mathrm{p}}$. The grading shifts are constrained by the rank-one ORS description, the functional equation for $\Quot^R_m(R^n)$, and the computed torus-knot examples. Proposition~\ref{prop:ribbon-n2} supplies independent geometric evidence by verifying the predicted perverse concentration for the smooth germ at $n=2$ in the $\GL_2$ spectral-curve family.

\subsection{Geometry of singular affine Grassmannians}\label{sub:singaffinegrass}
As a concluding remark, we present a framework that effectively describe the cell decomposition on $\Quot_m^R(\k[\![T]\!]^n)$, as asserted in  Theorem~\ref{thm:paving}.
It uses the language of ``singular affine Grassmannian'' introduced in \cite[Appendix]{huangjiang2023torsionfree}. We then briefly point out how \Cref{thm:paving} follows from \Cref{thm:bb}. We refer the reader to Appendix~\ref{sec:append} for precise definitions and detailed explanations. 

Let $R=\k[\![T^a,T^b]\!]$ and let $\tl R=\k[\![T]\!]$ be its normalization. While the classical affine Grassmannian $\Gr_{\GL_n}$ classifies $\tl R$-lattices in $\k(\!(T)\!)^n$, the singular affine Grassmannian $\Gr_{\GL_n,R}$ classifies $R$-lattices in $\k(\!(T)\!)^n$. The geometry of cells on $\Quot_m^R(\k[\![T]\!]^n)$ can be elegantly translated to geometry of Schubert cells on  $\Gr_{\GL_n,R}$:

\begin{description}
    \item[(Extension fiber)]  As  pointed out in \cite[Appendix]{huangjiang2023torsionfree}, there is a constructible map $\underline{\vec{\pi}^*}:\Gr_{\GL_n,R}\rightarrow \Gr_{\GL_n}$
sending an $R$-lattice $L$ to $\tl R L$. Upon the decomposition $\Gr_{\GL_n}=\bigsqcup_{\omega\in X_*(\GL_n)} X_\omega^\circ$ into Iwahoric Schubert cells, $\underline{\vec{\pi}^*}$ becomes a trivial fibration with fibers isomorphic to the \textit{extension fiber} $E_R(\tl R^n)$.   
 \item[(Spherical Schubert cell)] In Appendix~\ref{sec:append}, we introduce a notion of (spherical) Schubert cells for $\Gr_{\GL_n,R}$, which naturally generalizes its 
classical counterpart for $\Gr_{\GL_n}$. These Schubert cells are labelled by dimension vectors $\mathbf{n}$ recording the initial term data of $R$-lattices parametrized by $\Gr_{\GL_n,R}$. We show that $E_R(\tl R^n)$ decomposes into a finite union of Schubert cells with dimension vectors $\mathbf{n}\in \mathbb{Z}^P$, where $P$ is a certain rectangular grid poset. 
\item[(Bia\l ynicki-Birula morphism)] Let $\Gr_{\GL_n,R,\mathbf{n}}$ be the Schubert cell with dimension vectors $\mathbf{n}\in \mathbb{Z}^P$. Then there is an affine fibration (Bia\l ynicki-Birula morphism) $\Gr_{\GL_n,R,\mathbf{n}}\rightarrow \Fl_P(\mathbf{
n};n)$, and the target is a poset flag variety, which is an iterated Grassmannian bundle over a point. This encodes Theorems~\ref{thm:bb} and \ref{thm:poset-flag}.
\end{description}
The following diagram summarizes the  structure of $\Gr_{\GL_n,R}$ (where each rectangle encloses a relation between two objects, which is either an isomorphism up to decomposition, or an actual isomorphism, or a  fibration, depending on the context): 

\begin{center}

\tikzset{every picture/.style={line width=0.75pt}} 

\begin{tikzpicture}[x=0.75pt,y=0.75pt,yscale=-1,xscale=1]

\draw    (483,175) -- (483.96,230) ;
\draw [shift={(484,232)}, rotate = 268.99] [color={rgb, 255:red, 0; green, 0; blue, 0 }  ][line width=0.75]    (10.93,-3.29) .. controls (6.95,-1.4) and (3.31,-0.3) .. (0,0) .. controls (3.31,0.3) and (6.95,1.4) .. (10.93,3.29)   ;
\draw    (470,244) -- (424,244) ;
\draw    (470,247) -- (424,247) ;
\draw    (315,108) -- (315,144) ;
\draw    (312,108) -- (312,144) ;
\draw    (258,152) -- (215,152) ;
\draw    (258,155) -- (215,155) ;
\draw    (426,152) -- (377,152) ;
\draw    (426,155) -- (377,155) ;
\draw   (159,138) .. controls (159,130.82) and (164.82,125) .. (172,125) -- (285,125) .. controls (292.18,125) and (298,130.82) .. (298,138) -- (298,177) .. controls (298,184.18) and (292.18,190) .. (285,190) -- (172,190) .. controls (164.82,190) and (159,184.18) .. (159,177) -- cycle ;
\draw   (249,102.2) .. controls (249,88.83) and (259.83,78) .. (273.2,78) -- (368.8,78) .. controls (382.17,78) and (393,88.83) .. (393,102.2) -- (393,174.8) .. controls (393,188.17) and (382.17,199) .. (368.8,199) -- (273.2,199) .. controls (259.83,199) and (249,188.17) .. (249,174.8) -- cycle ;
\draw   (321,138.72) .. controls (321,131.64) and (326.74,125.9) .. (333.82,125.9) -- (539.18,125.9) .. controls (546.26,125.9) and (552,131.64) .. (552,138.72) -- (552,177.18) .. controls (552,184.26) and (546.26,190) .. (539.18,190) -- (333.82,190) .. controls (326.74,190) and (321,184.26) .. (321,177.18) -- cycle ;
\draw   (449,149.4) .. controls (449,139.24) and (457.24,131) .. (467.4,131) -- (522.6,131) .. controls (532.76,131) and (541,139.24) .. (541,149.4) -- (541,262.6) .. controls (541,272.76) and (532.76,281) .. (522.6,281) -- (467.4,281) .. controls (457.24,281) and (449,272.76) .. (449,262.6) -- cycle ;
\draw   (273,236) .. controls (273,231.58) and (276.58,228) .. (281,228) -- (528,228) .. controls (532.42,228) and (536,231.58) .. (536,236) -- (536,260) .. controls (536,264.42) and (532.42,268) .. (528,268) -- (281,268) .. controls (276.58,268) and (273,264.42) .. (273,260) -- cycle ;

\draw (283,85.4) node [anchor=north west][inner sep=0.75pt]    {$\Gr_{\GL_{n} , R}$};
\draw (261,142.7) node [anchor=north west][inner sep=0.75pt]    {$\bigsqcup X_{\omega }^\circ \times \ E_{R}(\widetilde{R}^{n})$};
\draw (432,145.4) node [anchor=north west][inner sep=0.75pt]    {$\bigsqcup \Gr_{\GL_n , R, \mathbf{n}}  $};
\draw (488,201) node [anchor=north west][inner sep=0.75pt]  [font=\tiny] [align=left] {affine bundle};
\draw (471,236.4) node [anchor=north west][inner sep=0.75pt]    {$\Fl_{P}( \mathbf{n};n)$};
\draw (276,239) node [anchor=north west][inner sep=0.75pt]   [align=left] {{\scriptsize Iterated Grassmannian bundle}};
\draw (166,145.4) node [anchor=north west][inner sep=0.75pt]    {$\Gr_{\GL_{n}}$};

\end{tikzpicture}

\end{center}
One deduces the affine paving result in Theorem~\ref{thm:paving}: First, $\Quot^R(\tl R^n)=\bigsqcup_{m\geq 0}\Quot^R_m(\tl R^n)$, up to reduced induced structure, naturally embeds into $\Gr_{\GL_n,R}$. Similarly, $\Quot^{\tl{R}}(\tl R^n)_{\mathrm{red}}\hookrightarrow \Gr_{\GL_n}$. Second, $\Quot^{\tl{R}}(\tl R^n)_{\mathrm{red}}$ is a union of Iwahoric Schubert cells, i.e., $\Quot^{\tl{R}}(\tl R^n)_{\mathrm{red}}=\bigsqcup_{\omega\in V} X_\omega^\circ$ for a certain $V\subeq X_*(\GL_n)$. Third, the decomposition $\Gr_{\GL_n,R}=\bigsqcup_\omega X_{\omega }^\circ \times \ E_{R}(\widetilde{R}^{n})$ restricts to a decomposition $\Quot^R(\tl R^n)_{\mathrm{red}}=\bigsqcup_{\omega\in V} X_{\omega }^\circ \times \ E_{R}(\widetilde{R}^{n})$. An affine paving on $\Quot^R(\tl R^n)$ can then be obtained from one on $E_R(\tl R^n)$.

\subsection*{Acknowledgment} The authors thank fruitful discussions with Eugene Gorsky, Martin Olsson, Ken Ono, Vivek Shende, Sug Woo Shin, Peisheng Yu, Zhiwei Yun, and Peng Zhou.  AO research was partially supported by  NSF grant DMS-2200798. The authors acknowledge that the discovery of further connections and conjectures in Sections~\ref{sec:speculations} and \ref{sec:further} is partly assisted by generative AIs, such as ChatGPT-5.6 Sol.

\section{Definitions and notation}
In this paper, $\N=\{0,1,\dots\}$. We use the $q$-Pochhammer symbol for $n\in \N\cup \{\infty\}$:
\[ (a;q)_n=(1-a)(1-aq)\cdots (1-aq^{n-1})\]
and the $q$-binomial coefficient for $n,k\in \Z$:
\[ \qbinom{n}{k}=\begin{cases}
    \frac{(q;q)_n}{(q;q)_k(q;q)_{n-k}},& 0\leq k\leq n;\\
    0, & n<0,\; k<0, \text{ or }k>n,
\end{cases}\]
which is a polynomial in $\N[q]$. We have $[\Gr(r,n)]=\qbinom{n}{r}_{\L}$. 

For $\gcd(a,b)=1$, we let $G=\N\setminus \pairing{a,b}$ be the \defn{gap poset} of the numerical semigroup $\pairing{a,b}$, equipped with the ordering
\[ i\preceq j \text{ if and only if }j-i\in \pairing{a,b}.\]
The largest element of $G$ (in the usual order of $\N$), known as the \defn{Frobenius element}, is denoted by $f$ and is well-known to be given by $f=\max G=ab-a-b$. For an integer-valued vector $\mathbf{n}=(n_i)_{i\in G}\in \Z^G$ and $n\in \N$, we define the \defn{generalized $q$-multinomial coefficient}
\begin{equation}\label{eq:q-binom-gap}
    \qbinom{n}{\mathbf{n}}_{q;G}\coloneqq \frac{(q;q)_n}{(q;q)_{n-n_f}} \prod_{i\in G} \frac{(q;q)_{n_i-n_{i-a-b}}}{(q;q)_{n_i-n_{i-a}}(q;q)_{n_i-n_{i-b}}},
\end{equation}
where $n_j\coloneqq 0$ for $j<0$ by convention  (cf.~Lemma~\ref{lem:q-binom-gap}). It can be reformulated as a product of $q$-binomial coefficients in a non-unique way (see Lemma~\ref{lem:q-binom-tame}). Define also
\begin{equation*}
    \qbinom{\infty}{\mathbf{n}}_{q;G}\coloneqq  \prod_{i\in G} \frac{(q;q)_{n_i-n_{i-a-b}}}{(q;q)_{n_i-n_{i-a}}(q;q)_{n_i-n_{i-b}}}.
\end{equation*}

On $\Z^G$, define the quadratic form
\begin{equation}\label{eq:dinv}
    \dinv(\mathbf{n})=\sum_{i,j\in G}  K(j-i)\,n_{i}n_{j},
\end{equation}
where
\[K(d)=\mathbf{1}_{d\geq 0}-\mathbf{1}_{d\geq a}-\mathbf{1}_{d\geq b}+\mathbf{1}_{d\geq a+b}.
\]
and the cone
\[ C=\{\mathbf{n}\in \mathbb{Z}^G:n_i\geq 0, n_j\geq n_i\text{ if }j-i\in \langle a,b\rangle\}.\]
For $n\in \N\cup \set{\infty}$, the generalized $q$-multinomial coefficient $\qbinom{n}{\mathbf{n}}_{q;G}$ is nonzero in $\Z[q]$ if and only if
\[ \mathbf{n}\in C \text{ and } n_f \leq n,\] 
again see Section~\ref{subsec:multinomial}. For a vector $\mathbf{n}\in C$, define
\[ \abs{\mathbf{n}}\coloneqq \norm{\mathbf{n}}_1=\sum_{i\in G} n_i.\]

As an important property, which we initially conjectured and which is proved by the first author in \cite{huang2026dinv}, $\dinv$ is positive definite on $C$, namely, there is $\lambda>0$ such that 
\begin{equation}\label{eq:pos-def-eff}
\dinv(\mathbf{n})\geq \lambda\abs{\mathbf{n}}^2
\end{equation}
for all $\mathbf{n}\in C$. Moreover, when evaluated on $0,1$-vectors in $C$, $\dinv$ matches the classical rational $\dinv$ statistic \cite{gorskymazin2013compactified1,huang2026dinv}, justifying our choice of notation. 

We now define the key $q,t$-polynomials and series that appear in our results:
\begin{equation}
    \label{eq:q,t-sum-finite} 
    N_{a,b;n}(q,t)=\sum_{\mathbf{n}\in \Z^G} \qbinom{n}{\mathbf{n}}_{q;G} \, q^{\dinv(\mathbf{n})} \, t^{\abs{\mathbf{n}}},
\end{equation}
and
\begin{equation}
    \label{eq:q,t-sum-infinite}
    N_{a,b;\infty}(q,t)=\sum_{\mathbf{n}\in \Z^G} \qbinom{\infty}{\mathbf{n}}_{q;G} \, q^{\dinv(\mathbf{n})} \, t^{\abs{\mathbf{n}}}.
\end{equation}
For finite $n$, $N_{a,b;n}(q,t)$ is a polynomial because the sum is supported on the finite set $C\cap \{\mathbf{n}: \max_{i\in G} n_i\leq n\}$, and because $\dinv(\mathbf{n})\geq 0$ for $\mathbf{n}\in C$. The infinite sum defining $N_{a,b;\infty}(q,t)$ converges formally to a power series in $\Z[\![q,t]\!]$ because $\qbinom{\infty}{\mathbf{n}}_{q;G} = 1+O(q)$ and $\dinv$ is (strictly) positive definite on $C$; more precisely, the effective bound \eqref{eq:pos-def-eff} ensures that there are at most finitely many summands that contribute to a given degree. Moreover, for the same reason, the specialization $N_{a,b;\infty}(q,1)$ converges formally to a power series in $\Z[\![q]\!]$, and for complex numbers $\abs{q}<1, t\in \C$, the numerical series $N_{a,b;\infty}(q,t)$ converges. 

\begin{remark}\label{rmk:catalan}
    We shall treat $N_{a,b;n}(q,t)$ as a ``rank-$n$'' rational $q,t$-Catalan polynomial, but with a small twist. It connects to the classical polynomial \cite{gorskymazin2013compactified1} by
    \[N_{a,b;1}(q,t) = \sum_{D\in \mathrm{Dyck}_{a,b}} q^{\dinv(D)}t^{\frac{(a-1)(b-1)}{2}-\mathtt{area}(D)},\]
    while
    \[ C_{a,b}(q,t)=C_{a,b}(t,q)=\sum_{D\in \mathrm{Dyck}_{a,b}} q^{\dinv(D)}t^{\mathtt{area}(D)},\]
    so
    \[C_{a,b}(q,t)=t^{\frac{(a-1)(b-1)}{2}}\,N_{a,b;1}(q,t^{-1}).\]
    We do not observe any functional equation for $N_{a,b;n}(q,t)$ when $n>1$. This is expected: we expect a functional equation for $[\Quot_m(R^n)]$, not for $[\Quot_m^R(\k[\![T]\!]^n)]$ \cite[\funceq]{huangjiang2023torsionfree}. However, as is discussed in \S\ref{sec:speculations}, we expect a three-variable rank-$n$ Catalan polynomial that satisfies additional symmetry.
  \end{remark}

\section{Main ingredients}
The main geometric input of our results is in the study of another moduli space. Recall $\gcd(a,b)=1$ and $R=\k[\![T^a,T^b]\!]$. Denote $\tl R=\k[\![T]\!]$, the normalization of $R$. We consider an open subset of $\Quot_m^R(\tl R^n)$ defined by
\[ E_R(\tl R^n;m) = \{L\in \Quot_m^R(\tl R^n): \tl R L = \tl R^n\},\]
where $\tl R L$ is the smallest $\tl R$-submodule of $\tl R^n$ containing $L$. There is a natural $\k^\times$-action on $\tl R=\k[\![T]\!]$ given by $t\cdot f(T)\coloneqq f(tT)$. This action preserves $R$, and induces a $\bG_m$-action on $\Quot_m^R(\tl R^n)$ and $E_R(\tl R^n;m)$. 

\begin{theorem}[{$\subeq$ Theorem~\ref{thm:bb-detailed}}]
    The $\Gm$-action on $E_R(\tl R^n;m)$ has a smooth fixed point locus whose connected components $F$ can be explicitly described as iterated Grassmannian bundles. Moreover, each Bia\l ynicki-Birula stratum $X_F\coloneqq \{x:\lim_{t\to 0} t\cdot x\in F\}$ is smooth, and is an affine bundle\footnote{In this paper, an \defn{affine bundle} is just a Zariski-local fiber bundle whose fiber is an affine space.} over $F$.
    \label{thm:bb}
\end{theorem}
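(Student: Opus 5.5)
The plan is to make everything explicit through initial terms with respect to the $T$-adic valuation, which is exactly the $\Gm$-weight. For $L\in E_R(\tl R^n;m)$ and each $s\in\N$, set
\[ L_s=\{v\in\k^n:\ vT^s+(\text{higher order terms})\in L\}\subeq \k^n, \]
the space of leading coefficients in degree $s$. Since $R$ is spanned by monomials $T^c$ with $c\in\pairing{a,b}$, we get $L_s\subeq L_{s+c}$ for $c\in\pairing{a,b}$.

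The condition $\tl R L=\tl R^n$ forces $L\supeq T^{N}\tl R^n$ for some bounded $N$; here I expect the conductor $T^{f+1}\tl R\subeq R$ to give $N$ in terms of $f$ and $m$. It also forces $L_s=\k^n$ for all $s\in\pairing{a,b}$ large. One then checks that the only nontrivial data sit at the gaps, organized by residues into the grid poset $P$. Granting this, the dimension vector $\mathbf{n}=(\dim L_s)_s$ and the flag $(L_s)$ form a point of the poset flag variety $\Fl_P(\mathbf{n};n)$, with colength $m=\sum_s(n-\dim L_s)$ restricted to the relevant degrees. This produces a constructible decomposition of $E_R(\tl R^n;m)$ into strata indexed by $\mathbf{n}$.

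The next step identifies the fixed points. A $\Gm$-fixed $L$ is homogeneous, so $L=\bigoplus_s L_sT^s$ (completed). Conversely, any collection of subspaces satisfying the semigroup inclusions, and whose generation condition holds, gives a fixed point. Hence each fixed component $F$ is exactly $\Fl_P(\mathbf{n};n)$. I would show this is an iterated Grassmannian bundle by ordering the elements of $P$ by a linear extension. At each step we choose $L_s$ subject to $L_{s-a}+L_{s-b}\subeq L_s$, with the intersection condition encoded by the $(q;q)_{n_i-n_{i-a-b}}$ factors. The key point is that $L_{s-a}\cap L_{s-b}$ is controlled through $L_{s-a-b}$, which gives the correct form of the $q$-multinomial \eqref{eq:q-binom-gap}. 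Smoothness of $F$ follows from this.

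For the Bia\l ynicki-Birula strata, $\lim_{t\to0}t\cdot L=\bigoplus_s L_sT^s$, so the attracting set $X_F$ is precisely the stratum of lattices with initial data $\mathbf{n}$. The map $X_F\to F$ is taking the initial module. To show it is an affine bundle, I would work Zariski-locally on $F$, where the flag is trivialized by choosing complements $W_s$ with $L_s=L_{s-a}+L_{s-b}+W_s$ compatibly. Over that chart, $L$ is determined by its generators $w+\sum_{k>s}c_k T^k$ with $w\in W_s$. The tails $c_k$ live in $\k^n/L_k$ with $k>s$. They are subject to the requirement that $R$-multiples of generators whose leading terms collide at $s+a+b$ have compatible tails. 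The aim is a triangular system: ordering constraints by degree, each constraint solves one block of tail coordinates linearly in terms of the earlier free coordinates. The solution set is then an affine space of constant dimension, and the same computation shows $X_F$ is smooth.

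The main obstacle is this last step. The relations come from the overlaps $T^aL_{s-a}\cap T^bL_{s-b}$, i.e.\ the $a+b$ shift. I must show these relations are consistent and of locally constant rank, rather than cutting out something singular; this is where wildness would otherwise enter. I would first try filtering by degree and proving that at each degree the constraint map from tails to the quotient $(L_{s-a}\cap L_{s-b})/L_{s-a-b}$-type spaces is surjective, with kernel dimension depending only on $\mathbf{n}$.
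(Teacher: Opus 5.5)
Your outline has the right shape: initial terms, homogeneous fixed points, and degree-by-degree linear equations in tail coordinates. But both nontrivial assertions have genuine gaps.

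\emph{Fixed locus.} Adding the $L_s$ along a linear extension, subject to $L_{s-a}+L_{s-b}\subeq L_s$, does not produce a Grassmannian bundle, because $\dim(L_{s-a}+L_{s-b})$ is not constant on the base. The flag condition only gives $L_{s-a-b}\subeq L_{s-a}\cap L_{s-b}$, and this inclusion can be strict, so your ``key point'' is false. For example, take $(a,b)=(3,4)$, $G=\{1,2,5\}$, $n=3$, $\mathbf n=(1,1,2)$. The admissible $V_5\supeq V_1+V_2$ form a point when $V_1\ne V_2$ and a $\P^1$ when $V_1=V_2$. Decreasing orders fail as well: for $(3,7)$, the gap $1$ has two incomparable upper covers $4$ and $8$. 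Note also that $G$ is not a grid but a Young-diagram-shaped upper set of one.

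The missing idea is a good traverse. In the reading order of the ambient grid, each new $V_x$ is constrained by exactly one earlier lower element and one earlier upper element: $V_{x-a}\subeq V_x\subeq V_{x+b}$, with $V_{x-a}=0$ if $x<a$ and $V_{x+b}=\k^n$ if $x+b\in\pairing{a,b}$. So every step is the relative Grassmannian of the vector bundle $\calV_{x+b}/\calV_{x-a}$. The factors $(q;q)_{n_i-n_{i-a-b}}$ then come from M\"obius values on unit diagonals, not from an intersection constraint.

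\emph{Affine bundle.} You stop exactly at the crux, and the route you sketch would not close it. Your chart takes complements of $L_{s-a}+L_{s-b}$ in $L_s$, i.e.\ minimal generators and their syzygies. By the jump just described, the number of such generators is not locally constant on $F$, so this cannot give a Zariski-local trivialization.

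What works is to use all tails:
\begin{itemize}
\item Fix $W_d$ with $W_d\oplus V_d=\k^n$; these remain complementary on an open subset of $F$.
\item Write $L=\ker(1_W+\phi)$ with $\phi\in\Hom_\k(\bbar V,W)_{>0}$.
\item The degree-$r$ part of the $R$-module condition becomes $D_1({}_r\phi)={}_r\rho$. Here $D_1$ is the Koszul differential for $(T^a,T^b)$, and ${}_r\rho$ is polynomial in the lower-degree tails.
\end{itemize}

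Surjectivity is not the mechanism: in general $D_2\neq 0$, and since $D_2D_1=0$, $D_1$ is not onto. Solvability instead comes from two facts.
\begin{enumerate}
\item $D_2({}_r\rho)=0$ once the lower-degree equations hold. This is a short computation using only $T^aT^b=T^bT^a$.
\item $H^1=0$. Degree by degree, this Koszul complex is the cochain complex $C^\bullet(V[-r],W)$ of an injective flag and a surjective flag. It lives on the upper set $G$ of a grid because $V_0=\k^n$. That is what $\tl RL=\tl R^n$ gives via Nakayama, not merely $L_s=\k^n$ for large $s\in\pairing{a,b}$. For grid posets this complex is exact at $C^1$ and $C^2$.
\end{enumerate}
Exactness also makes each fiber an affine space of dimension $\sum_i(-1)^i\dim C^i(V[-r],W)$, which depends only on $\mathbf n$. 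An elementary splitting lemma then trivializes each step of the tower over that open set.
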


Explicit understanding of the fixed point components and the ranks of the affine bundles gives the following corollary.
\begin{corollary}[$=$ \eqref{eq:ext-fiber-motive}]
    Recall $N_{a,b;n}(q,t)$ from \eqref{eq:q,t-sum-finite}. We have
    \[ \sum_{m\geq 0} [E_R(\tl R^n;m)]\, t^m = \L^{n^2\delta} \, t^{n\delta} N_{a,b;n}(\L^{-1},t^{-1}),\]
    where $\delta=(a-1)(b-1)/2=\dim_\k \tl R/R$.
\end{corollary}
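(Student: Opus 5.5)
We plan to deduce the corollary directly from Theorem~\ref{thm:bb} by summing over Bia\l ynicki-Birula strata. The locus $E_R(\tl R^n;m)$ is quasi-projective. By Theorem~\ref{thm:bb}, it is the disjoint union of the locally closed strata $X_F$, each an affine bundle of some rank $d_F$ over a fixed component $F$. Since affine bundles are Zariski-locally trivial, $[X_F]=\L^{d_F}[F]$ in $\KVar{\k}$. Each $F$ is an iterated Grassmannian bundle (Zariski-locally trivial), so $[F]$ is a product of classes $\qbinom{\cdot}{\cdot}_{\L}$. One caveat: the BB decomposition covers all of $E_R(\tl R^n;m)$ only if $\lim_{t\to0}t\cdot x$ exists for every $x$. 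We would check this by embedding into the projective $\Quot_m^R(\tl R^n)$ and noting that the limit stays in the $\Gm$-stable closed condition $\tl R L=\tl R^n$. The latter is closed under limits because, for a degree-filtered lattice, the initial module generates $\tl R^n$ whenever $L$ does.

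The substantive step is indexing the fixed components. A $\Gm$-fixed $L$ is a graded $R$-submodule $L=\bigoplus_k L_k T^k$ with $L_k\subeq\k^n$. For $k\ge \delta$-range (beyond the conductor $f+1$) we have $L_k=\k^n$. On the semigroup degrees we need $L_s\supeq L_0$-type inclusions, and $\tl R L=\tl R^n$ forces $L_0=\k^n$, hence $L_s=\k^n$ for $s\in\pairing{a,b}$. The remaining data is $V_g=L_g$ for $g\in G$, with $V_i\subeq V_j$ whenever $j-i\in\pairing{a,b}$. This is a $G$-indexed poset flag in $\k^n$. Setting $n_g=n-\dim V_g$ (codimension), the order condition becomes exactly the cone $C$ with $n_f\le n$, and the colength is $m=\sum_g n_g=\abs{\mathbf n}$ (I expect the precise normalization to give the $t^{n\delta}t^{-\abs{\mathbf n}}$ shape after matching conventions, e.g.\ using $\dim V_g$ in place of the codimension). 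We then compute $[\Fl_P(\mathbf n;n)]$ by choosing the subspaces in an order compatible with the poset. At each gap $g$, $V_g$ must contain $V_{g-a}+V_{g-b}$, whose intersection contains $V_{g-a-b}$. Choosing $V_g$ then contributes a Grassmannian factor, and inclusion–exclusion on dimensions yields precisely the ratio of Pochhammers in $\qbinom{n}{\mathbf n}_{q;G}$ evaluated at $q=\L$. The nontrivial point is that $\dim(V_{g-a}+V_{g-b})$ is determined by $\mathbf n$, i.e.\ $V_{g-a}\cap V_{g-b}=V_{g-a-b}$ on the fixed locus. We would check this using the explicit structure of Theorem~\ref{thm:bb-detailed}.

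Finally, we compute the rank $d_F$ as the dimension of the positive-weight part of the tangent space $\Hom_R(L,\tl R^n/L)$ at a fixed point, or directly by parametrizing $X_F$ by initial-term data: lattices $L$ with given leading graded pieces $V_g$ form an affine space of "lower-order tails". This count should be a quadratic expression in $\mathbf n$ plus $n^2\delta$, and we aim to match it with $n^2\delta-\dinv(\mathbf n)$ together with the $q\mapsto \L^{-1}$ inversion of $\qbinom{n}{\mathbf n}$. The inversion is consistent because $\qbinom{n}{\mathbf n}_{\L^{-1}}\L^{\deg}=\qbinom{n}{\mathbf n}_{\L}$. This rank computation, and reconciling it with the kernel $K(d)$ through the interaction between gaps differing by $<a$, $<b$, $<a+b$, is the main obstacle. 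We would first verify it for $n=1$ against the known rational $q,t$-Catalan formula (Remark~\ref{rmk:catalan}), and then do the bilinear count of tangent directions $\Hom(V_i,\k^n/V_j)$ of negative weight $j-i$ weighted by $K(j-i)$.
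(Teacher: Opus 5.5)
Your skeleton is the paper's: write $[X_F]=\L^{d_F}[F]$, sum over the \BB strata, and identify the fixed components with $G$-indexed flags. However, the two computations that carry the content of the corollary are either wrong as proposed or missing. For bookkeeping, take $n_g=\dim V_g$, not the codimension. Then the inclusions give exactly the cone $C$ with $n_f\le n$, and $m=\sum_{g\in G}(n-n_g)=n\delta-\abs{\mathbf n}$, which is where $t^{n\delta}t^{-\abs{\mathbf n}}$ comes from.

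\textbf{The motive of $F$.} The identity $V_{g-a}\cap V_{g-b}=V_{g-a-b}$ that your inclusion--exclusion needs is false on the fixed locus.
\begin{itemize}
\item Take $(a,b)=(3,4)$, so $G=\{1,2,5\}$, with $n=2$ and $(n_1,n_2,n_5)=(1,1,1)$. Every point has $V_1=V_2=V_5$, so $V_1\cap V_2$ is a line while $V_{-2}=0$.
\item Hence choosing the $V_g$ bottom-up, subject to $V_g\supeq V_{g-a}+V_{g-b}$, does not give a tower of Grassmannian bundles. The later constraint couples earlier choices; here your recipe would even give $0$ instead of $[\Fl_G(\mathbf n;n)]=[\P^1]=1+\L$.
\item The paper instead chooses along a good traverse (Definition~\ref{def:tame}), namely the reading order of a grid in which $G$ sits as an upper set. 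Then $V_x$ is constrained only by $V_{x-a}\subeq V_x\subeq V_{x+b}$, and each step is $\Gr(n_x-n_{x-a},V_{x+b}/V_{x-a})$.
\item Theorem~\ref{thm:poset-flag-tame} then gives $[F]=\qbinom{n}{\mathbf n}_{\L;G}$. It also gives the inversion exponent in $\qbinom{n}{\mathbf n}_{\L;G}=\L^{D_G}\qbinom{n}{\mathbf n}_{\L^{-1};G}$, namely $D_G(\mathbf n;n)=\sum_{d\in G}(n_{d+b}-n_d)(n_d-n_{d-a})$. You need this exponent explicitly, not just the fact that some inversion exists.
\end{itemize}

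\textbf{The rank and the final identity.} You leave these as the main obstacle, and the ideas that resolve them are absent.
\begin{itemize}
\item The positive-degree part of $\Hom_R(V,\tl R^n/V)$ is a priori only an upper bound for $\dim\pi^{-1}(V)$. This is because $E_R(\tl R^n;m)$ is not known to be smooth; already a cusp with weights $2,3$ shows the bound can be strict.
\item Here the bound is attained because the \BB fiber is cut out by a triangular system $D_1({}_r\phi)={}_r\rho$, $r=1,2,\dots$, which is always solvable. One has $D_2({}_r\rho)=0$ by commutativity of $T^a$ and $T^b$. The complex $C^\bullet(V[-r],W)$, transplanted to a grid containing $G$ as an upper set, is exact at $C^1$ and $C^2$ (Theorem~\ref{thm:grid-acyclicity}).
\item The same exactness is what turns your bilinear count into the actual kernel dimension: $\dim\Hom_Q(V[-r],W)=\sum_i(-1)^i\dim C^i$. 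Summing over $r>0$ gives $B(\mathbf n;n)$ of \eqref{eq:bb-fiber-dim}. Note this is weighted by $U(r)=K(r-1)$ and runs over all $d\in\Z$ via the extended vector $\hhat{\mathbf n}$ (equal to $n$ on $\pairing{a,b}$ and $0$ on negatives), not a $K$-weighted count over $G$.
\item Even then, the corollary requires $B(\mathbf n;n)+D_G(\mathbf n;n)=n^2\delta-\dinv(\mathbf n)$ (Lemma~\ref{lem:dinv-simplification}). A check at $n=1$ cannot give this, since both sides are quadratic in $n$. One must show three things, working with Laurent series and $1_\Gamma=(1-T^{ab})/((1-T^a)(1-T^b))$:
  \begin{itemize}
  \item the $n^0$-terms cancel, because $K-U=(1-T^a)(1-T^b)$;
  \item the $n^2$-term is $\pairing{T+\dots+T^{ab},1_G}=\delta$;
  \item the $n^1$-term vanishes, because the relevant Laurent polynomial has no monomials in degrees $1,\dots,ab-a-b$.
  \end{itemize}
\end{itemize}
Without the traverse, the acyclicity input, and this identity, the proposal does not reach the stated formula.
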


The proof of Theorem~\ref{thm:bb} depends on two general results about representations of the ``rectangular grid'' poset, which is the key innovation of our methods. Given an integer $n\in \Z$, a finite poset $(P,\preceq)$, and a vector $\mathbf{n}\in \Z^P$, define the \defn{$P$-flag variety}
\[ \Fl_P(\mathbf{n};n)=\{(V_i)_{i\in P}: V_i\subeq \k^n,\; \dim_\k V_i=n_i,\; V_i\subeq V_j \text{ if }i\preceq j\},\]
viewed as a closed subset of $\prod_{i\in P} \Gr(n_i,n)$. Of course, it is empty unless $0\leq n_i\leq n$ for all $i$ and $n_i\leq n_j$ for $i\preceq j$. For $r\in \N$, let $[r]\coloneqq \{1,\dots,r\}$ be equipped with the usual ordering; then an $[r]$-flag variety in $\k^n$ of dimension vector $(n_1\leq \dots\leq n_r\leq n)$ is simply the usual partial flag variety $\Fl(n_1,\dots,n_r;n)$. Our next result shows that the flag varieties for a more general family of posets are almost as nice, except that they are not homogeneous spaces. A \defn{rectangular grid poset} is the product poset $[w]\times [h]$ (where $w,h\geq 1$), equipped with the ordering $(x,y)\preceq (x',y')$ if and only if $x\le x'$ and $y\le y'$. 

\begin{theorem}
    [$\subeq$ \Cref{thm:poset-flag-tame} and Example~\ref{eg:grid-tame}]
    Let $P$ be a rectangular grid poset. For $n\in \Z$ and $\mathbf{n}\in \Z^P$, the poset flag variety $\Fl_P(\mathbf{n};n)$ is smooth projective, can be built as an iterated Grassmannian bundle over a point, and admits an affine paving. Moreover, the motive of $\Fl_P(\mathbf{n};n)$ is given by the generalized $q$-multinomial coefficient $\qbinom{n}{\mathbf{n}}_{\L;P}$ defined in \eqref{eq:q-binom-grid}.\label{thm:poset-flag}
\end{theorem}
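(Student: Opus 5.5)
We build $\Fl_P(\mathbf{n};n)$ for $P=[w]\times[h]$ one element at a time, following a linear extension of $P$, and control each step by a Grassmannian bundle. Order $P$ row by row: $(1,1),(2,1),\dots,(w,1),(1,2),\dots$. Let $P_{\le k}$ denote the first $k$ elements; each is an order ideal. Consider the forgetful maps
\[\Fl_{P_{\le k}}(\mathbf{n}|_{P_{\le k}};n)\to \Fl_{P_{\le k-1}}(\mathbf{n}|_{P_{\le k-1}};n).\]
Suppose the $k$-th element is $p=(x,y)$. In an order ideal of the grid, $p$ has at most two lower covers, namely $p_1=(x-1,y)$ and $p_2=(x,y-1)$. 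The only new condition on $V_p$ is
\[V_{p_1}+V_{p_2}\subeq V_p,\qquad \dim V_p=n_p.\]
The upper-bound conditions from elements above $p$ are imposed later, when those elements are added. So the fiber is $\Gr\bigl(n_p-\dim(V_{p_1}+V_{p_2}),\,n-\dim(V_{p_1}+V_{p_2})\bigr)$.

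The key point, and the main obstacle, is that $\dim(V_{p_1}+V_{p_2})$ must be constant on the base. Without this the map is not a Grassmannian bundle, and for general posets it fails. Here the grid structure saves us: $p_1$ and $p_2$ have a common lower cover $p_0=(x-1,y-1)$ when $x,y\ge2$. I would rather add the elements in an order, or change the fibration, so that the intersection is rigid. I would first try to prove
\[V_{p_1}\cap V_{p_2}=V_{p_0}\]
on the whole stratum. This cannot hold for arbitrary flags, so it is not automatic. The right fix is to stratify by the dimension $d=\dim(V_{p_1}\cap V_{p_2})$, or, better, to reorganize the dimension vector. Concretely, I would pass to the invariant $m_p=\dim(V_{p_1}\cap V_{p_2})$. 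This is exactly what the factor $(q;q)_{n_i-n_{i-a-b}}$ in \eqref{eq:q-binom-grid} suggests: the formula
\[\frac{(q;q)_{n_p-n_{p_0}}}{(q;q)_{n_p-n_{p_1}}(q;q)_{n_p-n_{p_2}}}\]
is precisely the count of pairs $(V_{p_1},V_{p_2})$ inside $V_p$ containing $V_{p_0}$, summed with appropriate intersection data, which suggests building the fibration \emph{downward}. So the actual construction I would use reverses the order. Start from the top element $(w,h)$ with $V_{(w,h)}\in\Gr(n_{(w,h)},n)$, giving the factor $(q;q)_n/(q;q)_{n-n_f}\cdots$. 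Then choose the elements in decreasing order, each time choosing $V_p$ inside $V_{q_1}\cap V_{q_2}$, where $q_1,q_2$ are the upper covers of $p$.

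Rigidity again requires $\dim(V_{q_1}\cap V_{q_2})$ to be constant. I would establish this inductively by including the intersection $V_{q_1}\cap V_{q_2}$ itself as an auxiliary datum at each square. Equivalently, I would verify via Lemma~\ref{lem:q-binom-tame}'s rewriting that the generalized multinomial is a product of honest $q$-binomials, each realized by one Grassmannian fiber. This makes the main work showing that the definition of ``tame'' in \Cref{thm:poset-flag-tame} applies to the grid, as Example~\ref{eg:grid-tame} asserts.

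Once $\Fl_P(\mathbf{n};n)$ is exhibited as an iterated Grassmannian bundle over a point with Zariski-locally trivial fibers, the remaining claims follow formally. First, smoothness and projectivity: it is closed in $\prod_p\Gr(n_p,n)$, and each Grassmannian bundle is smooth and proper. Second, an affine paving: pull back Schubert cells along Zariski-trivializations, refining by a trivializing cover. Third, the motive: multiplicativity of $[\,\cdot\,]$ in Zariski-locally trivial fibrations, together with $[\Gr(r,n)]=\qbinom{n}{r}_{\L}$, gives a product of $q$-binomials. This product telescopes to $\qbinom{n}{\mathbf{n}}_{\L;P}$ by the identity in Lemma~\ref{lem:q-binom-tame}.
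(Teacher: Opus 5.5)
There is a genuine gap, and it sits exactly at the step you identify as the main obstacle: you never exhibit an order of $P$ for which the forgetful maps are Grassmannian bundles. You correctly note that adding elements bottom-up imposes $V_{p_1}+V_{p_2}\subeq V_p$, and adding them top-down imposes $V_p\subeq V_{q_1}\cap V_{q_2}$. Neither $\dim(V_{p_1}+V_{p_2})$ nor $\dim(V_{q_1}\cap V_{q_2})$ is constant on the base.

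Your remedies do not close the gap. Stratifying by $\dim(V_{p_1}\cap V_{p_2})$, or adjoining the intersection as an auxiliary datum, gives only a locally closed decomposition. That can at best compute a motive; it cannot yield smoothness or a bundle structure. Such a stratification computes the motive of the butterfly variety in Remark~\ref{rmk:poset-flag} just as well, yet that variety is singular.

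Your fallback to Lemma~\ref{lem:q-binom-tame} and tameness does not help either. That lemma and Theorem~\ref{thm:poset-flag-tame} both take a good traverse as input, and exhibiting one for the grid is precisely the step you leave undone. The two orders you actually try are not good traverses. Separately, the heuristic that $(q;q)_{n_p-n_{p_0}}/((q;q)_{n_p-n_{p_1}}(q;q)_{n_p-n_{p_2}})$ counts pairs of subspaces is wrong: for $n_{p_0}=n_{p_1}=n_{p_2}=0$ and $n_p=1$ it equals $1/(1-q)$.

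The missing idea is to mix the two directions. Run through the rows from $y=h$ down to $y=1$, but within each row go left to right; this is the reading order of Example~\ref{eg:grid-tame}. When $(x,y)$ is added:
\begin{itemize}
\item The already-placed elements below it are the $(x',y)$ with $x'<x$, whose unique maximum is $(x-1,y)$.
\item The already-placed elements above it are the $(x',y')$ with $x'\geq x$ and $y'>y$, whose unique minimum is $(x,y+1)$.
\item Every other relation to placed elements follows by transitivity.
\end{itemize}
So the only new condition is $V_{(x-1,y)}\subeq V_{(x,y)}\subeq V_{(x,y+1)}$, with the conventions $V_{(0,y)}=0$ and $V_{(x,h+1)}=\k^n$. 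Each step is therefore the relative Grassmannian of the quotient bundle $\calV_{(x,y+1)}/\calV_{(x-1,y)}$, which has constant rank $n_{(x,y+1)}-n_{(x-1,y)}$. This is exactly the paper's argument.

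With this in place, your last paragraph gives smoothness, projectivity, and the motive $\prod_{x\in P}\qbinom{n_{x^+}-n_{x^-}}{n_{x^+}-n_x}_{\L}$. For the grid, the match with \eqref{eq:q-binom-grid} is a direct telescoping. That telescoping also shows the factor $(q;q)_{n_{(1,1)}}$ in \eqref{eq:q-binom-grid} belongs in the denominator.

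Finally, refining a paving by a trivializing open cover does not produce affine cells. Either trivialize the quotient bundle over each affine cell of the base (e.g.\ by Quillen--Suslin), or do as the paper does: apply the Bia\l ynicki-Birula decomposition for the diagonal torus of $\GL_n$, which has finitely many fixed points on the smooth projective variety $\Fl_P(\mathbf{n};n)$.
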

Obviously, much more can be potentially said about these poset flag varieties, which would amount to a generalization of Schubert calculus in type A. We leave that to future exploration.

The next general result is a homological acyclicity theorem about representations of the rectangular grid poset. As usual, we view any poset $P$ as a category whose objects are elements of $P$ and such that there is a unique morphism $i\to j$ for each $i\preceq j$. For a field $\k$, a \defn{$P$-representation} is a functor $V$ from $P$ to the category of finite-dimensional $\k$-vector spaces, which can be concretely represented as a system of vector spaces $(V_i)_{i\in P}$ together with morphisms $V(i\to j):V_i\to V_j$ for $i\preceq j$ that for each chain $i\preceq j\preceq k$ make the obvious triangle commute. An \defn{injective/surjective $P$-flag} is a $P$-representation in which all arrows are injective/surjective. 

Now let $P=[w]\times [h]$ be a rectangular grid poset. Let $V$ be an injective $P$-flag, and $W$ be a surjective $P$-flag. Let $\Hom_P(V,W)$ be the space of morphisms from $V$ to $W$ in the category of $P$-representations. Concretely, it consists of maps $h=(h_i:V_i\to W_i)_{i\in P}$ such that the 3D diagram $V\overset{h}{\to} W$ commutes. 

Our next result states that $\Hom_P(V,W)$ can be resolved by a cochain complex, defined as follows. The space of $0$-cochains is simply
\[ C^0(V,W)=\bigoplus_{i\in P} \Hom_\k(V_i,W_i).\]
There is an inclusion map $\Hom_P(V,W)\to C^0(V,W)$.

To define the space of $1$-cochains, let
\[ Q_1(P)=\{(i,j)\in P^2: i\lessdot j\}\]
be the set of covering relations of $P$; here $\lessdot$ is the covering relation, see Definition~\ref{def:covering}. An element $(i,j)\in Q_1(P)$ is more suggestively denoted as an arrow $i\to j$. Concretely, $Q_1(P)$ consists of vertical arrows $(x,y)\to (x,y+1)$ and horizontal arrows $(x,y)\to (x+1,y)$. Then define
\[ C^1(V,W)=\bigoplus_{i\to j} \Hom_\k(V_i,W_j).\]
Define the differential map $\partial:C^0(V,W)\to C^1(V,W)$ by the rule
\[ (\partial h)_{i\to j} = h_j\circ V(i\to j) - W(i\to j)\circ h_i \in \Hom_\k(V_i,W_j).\]
In other words, $\partial h$ measures the failure of commutativity of the diagram $V\overset{h}{\to} W$. 

To define the space of $2$-cochains, let
\[ Q_2(P)=\{(i,j)\in P^2: j=i+(1,1)\},\]
so pairs in $Q_2(P)$ are those of the form $((x,y), (x+1,y+1))$. We suggestively denote a pair $(i,j)\in Q_2(P)$ by a diagonal arrow $i\nearrow j$. Then define
\[ C^2(V,W)=\bigoplus_{i\nearrow j} \Hom_\k(V_i,W_j).\]
Define the differential map $\partial:C^1(V,W)\to C^2(V,W)$ by
\[ (\partial \rho)_{(x,y)\nearrow (x+1,y+1)} = \rho_{(x,y)\to (x+1,y)}+\rho_{(x+1,y)\to (x+1,y+1)}-\rho_{(x,y+1)\to (x+1,y+1)}-\rho_{(x,y)\to (x,y+1)},\]
in which each term needs to be pre- or postcomposed with an appropriate morphism of the form $V(i\to j)$ or $W(i\to j)$ in order to land in $\Hom_\k(V_{(x,y)},W_{(x+1,y+1)})$.
Intuitively, if $\rho\in C^1(V,W)$ is a ``prescription'' of the failure of commutativity of an imaginary diagram $V\overset{h}{\to} W$, then $\partial \rho$ measures how it fails to be compatible along each counterclockwise loop.
\begin{theorem}
[$=$ Theorem~\ref{thm:grid-acyclicity}]
    Let $P$ be a rectangular grid poset, $V$ be an injective $P$-flag, and $W$ be a surjective $P$-flag. Then the following is an exact sequence:
    \begin{equation}\label{eq:cochain-complex}
        0\to \Hom_P(V,W)\to C^0(V,W)\overset\partial\to C^1(V,W)\overset\partial\to C^2(V,W)\to 0.
    \end{equation}
    In particular, $$\dim_\k \Hom_P(V,W)=\sum_{i=0}^2 (-1)^i \dim_\k C^i(V,W).$$\label{thm:acyclicity}
\end{theorem}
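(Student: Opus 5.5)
The plan is to see \eqref{eq:cochain-complex} as the total complex of a double complex, with one direction for each factor of $P=[w]\times[h]$, and to reduce everything to facts about representations of a single chain $[h]$.

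\emph{Routine parts.} First I check $\partial\circ\partial=0$. For $h\in C^0$, the four terms of $(\partial\partial h)_{(x,y)\nearrow(x+1,y+1)}$ cancel in pairs, using $V(i\to k)=V(j\to k)V(i\to j)$ and the same for $W$. Next, $\ker(\partial:C^0\to C^1)=\Hom_P(V,W)$. Indeed, every relation $i\preceq j$ is a composite of covering relations, so commutativity along all arrows in $Q_1(P)$ implies commutativity along all of $P$. This gives exactness at $\Hom_P(V,W)$ and at $C^0$. What remains is exactness at $C^1$ and surjectivity onto $C^2$.

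\emph{Double complex structure.} Write $V^x=(V_{(x,y)})_{y\in[h]}$ and $W^x$ for the $x$-th columns; these are $[h]$-representations. Because $V$ is injective, each $V^x$ is an injective $[h]$-flag, and $V(x\to x+1)$ is a morphism of $[h]$-representations $V^x\to V^{x+1}$. Similarly each $W^x$ is a surjective $[h]$-flag, and $W^x\to W^{x+1}$ is a morphism which is surjective at every vertex.

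Sort the summands of $C^\bullet$ by horizontal degree, according to whether a horizontal step is used:
\begin{itemize}
\item $C^{0,\bullet}$ consists of $\Hom(V_{(x,y)},W_{(x,y)})$ and $\Hom(V_{(x,y)},W_{(x,y+1)})$;
\item $C^{1,\bullet}$ consists of $\Hom(V_{(x,y)},W_{(x+1,y)})$ and $\Hom(V_{(x,y)},W_{(x+1,y+1)})$.
\end{itemize}
Then $C^\bullet$ is the total complex of a double complex (up to signs). Its vertical rows are the chain complexes
\[ D(A,B):\ \bigoplus_y \Hom(A_y,B_y)\to\bigoplus_y\Hom(A_y,B_{y+1}), \]
taken for the pairs $(A,B)=(V^x,W^x)$ and $(A,B)=(V^x,W^{x+1})$.

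\emph{Chain case.} For a single chain $[h]$, an injective $[h]$-flag $A$ is a direct sum of the representable projectives $P_y$, namely $\k$ on $\{y'\ge y\}$ with identity maps. This follows by choosing compatible complements $A_{y}=A_{y-1}\oplus C_y$. The complex $D(A,B)$ is $\Hom_{[h]}(-,B)$ applied to the standard resolution of $A$, so its cohomology is $\Hom_{[h]}(A,B)$ in degree 0 and $\Ext^1_{[h]}(A,B)$ in degree 1. Since $A$ is projective, the degree-1 cohomology vanishes. One can also check this by hand: $D(P_y,B)$ is $\prod_{y'\ge y}B_{y'}\to\prod_{y'>y}B_{y'}$, given by $(b_{y'})\mapsto (b_{y'+1}-B(y'\to y'+1)b_{y'})$, which is visibly surjective.

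\emph{Spectral sequence.} Take vertical cohomology first. The total complex is then quasi-isomorphic to the two-term complex
\[ \bigoplus_x\Hom_{[h]}(V^x,W^x)\xrightarrow{\ \delta\ }\bigoplus_x\Hom_{[h]}(V^x,W^{x+1}),\qquad \delta(h)_x=h_{x+1}\circ V(x\to x+1)-W(x\to x+1)\circ h_x . \]
Hence exactness at $C^1$ and $C^2$ is equivalent to surjectivity of $\delta$. This is the key step, and it is where surjectivity of $W$ enters.

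\emph{Surjectivity of $\delta$.} Since $V^x$ is projective and $W^x\to W^{x+1}$ is an epimorphism of $[h]$-representations, postcomposition
\[ \Hom_{[h]}(V^x,W^x)\to\Hom_{[h]}(V^x,W^{x+1}) \]
is surjective. Now solve $\delta(h)=\rho$ by a triangular procedure, running from $x=w$ downward. Set $h_w=0$. Given $h_{x+1}$, choose $h_x$ with
\[ W(x\to x+1)\circ h_x=h_{x+1}\circ V(x\to x+1)-\rho_x , \]
which is possible by the surjectivity just established. The only delicate point I anticipate is bookkeeping: checking that the signs in the paper's $\partial:C^1\to C^2$ match the total-complex differential, so that the spectral-sequence reduction is legitimate.

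Finally, the dimension formula is the Euler characteristic of the exact sequence \eqref{eq:cochain-complex}.
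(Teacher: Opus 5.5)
Your proof is correct, and it is organized differently from the paper's. The paper argues entirely by hand. For exactness at $C^2$ it fills in the horizontal components of $\rho$ row by row, solving each unit square by extending a map along $V_{11}\hookrightarrow V_{12}$ (Lemma~\ref{lem:lift1}). For exactness at $C^1$ it fills in $\phi$ in a fixed order: the bottom row from east to west, then each higher row starting at its east end. This uses Lemmas~\ref{lem:lift2} and \ref{lem:lift3}, and the latter rests on the block-matrix Lemma~\ref{lem:fill-triangle}.

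You instead view $C^\bullet$ as the total complex of a two-column double complex, and it is exactly that. The paper's $\partial$ on $C^1$ is $d_h$ on vertical components and $-d_v$ on horizontal ones, which is the standard $d_h+(-1)^p d_v$, so your sign worry is moot. You then use that an injective $[h]$-flag is projective to kill the vertical cohomology, and reduce to lifting morphisms out of the projective $V^x$ through the epimorphism $W^x\twoheadrightarrow W^{x+1}$.

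The elementary moves end up the same. Surjectivity of $D(V^x,W^{x+1})$ is Lemma~\ref{lem:lift1} iterated up a column, and lifting out of $V^x$ vertex by vertex is Lemma~\ref{lem:fill-triangle}. Still, your packaging is shorter and shows at a glance that $H^2=0$ needs only the hypothesis on $V$, while $H^1=0$ needs both. It also makes plain that only injectivity of $V$ along columns and surjectivity of $W$ along rows are used, which is slightly more than the remark after the theorem records. In exchange, the paper's version is self-contained and gives an explicit solving algorithm without invoking projective resolutions, $\Ext$, or spectral sequences.
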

\begin{remark}~
    \begin{itemize}
        \item That \eqref{eq:cochain-complex} is a chain complex and that it is exact at $C^0(V,W)$ are not hard to see. The main content of Theorem~\ref{thm:acyclicity} is the exactness at $C^1$ and $C^2$. 
        \item While bearing resemblance, Theorem~\ref{thm:acyclicity} is not a mere consequence of the general theory of quiver representations, in that the \emph{relations} in the quiver play a role in the formation of $C^2(V,W)$; see Theorem~\ref{thm:ext}.
        \item Consider the constant representation $\underline{\k}$ such that $\underline{\k}_i=\k$ and $\underline{\k}(i\to j)$ is the identity map on $\k$. If both $V\simeq W\simeq \underline{\k}$, then the exactness of \eqref{eq:cochain-complex} is nothing but the fact that a solid rectangle, as a CW-complex formed by filling in the squares in a rectangular grid, is cohomologically contractible. It would be interesting to ask for analogues of Theorem~\ref{thm:acyclicity} for more general posets or quivers, such as higher-dimensional grids.
    \end{itemize}
\end{remark}

We now briefly describe our ideas to prove Theorem~\ref{thm:bb}, and how we invoke Theorems~\ref{thm:poset-flag} and \ref{thm:acyclicity}. Recall $R=\k[\![T^a,T^b]\!]$ and $\tl R=\k[\![T]\!]$. First, we observe that the $\Gm$-fixed points on $\Quot_\bullet^R(\tl R^n)$ are precisely the ``homogeneous'' submodules of $\k [\![T]\!]^n$, namely, $R$-submodules of the form
\[ V=\bigg(\bigoplus_{i=0}^{N-1} V_i T^i\bigg) \oplus T^N \k[\![T]\!]^n,\]
where $V_i$ is a subspace of $\k^n$. For simplicity, we denote this $V$ by
\[ V=\hhat \bigoplus_{i=0}^\infty V_i T^i,\]
where it is understood that $V_i=\k^n$ for $i\geq N$. (Conversely, whenever we write down $\hhat \bigoplus_{i=0}^\infty V_i T^i$ such that $V_i=\k^n$ for $i\gg 0$, we understand it this way.) The vector space $V$ is a submodule if and only if it is closed under multiplication by $T^a$ and $T^b$, namely, $V_i\subeq V_j$ whenever $j-i\in \pairing{a,b}$. As such, the fixed point locus on $\Quot_\bullet^R(\tl R^n)$ can be identified with the flag variety over the poset $\N_{a,b}$, defined by
\begin{equation}\label{eq:Nab-poset}
    \N_{a,b}=\N \text{ as a set, with } i\preceq j \text{ if and only if }j-i\in \pairing{a,b}.
\end{equation}

A typical fiber of the Bia\l ynicki-Birula map (denoted by $\pi$), namely, the attracting locus of a specific fixed point $V=\hhat \bigoplus_{i=0}^\infty V_i T^i$, can be described as a Gr\"obner stratum
\[ \pi^{-1}(V)=\set{L\subeq_R \tl R^n:\init(L)=V},\]
where $\init(L)$ is the ``initial term data'' of $L$. In general, a Gr\"obner stratum can be parametrized by an affine variety whose variables (``Gr\"obner coordinates'') correspond to free non-leading terms in reduced Gr\"obner bases, and whose equations correspond to the Buchberger criterion, which can generally be arbitrarily complicated. 

Traditionally, Gr\"obner method over subrings of $\k[\![T]\!]$ often involves minimal generators of ideals of the valuation semigroup (which provide the general form of reduced Gr\"obner basis and thus the Gr\"obner coordinate system), and syzygies of these generators (which provide the equations in the Buchberger criterion); see \cite{gorskymazin2013compactified1,OblomkovRasmussenShende12,gmo2026generic,hmz2025}. Our method differs from all of the above by using \emph{all} available variables and relations, not just the \emph{minimal} ones. Moreover, we do it in a manner free of non-canonical choice of basis and term order. The outcome is the following parametrization of $\pi^{-1}(V)$. Recall the poset $P=\N_{a,b}$ from \eqref{eq:Nab-poset}, and consider $P$-representations
\[ V=(V_i)_{i\geq 0}, W=\k^n/V=(\k^n/V_i)_{i\geq 0}\]
with usual inclusion maps on $V$ and quotient maps on $W$. Then $V$ is an injective $P$-flag and $W$ is a surjective $P$-flag. Consider the shift
\begin{equation}\label{eq:shifted-poset-flag}
    V[-d]=(V_{i-d})_{i\geq 0},
\end{equation}
where $V_i\coloneqq 0$ if $i<0$; note that with this extension by zero, $V[-d]$ is still an injective $P$-flag. Let $C^0(V[-d],W)\overset\partial\to C^1(V[-d],W)$ be defined as before; note that the construction applies verbatim to any finite poset, not just the rectangular grid. 

\begin{proposition}[$\subeq$ Proof of Theorem~\ref{thm:bb-fiber}]
    Notation as above. Then $\pi^{-1}(V)$ is isomorphic to the affine variety
    \[ \set*{({}_r \phi)_{r>0}: {}_r \phi\in C^0_{\N_{a,b}}(V[-r],W), \partial({}_r \phi)={}_r \rho},\]
    with explicit elements ${}_r \rho\in C^1_{\N_{a,b}}(V[-r],W)\otimes_\k \k[{}_1 \phi,\dots,{}_{r-1}\phi]$.
    \label{prop:bb-fiber}
\end{proposition}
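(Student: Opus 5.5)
We parametrize an $R$-submodule $L\subeq \tl R^n$ with $\init(L)=V=\hhat\bigoplus V_iT^i$ by normalized ``leading-term lifts.'' For each $i\geq 0$ and $v\in V_i$ there is a unique element of $L$ of the form
\[
vT^i+\sum_{r>0} {}_r\phi_i(v)\,T^{i+r},
\]
where we require ${}_r\phi_i(v)\in W_{i+r}=\k^n/V_{i+r}$. Concretely, we fix the representative by demanding that the coefficient of $T^{i+r}$ be reduced modulo $V_{i+r}$; this is done canonically by working with the quotient $W$. We then get linear maps ${}_r\phi_i:V_i\to W_{i+r}$, i.e.\ ${}_r\phi\in \bigoplus_i\Hom_\k(V[-r]_{i+r},W_{i+r})=C^0(V[-r],W)$. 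Existence comes from the definition of the initial module together with reduction by elements of $L$ with higher leading terms; $T$-adic completeness and the fact that $V_i=\k^n$ for $i\gg0$ make the reduction converge. Uniqueness holds because two lifts differ by an element of $L$ whose initial terms all lie outside $V$ in the reduced coordinates, hence the difference is $0$. The submodule $L$ is recovered as the closure of the span of these lifts, since their leading terms fill out $\init(L)$ and $\dim \tl R^n/L=\dim\tl R^n/V$. This gives an injective map $\pi^{-1}(V)\to\prod_{r>0} C^0(V[-r],W)$. Only finitely many $r$ matter, because $W_j=0$ for $j\ge N$.

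Next we determine the image, which amounts to closure under multiplication by $T^a$ and $T^b$. Take $s\in\{a,b\}$, which are the covering steps of $\N_{a,b}$ relevant here; more generally take any $s\in\pairing{a,b}$, which reduces to the generators. Multiply the lift of $v\in V_i$ by $T^s$. The result is an element of $L$ with leading term $vT^{i+s}$, where $v\in V_i\subeq V_{i+s}$. Reduce it against the lifts of $V_{i+s}$. The result must equal the normalized lift of $v$ at level $i+s$, and we compare coefficients of $T^{i+s+r}$ in $W_{i+s+r}$.

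The coefficient of $T^{i+s+r}$ from the multiplied lift is ${}_r\phi_i(v)$, which lies in $W_{i+r}$ and is pushed forward along $W(i+r\to i+s+r)$. The target lift contributes ${}_r\phi_{i+s}(v)$. The error terms come from the reduction. When a coefficient of $T^{i+s+r'}$ with $r'<r$ has a component in $V_{i+s+r'}$ (the part killed by passing to $W$), we subtract the lift of that component, which introduces ${}_{r-r'}\phi$ applied to it. Hence the condition at order $r$ reads
\[
{}_r\phi_{i+s}\circ V(i\to i+s)-W(i+r\to i+r+s)\circ {}_r\phi_i={}_r\rho_{i\to i+s},
\]
where ${}_r\rho$ is a polynomial in ${}_1\phi,\dots,{}_{r-1}\phi$. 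This is exactly $\partial({}_r\phi)={}_r\rho$ in $C^1(V[-r],W)$. Both conditions must be imposed in the reduced (quotient) coordinates, and keeping track of which lifts of $V$-components get subtracted is what makes ${}_r\rho$ explicit and triangular in $r$. Conversely, if all these equations hold, the span of the lifts is $T^a$- and $T^b$-stable, hence an $R$-submodule, and it has initial module $V$. The map is therefore an isomorphism onto the stated affine variety, and compatibility with families gives an isomorphism of schemes, or of reduced varieties as needed.

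The main obstacle is the canonical, choice-free normalization. We have to show that $\k^n\to W_j$ reduction yields well-defined maps rather than maps depending on complements, and that the recursion really only involves lower ${}_{r'}\phi$. I would handle this by defining the lift of $v$ as the unique element of $L\cap\bigl(vT^i+\hhat\bigoplus_{j>i}\k^nT^j\bigr)$ modulo $\hhat\bigoplus_{j>i}(L$-lifts$)$, phrased as a filtration argument on $L$ by $T$-adic order. Concretely, ${}_r\phi_i$ is the induced map on the associated graded pieces $L\cap T^i\tl R^n/L\cap T^{i+r+1}\tl R^n$ projected to $W_{i+r}$, and this is intrinsic.
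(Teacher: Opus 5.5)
Your route is essentially the paper's. Your normalized lift of $v\in V_i$ is, up to sign, $(1_V-\phi)(vT^i)$ for the paper's centralized Gr\"obner coordinate $\phi\in\Hom_\k(\bbar V,W)_{>0}$. Reducing $T^s$ times a lift against the lifts of its $V$-components is the elementwise form of $(1_W+\phi)T^\gamma(1_V-\phi)=0$, whose degree-$(r+\gamma)$ part is \eqref{eq:groebner-equation-by-degree}. Your correction terms are exactly ${}_{r+\gamma}(\phi\,1_V T^\gamma\phi)$, which is quadratic in ${}_1\phi,\dots,{}_{r-1}\phi$.

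However, your normalization is not canonical, and the repair in your last paragraph fails.

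\emph{Why a splitting is needed.} A coefficient of an element of $\tl R^n$ is a vector in $\k^n$. So the condition ``${}_r\phi_i(v)\in W_{i+r}=\k^n/V_{i+r}$'' picks out a unique element of $L$ only after you fix subspaces $W_d\subeq\k^n$ with $W_d\oplus V_d=\k^n$. Both your uniqueness argument and your recursion silently use such a splitting: the recursion subtracts the lift of ``the component in $V_{i+s+r'}$'' of ${}_{r'}\phi_i(v)$, which has no meaning for a class in a quotient.

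\emph{Only ${}_1\phi$ is intrinsic.} Without a splitting, $L$ determines ${}_1\phi$ but not ${}_r\phi$ for $r\geq 2$. Two elements of $L$ with leading term $vT^i$ differ by some $w\in L\cap T^{i+1}\tl R^n$. The $T^{i+1}$-coefficient $u$ of $w$ lies in $V_{i+1}$, but its $T^{i+2}$-coefficient is ${}_1\phi_{i+1}(u)$ modulo $V_{i+2}$, which is generally nonzero.

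\emph{Example.} Take $(a,b)=(3,4)$, $n=2$, $V_0=V_5=\k^2$, and $V_1,V_2$ lines. There are no equations at all, the fiber is $\A^5$, and ${}_1\phi_1:V_1\to W_2$ is a free coordinate. Changing the complement of $V_1$ shears ${}_2\phi_0$ by ${}_1\phi_1$ composed with the projection of ${}_1\phi_0$ onto $V_1$. So your ``associated graded'' definition of ${}_r\phi_i$ is not well defined for $r\geq 2$.

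\emph{The fix.} Do what the paper does. Choose complements $W_d$ and identify $W_d\simeq\k^n/V_d$, so the quotient maps become $W_d\incl\k^n\onto W_e$ (projection along $V_e$), as in \eqref{eq:w-surj-flag}. Accept that the coordinates, the elements ${}_r\rho$, and hence the isomorphism depend on this choice; the locus $\init^{-1}(V)$ does not, and the statement only asks for some isomorphism. With this change your argument goes through. Algebraicity then comes directly: your lift coordinates are the standard chart $\{\bbar L: W\oplus\bbar L=\bbar{\tl R^n}\}$ of the Grassmannian, restricted to its triangular part $U^\k_{W,V}$. You then need no appeal to ``compatibility with families.''
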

Note that the ambient affine space 
\[ \A=\bigoplus_{r>0} C^0(V[-r],W)=\bigoplus_{r,d:d\geq r>0} \Hom_\k(V_{d-r},\k^n/V_d)\] is finite-dimensional because $\k^n/V_d=0$ for $d\gg 0$. The elements ${}_r\rho$ shall be viewed as elements of $C^1(V[-r],W)$ whose matrix entries depend polynomially on matrix entries of ${}_s \phi$, $s<r$. Conceptually, $\A$ parametrizes \emph{$\k$-vector subspaces} $L\subeq \tl R^n$ such that $\init(L)=V$, while $\partial({}_r \phi)={}_r \rho$ corresponds to the closed condition that $L$ is an $R$-module, namely, closed under multiplication by $T^a$ and $T^b$.

To solve the system of equations that cuts out $\pi^{-1}(V)$, we solve for the equation 
\begin{equation}
    \partial({}_r \phi)={}_r \rho \tag{$*_r$}
\end{equation}inductively, starting from $r=1$. At each step, the equation is a \emph{nonhomogeneous linear equation} in the new variables (since ${}_r \rho$ depends only on the old variables). In general, such a linearity observation alone is not enough to tame the solution set: the equation $(*_r)$ may or may not have a solution, and the locus of choices $({}_1 \phi,\dots,{}_{r-1}\phi)$ such that $(*_r)$ has a solution could be arbitrarily complicated.

However, we claim the following miraculous fact.
\begin{proposition}[$\subeq$ Proof of Theorem~\ref{thm:bb-fiber}]
    If $V$ satisfies $V_0=\k^n$, then whenever ${}_r \phi$ solves $(*_r)$ for all $r=1,\dots,d-1$, the equation $(*_d)$ has a solution.\label{prop:solution-exists}
\end{proposition}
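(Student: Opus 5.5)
The plan is to show that ${}_d\rho$ lies in the image of $\partial$, i.e.\ that it is a cocycle in the complex \eqref{eq:cochain-complex} and that the relevant cohomology vanishes. This splits into two steps: a structural reduction to a rectangular grid, where Theorem~\ref{thm:acyclicity} applies, and an obstruction-theoretic cocycle identity. The step I expect to be hardest is the first, the reduction to the grid.

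\emph{Reduction to a grid.} Since $\gcd(a,b)=1$, every $i\in\N$ has a unique representation $i=xa+yb+k(ab)$ with $0\le y<a$. Alternatively, $\N_{a,b}$ is covered by the ``tiles'' $i+\{xa+yb\}$. The point to exploit is that the covering relations of $\N_{a,b}$ are $i\to i+a$ and $i\to i+b$. The relations among compositions of these are generated by the commuting squares $i\to i+a\to i+a+b$ and $i\to i+b\to i+a+b$, because the semigroup $\langle a,b\rangle$ has the single defining relation $a+b=b+a$ in the free commutative monoid on two letters. Concretely, I would construct a map of posets $\Z_{\ge0}^2\to\N_{a,b}$, $(x,y)\mapsto xa+yb$, translated by a suitable base point. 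I would then use the hypothesis $V_0=\k^n$ as follows: $V_i=\k^n$ for all $i\in\langle a,b\rangle$, so $W_i=0$ there. As a consequence, only the finitely many gaps carry nontrivial data, and $V[-d]$, $W$ restrict to an injective and a surjective flag on a finite rectangular grid $[w]\times[h]$ containing the image of the gap poset. Here $V[-d]$ vanishes below $d$ and $W$ vanishes on the semigroup part, so extending by zero or by constant $\k^n$ keeps the injectivity and surjectivity properties. Under this identification, the cochain complex $C^\bullet_{\N_{a,b}}(V[-d],W)$ should agree with the grid complex $C^\bullet_{[w]\times[h]}$, possibly after discarding acyclic pieces where one side is zero.

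\emph{Cocycle identity.} Next I would show $\partial({}_d\rho)=0$ in $C^2$. Here ${}_d\rho$ is the degree-$d$ part of the failure of $L$ to be closed under $T^a$ and $T^b$. Its components are the $T^a$-component on horizontal arrows and the $T^b$-component on vertical arrows, each written via the lower ${}_s\phi$. Then $\partial({}_d\rho)$ on a square $i\nearrow i+a+b$ is the degree-$d$ part of $T^aT^b-T^bT^a$ applied to the truncated lattice. This vanishes because multiplication commutes and because the equations $(*_s)$ hold for all $s<d$. This is the standard Maurer--Cartan/obstruction argument: the obstruction to extending a solution is a cocycle.

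\emph{Conclusion.} Once ${}_d\rho$ is known to be a cocycle, exactness of \eqref{eq:cochain-complex} at $C^1$ (Theorem~\ref{thm:acyclicity}) produces ${}_d\phi$ with $\partial({}_d\phi)={}_d\rho$. Moreover, since $\partial:C^1\to C^2$ is surjective, the solution set at each step is an affine space of dimension $\dim C^0-\dim C^1+\dim C^2$, independent of the earlier choices. This is also what yields the affine bundle structure.

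The main obstacle is the grid reduction. A naive map from the grid to $\N_{a,b}$ fails to be injective once one wraps past $ab$. To handle this, I would first try to show that on the relevant finite window, namely the indices $i$ with $W_i\neq0$ or $V_{i-d}\neq 0$ entering nonzero Hom spaces, the map $(x,y)\mapsto xa+yb$ with $0\le x<b$, $0\le y<a$ is injective. I would also need to check that the arrows of $Q_1$ and $Q_2$ that leave this window contribute zero Hom spaces, so that the complexes agree term by term.
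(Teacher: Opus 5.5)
Your proposal follows the paper's proof. The paper proves the cocycle identity $\partial({}_d\rho)=0$ from $T^aT^b=T^bT^a$ and the lower equations. It then notes that when $V_0=\k^n$, every nonzero summand $\Hom_\k(V_{i-d},W_j)$ has $j\in G$, and hence $i\in G$ because $\langle a,b\rangle$ is closed under addition. It embeds $G$ as an upper set of a grid, which is your rectangle $0\le x<b$, $0\le y<a$ with base point $-ab$, cf.~\eqref{eq:gap-young}, and concludes from exactness at $C^1$ in Theorem~\ref{thm:acyclicity}.

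One aside is false, though you never use it. The commuting squares do not generate all relations: two paths of $+a$ and $+b$ steps with the same endpoints can differ in step counts (for example, $b$ steps of $+a$ versus $a$ steps of $+b$). This is exactly the wrap past $ab$ you note later. It is harmless here because the complex is defined with the square relations $Q_2$ from the start, and any such path ends at an index $\ge ab$, where $W=0$.
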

We will see that $E_R(\tl R^n;m)$ is precisely the disjoint union of $\pi^{-1}(V)$ for $V\in \Quot_m^R(\tl R^n)^{\Gm}$ satisfying $V_0=\k^n$. Provided the existence of solutions, the solution space of ${}_r \phi$ is an affine space whose dimension does not depend on the choice of ${}_s \phi$, $s<r$. With some elementary argument (Lemma~\ref{lem:trivial-aff-bundle}), this implies that $\pi^{-1}(V)$ is an affine space.

To prove the existence of solutions, we do not approach it explicitly using the precise expression of ${}_r\rho$; its complexity blows up as $r$ increases. Instead, we invoke Theorem~\ref{thm:acyclicity}, thanks to the following observation:

\begin{center}
\emph{Though $\N_{a,b}$ cannot be embedded into a rectangular grid, the support of $W$ can.}
\end{center}

More precisely, since $V_0=\k^n$, $W_i=0$ for all $i\in \pairing{a,b}$. So, the support of $W$ is contained in the gap set $G=\N\setminus \pairing{a,b}$, which is well-known to take the shape of a Young diagram:
\begin{equation}\label{eq:gap-young}
    G=\{ab-ax-by>0:x,y\in \Z_{\ge 1}\}.
\end{equation}
As a result, $G$ is embeddable into a rectangular grid poset, say $Q$. The relevant embedding is explained in Example~\ref{eg:gap-young}.
After making suitable restrictions and extensions, there are injective and surjective $Q$-flags, still denoted by $V[-r]$ and $W$, such that the map $\partial:C^0_P(V[-r],W)\to C^1_P(V[-r],W)$ is identical to part of the exact sequence
\[0\to \Hom_Q(V[-r],W)\to C^0_Q(V[-r],W)\overset\partial\to C^1_Q(V[-r],W)\overset\partial\to C^2_Q(V[-r],W)\to 0,\]
an instance of \eqref{eq:cochain-complex}.

Now it turns out that it is not hard to prove that $\partial({}_r \rho)=0$ (assuming $(*_s)$ is solved for all $s<r$), which essentially results from the fact that the multiplication map by $T^a$ and the multiplication map by $T^b$ commute. By exactness, we conclude that $\partial({}_r \phi)={}_r \rho$ has a solution. This concludes the proof of the main engine of the paper.

  \section{An acyclicity theorem and proof of Theorem~\ref{thm:acyclicity}}\label{sec:acyclicity}

The goal of this section is to prove \Cref{thm:acyclicity}. We begin by setting up notation to put the theorem in a more general framework. Let $\k$ be any field, and $[n]=\set{1,\dots,n}$ for nonnegative integers $n$.

\subsection{Poset quivers}
\begin{definition}\label{def:covering}
    In a poset $(P,<)$, a \defn{covering relation}, denoted by $i\lessdot j$, is a strict comparison $i< j$ such that there does not exist $z\in P$ with $i<z<j$. A \defn{saturated ascending chain} from $i$ to $j$ is a chain of covering relations $i_0\lessdot i_1\lessdot\dots\lessdot i_n$ in $P$. 
\end{definition}
\begin{definition}
    Given a poset $P$, the \defn{poset quiver} associated with $P$ is a quiver with relations $(Q_P,I_P)$ whose vertices are elements of $P$, arrows are $i\to j$ where $i\lessdot j$ is a covering relation in $P$, and whose relation ideal $I_P$ is generated by
    \begin{equation}
        [i_0\to i_1\to\dots\to i_m] - [j_0\to j_1\to\dots \to j_n],
    \end{equation}
    where $i_0\lessdot \dots\lessdot i_m$ and $j_0\lessdot \dots\lessdot j_n$ are two saturated ascending chains from $i_0=j_0$ to $i_m=j_n$. The \defn{incidence algebra} of $P$ is $A_P:=\k Q_P/I_P$, where $\k Q_P$ is the path algebra of $Q_P$. A (finite dimensional) \defn{$P$-module} or \defn{$P$-representation} is a left $A_P$-module $V$ such that $\dim_\k V_x< \infty$ for all $x\in P$. A $P$-module is an \defn{injective $P$-flag} (\resp \defn{surjective $P$-flag}) if the linear map associated with each arrow is injective (\resp surjective). For two $P$-modules $V,W$, we denote $\Hom_{A_P}(V,W)$ simply by $\Hom_P(V,W)$, and similarly $\Ext^i_{P}(V,W):=\Ext^i_{A_P}(V,W)$. 
\end{definition}

\begin{warning}
    The notion of injective $P$-flags has nothing to do with the notion of injective modules in the category of $P$-representations (i.e., $A_P$-modules). 
\end{warning}

\subsection{The fundamental cochain complex}\label{subsec:cochain}

\begin{definition}\label{def:uniform}
    Given a quiver with relations $(Q,I)$, a nonzero relation $r\in I\subeq \k Q$ is \defn{uniform} if there exist vertices $s,t\in Q$ (necessarily unique) such that $r$ is a linear combination of paths starting at $s$ and ending at $t$. In this case, the source and target of $r$ are defined as $s(r):=s, t(r):=t$. A \defn{uniform generating set} of $I$ is a generating set $Q_2=\set{r_1,\dots,r_l}$ of $I$ consisting of uniform relations. 
\end{definition}

Given a quiver with relations $(Q,I)$, a uniform generating set $Q_2$ of relations, and two (left) $A:=\k Q/I$-modules $V,W$, we consider a cochain complex $C^\bullet(V,W)$, which will be the main player of this section. Let $Q_0, Q_1$ be the vertex set and the arrow set of $Q$. For an arrow $a\in Q_1$, let $s(a),t(a)$ denote the source and target of $a$. The cochain complex is defined as follows:
\begin{equation}\label{eq:fundamental-cochain-complex}
\begin{aligned}
    C^\bullet_P(V,W;Q_2)&= C^\bullet(V,W;Q_2)=\parens*{0\to C^0(V,W)\to C^1(V,W) \to C^2(V,W;Q_2)} \\   &:= \parens*{0\to \bigoplus_{x\in Q_0} \Hom_\k(V_x,W_x) \to \bigoplus_{a\in Q_1} \Hom_\k(V_{s(a)},W_{t(a)}) \to \bigoplus_{r\in Q_2} \Hom_\k(V_{s(r)},W_{t(r)})}.
\end{aligned}
\end{equation}
In the discussion below we often omit \(Q_2\) from the notations when it is clear from the context.

The differential maps are as follows. For $\phi=(\phi_x)_{x\in Q_0}$ and $a:x\to y$, let $\partial\phi_a:=\phi_y\circ a|_V-a|_W\circ \phi_x$; by abuse of notation, we also denote the defining formula for $\partial\phi_a$ by $\partial\phi_a:=\phi_y-\phi_x:V_x\to W_y$. This completes the definition of the zeroth differential: $\partial\phi = (\partial \phi_a)_{a\in Q_1}$. To define the first differential, for $\rho=(\rho_a)_{a\in Q_1}$ and any path $\gamma:x_0\map[a_1]  \dots \map[a_m] x_m$ of $Q$, define $\int_\gamma \rho:=\sum_{i=1}^m \int_\gamma^i\rho\in \Hom_\k(V_{x_0},W_{x_m})$, where
\begin{equation}
    \int_\gamma^i\rho := (V_{x_0}\map[a_{i-1}\cdots a_1|_V] V_{x_{i-1}}\map[\rho_{a_i}] W_{x_i}\map[a_m\cdots a_{i+1}|_W]W_{x_m}).
\end{equation}
By a similar abuse of notation, the definition can also be notated by $\int_\gamma \rho=\rho_{a_1}+\dots+\rho_{a_m}$. For $r\in Q_2$, the definition is extended to $\int_r \rho\in \Hom_\k(V_{s(r)},W_{t(r)})$ by $\k$-linearity. The first differential $\partial:C^1(V,W)\to C^2(V,W)$ is then defined as $\partial\rho = (\int_r \rho)_{r\in Q_2}$.

It is a classical fact that $(C^\bullet(V,W),\partial)$ is a cochain complex whose cohomology $H^\bullet(V,W)$ has natural interpretations. 
\begin{theorem}\label{thm:ext}
    Notation as above. Then $(C^\bullet(V,W),\partial)$ is a cochain complex, and its cohomology satisfies $H^i(V,W)\simeq_\k \Ext^i_A(V,W)$ for $i=0,1$.
\end{theorem}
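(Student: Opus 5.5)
The proof has three parts: check that $\partial^2=0$, identify $H^0$ with $\Hom_A(V,W)$, and identify $H^1$ with $\Ext^1_A(V,W)$ via extensions.

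\emph{The complex property.} Take $\phi\in C^0$ and a path $\gamma: x_0\to\cdots\to x_m$. Then $\int_\gamma\partial\phi$ telescopes:
\[\int_\gamma \partial\phi=\phi_{x_m}\circ\gamma|_V-\gamma|_W\circ\phi_{x_0}.\]
For a uniform relation $r=\sum_k c_k\gamma_k$, every $\gamma_k$ runs from $s(r)$ to $t(r)$. By linearity,
\[\int_r\partial\phi=\phi_{t(r)}\circ r|_V-r|_W\circ\phi_{s(r)}.\]
This vanishes because $r$ acts by zero on the $A$-modules $V$ and $W$. Uniformity is exactly what makes the telescoping terms share endpoints, so they combine into the action of $r$.

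\emph{Degree zero.} $\ker(\partial:C^0\to C^1)$ consists of families $(\phi_x)$ commuting with every arrow. Arrows generate $A$ together with the idempotents $e_x$, so these are exactly the $A$-linear maps. Hence $H^0=\Hom_A(V,W)$.

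\emph{Degree one.} Given $\rho\in C^1$, define a representation $E_\rho$ of $Q$ on $E_x=W_x\oplus V_x$, where an arrow $a:x\to y$ acts by the block matrix
\[\begin{pmatrix} a|_W & \rho_a\\ 0 & a|_V\end{pmatrix}.\]
For a path $\gamma$, the product of these matrices has off-diagonal entry $\int_\gamma\rho$, by expanding the product of upper triangular block matrices. Extending linearly, a relation $r$ acts on $E_\rho$ with diagonal entries $r|_W=0$ and $r|_V=0$ and off-diagonal entry $\int_r\rho$. Since $Q_2$ generates $I$, and the set of elements killing $E_\rho$ is a two-sided ideal, $E_\rho$ is an $A$-module if and only if $\partial\rho=0$.

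Conversely, let $0\to W\to E\to V\to0$ be an extension of $A$-modules. Choose a $\k$-splitting $E_x\cong W_x\oplus V_x$ at each vertex, which is possible since $E=\bigoplus_x e_xE$. In this splitting every arrow has the block form above for some $\rho$, so every extension is of the form $E_\rho$ with $\rho$ a cocycle.

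Finally, I compare isomorphism classes. Changing the vertexwise splittings by $\phi=(\phi_x)\in C^0$ replaces $\rho$ by $\rho\pm\partial\phi$; this is the usual conjugation by $\begin{pmatrix}1&\phi\\0&1\end{pmatrix}$. Likewise, an isomorphism of extensions $E_\rho\cong E_{\rho'}$ inducing the identity on $V$ and $W$ is vertexwise of this unipotent form, which forces $\rho'-\rho\in\operatorname{im}\partial$. This gives a bijection $\Ext^1_A(V,W)\cong Z^1/B^1$ via Yoneda extensions. It is $\k$-linear, because Baer sum corresponds to adding the $\rho$'s.

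The main obstacle is the degree-one step. Two points need care there: that relations in $Q_2$ (rather than all of $I$) suffice to check the module condition, and that $\rho$ really defines an $A$-module structure. Both are handled by the ideal argument above. Alternatively, one could use the standard projective resolution $\bigoplus_{r}Ae_{t(r)}\otimes e_{s(r)}\to\bigoplus_a Ae_{t(a)}\otimes e_{s(a)}\to\bigoplus_x Ae_x\otimes e_x\to V\to0$, which is exact in degrees $\le 1$. Applying $\Hom_A(-,W)$ to it recovers $C^\bullet$ and gives the claim for $i=0,1$ directly.
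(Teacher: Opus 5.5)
Your argument is correct, but its main line differs from the paper's. The paper proves the statement in one step. It cites the Bardzell/Bergman bimodule resolution $\bigoplus_{r\in Q_2}P_{t(r)s(r)}\to\bigoplus_{a\in Q_1}P_{t(a)s(a)}\to\bigoplus_{x\in Q_0}P_{xx}\to A\to 0$ with $P_{xy}=Ae_x\otimes e_yA$, tensors it with $V$ to obtain the start of a projective resolution by modules $Ae_x\otimes_\k V_y$, and then applies $\Hom_A(-,W)$ using $\Hom_A(Ae_x\otimes_\k V_y,W)=\Hom_\k(V_y,W_x)$. That is the alternative you sketch in your last sentence; there the terms should read $Ae_{t(r)}\otimes_\k V_{s(r)}$ and so on, not $Ae_{t(r)}\otimes e_{s(r)}$.

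Your main line is different. You verify $\partial^2=0$ by telescoping and read off $H^0$ from the generators of $A$. You then identify $H^1$ with Yoneda $\Ext^1$ by writing every extension as $W\oplus V$ with arrows acting by $\left(\begin{smallmatrix} a|_W&\rho_a\\ 0&a|_V\end{smallmatrix}\right)$. Each step checks out:
\begin{itemize}
\item the off-diagonal entry of a product of these matrices along a path is exactly $\int_\gamma\rho$;
\item the annihilator argument correctly reduces the module condition to the generating set $Q_2$;
\item the change-of-splitting computation gives $\rho\mapsto\rho-\partial\phi$.
\end{itemize}

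The paper's route buys brevity and places $C^\bullet$ inside a partial projective resolution. It leaves implicit, however, that the induced differentials are the explicit $\partial$ with the $\int_r$ formula and its signs. Your route is self-contained and makes that identification automatic. It also gives the cocycle condition the concrete meaning the paper later uses informally: a $1$-cochain is a prescribed failure of commutativity, and $\partial\rho$ measures its incompatibility around the relations. The cost is that you rely on two standard facts you assert rather than check: Yoneda $\Ext^1$ agrees with derived $\Ext^1$, and Baer sum and scalar pushout correspond to the linear structure on the $\rho$'s. Like the paper's statement, your argument stops at $i=1$.
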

\begin{proof}
    This is essentially \cite[Proposition 2.1]{bardzell97}; see also \cite[Eq.~(5.3)]{bergman08}. More precisely, for $x,y\in Q_0$, consider the $A,A$-bimodule $P_{xy}=Ae_x\otimes e_y A$, where $e_x$ is the empty path on the vertex $x$. By either of the citations, there is a resolution of $A$ as an $A,A$-bimodule:
    \begin{equation}
        \bigoplus_{r\in Q_2} P_{t(r)s(r)}\to \bigoplus_{a\in Q_1} P_{t(a)s(a)}\to \bigoplus_{x\in Q_0} P_{xx} \to A\to 0.
    \end{equation}
    Applying $(-)\otimes_A V$, the resulting sequence of left $A$-modules is still exact because $P_{xy}\otimes_A V=Ae_x\otimes_\k V_y$. This gives a resolution of $V$ as a left $A$-module. Applying $\Hom_A(-,W)$ to this resolution gives what we want.
\end{proof}
\begin{remark}
    The relations play a role in $\Ext^1$ though not $\Ext^0$: if we work over $\k Q$ instead, then $\Ext^i_{\k Q}(V,W)$ is the cohomology of the chain complex $0\to C^0(V,W)\to C^1(V,W)\to 0$.
\end{remark}

\subsection{The grid poset}

\begin{definition}
    Let $m,n\geq 1$. The \defn{standard $n\times m$ grid poset} $P_{n,m}$ is the poset $[n]\times [m]$ with the product order $(i,j)\leq (i',j')$ if and only if $i\leq i'$ and $j\leq j'$. The quiver, relation ideal, and the incidence algebra of $P_{n,m}$ are denoted by $Q_{n,m}, I_{n,m}$ and $A_{n,m}$, respectively. A poset $P$ is called a \defn{(finite) grid poset} if $P$ is isomorphic to $P_{n,m}$ for some $n,m\geq 1$. 
\end{definition}

We follow the Cartesian convention (not the matrix convention) to arrange $P_{n,m}$ on a plane: $(n,1)$ is at the southeast and $(n,m)$ is at the northeast.

\begin{remark}\label{rmk:grid-relation}
    Given the grid poset $P_{n,m}$, consider the set $R_{n,m}\subeq \k Q_{n,m}$ consisting of relations of the form
    \begin{equation}\label{eq:commuting-relation}
        r_{ij}:=[(i,j)\to (i+1,j)\to (i+1,j+1)]-[(i,j)\to (i,j+1)\to (i+1,j+1)], \quad (i,j)\in [n-1]\times [m-1].
    \end{equation}
    Then $R_{n,m}$ is a uniform generating set of $I_{n,m}$.
\end{remark}

Representations and the fundamental cochain complex of a grid poset $P$ have an intuitive interpretation. A representation of $P$ is a commutative diagram of $\k$-vector spaces with the shape of $P$. Given $P$-modules $V,W$, an element $\phi\in C^0(V,W)$ is an assignment of ``vertical arrows'' $\phi_x:V_x\to W_x$ for all $x\in P$. This element is in $\Hom_P(V,W)$ if and only if the resulting 3D diagram commutes. For a general $\phi\in C^0(V,W)$, the element $\partial\phi\in C^1(V,W)$ measures the failure of commutativity of each vertical unit square. A general element $\rho\in C^1(V,W)$ \textit{prescribes} a failure for each vertical square. If $\rho=\partial\phi$ for some $\phi$, then $\rho$ must be \textit{compatible} on each unit cube, i.e., $\partial \rho=0\in C^2(V,W)$. For a general prescription $\rho\in C^1(V,W)$, $\partial \rho$ measures the incompatibility of $\rho$. The following diagram illustrates the case $P=P_{2,2}$, where the elements of $P$ are labeled by $0< \set{1,2}< 3$.

\[\begin{tikzcd}[
	row sep={4em, between origins},
	column sep={4em, between origins}
]
	W_0 \ar[rr] \ar[dr] & & W_1 \ar[dr]
	\\
	& W_2 \ar[rr] & & W_3
	\\
	V_0 \ar[uu, "\phi_0"'] \ar[rr] \ar[dr]
		\ar[uurr, dashed, sloped, bend right=25, "\rho_{01}"', pos=0.55]
		\ar[ur, dashed, sloped, "\rho_{02}", pos=0.4]
	& & V_1 \ar[uu, "\phi_1"', pos=0.7] \ar[dr]
			\ar[ur, dashed, sloped, "\rho_{13}", pos=0.4]
	\\
	& V_2 \ar[uu, "\phi_2"', pos=0.3] \ar[rr]
		\ar[uurr, dashed, sloped, bend right=25, "\rho_{23}", pos=0.4]
	& & V_3 \ar[uu, "\phi_3" ']
\end{tikzcd}\]

The above discussion gives a direct explanation of the definition of $\partial$ and why $\partial^2=0$. To make the sign convention even more transparent, it is helpful to think of the following 2D picture
\[\begin{tikzcd}
	\phi_{12} \arrow[r, "\rho_{12,22}"] & \phi_{22} \\
	\phi_{11} \arrow[u, "\rho_{11,12}"] \arrow[r, "\rho_{11,21}"] \arrow[ur, phantom, "\circlearrowleft" description] & \phi_{21} \arrow[u, "\rho_{21,22}"']
\end{tikzcd}\]
where the standard label $(i,j)$ of $P$ is abbreviated by $ij$, and we view $\phi\in C^0(V,W)$ as an assignment to each node of $P$, while $\rho\in C^1(V,W)$ as an assignment to each arrow. Using the abuse of notation as in \Cref{subsec:cochain}, the differential rule is simply
\begin{equation}
    \partial\phi_{11,12}=\phi_{12}-\phi_{11}: V_{11}\to W_{12}, \; \text{etc.},
\end{equation}
and the ``counterclockwise rule''
\begin{equation}\label{eq:ccw-rule}
    \partial\rho_{r_{11}}=\rho_{11,21}+\rho_{21,22}-\rho_{12,22}-\rho_{11,12}:V_{11}\to W_{22}.
\end{equation}
If both $V,W$ are the constant representation, namely, putting $\k$ on each node and letting every arrow be the identity map, then each of $\phi_x, \rho_a$ can be viewed as a scalar in $\k$, so $C^\bullet(V,W)$ reduces to the CW cochain complex of the 2D CW complex associated with the grid (where each unit square corresponds to a two-cell). 

\subsection{The acyclicity theorem}
We are ready to state the main result of this section.
\begin{theorem}\label{thm:grid-acyclicity}
    Let $P$ be a grid poset, $V$ be an injective $P$-flag, and $W$ be a surjective $P$-flag. Then the cochain complex
    \begin{equation}\label{eq:grid-acyclicity}
        0\to \Hom_P(V,W)\to C^0(V,W)\to C^1(V,W)\to C^2(V,W)\to 0
    \end{equation}
    is exact. In particular, $\Ext^1_P(V,W)=0$ and $\dim_\k \Hom_P(V,W)=\sum_{i=0}^2 (-1)^i \dim_\k C^i(V,W)$.
\end{theorem}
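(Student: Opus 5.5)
The complex \eqref{eq:grid-acyclicity} is a cochain complex by Theorem~\ref{thm:ext}. Exactness at $\Hom_P(V,W)$ and at $C^0$ holds by definition of $\Hom_P$ as the maps whose commutativity defect $\partial\phi$ vanishes. Since $H^1\simeq\Ext^1_P(V,W)$, the proof reduces to two claims: surjectivity of $\partial\colon C^1\to C^2$, and $\ker(\partial|_{C^1})=\im(\partial|_{C^0})$. The dimension formula then follows from exactness.

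\emph{Surjectivity onto $C^2$.} I would use only the injectivity of $V$. Consider the square with lower-left corner $(i,j)$. By \eqref{eq:ccw-rule}, $\partial\rho$ contains the term $\rho_{(i+1,j)\to(i+1,j+1)}\circ V((i,j)\to(i+1,j))$. Since $V((i,j)\to(i+1,j))$ is injective, every element of $\Hom_\k(V_{ij},W_{i+1,j+1})$ is of this form for a suitable choice of $\rho_{(i+1,j)\to(i+1,j+1)}$. Each vertical arrow in columns $\ge 2$ is the right edge of exactly one square. So, given a target in $C^2$, I set $\rho$ on these right edges as just described and $\rho=0$ on all other arrows. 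This choice hits the target square by square.

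\emph{Vanishing of $H^1$.} First consider a chain $[m]$, where there are no relations. Given an injective flag $V$, a surjective flag $W$ and any $\rho\in C^1$, I solve $\phi_{j+1}\circ V(j\to j+1)=\rho_{j\to j+1}+W(j\to j+1)\circ\phi_j$ successively. This is always possible because a map defined on $V_j$ extends to $V_{j+1}$. Hence $C^0\to C^1$ is surjective for chains.

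For the grid I induct on the number of columns, viewing a $P_{n,m}$-module as a chain of $[m]$-modules. Let $\rho$ be a cocycle. By the chain case, I first find $\phi$ on column $1$ matching the vertical components of $\rho$ there, and subtract $\partial\phi$, extended by zero, so that $\rho$ vanishes on the vertical arrows of column $1$. I would keep the freedom to add elements of $\Hom_{[m]}(V_{1,\bullet},W_{1,\bullet})$ to this choice. The horizontal components then prescribe maps $\psi_j\colon V_{1j}\to W_{2j}$. I need $\phi_{2,\bullet}$ on column $2$ that restricts to $\psi_j$ along $V_{1j}\hookrightarrow V_{2j}$ and satisfies the vertical equations of column $2$. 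After that, the same procedure is repeated on the grid with column $1$ deleted, by induction.

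I would construct $\phi_{2,j}$ by going up the column. At step $j+1$ the map is forced on two subspaces: on $V_{2j}$ by the vertical equation, and on $V_{1,j+1}$ by $\psi_{j+1}$. The cocycle condition on the square $(1,j)$ guarantees that these agree on $V_{1j}$. Once they agree on the whole intersection, any extension to $V_{2,j+1}$ will do.

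\emph{Main obstacle.} The intersection $V_{2j}\cap V_{1,j+1}$ inside $V_{2,j+1}$ can be strictly larger than $V_{1j}$, since injective flags need not be "pullback" squares. Agreement on the excess is not forced by the cocycle condition. This is exactly where the surjectivity of $W$, and the freedom in choosing column $1$, must be used.

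My first attempt would be to absorb the discrepancy on the excess into a correction of $\phi_{1,\bullet}$. The obstruction lives in maps from that excess to $W_{2,j+1}$. Surjectivity of $W((1,j+1)\to(2,j+1))$ lets such a map be lifted to $W_{1,j+1}$, and then propagated to a genuine element of $\Hom$ on column $1$. Propagating it consistently up and down the column should again be a chain-type extension argument using surjectivity of $W$.

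If this bookkeeping becomes unwieldy, I would switch to the dual route. I would check that $\dim\Hom_P(V,W)\ge\sum_i(-1)^i\dim C^i$ directly, by exhibiting enough homomorphisms inductively via surjectivity of $W$. Combined with the surjectivity onto $C^2$, this inequality forces $H^1=0$.
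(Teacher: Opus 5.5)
\textbf{There is a genuine gap: exactness at $C^1$, which is the real content of the theorem, is never established.}

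In your column-by-column order, the vertex $(2,j+1)$ is filled after both of its predecessors. So both constraints on $\phi_{2,j+1}$ are restrictions to subspaces of $V_{2,j+1}$, and the cocycle condition makes them agree only on $V_{1j}$, not on $V_{2j}\cap V_{1,j+1}$.

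Every step you actually carry out (the chain case, the column step up to this point, and surjectivity onto $C^2$) uses only injectivity of $V$. That hypothesis alone does not suffice. Take $P=P_{2,2}$ with $V_{11}=0$, $V_{21}=V_{12}=V_{22}=\k$ (identity maps), $W_{22}=\k$ and $W=0$ elsewhere; this is the example in the remark after the paper's proof. Then $C^2=0$, and $\rho_{21,22}=a\neq b=\rho_{12,22}$ is a nonzero class in $H^1$. It is exactly your excess intersection, with $W$ not surjective. So your sketched correction step must carry the entire surjectivity hypothesis, and it is not routine.

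With three or more columns, a correction of $\phi_{i-1,\bullet}$ must preserve the equations already solved between earlier columns and above the current row. Hence it must itself be a homomorphism on all previous columns, vanishing in rows $\le j$ and with prescribed projection at row $j+1$. Extending such a homomorphism upward, left to right within rows, hits the same two-restriction conflict again. Your induction ``with column $1$ deleted'' is therefore not a smaller instance of the same problem.

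Your fallback is also reversed. Once $\partial\colon C^1\to C^2$ is onto, $\dim\Hom_P(V,W)=\sum_i(-1)^i\dim C^i+\dim H^1$. So the lower bound $\dim\Hom_P(V,W)\ge\sum_i(-1)^i\dim C^i$ holds automatically and says nothing about $H^1$. You would need the upper bound, which exhibiting homomorphisms cannot give.

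\textbf{The missing idea is never to impose two restriction constraints on the same vertex.} The key lemma is Lemma~\ref{lem:fill-triangle}. Suppose $\iota\colon V_1\hookrightarrow V_2$, $\pi\colon W_1\twoheadrightarrow W_2$, $\phi_1\colon V_1\to W_1$ and $\phi_2\colon V_2\to W_2$ satisfy $\pi\phi_1=\phi_2\iota$. Then there is $h\colon V_2\to W_1$ with $h\iota=\phi_1$ and $\pi h=\phi_2$. In adapted bases, the first condition fixes the first block column of $h$ and the second fixes the last block row. These overlap in one block, where they agree because the square commutes. So one restriction constraint and one projection constraint conflict only through a single square.

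Accordingly, fill the vertices in this order:
\begin{enumerate}
\item Fill the bottom row from right to left. Each new $\phi$ is constrained only through the surjection to its right neighbour.
\item For each higher row, start at the right end, which is constrained only by extension along the injective vertical map, then move left.
\item At $(i,j)$, the vertex below prescribes $\phi_{ij}\circ V((i,j-1)\to(i,j))$, and the vertex to the right prescribes $W((i,j)\to(i+1,j))\circ\phi_{ij}$. Their compatibility is the cocycle condition on the single square with corners $(i,j-1)$ and $(i+1,j)$, given what was already matched along $(i,j-1)\to(i+1,j-1)\to(i+1,j)$. The lemma then produces $\phi_{ij}$.
\end{enumerate}
This is the paper's proof, and it is exactly where surjectivity of $W$ enters.

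A minor point on your surjectivity argument for $C^2$: an interior vertical edge is also the left edge of the next square, where it enters postcomposed with $W$. So the right edges must be chosen successively from left to right, not independently. With that fix, your argument is the column version of the paper's row-by-row one.
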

In spirit, this theorem should be a reflection of the fact that the 2D CW complex associated with the grid is contractible as a topological space. For the rest of the section, we provide a direct proof. Note that by \Cref{thm:ext}, the exactness at $\Hom_P(V,W)$ and $C^0(V,W)$ is already known, so it remains to prove exactness at the other two places. 

\subsubsection{Proof of exactness at $C^2$} It suffices to make the following simple observation:
\begin{lemma}\label{lem:lift1}
    Let $P=P_{2,2}$ and assume the setting of \Cref{thm:grid-acyclicity}. Then for any $\delta\in \Hom_\k(V_{11},W_{22})$, $\rho_{11,21}\in \Hom_\k(V_{11},W_{21})$, $\rho_{11,12}\in \Hom_\k(V_{11},W_{12})$, and $\rho_{21,22}\in \Hom_\k(V_{21},W_{22})$, there exists $\rho_{12,22}\in \Hom_\k(V_{12},W_{22})$ such that $\partial \rho = \delta$.
    \[\begin{tikzcd}
	12 \arrow[r, dashed, "\exists \rho_{12,22}"] & 22 \\
	11 \arrow[u, "\rho_{11,12}"] \arrow[r, "\rho_{11,21}"] \arrow[ur, phantom, "\circlearrowleft \delta" description] & 21 \arrow[u, "\rho_{21,22}"']
\end{tikzcd}\]
\end{lemma}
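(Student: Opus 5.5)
The counterclockwise rule \eqref{eq:ccw-rule} lets us rewrite the requirement $\partial\rho=\delta$ as an equation for the single unknown $\rho_{12,22}$. With all composition maps written out, we need
\[
\rho_{12,22}\circ V(11\to 12) \;=\; W(21\to 22)\circ\rho_{11,21}+\rho_{21,22}\circ V(11\to 21)-W(12\to 22)\circ\rho_{11,12}-\delta
\]
as maps $V_{11}\to W_{22}$. Call the right-hand side $\psi\in\Hom_\k(V_{11},W_{22})$. It is completely determined by the given data. So the lemma comes down to showing that every linear map $\psi:V_{11}\to W_{22}$ factors through the arrow map $\iota:=V(11\to 12):V_{11}\to V_{12}$.

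This factorization follows from the injectivity hypothesis. Since $V$ is an injective $P$-flag, $\iota$ is injective. Choose a complement $V_{12}=\iota(V_{11})\oplus U$. Define $\rho_{12,22}$ to be $\psi\circ\iota^{-1}$ on $\iota(V_{11})$ and zero (or anything) on $U$. Then $\rho_{12,22}\circ\iota=\psi$, which is exactly the displayed equation.

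Nothing here is a real obstacle; the only care needed is to check the sign and composition conventions in \eqref{eq:ccw-rule}, so that $\rho_{12,22}$ really appears precomposed with $V(11\to 12)$ and not postcomposed with some $W$-map. That placement is what makes injectivity of $V$ the relevant hypothesis. Surjectivity of $W$ is not used in this lemma. Presumably it enters symmetrically when solving for an edge leaving the source corner, or in the exactness at $C^1$.

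For the intended application to exactness at $C^2$ for a general grid, I would apply the lemma square by square. Start with $\rho=0$ and process the unit squares in an order where each square's top edge is not yet fixed while its other three edges already are. For example, go row by row from bottom to top, and within each row in any order: the top edge of a square in row $j$ is not an edge of any square in rows $<j$. At each square, solve for its top edge so that the square's component of $\partial\rho$ equals the prescribed component of the target element of $C^2$. A top edge belongs to only one square of the current row, so each square receives its own new edge, and later squares never revisit edges that are already fixed. This gives surjectivity of $\partial:C^1\to C^2$.
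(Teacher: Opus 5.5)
Your proposal is correct and is essentially the paper's proof: you rewrite $\partial\rho=\delta$ via the counterclockwise rule as an extension problem for a map $V_{11}\to W_{22}$ along the injection $V(11\to 12)$, and you solve it using only injectivity of $V$; your row-by-row, bottom-to-top filling of top edges for exactness at $C^2$ also matches the paper's argument. Your sign is the right one: solving $\rho_{11,21}+\rho_{21,22}-\rho_{12,22}-\rho_{11,12}=\delta$ puts $-\delta$ on the right-hand side, whereas the paper's displayed equation has $+\delta$, a slip that does not affect existence.
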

\begin{proof}
    By \eqref{eq:ccw-rule} and partially undoing the abuse of notation, the equation for $\rho_{12,22}$ is
    \begin{equation}
        \rho_{12,22}|_{V_{11}}=\delta+\rho_{11,21}+\rho_{21,22}-\rho_{11,12}\in \Hom_\k(V_{11},W_{22}).
    \end{equation}
    If we denote the right-hand side by $f$, this is equivalent to the extension problem
    \[\begin{tikzcd}
	& {W_{22}} \\
	{V_{11}} & {V_{12}}
	\arrow["f", from=2-1, to=1-2]
	\arrow[hook, from=2-1, to=2-2]
	\arrow["{\rho_{12,22}}"', dashed, from=2-2, to=1-2]
    \end{tikzcd}\]
    Since the natural map $V_{11}\to V_{12}$ is injective, and any vector space is an injective module, a solution $\rho_{12,22}$ exists.
\end{proof}

\begin{proof}
    [Proof of exactness of \eqref{eq:grid-acyclicity} at $C^2$]
    Let $\delta\in C^2(V,W)$ be arbitrary. We construct $\rho\in C^1(V,W)$ such that $\partial \rho =\delta$ as follows. First, assign the values of $\rho$ at bottom horizontal arrows and bottom vertical arrows (namely, $\rho_{i1,(i+1)1}$ and $\rho_{i1,i2}$) arbitrarily, e.g., zero. Then fill in the values of $\rho$ at the second lowest horizontal arrows (namely, $\rho_{i2,(i+1)2}$) in an arbitrary way that respects $\delta$; this is always possible by \Cref{lem:lift1}. Now, assign $\rho$ at the second lowest vertical arrows (namely, $\rho_{i2,i3}$) arbitrarily, e.g., zero, and then repeat the process. 
\end{proof}

\subsubsection{Proof of exactness at $C^1$} Throughout this proof, let $\rho=(\rho_{x,y})_{x\lessdot y}\in C^1(V,W)$ be given such that $\partial\rho=0$. For any $x,y\in P$ with $x<y$, define $\rho_{x,y}=\sum_{i=1}^l \rho_{x_{i-1},x_i}\in \Hom_\k(V_x,W_y)$, where $x=x_0\lessdot \cdots \lessdot x_l=y$ is any saturated chain from $x$ to $y$. Because $\partial \rho=0$, $\rho_{x,y}$ is well-defined.

The goal of the proof is to construct $\phi\in C^0(V,W)$ that is ``compatible'' with $\rho$, in the sense that $\partial\phi=\rho$. The strategy is filling in entries of $\phi$ in a certain order, ensuring compatibility with $\rho$ along the way. We will need a few lemmas as the induction seeds.

\begin{lemma}\label{lem:lift2}
    Let $P=P_{2,1}=\set{1,2}$ and assume the setting of \Cref{thm:grid-acyclicity}. Then for any $\rho\in \Hom_\k(V_1,W_2)$ and $\phi_2\in \Hom_\k(V_2,W_2)$, there exists $\phi_1\in\Hom_\k(V_1,W_1)$ such that $\partial \phi=\rho$.
    \[\begin{tikzcd}
	|[circle, draw, dashed, inner sep=1pt]|{\exists\phi_{1}} &  {\phi_{2}} 
	\arrow["{\rho}", from=1-1, to=1-2]
\end{tikzcd}\]
    Dually, for any $\rho\in \Hom_\k(V_1,W_2)$ and $\phi_1\in \Hom_\k(V_1,W_1)$, there exists $\phi_2\in\Hom_\k(V_2,W_2)$ such that $\partial \phi=\rho$.
    \[\begin{tikzcd}
	{\phi_{1}}  & |[circle, draw, dashed, inner sep=1pt]|{\exists\phi_{2}} 
	\arrow["{\rho}", from=1-1, to=1-2]
\end{tikzcd}\]
\end{lemma}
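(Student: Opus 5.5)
The plan is to unwind the definition of $\partial$ for the single arrow $1\to 2$ and reduce each of the two claims to a lifting or extension problem for linear maps. The only inputs are the two hypotheses: $V(1\to 2)\colon V_1\to V_2$ is injective and $W(1\to 2)\colon W_1\to W_2$ is surjective. Undoing the abuse of notation, the equation $\partial\phi=\rho$ reads
\[
\phi_2\circ V(1\to 2)-W(1\to 2)\circ\phi_1=\rho\in\Hom_\k(V_1,W_2).
\]

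For the first assertion, $\phi_2$ is given and we need $\phi_1$ with
\[
W(1\to 2)\circ\phi_1=\phi_2\circ V(1\to 2)-\rho=:g.
\]
This is a lifting problem of $g\colon V_1\to W_2$ along the surjection $W_1\to W_2$. Choose a basis of $V_1$. For each basis vector, pick any preimage under $W(1\to 2)$ of its image under $g$, and extend linearly. (Equivalently, use that $V_1$ is a projective $\k$-module, or that the surjection splits.) This gives the required $\phi_1$.

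For the dual assertion, $\phi_1$ is given and we need $\phi_2$ with
\[
\phi_2\circ V(1\to 2)=\rho+W(1\to 2)\circ\phi_1=:f.
\]
This is an extension problem of $f\colon V_1\to W_2$ along the injection $V_1\hookrightarrow V_2$. Choose a complement $V_2=V(1\to 2)(V_1)\oplus U$. Define $\phi_2$ as $f\circ V(1\to 2)^{-1}$ on the first summand and as $0$ on $U$. This is the same argument as in Lemma~\ref{lem:lift1}, using that every vector space is an injective $\k$-module.

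No genuine obstacle arises. The only care needed is getting the sign convention of $\partial\phi_a=\phi_y-\phi_x$ right, which merely changes $g$ and $f$ by signs. The structural point to keep in view is that each direction uses exactly one hypothesis: surjectivity of $W$ for lifting $\phi_1$, and injectivity of $V$ for extending $\phi_2$. This asymmetry is what later drives filling in $\phi$ along the grid in the proof of exactness at $C^1$.
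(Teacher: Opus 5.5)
Your proposal is correct and follows the paper's proof: the paper writes the equation as $\pi_{1,2}\circ\phi_1=\phi_2|_{V_1}-\rho$ and solves it using surjectivity of $W_1\to W_2$. It then handles the dual case by extending along the injection $V_1\hookrightarrow V_2$, exactly as you do. Your sign bookkeeping and the explicit basis/complement constructions match that argument.
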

\begin{proof}
    The equation for $\phi_1$ is $\pi_{1,2}\circ \phi_1=\phi_2|V_1-\rho:V_1\to W_2$, where $\pi_{1,2}:W_1\onto W_2$ is the natural map. A solution for $\phi_1$ exists because $\pi_{1,2}$ is onto. The proof for the dual version is analogous, where we instead use the injectivity of the map $V_1\to V_2$.
\end{proof}

\begin{lemma}\label{lem:fill-triangle}
    Consider the following diagram of finite-dimensional $\k$-vector spaces consisting of solid arrows, such that the square commutes. Then there exists $h:V_2\to W_1$ that makes both triangles commute.
    \[\begin{tikzcd}
	{W_1} & {W_2} \\
	{V_1} & {V_2}
	\arrow[two heads, from=1-1, to=1-2]
	\arrow["{\phi_1}", from=2-1, to=1-1]
	\arrow[hook, from=2-1, to=2-2]
	\arrow["\exists h"{description}, dashed, from=2-2, to=1-1]
	\arrow["{\phi_2}"', from=2-2, to=1-2]
    \end{tikzcd}\]
\end{lemma}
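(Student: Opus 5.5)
Label the maps: $\iota:V_1\hookrightarrow V_2$ is the injection and $\pi:W_1\twoheadrightarrow W_2$ is the surjection. The square commutes, so $\pi\circ\phi_1=\phi_2\circ\iota$. We want a linear map $h:V_2\to W_1$ with $h\circ\iota=\phi_1$ (lower triangle) and $\pi\circ h=\phi_2$ (upper triangle). The plan is to build $h$ on a complement of $V_1$ in $V_2$ and then correct it by elements of $\ker\pi$.

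\textbf{Step 1: the lower triangle.} Choose a complement $U$ with $V_2=\iota(V_1)\oplus U$; this uses only that we are working with vector spaces. On $\iota(V_1)$, set $h(\iota v)=\phi_1(v)$. This is well defined because $\iota$ is injective, and it gives $h\circ\iota=\phi_1$.

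\textbf{Step 2: the upper triangle on $U$.} Since $\pi$ is surjective, $\phi_2|_U:U\to W_2$ lifts through $\pi$. Concretely, choose a linear section $s:W_2\to W_1$ with $\pi\circ s=\mathrm{id}_{W_2}$, and set $h|_U=s\circ\phi_2|_U$. Then $\pi\circ h=\phi_2$ on $U$.

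\textbf{Step 3: the upper triangle on $\iota(V_1)$.} Here we have $\pi h(\iota v)=\pi\phi_1(v)=\phi_2(\iota v)$, which is exactly the commutativity of the square. So $\pi\circ h$ and $\phi_2$ agree on both summands $\iota(V_1)$ and $U$, hence on all of $V_2$.

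The only point requiring care is Step 3, where the commutativity hypothesis is used to make the two prescriptions for $h$ compatible. The rest is formal, so no real obstacle is expected.

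An equivalent phrasing: the lifts of $\phi_2$ through $\pi$ form a torsor under $\Hom(V_2,\ker\pi)$. Starting from any lift $h_0$, the discrepancy $\phi_1-h_0\circ\iota$ lands in $\ker\pi$ by commutativity of the square. It therefore extends along the injection $\iota$ to a map $V_2\to\ker\pi$, and adding this extension to $h_0$ fixes the lower triangle.
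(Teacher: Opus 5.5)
Your proof is correct and is essentially the paper's argument in coordinate-free form. The paper chooses bases adapted to $V_1\hookrightarrow V_2$ and $W_1\twoheadrightarrow W_2$, writes $\phi_1=\bmat{A\\C}$ and $\phi_2=\bmat{C'&D}$, uses commutativity of the square to get $C=C'$, and takes $h=\bmat{A&B\\C&D}$ with $B$ arbitrary; your complement $U$ and section $s$ play the role of these adapted bases, and amount to one particular choice of $B$.
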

\begin{proof}
    Without loss of generality, let $V_1=\k^{n_1}\incl V_2=\k^{n_1+n_2}$ be the inclusion into the first $n_1$ coordinates, and $W_1=\k^{m_1+m_2}\onto W_2=\k^{m_2}$ be the projection onto the last $m_2$ coordinates. Let $\phi_1$ be represented by the block matrix $\bmat{A\\C}$, and $\phi_2$ by $\bmat{C'& D}$, where $A\in \Mat_{m_1\times n_1}(\k)$, $C,C'\in \Mat_{m_2\times n_1}(\k)$, and $D\in \Mat_{m_2\times n_2}(\k)$. The commutativity of the square means $C=C'$. Hence, for any choice of $B\in \Mat_{m_1\times n_2}(\k)$, the linear map
    \begin{equation}
        h=\bmat{A&B\\C&D}
    \end{equation}
    makes both triangles commute.
\end{proof}

\begin{lemma}\label{lem:lift3}
    Let $P=P_{3,1}=\set{1,2,3}$ and assume the setting of \Cref{thm:grid-acyclicity}. Then for any $\rho\in C^1(V,W)$, $\phi_1\in \Hom_\k(V_1,W_1)$, and $\phi_3\in \Hom_\k(V_3,W_3)$, such that $\rho_{1,3}=\phi_3-\phi_1\in \Hom_\k(V_1,W_3)$, there exists $\phi_2\in \Hom_\k(V_2,W_2)$ such that $\partial \phi=\rho$.
    \[\begin{tikzcd}
	{\phi_{1}} & |[circle, draw, dashed, inner sep=1pt]| {\exists\phi_{2}} & {\phi_{3}}
	\arrow["{\rho_{1,2}}"', from=1-1, to=1-2]
	\arrow["{\rho_{1,3}}", curve={height=-16pt}, from=1-1, to=1-3]
	\arrow["{\rho_{2,3}}"', from=1-2, to=1-3]
\end{tikzcd}\]
\end{lemma}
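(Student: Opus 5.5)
The plan is to reduce the lemma directly to Lemma~\ref{lem:fill-triangle}. Undoing the abuse of notation, I will write $\iota_{1,2}:V_1\incl V_2$ for the injection and $\pi_{2,3}:W_2\onto W_3$, $\pi_{1,2}:W_1\onto W_2$ for the surjections. The requirement $\partial\phi=\rho$ consists of two equations for the unknown $\phi_2$:
\begin{equation*}
    \phi_2\circ\iota_{1,2}=\pi_{1,2}\circ\phi_1+\rho_{1,2}, \qquad \pi_{2,3}\circ\phi_2=\phi_3\circ\iota_{2,3}-\rho_{2,3}.
\end{equation*}
The first equation prescribes the restriction of $\phi_2$ to $V_1$. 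The second prescribes the composite of $\phi_2$ with the quotient map to $W_3$.

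Next I set $\psi_1:=\pi_{1,2}\circ\phi_1+\rho_{1,2}\in\Hom_\k(V_1,W_2)$ and $\psi_2:=\phi_3\circ\iota_{2,3}-\rho_{2,3}\in\Hom_\k(V_2,W_3)$. These fit into the square of Lemma~\ref{lem:fill-triangle}, with $V_1\incl V_2$ at the bottom, $W_2\onto W_3$ at the top, and vertical maps $\psi_1,\psi_2$. Any diagonal map $h:V_2\to W_2$ making both triangles commute satisfies exactly the two displayed equations, so $\phi_2:=h$ solves the problem.

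It remains to check that the square commutes, that is, $\pi_{2,3}\circ\psi_1=\psi_2\circ\iota_{1,2}$. Expanding, this reads
\begin{equation*}
    \pi_{1,3}\circ\phi_1+\pi_{2,3}\circ\rho_{1,2}=\phi_3\circ\iota_{1,3}-\rho_{2,3}\circ\iota_{1,2}.
\end{equation*}
By the definition of path integrals, $\rho_{1,3}=\pi_{2,3}\circ\rho_{1,2}+\rho_{2,3}\circ\iota_{1,2}$. So the equation above is precisely the hypothesis $\rho_{1,3}=\phi_3-\phi_1$, and the square commutes. Lemma~\ref{lem:fill-triangle} then produces $h$, which finishes the proof.

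The only delicate point is bookkeeping: one must confirm that the hypothesis on $\rho_{1,3}$ is exactly the commutativity condition, with the signs matching. I expect no genuine obstacle beyond that.
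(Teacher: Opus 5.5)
Your proposal is correct and is essentially the paper's own proof: the paper defines the same two maps, $f_{1,2}=\pi_{1,2}\circ\phi_1+\rho_{1,2}$ and $f_{2,3}=\phi_3\circ\iota_{2,3}-\rho_{2,3}$, and checks that the square commutes via $\rho_{1,3}=\phi_3-\phi_1$. It then obtains $\phi_2$ from Lemma~\ref{lem:fill-triangle}, exactly as you do, and your sign bookkeeping matches.
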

\begin{proof}
    Consider the following diagram:
    \[\begin{tikzcd}
	{W_1} & {W_2} & {W_3} \\
	{V_1} & {V_2} & {V_3}
	\arrow["{\pi_{1,2}}", two heads, from=1-1, to=1-2]
	\arrow["{\pi_{2,3}}", two heads, from=1-2, to=1-3]
	\arrow["{\phi_1}", from=2-1, to=1-1]
	\arrow["{\rho_{1,2}}"{description}, from=2-1, to=1-2]
	\arrow["{\iota_{1,2}}"', hook, from=2-1, to=2-2]
	\arrow[dashed, from=2-2, to=1-2]
	\arrow["{\rho_{2,3}}"{description}, from=2-2, to=1-3]
	\arrow["{\iota_{2,3}}"', hook, from=2-2, to=2-3]
	\arrow["{\phi_3}"', from=2-3, to=1-3]
    \end{tikzcd}\]
    Define $f_{1,2}=\pi_{1,2}\circ \phi_1 + \rho_{1,2}:V_1\to W_2$ and $f_{2,3}=\phi_3\circ \iota_{2,3}-\rho_{2,3}:V_2\to W_3$. Then $\partial\phi=\rho$ if and only if both triangles in the following diagram commute:
    \[\begin{tikzcd}
	& {W_2} & {W_3} \\
	{V_1} & {V_2}
	\arrow["{\pi_{2,3}}", two heads, from=1-2, to=1-3]
	\arrow["{f_{1,2}}", from=2-1, to=1-2]
	\arrow["{\iota_{1,2}}"', hook, from=2-1, to=2-2]
	\arrow["{\phi_2}"{description}, dashed, from=2-2, to=1-2]
	\arrow["{f_{2,3}}"', from=2-2, to=1-3]
    \end{tikzcd}\]
    We note that the outer arrows in this diagram commute, since
    \begin{align}
        \pi_{2,3}\circ f_{1,2}-f_{2,3}\circ \iota_{1,2}&=\pi_{1,3}\circ \phi_1+\pi_{2,3}\circ \rho_{1,2}-\phi_3\circ \iota_{1,3}+\rho_{2,3}\circ \iota_{1,2}\\
        &=(\phi_1-\phi_3)+\rho_{1,3}=0.
    \end{align}
    Thus, by \Cref{lem:fill-triangle}, there exists $\phi_2$ that fulfills the requirement.
\end{proof}

\begin{proof}
    [Proof of exactness of \eqref{eq:grid-acyclicity} at $C^1$]

    Let $\rho\in C^1(V,W)$ with $\partial\rho=0$ be given. Pick $\phi_{n1}$ arbitrarily, e.g., zero. Then we sequentially use \Cref{lem:lift2} to fill in $\phi_{(n-1)1}, \dots, \phi_{11}$. 

    Now to proceed to the second row, we first fill in $\phi_{n2}$ using \Cref{lem:lift2} along the arrow $n1\to n2$. Note that $\rho_{(n-1)1,n2}=\phi_{n2}-\phi_{(n-1)1}$ from the established path $(n-1)1\to n1\to n2$. Therefore, we may apply \Cref{lem:lift3} to the new path $(n-1)1\to (n-1)2\to n2$, and fill in $\phi_{(n-1)2}$. We have so far established the compatibility between $\phi$ and $\rho$ along the bottom row of horizontal arrows and the bottom right unit square. Continuing this process from right to left, we fill in the second row.

    Repeating this construction row by row, we complete the construction of $\phi$. The following diagram illustrates the order in which $\phi_x$ are filled in, for $P=P_{4,3}$.
    \[\begin{tikzcd}
	12 & 11 & 10 & 9 \\
	8 & 7 & 6 & 5 \\
	4 & 3 & 2 & 1
	\arrow[from=1-1, to=1-2]
	\arrow[from=1-2, to=1-3]
	\arrow[from=1-3, to=1-4]
	\arrow[from=2-1, to=1-1]
	\arrow[from=2-1, to=2-2]
	\arrow[from=2-2, to=1-2]
	\arrow[from=2-2, to=2-3]
	\arrow[from=2-3, to=1-3]
	\arrow[from=2-3, to=2-4]
	\arrow[from=2-4, to=1-4]
	\arrow[from=3-1, to=2-1]
	\arrow[from=3-1, to=3-2]
	\arrow[from=3-2, to=2-2]
	\arrow[from=3-2, to=3-3]
	\arrow[from=3-3, to=2-3]
	\arrow[from=3-3, to=3-4]
	\arrow[from=3-4, to=2-4]
    \end{tikzcd}\]
\end{proof}

\begin{proof}
    [Proof of \Cref{thm:grid-acyclicity} and \Cref{thm:acyclicity}]
    The preceding arguments conclude the proof of \Cref{thm:grid-acyclicity}. Unwrapping the statement of \Cref{thm:grid-acyclicity}, we get precisely \Cref{thm:acyclicity}.
\end{proof}

\begin{remark}
    If we track the proof of \Cref{thm:grid-acyclicity}, we note that the conclusion still holds as long as $V$ is injective and rows of $W$ (but not necessarily columns) are surjective. By symmetry, the same is true if $V$ is injective and columns of $W$ (but not necessarily rows) are surjective. This would imply, for example, the exactness still holds if $W$ is composed of a surjective flag on a subgrid and rows of zeros below; note that $W$ ceases to be a surjective flag. The same is true if $W$ is composed of a surjective flag on a subgrid and columns of zeros to its left. However, \Cref{thm:grid-acyclicity} stops being true if $W$ is composed of a surjective flag on a northeast block and zero elsewhere. For example, take $P=P_{2,2}=\set{11,21,12,22}$, $V_{11}=0$, $V_{21}=V_{12}=V_{22}=\k$, $W_{22}=\k$, $W_{11}=W_{21}=W_{12}=0$, and the nonzero maps in $V$ are identity maps. Take $\rho_{21,22}=a$ and $\rho_{12,22}=b$, where $a\neq b$ are scalars in $\k$. Note that $C^2(V,W)=0$, so $\partial\rho=0$. However, there does not exist $\phi\in C^0(V,W)$ such that $\partial\phi=\rho$, as this would force $\phi_{22}=a-0=b-0$.

    The above generalization will not be needed in the rest of the paper, but it would be necessary if one uses the framework of \Cref{sec:toric} to describe the punctual Quot scheme of a plane, $\Quot_m(\k[\![x,y]\!]^n)$, to be discussed in a forthcoming companion paper. 
\end{remark}

\section{Poset flag varieties and proof of Theorem~\ref{thm:poset-flag}}\label{sec:poset}

In this section, we study the poset flag varieties over a class of posets that contain the rectangular grids, and then prove a generalization of \Cref{thm:poset-flag}. Poset flag varieties are variants of quiver flag varieties studied in \cite{craw11quiver, gu24rim}; however, since poset quivers have relations, results in these works cannot be directly applied to our setting. 

\subsection{General setup}
Let $(P,\preceq)$ be a finite poset and $\k$ be a field. Recall the $P$-flag variety of dimension vector $\mathbf{n}\in \Z^P$ in $\k^n$ is
\[ \Fl_P(\mathbf{n};n)=\{V=(V_i)_{i\in P}: V_i\subeq \k^n,\; \dim_\k V_i=n_i,\; V_i\subeq V_j \text{ if }i\preceq j\}.\]
We view it as a reduced closed subscheme of $\prod_{i\in P} \Gr(n_i,n)$ cut out by closed conditions $V_i\subeq V_j$ for $i\preceq j$. 

Our results in this section will hold whether or not $\Fl_P(\mathbf{n};n)$ is empty, but for the sake of completeness, we verify the obvious numerical criterion for nonemptiness.
\begin{proposition}\label{prop:admissible}
    The variety $\Fl_P(\mathbf{n};n)$ is nonempty if and only if the pair $(\mathbf{n};n)$ is \defn{admissible}, namely, 
    \[ 0\leq n_i\leq n \text{ for all $i$ and } n_i\leq n_j \text{ for }i\preceq j.\]
\end{proposition}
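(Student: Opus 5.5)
Necessity is immediate. If $(V_i)_{i\in P}$ lies in $\Fl_P(\mathbf{n};n)$, then each $V_i$ is a subspace of $\k^n$ of dimension $n_i$, so $0\leq n_i\leq n$. Whenever $i\preceq j$ we have $V_i\subeq V_j$, and taking dimensions gives $n_i\leq n_j$.

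For sufficiency, we construct an explicit point using coordinate subspaces. Fix the standard basis $e_1,\dots,e_n$ of $\k^n$, and set
\[ V_i\coloneqq \operatorname{span}(e_1,\dots,e_{n_i})\subeq \k^n, \]
with the convention that this is $0$ when $n_i=0$. This is well defined because $0\leq n_i\leq n$, and $\dim_\k V_i=n_i$ by construction. If $i\preceq j$, then admissibility gives $n_i\leq n_j$, so the initial segment $e_1,\dots,e_{n_i}$ is contained in $e_1,\dots,e_{n_j}$, hence $V_i\subeq V_j$. Therefore $(V_i)_{i\in P}\in \Fl_P(\mathbf{n};n)$, and the variety is nonempty.

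The key point is that all the $V_i$ are drawn from a single complete flag $0\subset \langle e_1\rangle\subset\langle e_1,e_2\rangle\subset\cdots\subset\k^n$. Inclusion between members of a chain is then governed purely by the order on dimensions, so admissibility of the dimension vector forces every required inclusion $V_i\subeq V_j$ for $i\preceq j$.

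If nonemptiness is understood scheme-theoretically, the same construction works over any field. The point is defined over $\k$, so $\Fl_P(\mathbf{n};n)(\k)\neq\emptyset$, and in particular the reduced closed subscheme of $\prod_{i\in P}\Gr(n_i,n)$ is nonempty. No step here presents a real obstacle.
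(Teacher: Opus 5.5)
Your proposal is correct and follows essentially the same route as the paper. Necessity comes from comparing dimensions, and sufficiency from taking $V_i$ to be the span of the first $n_i$ standard basis vectors of $\k^n$, which nest according to $n_i\leq n_j$.
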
 
\begin{proof}
    The forward implication is evident. To prove the converse, let $(\mathbf{n};n)$ be admissible. Assign $V_i=\k^{n_i}\times 0\subeq \k^n$, the span of the first $n_i$ standard basis vectors. Then whenever $i\preceq j$, $n_i\leq n_j$ by admissibility, so $V_i\subeq V_j$.
\end{proof}

The poset flag variety is naturally equipped with a tautological bundle $\calV_\infty\coloneqq \k^n$, the trivial vector bundle of rank $n$ on $\Fl_P(\mathbf{n};n)$. We have tautological subbundles $\calV_i\subeq \calV_\infty$ of rank $n_i$, whose fiber at the point $V$ is $V_i$. For $i\preceq j$, the quotient $\calV_j/\calV_i$ is a vector bundle of rank $n_j-n_i$, whose fiber at the point $V$ is $V_j/V_i$. 

\begin{remark}~
    \begin{enumerate}
        \item The natural $\GL_n$-action on the poset flag variety may not be transitive. For example, let $P=[2]\times [2]=\{11,21,12,22\}$, $(\mathbf{n};n)=(0,1,1,2;2)$. Then the function $V\mapsto \dim V_{12}\cap V_{21}$ can take values $0$ and $1$, showing that $\Fl_P(\mathbf{n};n)$ cannot be a single $\GL_n$-orbit. In this example, $\calV_{12}\cap\calV_{21}$ is not a vector bundle.
        \item \label{rmk:poset-flag-butterfly} The poset flag variety may be singular. Consider the ``butterfly'' poset $P=\{1,2,3,4\}$ where $i\prec j$ if and only if $i\in \{1,2\}$ and $j\in \{3,4\}$. Consider $(\mathbf{n};n)=(1,1,2,2;3)$. Then an explicit computation by stratification according to $\dim V_1\cap V_2$ shows
        \begin{equation}
        [\Fl_P(\mathbf{n};n)]=1+4\L+6\L^2+5\L^3+2\L^4.
        \end{equation}
        Since $\Fl_P(\mathbf{n};n)$ is projective, the lack of Poincar\'e duality implies that $\Fl_P(\mathbf{n};n)$ is not smooth.
    \end{enumerate}
    \label{rmk:poset-flag}
\end{remark}

\subsection{Combinatorics of tame posets}
We define a property on finite posets that is satisfied by rectangular grids and is sufficient for the poset variety to be nice. 

\begin{definition}\label{def:tame}
  Let $P$ be a finite poset, and let $\hhat P=P\sqcup \set{\pm \infty}$. A \defn{traverse} of $P$ is a permutation $x_1,x_2,\dots,x_{\abs{P}}$ of $P$. Given a traverse of $P$ and $0\leq i\leq \abs{P}$, let $P_i:=\set{x_1,\dots,x_i}$. Furthermore, for $1\leq i\leq \abs{P}$, define subsets of $\hhat P$:
  \begin{equation}
      P_i^-:=(P_i\cap (-\infty, x_i)_{P})\sqcup \set{-\infty},\quad P_i^+:=(P_i\cap (x_i,\infty)_P)\sqcup \set{\infty}.
  \end{equation}
  The traverse is \defn{good} if for each $1\leq i\leq \abs{P}$,  
  \begin{enumerate}
      \item\label{it:prec} the set $P^-_i$ admits a unique maximal element in $\hhat{P}$, and
      \item\label{it:succ} the set $P^+_i$ admits a unique minimal element in $\hhat{P}$.
  \end{enumerate}
  In this case, we denote $x_i^-:=\max P_i^-$ and $x_i^+:=\min P_i^+$. We call $P$ \defn{tame}, if a good traverse of $P$ exists.   
\end{definition}

\begin{example}\label{eg:grid-tame}
    The grid poset $P=[w]\times [h]$ is tame; the ``reading order'' is a good traverse:
    \begin{equation}
        \begin{aligned}
            &(1,w),(2,w),\dots,(h,w),\\
            &(1,w-1),\dots,(h,w-1),\\
            &\cdots\\
            &(1,1),(2,1),\dots,(h,1).
        \end{aligned}
    \end{equation}
    Here, we orient $P$ such that the maximal element $(h,w)$ is located at the northeast.

    Moreover, for $x\in P$, it is not hard to verify that $x^+$ with respect to this traverse is the vertex directly above $x$ (or $\infty$ if outside the grid), and $x^-$ is the vertex to the left of $x$ (or $-\infty$ if outside the grid). Indeed, this follows from the observation that $P_i$ is a Young diagram in British convention, i.e., with gravity to the northwest. More generally, any standard Young tableau of the rectangular partition $(n^m)$ in British convention gives a good traverse with the same description of $x^+, x^-$.
\end{example}

\begin{example}
    Subposets of tame posets may not be tame. Indeed, consider $[3]\times [3]$, which is tame by above, and its subposet $P=\{12,21,23,13\}$. This subposet is isomorphic to the butterfly poset in Remark~\ref{rmk:poset-flag}\ref{rmk:poset-flag-butterfly}. One can directly verify that $P$ is not tame by checking all $4!=24$ permutations of $P$. Alternatively, by \Cref{thm:poset-flag-tame}, which we prove below, $P$ cannot be tame because the poset flag variety in Remark~\ref{rmk:poset-flag}\ref{rmk:poset-flag-butterfly} is not smooth.
\end{example}

However, \emph{convex} subposets of tame posets are tame, as the next lemma shows. A subposet $P'\subeq P$ is \defn{convex} if for any $x\prec y\prec z$, $x,z\in P'$ implies $y\in P'$. We adopt the usual interval notation $[x,y]_P, (x,y)_P, (x,\infty)_P$, etc. 

\begin{lemma}\label{lem:traverse-restrict}
    If $P$ is a tame poset and $P'\subeq P$ is a convex subposet, then $P'$ is also tame. Moreover, any good traverse on $P$ restricts to a good traverse on $P'$, and with respect to these traverses, for $x\in P'$,
    \begin{equation}
        x^+_{P'}=\begin{cases}
            x^+_P,& x^+_P\in P';\\
            \infty, & x^+_P\notin P',
        \end{cases}
        \quad
        x^-_{P'}=\begin{cases}
            x^-_P,& x^-_P\in P';\\
            -\infty, & x^-_P\notin P'.
        \end{cases}
    \end{equation}
\end{lemma}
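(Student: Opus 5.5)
The plan is to restrict the given good traverse $x_1,\dots,x_{\abs{P}}$ of $P$ to $P'$. This gives a traverse $y_1,\dots,y_{\abs{P'}}$ of $P'$ in the induced order. Fix $x\in P'$ with $x=x_i$ in the $P$-traverse and $x=y_k$ in the $P'$-traverse. The initial segment $P'_k$ is exactly $P_i\cap P'$. Therefore
\[ (P')_k^-=(P_i\cap P'\cap(-\infty,x)_P)\sqcup\{-\infty\}=(P_i^-\cap P')\sqcup\{-\infty\},\]
and symmetrically $(P')_k^+=(P_i^+\cap P')\sqcup\{\infty\}$. It suffices to show that $(P')_k^-$ has a unique maximal element in $\hhat{P'}$ and that this element is described by the displayed formula. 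The statement for $(P')_k^+$ then follows by the dual argument, reversing the order.

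The first case is $x^-_P\in P'$. Then $x^-_P$ lies in $(P')_k^-$, and every element of $(P')_k^-$ is $\preceq x^-_P$, since $x^-_P$ is the maximum of the larger set $P_i^-$. So $x^-_P$ is the maximum of $(P')_k^-$, and a maximum is in particular the unique maximal element.

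The second case is $x^-_P\notin P'$, which includes the possibility $x^-_P=-\infty$. We need to show that $(P')_k^-=\{-\infty\}$. Suppose instead there is some $z\in P_i^-\cap P'$. Since $x^-_P=\max P_i^-$, we have $z\preceq x^-_P\prec x$; here $x^-_P\neq -\infty$, because $z$ is a genuine element of $P$ lying below it. Both $z$ and $x$ lie in $P'$, so convexity forces $x^-_P\in P'$, a contradiction. Hence the maximal element of $(P')_k^-$ is $-\infty$, again matching the formula.

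The one point needing care is that "unique maximal element" in Definition~\ref{def:tame} must really be read as "maximum" for $P_i^-$. This holds because $-\infty$ lies below everything in $\hhat P$, so a finite poset with a unique maximal element in which every element sits under some maximal element has that element as its maximum. I use this to get $z\preceq x^-_P$. Apart from this, and writing out the dual statement for $x^+$, I expect no real obstacle.
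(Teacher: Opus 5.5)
Your proof is correct and follows the same route as the paper: restrict the good traverse, note that the initial segments of the restricted traverse are $P_i\cap P'$, and split into the case where the old extremal element $x^{\pm}_P$ lies in $P'$ (so it survives) and the case where it does not (so convexity rules out any surviving comparable element). The differences are cosmetic: you treat $x^-$ first and dualize, you fold the case $x^-_P=-\infty$ into the case $x^-_P\notin P'$, and you state explicitly the finite-poset fact that a unique maximal element is a maximum, which the paper uses tacitly.
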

\begin{proof}
    Let $x_1,\dots,x_{\abs{P}}$ be a good traverse of $P$. Let $\set{i_1< \dots< i_{\abs{P'}}}$ be the set of all indices $i$ such that $x_i\in P'$, and consider the traverse $\set{x_{i_j}}_j$ of $P'$. Then for $1\leq j\leq \abs{P'}$, 
    \begin{itemize}
        \item If $x^+_{i_j,P}\in P'$, then $x^+_{i_j,P}$ is least among $\set{x_1,\dots,x_{i_j-1}}\cap (x_{i_j},\infty)_{P}$, and thus least among $\set{x_{i_1},\dots,x_{i_{j-1}}}\cap (x_{i_j},\infty)_{P'}$, so that $x^+_{i_j,P'}$ exists and $x^+_{i_j,P'} = x^+_{i_j,P}$.
        \item If $x^+_{i_j,P}=\infty$, then clearly $x^+_{i_j,P'}=\infty$.
        \item If $x^+_{i_j,P}\in P\setminus P'$, then we claim that there does not exist $1\leq k\leq j-1$ such that $x_{i_k}\succ x_{i_j}$. If it existed, then by the definition of $x^+_{i_j,P}$, we have $x_{i_k}\succeq x^+_{i_j,P}\succ x_{i_j}$. By convexity of $P'$, $x^+_{i_j,P}\in P'$, a contradiction. This proves the claim, and it follows that $x^+_{i_j,P'}=\infty$.
    \end{itemize}
    We thus prove the formula for $x^+_{P'}$. The assertion for $x^-_{P'}$ is proved analogously. The existence of $x^+_{P'}$ and $x^-_{P'}$ implies that $\set{x_{i_j}}_j$ is a good traverse of $P'$.
\end{proof}

\begin{example}\label{eg:gap-young}
    If $\gcd(a,b)=1$, then the gap poset $G$ is tame because it can be realized as a convex subposet of a grid poset. More precisely, by \eqref{eq:gap-young}, there is an order preserving bijection between $G$ and
    \[ \{(x,y)\in \{-b,\dots,-1\}\times \{-a,\dots,-1\}: ab+ax+by>0\}, \]
    which is a convex subposet of $\{-b,\dots,-1\}\times \{-a,\dots,-1\}$ that can be visualized as a northeast-gravitating Young subdiagram.
\end{example}

Finally, we prove a technical lemma whose use will be obvious later. 
Recall that for any finite poset $P$ and $x\prec y\in P$, the \defn{M\"obius function} $\mu_P(x,y)$ is defined inductively by
\begin{equation}
    \mu_P(y,y)=1,\quad \sum_{z\in [x,y]_P}\mu_P(z,y)=0 \text{ for }x\prec y.
\end{equation}
The M\"obius function also satisfies the dual recursion
\[ \sum_{z\in [x,y]_P} \mu_P(x,z)=\delta_{xy}=\begin{cases}
1, & x=y;\\
0, & x\neq y.
\end{cases}\]
\begin{lemma}\label{lem:mobius}
    Suppose $P$ is a finite tame poset with a good traverse $x_1,\dots,x_{\abs{P}}$. Consider the free abelian group spanned by formal elements $e_{x,y}$, where $x,y\in \hhat P:=P\sqcup \set{\pm \infty}$ and $x\prec y$. Then we have the following identity
    \begin{equation}\label{eq:traverse-mobius}
        \sum_{x\in P} e_{x^-,x^+}-e_{x,x^+}-e_{x^-,x} = e_{-\infty,\infty}+\sum_{\substack{x,y\in \hhat P\\x\prec y}} \mu_{\hhat P}(x,y) e_{x,y}.
    \end{equation}
    In particular, the left-hand side is independent of the choice of the traverse.
\end{lemma}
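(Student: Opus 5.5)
We prove \eqref{eq:traverse-mobius} by induction along the traverse. For $0\leq i\leq \abs{P}$, let $P_i=\set{x_1,\dots,x_i}$, viewed as an induced subposet of $P$. Then $x_1,\dots,x_i$ is a traverse of $P_i$, and the data $x_j^\pm$ for $j\leq i$ are literally the same whether computed in $P$ or in $P_i$, because Definition~\ref{def:tame} only looks at the prefix $P_j$. So this prefix is a good traverse of $P_i$. Define
\[ F_i:=e_{-\infty,\infty}+\sum_{\substack{u,v\in \hhat P_i\\ u\prec v}}\mu_{\hhat P_i}(u,v)\,e_{u,v}. \]
It suffices to prove two things: $F_0=0$, and for each $i\geq 1$, with $x=x_i$,
\[ F_i-F_{i-1}=e_{x^-,x^+}-e_{x,x^+}-e_{x^-,x}. \]
Telescoping then gives the identity, and independence of the traverse is immediate since the right-hand side of \eqref{eq:traverse-mobius} does not mention the traverse. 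The base case is trivial: $\hhat P_0=\set{\pm\infty}$ with $\mu(-\infty,\infty)=-1$, so $F_0=0$.

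For the inductive step, I will use P.~Hall's theorem: $\mu(u,v)=\tilde\chi(\Delta(u,v))$, the reduced Euler characteristic of the order complex of the open interval. The key structural input is goodness of the traverse. Every $z\in P_{i-1}$ with $z\prec x$ satisfies $z\preceq x^-$, and conversely $z\preceq x^-$ implies $z\prec x$. So within $\hhat P_{i-1}$,
\[ \set{z\prec x}=(-\infty,x^-],\qquad \set{z\succ x}=[x^+,\infty). \]
From this we compute, for $u,v\in\hhat P_{i-1}$:
\begin{itemize}
\item \emph{New pairs $(u,x)$.} The open interval $(u,x)$ in $\hhat P_i$ equals $(u,x^-]$. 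This is a cone with apex $x^-$, hence has $\tilde\chi=0$, unless $u=x^-$. In that case it is empty, with $\tilde\chi=-1$. So these pairs contribute exactly $-e_{x^-,x}$.
\item \emph{New pairs $(x,v)$.} Symmetrically, these contribute exactly $-e_{x,x^+}$.
\item \emph{Old pairs $(u,v)$ with $u\prec x\prec v$.} Here $\Delta_{\hhat P_i}(u,v)$ is obtained from $\Delta_{\hhat P_{i-1}}(u,v)$ by adjoining the cone over $\operatorname{lk}(x)=\Delta(u,x)*\Delta(x,v)$. Hence
\[ \tilde\chi_{\mathrm{new}}=\tilde\chi_{\mathrm{old}}-\tilde\chi(\operatorname{lk}x)=\tilde\chi_{\mathrm{old}}+\tilde\chi(\Delta(u,x))\,\tilde\chi(\Delta(x,v)), \]
using $\tilde\chi(A*B)=-\tilde\chi(A)\tilde\chi(B)$. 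By the previous two items, the product vanishes unless $u=x^-$ and $v=x^+$, where it equals $(-1)(-1)=1$. So these pairs contribute exactly $+e_{x^-,x^+}$.
\item \emph{All other old pairs.} Their intervals do not contain $x$, so their Möbius values are unchanged.
\end{itemize}
Summing the contributions gives the claimed difference $F_i-F_{i-1}$.

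The main obstacle is the bookkeeping in the third item. One must check that the extended poset really only adds the cone over the link, and that the join formula with the empty-set conventions ($\tilde\chi(\emptyset)=-1$, $\emptyset*B=B$) gives the right sign. The degenerate cases where $x^\pm=\pm\infty$ need particular care. If one prefers to avoid topology, the same computation can instead be done purely algebraically. In that version one uses the recursion $\sum_{z\in[u,v]}\mu(z,v)=0$ and verifies that $\mu_{\hhat P_i}(u,v)-\mu_{\hhat P_{i-1}}(u,v)=\mu_{\hhat P_i}(u,x)\,\mu_{\hhat P_i}(x,v)\cdot(-1)$. This is a one-point-insertion formula, and it follows since $x$ is then the unique new element of $[u,v]$.
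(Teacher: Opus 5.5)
Your proof is correct. The outer structure matches the paper's induction: adjoin $x=x_i$ to $P_{i-1}$, then check that the Möbius side changes by exactly $e_{x^-,x^+}-e_{x,x^+}-e_{x^-,x}$. Where you differ is in how this one-step identity is verified.

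The paper does it in one stroke. It applies the invertible operator $\Phi(e_{x,y})=\sum_{w\preceq x}e_{w,y}$ (left multiplication by the zeta function) to both sides. It then collapses the Möbius sums via $\sum_{x\in[w,y]}\mu(x,y)=\delta_{wy}$. Goodness enters only as $(z,\infty]=[z^+,\infty]$ and $[-\infty,z)=[-\infty,z^-]$.

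You work pair by pair instead. Goodness makes the lower and upper sets of $x$ in $\hat P_{i-1}$ principal. So $(u,x)$ and $(x,v)$ are cones unless $u=x^-$, respectively $v=x^+$. The only old coefficient that moves is the one at $(x^-,x^+)$, via $\mu_{\mathrm{new}}(u,v)-\mu_{\mathrm{old}}(u,v)=\mu(u,x)\mu(x,v)$.

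Your route makes visible exactly which three coefficients change and why. Its cost is importing Hall's theorem, whereas the paper's proof is self-contained. Both of your topological inputs have short algebraic proofs, so the topology can be dropped entirely:
\begin{itemize}
\item If $[u,x)$ has a maximum $m\neq u$, then $\mu(u,x)=-\sum_{z\in[u,m]}\mu(u,z)=0$.
\item The insertion rule follows by checking the defining recursion, using $\sum_{z\in[u,x]}\mu(z,x)=0$.
\end{itemize}

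There is one error, in your closing algebraic remark: the insertion formula carries no factor $(-1)$. The correct statement is $\mu_{\hat P_i}(u,v)-\mu_{\hat P_{i-1}}(u,v)=\mu_{\hat P_i}(u,x)\,\mu_{\hat P_i}(x,v)$, which is what your Euler-characteristic computation actually yields. Check it on a chain $u\prec x\prec v$: the left side is $0-(-1)$ and the right side is $(-1)(-1)$. With the stray sign, the pair $(x^-,x^+)$ would contribute $-e_{x^-,x^+}$ and the identity would fail. Also, ``$x$ is the unique new element of $[u,v]$'' is not by itself a proof of that formula; you need the recursion at $x$ as above.
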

\begin{proof}
    We use induction on $\abs{P}$. As the base case, $P=\varnothing$, so $\hhat P=\set{\pm \infty}$ and $\mu_{\hhat P}(-\infty,\infty)=-1$. The identity is verified by $0=e_{-\infty,\infty}-e_{-\infty,\infty}$.

    Now for $\abs{P}=i>0$, consider $z:=x_i$ and $P':=\set{x_1,\dots,x_{i-1}}$. Then by induction hypothesis, 
    \begin{equation}
        \sum_{x\in P'} e_{x^-,x^+}-e_{x,x^+}-e_{x^-,x} = e_{-\infty,\infty}+\sum_{\substack{x,y\in \hhat {P'}\\x\prec y}} \mu_{\hhat {P'}}(x,y) e_{x,y}.
    \end{equation}
    (Note that for $x\in P'$, the notion of $x^+$ and $x^-$ is unambiguous, i.e., $x^+_{P'}=x^+_P$, etc.) So the desired identity is equivalent to
    \begin{equation}\label{eq:mobius-induction}
        e_{z^-,z^+}-e_{z,z^+}-e_{z^-,z} = \sum_{\substack{x,y\in \hhat {P}\\x\prec y}} \parens*{\mu_{\hhat {P}}(x,y)-\mu_{\hhat {P'}}(x,y) }e_{x,y},
    \end{equation}
    where for convenience we adopt the convention that if $(x,y)\notin \hhat {P'}\times \hhat {P'}$, then $\mu_{\hhat{P'}}(x,y):=0$.

    It remains to verify \eqref{eq:mobius-induction}. While it is possible to check each $e_{x,y}$-coefficient via a simple though lengthy case-by-case analysis, we present a uniform proof below. Work with the free abelian group generated by $e_{x,y}$ for $x,y\in \hhat P$ and $x\preceq y$. Observe that the right-hand side of \eqref{eq:mobius-induction} is equal to
    \begin{equation}
        \mathrm{RHS}\eqref{eq:mobius-induction}=-e_{z,z}+\sum_{\substack{x,y\in \hhat {P}\\x\preceq y}} \parens*{\mu_{\hhat {P}}(x,y)-\mu_{\hhat {P'}}(x,y) }e_{x,y}.
    \end{equation}
    Define a linear operator $\Phi$ by
    \begin{equation}
        \Phi(e_{x,y})=\sum_{\substack{w\in \hhat P\\w\preceq x}}e_{w,y}.
    \end{equation}
    Since the matrix for $\Phi$ in a suitable basis ordering is triangular with diagonal entries one, $\Phi$ is invertible. It thus suffices to verify \eqref{eq:mobius-induction} after applying $\Phi$.

    To compute $\Phi(\mathrm{RHS}\eqref{eq:mobius-induction})$, we compute
    \begin{align}
        \Phi\parens*{\sum_{\substack{x,y\in \hhat P\\x\preceq y}}\mu_{\hhat P}(x,y)e_{x,y}}&=\sum_{\substack{w,x,y\in \hhat P\\w\preceq x\preceq y}}\mu_{\hhat P}(x,y)e_{w,y} =\sum_{\substack{w,y\in \hhat P\\w\preceq y}}e_{w,y} \sum_{x\in [w,y]_{\hhat P}}\mu_{\hhat P}(x,y) \\
        &= \sum_{\substack{w,y\in \hhat P\\w\preceq y}}e_{w,y} \delta_{wy} = \sum_{y\in \hhat P} e_{y,y},
    \end{align}
    and
    \begin{align}
        \Phi\parens*{\sum_{\substack{x,y\in \hhat {P}\\x\preceq y}}\mu_{\hhat {P'}}(x,y)e_{x,y}}&=\sum_{\substack{w,x,y\in \hhat P\\w\preceq x\preceq y}}\mu_{\hhat {P'}}(x,y)e_{w,y} =\sum_{\substack{w,y\in \hhat P\\w\preceq y}}e_{w,y} \sum_{x\in [w,y]_{\hhat P}}\mu_{\hhat {P'}}(x,y) \\
        &=\sum_{y\in \hhat P}e_{z,y}\sum_{x\in [z^+,y]_{P'}}\mu_{\hhat{P'}}(x,y)+\sum_{\substack{w,y\in \hhat{P'}\\w\preceq y}}e_{w,y} \sum_{x\in [w,y]_{\hhat {P'}}} \mu_{\hhat{P'}}(x,y) \\
        &=\sum_{y\in \hhat P}e_{z,y}\delta_{z^+y}+\sum_{\substack{w,y\in \hhat{P'}\\w\preceq y}}e_{w,y} \delta_{wy}=e_{z,z^+}+\sum_{y\in \hhat{P'}} e_{y,y},
    \end{align}
    where we use $\mu_{\hhat{P'}}(x,y)=0$ if $x=z$ or $y=z$, and the fact that $(z,\infty]_{\hhat P}=[z^+,\infty]_{\hhat P}$ from the definition of good traverses. It thus follows that
    \begin{equation}
        \Phi(\mathrm{RHS}\eqref{eq:mobius-induction})=e_{z,z}-e_{z,z^+}-\Phi(e_{z,z})=-e_{z,z^+}-\sum_{\substack{w\in \hhat P\\w\prec z}} e_{w,z}.
    \end{equation}
    On the other hand,
    \begin{align}
        \Phi(\mathrm{LHS}\eqref{eq:mobius-induction})&=\sum_{w\in \hhat{P}}((1_{w\preceq z^-} - 1_{w\preceq z})e_{w,z^+}-1_{w\preceq z^-}e_{w,z}) \\
        &= \sum_{w\in \hhat{P}}(-\delta_{wz}e_{w,z^+}-1_{w\prec  z}e_{w,z})=-e_{z,z^+}-\sum_{\substack{w\in \hhat P\\w\prec z}} e_{w,z},
    \end{align}
    where we use $[-\infty,z)_{\hhat P}=[-\infty,z^-]_{\hhat P}$ from the definition of good traverses. This verifies $\Phi(\mathrm{LHS}\eqref{eq:mobius-induction})=\Phi(\mathrm{RHS}\eqref{eq:mobius-induction})$ and completes the proof of the lemma.
\end{proof}

\subsection{Generalized $q$-multinomial coefficients on posets}\label{subsec:multinomial}
We now set up a uniform framework for the generalized $q$-multinomial coefficients that appear in Theorem~\ref{thm:poset-flag} and indirectly in Theorem~\ref{thm:quot-zeta-tilde}. For a nonempty finite poset $(P,<)$ and a vector $(\mathbf{n};n)\in \Z^P\times \Z_{\geq 0}$, we extend it to a vector $\mathbf{n}\in \Z^{\hhat P}$ (where $\hhat P=P\sqcup \{\pm \infty\}$) by setting $n_\infty=n, n_{-\infty}=0$. Define the \defn{generalized $q$-multinomial coefficient} on $P$ as
\begin{equation}\label{eq:q-binom-def-poset}
    \qbinom{n}{\mathbf{n}}_{q;P}\coloneqq (q;q)_n \prod_{\substack{x,y\in \hhat P\\ x\prec y}} (q;q)_{n_y-n_x}^{\mu_{\hhat P}(x,y)} \in \Q(q)
\end{equation}
if $(\mathbf{n};n)$ is admissible, and zero otherwise. Note that $(\mathbf{n};n)$ is admissible if and only if the extended $\mathbf{n}$ satisfies $n_y-n_x\geq 0$ for all $x,y\in \hhat P, x\prec y$. 

\begin{lemma}
    \label{lem:q-binom-tame}
    If $P$ is a nonempty tame poset, then $\qbinom{n}{\mathbf{n}}_{q;P}$ is a product of $q$-binomial coefficients, and thus is in $\N[q]$.
\end{lemma}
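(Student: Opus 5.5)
We will apply Lemma~\ref{lem:mobius} to turn the M\"obius-function product in \eqref{eq:q-binom-def-poset} into a product indexed by the traverse. Fix a good traverse $x_1,\dots,x_{\abs{P}}$ of $P$. If $(\mathbf{n};n)$ is not admissible, the coefficient is $0\in\N[q]$ and there is nothing to prove, so assume admissibility throughout.

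\textbf{Step 1: pass to the additive group.} Consider the group homomorphism from the free abelian group on the symbols $e_{x,y}$ ($x\prec y$ in $\hhat P$) to the multiplicative group $\Q(q)^\times$ sending $e_{x,y}\mapsto (q;q)_{n_y-n_x}$. By admissibility each $n_y-n_x\geq 0$, so every such value is a nonzero polynomial and the map is well defined. Applying this homomorphism to \eqref{eq:traverse-mobius} gives
\[ \prod_{x\in P}\frac{(q;q)_{n_{x^+}-n_{x^-}}}{(q;q)_{n_{x^+}-n_x}\,(q;q)_{n_x-n_{x^-}}} = (q;q)_{n}\prod_{\substack{x,y\in\hhat P\\ x\prec y}}(q;q)_{n_y-n_x}^{\mu_{\hhat P}(x,y)}, \]
since $n_\infty-n_{-\infty}=n$. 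The right-hand side is exactly $\qbinom{n}{\mathbf{n}}_{q;P}$. The left-hand side is $\prod_{x\in P}\qbinom{n_{x^+}-n_{x^-}}{n_x-n_{x^-}}_q$, because admissibility gives $n_{x^-}\leq n_x\leq n_{x^+}$, where $x^-\prec x\prec x^+$ in $\hhat P$.

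\textbf{Step 2: conclude.} Each factor is an ordinary $q$-binomial coefficient with $0\leq k\leq N$, hence lies in $\N[q]$, and therefore so does their product.

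The only point needing care is the bookkeeping at the boundary: when $x^\pm=\pm\infty$ we use $n_{-\infty}=0$ and $n_\infty=n$, and Lemma~\ref{lem:mobius} is stated with exactly these conventions. The conceptual work is already contained in Lemma~\ref{lem:mobius}, so no step here is a real obstacle.

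As a byproduct, this factorization also explains the non-uniqueness remarked after \eqref{eq:q-binom-gap}: different good traverses give different factorizations of the same coefficient.
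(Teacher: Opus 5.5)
Your proof is correct and is essentially the paper's argument: fix a good traverse, push the identity of Lemma~\ref{lem:mobius} through $e_{x,y}\mapsto (q;q)_{n_y-n_x}$ in the admissible case, and read off $\prod_{x\in P}\qbinom{n_{x^+}-n_{x^-}}{n_x-n_{x^-}}_q$. The only difference is the non-admissible case. You dismiss it because $0\in\N[q]$, which suffices for the statement. The paper additionally shows that the traverse product itself vanishes there, via a covering relation with $n_x>n_y$, so the factorization \eqref{eq:q-binom-tame} holds without the admissibility hypothesis.
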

\begin{proof}
    Fix a good traverse of $P$, and define $x^-, x^+$ with respect to it. Then by Lemma~\ref{lem:mobius}, the following identity is true whenever $(\mathbf{n};n)$ is admissible:
    \begin{equation}
        \label{eq:q-binom-tame}
        \qbinom{n}{\mathbf{n}}_{q;P} = \prod_{x\in P} \qbinom{n_{x^+}-n_{x^-}}{n_{x^+}-n_x}_q,
    \end{equation}
    because both sides are the same Laurent monomial in $(q;q)_{n_y-n_x}$. If $(\mathbf{n};n)$ is not admissible, then the left-hand side of \eqref{eq:q-binom-tame} is zero by definition. We claim the right-hand side is zero. Since the extended vector $\mathbf{n}\in \Z^{\hhat P}$ is not monotone increasing, there exists a covering relation $x\lessdot y$ such that $n_x>n_y$. Because $P\neq \varnothing$, the $e_{x,y}$-coefficient of the right-hand side of \eqref{eq:traverse-mobius} is $\mu_{\hhat P}(x,y)=-1$. As a result, there exists $z\in P$ such that $e_{z^-,z^+}-e_{z,z^+}-e_{z^-,z}$ contributes to negative coefficients of $e_{x,y}$, namely, either $(x,y)=(z,z^+)$ or $(x,y)=(z^-,z)$. In either case, $\qbinom{n_{z^+}-n_{z^-}}{n_{z^+}-n_z}_q=\qbinom{n_{z^+}-n_{z^-}}{n_z-n_{z^-}}_q=0$ because $n_y-n_x<0$, proving the claim. 
\end{proof}

\begin{example}
    If $P=[r]$, then $\qbinom{n}{\mathbf{n}}_{q;P}$ is the usual $q$-multinomial coefficient:
    \[ \qbinom{n}{\mathbf{n}}_{q;P} = \frac{(q;q)_n}{(q;q)_{n_1}(q;q)_{n_2-n_1}\cdots (q;q)_{n_r-n_{r-1}}(q;q)_{n-n_r}} = \qbinom{n}{n_1, n_2-n_1,\dots,n_r-n_{r-1},n-n_r}_q.\]
\end{example}
\begin{example}
    If $P=[w]\times [h]$ is the grid poset, then the only nonzero M\"obius values in $\hhat P$ occur at: 
    \begin{itemize}
        \item covering relations, namely, from $-\infty$ to the southwest corner $(1,1)$, from the northeast corner $(w,h)$ to $\infty$, and along unit edges on the grid, where the M\"obius value is $-1$; 
        \item unit diagonals, namely, from $x$ to $y$ with $y-x=(1,1)$, where the M\"obius value is $1$.
    \end{itemize} 
    We get an explicit formula for the $q$-multinomial coefficients on grid posets:
    \begin{equation}
    \label{eq:q-binom-grid}
        \qbinom{n}{\mathbf{n}}_{q;P}=\frac{(q;q)_n}{(q;q)_{n-n_{(w,h)}}}(q;q)_{n_{(1,1)}} \prod_{y-x=(1,0)\text{ or }(0,1)} (q;q)_{n_y-n_x}^{-1} \prod_{y-x=(1,1)}(q;q)_{n_y-n_x}.
    \end{equation}
\end{example}

Let $\gcd(a,b)=1$. We verify that the $q$-multinomial coefficient on the gap poset $G$ is given by \eqref{eq:q-binom-gap}, as promised. We start with a convenient lemma.

\begin{lemma}\label{lem:q-binom-restrict}
    Let $P$ be a nonempty finite poset, and let $U
    \subeq P$ be an upper set of $P$ (so $x\preceq y, x\in U$ implies $y\in U$). Suppose $\mathbf{n}\in \Z^P$ is a vector supported in $U$ (i.e., $\mathbf{n}|_{P\setminus U}=0$). Then for any $n\geq 0$, $$\qbinom{n}{\mathbf{n}}_{q;P}=\qbinom{n}{\mathbf{n}|_U}_{q;U}.$$
\end{lemma}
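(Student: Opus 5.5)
We compare the two defining products \eqref{eq:q-binom-def-poset} directly. Extend $\mathbf{n}$ to $\hhat P$ by $n_{-\infty}=0$, $n_\infty=n$, and extend $\mathbf{n}|_U$ to $\hhat U=U\sqcup\{\pm\infty\}$ the same way. Admissibility transfers: if $(\mathbf{n};n)$ is admissible for $P$, then its restriction is admissible for $U$. Conversely, suppose $(\mathbf{n}|_U;n)$ is admissible. The entries on $P\setminus U$ are $0$, so $0\le n_x\le n$ holds everywhere. For $x\preceq y$ with $x\notin U$ we have $n_x=0\le n_y$, and $x\in U$ forces $y\in U$. Hence both sides vanish simultaneously, and it remains to compare the admissible case.

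Write $D=P\setminus U$, a lower set. In $\prod_{x\prec y}(q;q)_{n_y-n_x}^{\mu_{\hhat P}(x,y)}$, group the factors by type.

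\emph{Pairs with $x,y\in \hhat D:=D\sqcup\{-\infty\}$.} Here $n_y-n_x=0$, so the factor is $1$.

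\emph{Pairs with $x,y\in U\sqcup\{\infty\}$.} Since $U\sqcup\{\infty\}$ is an upper set of $\hhat P$, the interval $[x,y]_{\hhat P}$ lies inside it and equals $[x,y]_{\hhat U}$. Thus $\mu_{\hhat P}(x,y)=\mu_{\hhat U}(x,y)$, and these factors match the corresponding ones for $U$.

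\emph{Remaining pairs: $x\in \hhat D$, $y\in U\sqcup\{\infty\}$.} The exponent base is $n_y-0=n_y$. For fixed $y$, the total exponent of $(q;q)_{n_y}$ on the $P$ side is $\sum_{x\in \hhat D,\,x\prec y}\mu_{\hhat P}(x,y)$. On the $U$ side the only such $x$ is $-\infty$, contributing $\mu_{\hhat U}(-\infty,y)$.

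The key step is the Möbius identity
\[ \sum_{x\in \hhat D,\, x\prec y}\mu_{\hhat P}(x,y)=\mu_{\hhat U}(-\infty,y)\qquad (y\in U\sqcup\{\infty\}). \]
To prove it, use the recursion $\sum_{x\in[w,y]}\mu(x,y)=0$ with $w=-\infty$ in $\hhat P$, which gives
\[ \sum_{x\in \hhat D,\,x\preceq y}\mu_{\hhat P}(x,y)=-\sum_{x\in U\sqcup\{\infty\},\,x\preceq y}\mu_{\hhat P}(x,y). \]
The right side equals $-\sum_{x\in[\,\cdot\,,y]\cap (U\sqcup\{\infty\})}\mu_{\hhat U}(x,y)$, by the upper-set observation above. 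The recursion in $\hhat U$ evaluates this as $\mu_{\hhat U}(-\infty,y)$. There is one small check: the $x=y$ term is excluded from the left side since $y\notin\hhat D$, so the strict inequality is harmless.

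For $y=\infty$ the same identity holds, and the prefactor $(q;q)_n$ is common to both sides. Collecting the factors gives the equality. The only real obstacle is the Möbius bookkeeping for the mixed pairs, and the single recursion argument handles it uniformly.
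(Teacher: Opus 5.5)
Your proposal is correct and is essentially the paper's proof. You use the same admissibility transfer, the same upper-set argument identifying $\mu_{\widehat P}$ with $\mu_{\widehat U}$ on intervals inside $U\sqcup\{\infty\}$, and the same subtraction of the two M\"obius recursions for the pairs with $x\in(P\setminus U)\sqcup\{-\infty\}$. The paper only packages your grouping of factors as an identity of formal sums, via the collapse map $\widehat P\to\widehat U$ that sends $(P\setminus U)\sqcup\{-\infty\}$ to $-\infty$; its convention $e_{x,x}=0$ absorbs your trivial factors $(q;q)_0=1$.
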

\begin{proof}
    We claim $\mathbf{n}$ is admissible if and only if $\mathbf{n}|_U$ is. The forward implication is trivial. For the backward implication, the only way the extension-by-zero $\mathbf{n}$ fails to be admissible is when $x\preceq y, x\in U, y\notin U, n_x>n_y=0$. But this is impossible because $U$ is an upper set. 
    
    Now it suffices to consider the case $\mathbf{n}$ is admissible. Define the surjection of posets $\phi:\hhat P\to \hhat U$ by
    \[\phi(x)=\begin{cases}
        x, & x\in U\cup \{\infty\},\\
        -\infty, & \text{else}.
    \end{cases}\]
    Then by comparing the factors, it suffices to prove that
    \begin{equation}\label{eq:mobius-restrict}
        \sum_{\substack{x,y\in \hhat P\\x\prec y}} \mu_{\hhat P}(x,y) e_{\phi(x),\phi(y)} = \sum_{\substack{x,y\in \hhat U\\x\prec y}} \mu_{\hhat U}(x,y) e_{x,y}
    \end{equation}
    in the free abelian group generated by $e_{x,y}$, $x,y\in \hhat U, x\prec y$, where we adopt the convention $e_{x,x}=0$. 

    We analyze the coefficients. There are two cases to consider. Let $U'=U\cup \{\infty\}$.
    \begin{enumerate}
        \item $e_{x,y}$ with $x,y\in U'$: the coefficient on the left-hand side is $\mu_{\hhat P}(x,y)$, while the coefficient on the right-hand side is $\mu_{\hhat U}(x,y)$. However, these M\"obius values are equal because $[x,y]_{\hhat P}=[x,y]_{\hhat U}$ are the same interval, again because $U$ is an upper set.
        \item $e_{-\infty,y}$ with $y\in U'$: we need to prove
        \[ \sum_{\substack{x\in \hhat P \setminus U'\\x\prec y}} \mu_{\hhat P}(x,y)=\mu_{\hhat U}(-\infty,y).\]
        By the definition of M\"obius functions on $\hhat P$ and on $\hhat U$, we have
        \begin{equation*}
            \sum_{x\in [-\infty,y]_{\hhat P}} \mu_{\hhat P}(x,y)=0
        \end{equation*}
        \begin{equation*}
            \sum_{x\in [-\infty,y]_{\hhat U}} \mu_{\hhat U}(x,y)=0
        \end{equation*}
        Subtracting these two equations and cancelling the contributions from $x\in U'$ (recall $\mu_{\hhat P}(x,y)=\mu_{\hhat U}(x,y)$ for $x,y\in U'$), we get
        \begin{equation*}
            \sum_{\substack{x\in \hhat P \setminus U'\\x\prec y}} \mu_{\hhat P}(x,y) - \sum_{x=-\infty} \mu_{\hhat U}(x,y) = 0,
        \end{equation*}
        which is exactly what we want.
    \end{enumerate}
\end{proof}

\begin{lemma}[cf.~\eqref{eq:q-binom-gap}]
    Let $\gcd(a,b)=1$ and $G$ be the gap poset. Then 
    \[ \qbinom{n}{\mathbf{n}}_{q;G}= \frac{(q;q)_n}{(q;q)_{n-n_f}} \prod_{i\in G} \frac{(q;q)_{n_i-n_{i-a-b}}}{(q;q)_{n_i-n_{i-a}}(q;q)_{n_i-n_{i-b}}},\]
    with the convention that $n_i=0$ for $i<0$, and if any $(q;q)_{-m}, m>0$ appears in the denominator, then the product is zero.\label{lem:q-binom-gap}
\end{lemma}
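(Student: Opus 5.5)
We need the Möbius function of $\hhat G$, where $G$ carries the order $i\preceq j$ iff $j-i\in\pairing{a,b}$. By definition \eqref{eq:q-binom-def-poset},
\[\qbinom{n}{\mathbf{n}}_{q;G}=(q;q)_n\prod_{x\prec y\in\hhat G}(q;q)_{n_y-n_x}^{\mu_{\hhat G}(x,y)},\]
so it suffices to show that the only nonzero values of $\mu_{\hhat G}(x,y)$, $x\prec y$, are the following:
\begin{itemize}
\item $\mu(i-a,i)=\mu(i-b,i)=-1$ and $\mu(i-a-b,i)=+1$ for $i\in G$ and those shifts lying in $G$;
\item $\mu(f,\infty)=-1$;
\item values $\mu(-\infty,i)$ which combine to reproduce the factors with $n_{i-a}$, $n_{i-b}$, $n_{i-a-b}$ equal to $0$ when the shifted index is negative.
\end{itemize}
Reading the exponents then gives the formula. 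The factor $(q;q)_n/(q;q)_{n-n_f}$ comes from $(q;q)_n$ together with the exponent $-1$ on $(q;q)_{n_\infty-n_f}$.

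The cleanest route is to reduce to the grid and use Lemma~\ref{lem:q-binom-restrict}. By Example~\ref{eg:gap-young}, $G$ is order-isomorphic to the subposet
\[U=\{(x,y)\in\{-b,\dots,-1\}\times\{-a,\dots,-1\}: ab+ax+by>0\}\]
of the grid $P=\{-b,\dots,-1\}\times\{-a,\dots,-1\}$. First I will check that $U$ is an upper set of $P$: if $(x,y)\le(x',y')$ then $ab+ax'+by'\ge ab+ax+by>0$. I will also check that under this bijection, moving one step in $x$ or in $y$ corresponds to subtracting $b$ or $a$ from the gap, in the appropriate orientation; concretely, a gap $g=ab-au-bv$ with $u,v\ge1$ corresponds to $(x,y)=(-u,-v)$. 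Extending $\mathbf{n}$ by zero to $P$, Lemma~\ref{lem:q-binom-restrict} gives $\qbinom{n}{\mathbf{n}}_{q;G}=\qbinom{n}{\tilde{\mathbf{n}}}_{q;P}$, and the grid formula \eqref{eq:q-binom-grid} then applies directly. In that formula:
\begin{itemize}
\item the northeast corner $(-1,-1)$ corresponds to $ab-a-b=f$;
\item edges $y-x=(1,0)$ and $(0,1)$ give the denominators $(q;q)_{n_i-n_{i-a}}$ and $(q;q)_{n_i-n_{i-b}}$;
\item diagonals give the numerators $(q;q)_{n_i-n_{i-a-b}}$.
\end{itemize}

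The main bookkeeping step is matching the grid formula, which is indexed by all grid points including those outside $U$ where $\tilde n=0$, with the product over $i\in G$ using the convention $n_j=0$ for $j<0$. There are two parts to this.
\begin{itemize}
\item For a grid point outside $U$ (value $0$), all edge and diagonal factors into it or out of it that land outside $U$ equal $(q;q)_0=1$, because both endpoints carry the value $0$ (no point outside $U$ lies above a point of $U$).
\item For a grid point $p\in U$ corresponding to gap $i$, the left, lower and diagonal-lower neighbours of $p$ are either in $U$, with gaps $i-b$, $i-a$, $i-a-b$ (the correct sign), or outside $U$ with value $0$. Outside $U$ means the corresponding integer is either a non-gap nonnegative number or negative, and a positive non-gap carries value $0$ as well.
\end{itemize}
One subtlety is that $i-a\ge 0$ may be a semigroup element rather than a gap. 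Such an entry should also be treated as $0$ in \eqref{eq:q-binom-gap}, and I will interpret the convention accordingly (equivalently, $n_j=0$ for $j\notin G$). The other subtlety is the factor $(q;q)_{n_{(1,1)}}$ at the southwest corner $(-b,-a)$, which lies outside $U$ and has value $0$, so it contributes $1$.

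Finally, I will handle the vanishing convention for non-admissible $\mathbf{n}$. Lemma~\ref{lem:q-binom-restrict} preserves admissibility, and a non-admissible vector forces some covering difference to be negative, so some $(q;q)_{-m}$ appears in the denominator. This matches the stated convention, in the same way as the argument in Lemma~\ref{lem:q-binom-tame}.
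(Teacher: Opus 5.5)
Your proposal is correct and follows essentially the same route as the paper. Both arguments embed $G$ as an upper set of a grid whose west and south sides miss $G$ (your explicit grid $\{-b,\dots,-1\}\times\{-a,\dots,-1\}$ has this property), apply Lemma~\ref{lem:q-binom-restrict}, and read off \eqref{eq:q-binom-grid}, noting that every factor away from $G$ is $(q;q)_0=1$.

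One simplification: the ``subtlety'' you flag never occurs, so no reinterpretation of the statement's convention is needed. If $i\in G$ and $i-a$, $i-b$ or $i-a-b$ lay in $\pairing{a,b}$, then $i$ would too. Equivalently, every grid point outside $U$ has $ab+ax+by<0$, so the convention $n_j=0$ for $j<0$ already suffices.
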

\begin{proof}
    Using Example~\ref{eg:gap-young}, embed $G$ as a northeast-gravitating Young diagram in a sufficiently large rectangular grid poset $P$ such that the vertices on the west side and the south side of $P$ are disjoint from $G$. Note that $G$ is an upper set of $P$, and the Frobenius element $f$ is the northeast corner of $P$. By Lemma~\ref{lem:q-binom-restrict}, 
    \[ \qbinom{n}{\mathbf{n}}_{q;G}=\qbinom{n}{\mathbf{n}}_{q;P},\]
    where $\mathbf{n}$ is extended by zero to a vector in $\Z^P$. By \eqref{eq:q-binom-grid} and that $\mathbf{n}$ vanishes on the west and south sides of $P$, we have
    \[ \qbinom{n}{\mathbf{n}}_{q;P} = \frac{(q;q)_{n}}{(q;q)_{n-n_f}}\prod_{x\in P} \frac{(q;q)_{n_x-n_{x-(1,1)}}}{(q;q)_{n_x-n_{x-(1,0)}}(q;q)_{n_x-n_{x-(0,1)}}},\]
    as the only mismatched factors are $(q;q)_{0-0}=1$. Finally, since $G$ is an upper set, for $x\in P\setminus G$, the corresponding factor in the product is $1$. The desired formula follows.
\end{proof}

\subsection{Poset flag variety on tame posets}
We are ready to prove the main geometric theorem of the section.
\begin{theorem}\label{thm:poset-flag-tame}
    Let $P$ be a finite nonempty tame poset. For $n\in \N$ and $\mathbf{n}\in \Z^P$, the poset flag variety $\Fl_P(\mathbf{n};n)$ is connected, smooth, and projective. It can be built as an iterated Grassmannian bundle over a point, and admits an affine paving. Moreover, the motive of $\Fl_P(\mathbf{n};n)$ is given by the generalized $q$-multinomial coefficient $\qbinom{n}{\mathbf{n}}_{\L;P}$.
\end{theorem}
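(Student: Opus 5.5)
We argue by induction on $\abs{P}$ along a fixed good traverse $x_1,\dots,x_{\abs{P}}$. Write $P_i=\set{x_1,\dots,x_i}$ and let $\mathbf{n}|_{P_i}$ be the restricted dimension vector. By Lemma~\ref{lem:traverse-restrict}, or simply by the definition of good traverse (the prefixes need not be convex, but $x_i^\pm$ is defined relative to $P_i$), each $P_i$ is tame with the restricted traverse. Also, the values $x^\pm$ computed in $P_i$ agree with those computed in $P$ for all $x\in P_i$. Consider the forgetful morphism
\[ \pi_i:\Fl_{P_i}(\mathbf{n}|_{P_i};n)\to \Fl_{P_{i-1}}(\mathbf{n}|_{P_{i-1}};n),\]
which drops $V_{x_i}$. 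The key claim is that $\pi_i$ is a Grassmannian bundle. Set $z=x_i$. The conditions on $V_z$ are $V_w\subeq V_z$ for all $w\in P_{i-1}$ with $w\prec z$, and $V_z\subeq V_w$ for all $w\in P_{i-1}$ with $w\succ z$. By goodness, $P_i^-$ has a unique maximum $z^-$, so every such $w\prec z$ satisfies $w\preceq z^-$. Hence the lower conditions are equivalent to $V_{z^-}\subeq V_z$, using $V_{-\infty}=0$. Dually, the upper conditions are equivalent to $V_z\subeq V_{z^+}$, using $V_\infty=\k^n$. Therefore the fiber over a point of the base is
\[ \Gr(n_z-n_{z^-},\,V_{z^+}/V_{z^-}),\]
and globally
\[ \Fl_{P_i}(\mathbf{n}|_{P_i};n)\cong \Gr\bigl(n_z-n_{z^-},\calV_{z^+}/\calV_{z^-}\bigr),\]
the relative Grassmannian of the vector bundle $\calV_{z^+}/\calV_{z^-}$ of rank $n_{z^+}-n_{z^-}$ on the base. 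This is a Zariski-locally trivial Grassmannian bundle, which is smooth, proper and connected over the base. It is empty exactly when $n_{z^-}\le n_z\le n_{z^+}$ fails, which is consistent with the empty-set case. The isomorphism should be checked scheme-theoretically on the reduced structure. Since the total space will be smooth, hence reduced, identifying points together with the closed-condition description suffices, and I expect this to be routine.

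Iterating from the base case $\Fl_\varnothing=\mathrm{pt}$ shows that $\Fl_P(\mathbf{n};n)$ is an iterated Grassmannian bundle over a point. Smoothness and connectedness follow at once. Projectivity follows from it being a closed subscheme of $\prod\Gr(n_i,n)$. An affine paving follows because a Zariski-locally trivial Grassmannian bundle over a paved base is paved: restrict to the cells of the base. Each cell is affine space, and the bundle over it is trivial. This is the main subtle point. On a single cell, the rank-$(n_{z^+}-n_{z^-})$ vector bundle is trivial because vector bundles on affine space are trivial (Quillen--Suslin), so the Grassmannian bundle is a product. Its Schubert cells then give the paving. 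Alternatively, one can avoid Quillen--Suslin by refining the paving so that each piece trivializes the bundle, using Zariski-local triviality and a stratification argument.

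For the motive, Zariski-local triviality gives multiplicativity:
\[ [\Fl_P(\mathbf{n};n)]=\prod_{z\in P}\qbinom{n_{z^+}-n_{z^-}}{n_z-n_{z^-}}_{\L}.\]
By Lemma~\ref{lem:q-binom-tame}, specifically identity \eqref{eq:q-binom-tame} derived from Lemma~\ref{lem:mobius}, this product equals $\qbinom{n}{\mathbf{n}}_{\L;P}$, and both sides vanish in the non-admissible case. The main obstacle is the global identification of $\pi_i$ with the relative Grassmannian together with the paving step; the combinatorial identity itself has already been established.
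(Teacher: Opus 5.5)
Your proof is correct. For the Grassmannian-bundle structure, smoothness, projectivity, connectedness and the motive, it follows the paper's argument: the same induction along a good traverse, the same identification of $\pi_i$ with the relative Grassmannian of $\calV_{x_i^+}/\calV_{x_i^-}$, and the same appeal to \eqref{eq:q-binom-tame}.

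The genuine difference is the affine paving.

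\textbf{The paper's route.} It does not push a paving up the tower. Once smoothness and projectivity are known, it applies the Bia\l ynicki-Birula decomposition to the diagonal torus $\mathbb{T}_n$, whose fixed points (tuples of coordinate subspaces) are finitely many. This avoids Quillen--Suslin and any bookkeeping on the base. It yields a filtrable paving for free. Its cells are indexed by $\mathbb{T}_n$-fixed points, which is the combinatorics the paper reuses later (atom vectors, Remark~\ref{rmk:bb-upgrade}).

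\textbf{Your route.} You pave inductively: restrict each Grassmannian bundle to a cell $\A^d$ of the base, trivialize the vector bundle there by Quillen--Suslin, and take Schubert cells in the fiber. This is valid over any field and avoids the Bia\l ynicki-Birula theorem and the choice of a generic cocharacter. It also produces cells adapted to the tower, so the number of cells of each dimension is visibly a coefficient of $\prod_{z}\qbinom{n_{z^+}-n_{z^-}}{n_z-n_{z^-}}_{\L}$.

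\textbf{A caution about your fallback.} ``Refining the paving so that each piece trivializes the bundle'' does not work as stated. Intersecting a cell with a trivializing open cover produces pieces such as $\A^1\setminus\{0\}$, which are not affine spaces and cannot be paved by them. So Quillen--Suslin, or an explicit trivialization of the tautological bundles on the cells, is genuinely the input your version needs.
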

\begin{proof}
    Consider a good traverse $x_1,\dots,x_{\abs{P}}$ and subposets $P_i=\{x_1,\dots,x_i\}$ for $0\leq i\leq \abs{P}$. For each $1\leq i\leq \abs{P}$, consider the forgetful morphism $\pi_i:\Fl_{P_i}(\mathbf{n};n)\to \Fl_{P_{i-1}}(\mathbf{n};n)$ (here we denote the restriction of $\mathbf{n}$ to subposets still by $\mathbf{n}$). Since the traverse is good, choosing a point in the fiber of $(V_{x_j})_{j\in [i-1]}\in \Fl_{P_{i-1}}(\mathbf{n};n)$ amounts to choosing a subspace $V_{x_i}$ of dimension $n_{x_i}$ such that $V_{x_i^-}\subeq V_{x_i}\subeq V_{x_i^+}$. As such, $\pi_i$ is the relative Grassmannian of a tautological quotient bundle:
    \[ \Fl_{P_i}(\mathbf{n};n) \simeq_{\pi_i} \mathcal{G}r_{\Fl_{P_{i-1}}(\mathbf{n};n)}(n_{x_i}-n_{x_i^-}, \calV_{x_i^+}/\calV_{x_i^-}).\]
    This realizes $\Fl_P(\mathbf{n};n)$ as a (possibly empty) iterated Grassmannian bundle over a point $\Fl_{P_0}(\mathbf{n};n)$, and thus $\Fl_P(\mathbf{n};n)$ is smooth and projective. Because the motive in $\KVar{\k}$ is multiplicative on fiber bundles, we have
    $$[\Fl_P(\mathbf{n};n)]=\prod_{x\in P} \qbinom{n_{x^+}-n_{x^-}}{n_{x^+}-n_x}_\L,$$
    which by \eqref{eq:q-binom-tame} is equal to $\qbinom{n}{\mathbf{n}}_{\L;P}$.

    Finally, the affine paving $\Fl_P(\mathbf{n};n)$ can be immediately constructed using the \BB decomposition. Consider the standard action of the standard torus $\mathbb{T}_n$ on the vector space $\k^n$. This induces a $\mathbb{T}_n$-action on $\Fl_P(\mathbf{n};n)$, whose fixed points are flags $V$ in which every $V_x$ is the span of a subset of the standard basis for $\k^n$. Therefore, the torus fixed point locus is finite. Since $\Fl_P(\mathbf{n};n)$ is smooth and projective, the \BB theorem implies that $\Fl_P(\mathbf{n};n)$ has an affine paving.
\end{proof}
It remains an open problem whether there exists a non-tame poset $P$ such that $\Fl_P(\mathbf{n};n)$ is smooth for all $(\mathbf{n};n)$.

\section{The Bia\l ynicki-Birula decomposition of the extension fiber and proof of Theorem~\ref{thm:bb}}\label{sec:bb}
Let $\k$ be an arbitrary field. Methods in this section are explicit and not sensitive to the characteristic of $\k$ and whether $\k$ is finite or algebraically closed. The reader can treat $\k$ as finite and focus on the counting aspect, viewing every moduli space as the set of $\k$-points. For geometric assertions, we implicitly work over the algebraic closure and work with the set of $\bbar{\k}$-points (as only the induced reduced structure is relevant throughout), and then note that every construction descends to the ground field.

\subsection{Torus action and initial terms}\label{sub:Torus action and initial terms}
Throughout this section, we fix $\gcd(a,b)=1$ and $R=\k[\![T^a,T^b]\!]\subeq \tl R=\k[\![T]\!]$. Recall $\Quot^R(\tl R^n)=\coprod_{m\geq 0}\Quot^R_m(\tl R^n)$ parametrizes finite-codimensional $R$-submodules of $\tl R^n$. For the method of this section, it is important to embed it into a bigger space, $\Quot^\k(\tl R^n)=\coprod_{m\geq 0} \Quot^\k_m(\tl R^n)$,
where $\Quot^\k_m(\tl R^n)$ parametrizes $\k$-subspaces $L\subeq \tl R^n$ such that
\[ T^{M}\tl R^n\subeq L\subeq \tl R^n,\]
where $M\coloneqq m+(a-1)(b-1)$ is chosen to be big enough that $\Quot^\k_m(\tl R^n)$ contains $\Quot^R_m(\tl R^n)$, see Lemma~\ref{lem:bounded-truncation}; we choose to suppress it in our notation because all that matters is that $\Quot^\k_m(\tl R^n)$ is now a Grassmannian (of the finite-dimensional vector space $\tl R^n/T^{M}\tl R^n$), hence smooth and projective.

There is a natural $\Gm$-action on $\Quot^\k_m(\tl R^n)$ induced by scaling $T$. More concretely, for $L\in \Quot^\k_m(\tl R^n)$,
\[ t\cdot L\coloneqq \set*{\mathbf{f}(tT): \mathbf{f}(T)\in L}.\]
Since $R$ is stable under the $\Gm$-action, the $\Gm$-action on $\Quot^\k_m(\tl R^n)$ restricts to a $\Gm$-action on $\Quot^R_m(\tl R^n)$.

Since $\Quot^\k_m(\tl R^n)$ is projective, for $L\in \Quot^\k_m(\tl R^n)$, the limit $\lim_{t\to 0} t\cdot L$ exists uniquely, and is a torus fixed point. We denote it by \[\pi(L)=\lim_{t\to 0} t\cdot L\in \Quot^\k_m(\tl R^n)^\Gm.\] Moreover, since $\Quot^R_m(\tl R^n)$ is closed in $\Quot^\k_m(\tl R^n)$, whenever $L\in \Quot^R_m(\tl R^n)$, we have $\pi(L)\in \Quot^R_m(\tl R^n)^\Gm$. The map $\pi$ is known as the \defn{\BB retraction map}, and we say $L$ is \defn{attracted} to $\pi(L)$.

As usual, $\pi(L)$ has a description in terms of Gr\"obner theory. Note that every element in $\tl R^n$ is uniquely of the form
\[ v=\sum_{d\geq 0} v_d T^d, \quad v_d\in \k^n.\]
For $d\geq 0$ and $L\in \Quot^\k_m(\tl R^n)$, define
\[ \init_d(L)\coloneqq \{v_d\in \k^n: v_d T^d+\sum_{e>d} v_e T^e \in L \text{ for some } v_{d+1},\dots \in \k^n\}.\]
It is obvious that $\init_d(L)$ is a subspace of $\k^n$. For each $L$, we have $\init_d(L)=\k^n$ for $d\gg 0$ (in fact, for $d\geq m+(a-1)(b-1)$). The \defn{initial subspace} of $L$ is defined as
\[ \init(L)\coloneqq \hhat\bigoplus_{d=0}^\infty \init_d(L) T^d\coloneqq \parens*{\bigoplus_{d=0}^{N-1} \init_d(L)} \oplus T^N \tl R^n,\]
where $N\gg 0$ is such that $\init_d(L)=\k^n$ for all $d\geq N$. It is a standard fact that $\pi(L)=\init(L)$ (Lemma~\ref{lem:BB-is-init}).

The torus fixed locus $\Quot^\k_m(\tl R^n)^\Gm$ consists of homogeneous subspaces:
\[ \Quot^\k_m(\tl R^n)^\Gm=\set*{\hhat\bigoplus_{d=0}^\infty V_d T^d: V_d\subeq \k^n, \; \sum_{d\geq 0}(n-\dim V_d)=m}.\]
We denote a typical element of $\Quot^\k_m(\tl R^n)^\Gm$ simply by $V=(V_d)_{d\geq 0}$ such that $V_d=\k^n$ for $d\geq M$. Since $\Quot^R_m(\tl R^n)^\Gm$ simply picks out those that are $R$-submodules, we have
\begin{equation}\label{eq:torus-fixed-R}
    \Quot^R_m(\tl R^n)^\Gm=\set*{V\in \Quot^\k_m(\tl R^n)^\Gm: V_d\subeq V_{d+a}\cap V_{d+b} \text{ for all }d},
\end{equation}
in other words, an injective flag on the poset $\N_{a,b}$ defined in \eqref{eq:Nab-poset}.

We now describe the extension fiber $E_R(\tl R^n;m)\subeq \Quot^R_m(\tl R^n)$ consisting of $L$ such that $\tl R L=\tl R^n$. We will show that $L$ is in the extension fiber if and only if $\pi(L)=(V_d)_{d\geq 0}$ satisfies $V_0=\k^n$ (Lemma~\ref{lem:ext-fiber}).

For an $\N_{a,b}$-poset flag $V$ in $\k^n$ such that $V_0=\k^n$, we automatically have $V_d=\k^n$ for $d\in \pairing{a,b}$. Therefore, we may identify $V$ with a flag on the gap poset $G$.
Since $G$ is tame (Example~\ref{eg:gap-young}), any poset flag variety on $G$ is connected, smooth, and projective. We have just concluded that the fixed point locus $E_R(\tl R^n)^\Gm=\coprod_{m\geq 0} E_R(\tl R^n;m)^\Gm$ has the following decomposition into (possibly empty) connected components, each of which is smooth projective:
\begin{equation}
    \label{eq:fixed-locus}
    E_R(\tl R^n)^\Gm=\coprod_{\mathbf{n}\in \Z^G} \Fl_G(\mathbf{n};n),
\end{equation}
and $E_R(\tl R^n;m)^{\Gm}$ is the disjoint union of those with $\sum_{i\in G} (n-n_i)=m$. (To get a decomposition into nonempty components, just recall $\Fl_G(\mathbf{n};n)$ is nonempty if and only if $(\mathbf{n};n)$ is admissible, see Proposition~\ref{prop:admissible}.)

By the general \BB decomposition \cite[Theorem 7.8.14, (1) and (3c)]{alpermoduli}, let
\[ X_{\mathbf{n}} = \pi^{-1}(\Fl_G(\mathbf{n};n)),\]
then the restricted map $\pi:X_{\mathbf{n}}\to \Fl_G(\mathbf{n};n)$ is a morphism, denoted by $\pi_\mathbf{n}$, and $E_R(\tl R^n)=\coprod_{\mathbf{n}} X_{\mathbf{n}}$ is a locally closed decomposition.\footnote{The criterion (3c) says if a $\Gm$-variety $X$ embeds equivariantly into $\P(V)$ for a $\Gm$-vector space $V$, then each \BB stratum $X_i$ is locally closed in $X$. This condition is verified in our case, because $E_R(\tl R^n;m)$ equivariantly embeds into a Grassmannian $\Quot^R_m(\tl R^n)$, which further equivariantly embeds into some $\P(V)$ via the Pl\"ucker embedding.}
Each $E_R(\tl R^n;m)$ is a disjoint union of strata $X_\mathbf{n}$ with $\sum_{i\in G} (n-n_i)=m$.

At this moment, we have proved Theorem~\ref{thm:bb} (and the detailed version Theorem~\ref{thm:bb-detailed}) except its crux, which is the assertion that $\pi_{\mathbf{n}}$ is an affine bundle of a specific rank. This is a hard statement: without the smoothness of $E_R(\tl R^n)$ (or at least the knowledge thereof), the affine bundle assertion does not follow from the general \BB Theorem. Nevertheless, we resort to an explicit description of $X_\mathbf{n}$, to be discussed in Section~\ref{sec:toric} after a short interlude.

\subsection{Proof of some technical lemmas}
Let $c=(a-1)(b-1)$; it is known as the \defn{conductor} of $R=\k[\![T^a,T^b]\!]$, and it satisfies $T^c \tl R\subeq R$.

\begin{lemma}
    For an $\tl R$-submodule $L\in \Quot^{\tl R}_m(\tl R^n)$, we have $L\supeq T^m \tl R^n$.
\end{lemma}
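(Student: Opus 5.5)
The plan is to reduce the statement to a nilpotency argument for multiplication by $T$ on the finite-dimensional quotient $Q\coloneqq \tl R^n/L$. Since $L$ is an $\tl R$-submodule of codimension $m$, $Q$ is an $\tl R$-module with $\dim_\k Q=m$. Multiplication by $T$ induces a $\k$-linear endomorphism $\tau$ of $Q$. I will show $\tau^m=0$. Then $T^m v\in L$ for every $v\in\tl R^n$, which gives $T^m\tl R^n\subeq L$.

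To see that $\tau$ is nilpotent, consider the descending chain of $\tl R$-submodules $Q\supeq TQ\supeq T^2Q\supeq\cdots$. Each $T^iQ$ is the image of $\tau^i$.

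First, if $T^iQ=T^{i+1}Q$ for some $i$, the chain is constant from then on. Write $N\coloneqq T^iQ$. Then $N$ is a finitely generated $\tl R$-module with $TN=N$. Since $T$ lies in the maximal ideal of the local ring $\tl R=\k[\![T]\!]$, Nakayama's lemma gives $N=0$.

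Second, by finite dimensionality the chain must stabilize. Whenever $T^iQ\neq 0$, the inclusion $T^{i+1}Q\subsetneq T^iQ$ is therefore strict, so the dimension drops by at least one at each such step. Starting from $\dim Q=m$, this forces $T^mQ=0$, which is exactly the claim.

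An equivalent route avoids Nakayama: $\tl R^n/L$ is a finite-length torsion module over the DVR $\tl R$. By the structure theorem it is isomorphic to $\bigoplus_j \tl R/T^{e_j}$ with $\sum_j e_j=m$, so every $e_j\le m$ and $T^m$ kills it. Either version works; I would present the chain argument since it is self-contained.

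There is no real obstacle here. The only point needing care is that $T$ genuinely preserves $L$, which uses the hypothesis that $L$ is an $\tl R$-submodule and not merely an $R$-submodule. This is precisely why the lemma is stated for $\Quot^{\tl R}_m$. It will then be combined with $T^c\tl R\subeq R$ to deduce the bound $M=m+c$ for $R$-submodules in Lemma~\ref{lem:bounded-truncation}.
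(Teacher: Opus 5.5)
Your proposal is correct and essentially coincides with the paper's proof, which gives the same two arguments: Smith normal form (equivalently, the structure theorem over the DVR $\tl R$, giving $\tl R^n/L\simeq\bigoplus_j \tl R/T^{e_j}$ with $\sum_j e_j=m$), and Nakayama's lemma applied to the length-$m$ quotient $\tl R^n/L$, of which your descending-chain argument is a careful unpacking. One cosmetic point: the strict inclusion $T^{i+1}Q\subsetneq T^iQ$ whenever $T^iQ\neq 0$ follows directly from your Nakayama step, so the separate appeal to stabilization is not needed.
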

\begin{proof}
    This is well-known, and we present two proofs. First, by the theory of Smith normal form, up to change of coordinate, $L=\im(\diag(T^{\lambda_1},\dots,T^{\lambda_n}))$, where $\sum_i \lambda_i=m$. In particular, $\max_i \lambda_i \leq m$, so $L\supeq T^m \tl R^n$. Second, we consider the quotient module $\tl R^n/L$. Its length is $m$, so by Nakayama's lemma, it is annihilated by $T^m$. 
\end{proof}

\begin{lemma}\label{lem:bounded-truncation}
    For an $R$-submodule $L\in \Quot^R_m(\tl R^n)$, we have $L\supeq T^{m+c}\tl R^n$. As a result, $\Quot^R_m(\tl R^n)\subeq \Quot^\k_m(\tl R^n)$.
\end{lemma}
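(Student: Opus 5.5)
Given $L\in \Quot^R_m(\tl R^n)$, the plan is to compare $L$ with the $\tl R$-module it generates and then use the conductor to get back inside $L$.

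Consider the $\tl R$-submodule $\tl R L\subeq \tl R^n$. Since $L\subeq \tl R L$, we have
\[ \dim_\k \tl R^n/\tl R L\leq \dim_\k \tl R^n/L=m. \]
So $\tl R L\in \Quot^{\tl R}_{m'}(\tl R^n)$ for some $m'\leq m$. By the preceding lemma, $\tl R L\supeq T^{m'}\tl R^n\supeq T^m\tl R^n$.

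Next, use the conductor property $T^c\tl R\subeq R$. Every element of $\tl R L$ is a finite sum $\sum_k f_k \ell_k$ with $f_k\in \tl R$ and $\ell_k\in L$. Then $T^c f_k\in R$, and $L$ is an $R$-module, so $T^c\sum_k f_k\ell_k=\sum_k (T^c f_k)\ell_k\in L$. Hence $T^c\,\tl R L\subeq L$, and therefore
\[ T^{m+c}\tl R^n = T^c\cdot T^m\tl R^n\subeq T^c\,\tl R L\subeq L. \]

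Finally, with $M=m+c=m+(a-1)(b-1)$, this shows $T^M\tl R^n\subeq L\subeq \tl R^n$, which is the defining condition for $L$ to lie in $\Quot^\k_m(\tl R^n)$; the codimension $m$ is unchanged. This gives $\Quot^R_m(\tl R^n)\subeq \Quot^\k_m(\tl R^n)$ on points.

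The inclusion should also hold as closed subschemes or subvarieties, which is the only point needing a remark. The condition of being closed under multiplication by $T^a$ and $T^b$ is Zariski closed inside the Grassmannian of $\tl R^n/T^M\tl R^n$. Since we only care about the reduced structure, this suffices. No step here is a real obstacle; the one thing to be careful about is that $\tl R L$ has colength at most $m$ (not exactly $m$), so the lemma is applied with $m'\leq m$.
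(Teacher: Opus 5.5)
Your proof is correct and is essentially the paper's own argument. Both bound the colength of $\tl R L$ by $m$, apply the preceding lemma to get $\tl R L\supeq T^{m}\tl R^n$, and then use the conductor inclusion $T^c\tl R\subeq R$ to conclude $T^c\,\tl R L\subeq L$, hence $T^{m+c}\tl R^n\subeq L$.
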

\begin{proof}
    Since $L$ is an $R$-module and $T^c\tl R\subeq R$, we have $L=RL\supeq T^c \tl RL$. Let $m'=\dim_\k \tl R^n/\tl RL$, then we have $m'\leq m$. By the previous lemma, $\tl RL\supeq T^{m'}\tl R^n$, so $L\supeq T^{m'+c}\tl R^n\supeq T^{m+c}\tl R^n$.
\end{proof}

\begin{lemma}\label{lem:BB-is-init}
    For $L\in \Quot^\k_m(\tl R^n)$, we have
    \[ \pi(L)=\init(L).\]
    In particular, $\sum_{d\geq 0} (n-\dim \init_d(L)) = m$. 
\end{lemma}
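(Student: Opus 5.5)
The plan is to compute the limit $\lim_{t\to 0} t\cdot L$ directly in Plücker-type coordinates. I will reduce to the finite-dimensional graded space $U\coloneqq\tl R^n/T^M\tl R^n=\bigoplus_{d=0}^{M-1}\k^n T^d$, on which $\Gm$ acts with weight $d$ on the summand $\k^n T^d$. Since $L\supeq T^M\tl R^n$, the point $L$ corresponds to the subspace $\bar L=L/T^M\tl R^n$ of $U$, of codimension $m$. Then $t\cdot \bar L$ is the image of $\bar L$ under the diagonal operator $\diag(t^d)$, and $\init(L)/T^M\tl R^n=\bigoplus_d \init_d(L)T^d$. So it suffices to prove the standard fact that, for a linear subspace $\bar L$ of a graded vector space with a $\Gm$-action of nonnegative weights, $\lim_{t\to 0}t\cdot\bar L$ in the Grassmannian is the associated graded subspace with respect to the filtration by lowest-degree terms.

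To do this, I first choose an adapted basis of $\bar L$. For each $d$, pick vectors $v^{(d)}_1,\dots,v^{(d)}_{k_d}\in\bar L$ whose lowest-degree term lies in degree $d$, and whose degree-$d$ terms form a basis of $\init_d(L)$. This is possible by the definition of $\init_d$, because $\bar L\cap T^d U$ maps onto $\init_d(L)$ with kernel $\bar L\cap T^{d+1}U$.

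A standard filtration argument then shows that the union of these vectors is a basis of $\bar L$. Linear independence follows by looking at lowest-degree terms. Spanning follows by induction downward on $d$: the space $\bar L\cap T^dU$ is spanned by the vectors with index $\geq d$. In particular $\dim\bar L=\sum_d\dim\init_d(L)$.

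Next, I apply the torus action to this basis. For $t\neq 0$, the vectors $t^{-d}(t\cdot v^{(d)}_j)=v^{(d)}_{j,d}T^d+\sum_{e>d}t^{e-d}v^{(d)}_{j,e}T^e$ still form a basis of $t\cdot\bar L$. As $t\to 0$, they converge to the homogeneous vectors $v^{(d)}_{j,d}T^d$, which are linearly independent and span $\bigoplus_d\init_d(L)T^d$. The Plücker vector of $t\cdot \bar L$, computed from this rescaled basis, is therefore a polynomial in $t$ whose value at $t=0$ is nonzero and equals the Plücker vector of $\init(L)$. Hence the morphism $\Gm\to\Quot^\k_m(\tl R^n)$, $t\mapsto t\cdot L$, extends to $\A^1$ with value $\init(L)$ at $0$. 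Since the Grassmannian is separated, this extension is the limit, giving $\pi(L)=\init(L)$.

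Finally, the dimension count $\dim\bar L=\sum_d\dim\init_d(L)$ gives $\sum_{d}(n-\dim\init_d(L))=\dim U-\dim\bar L=m$; equivalently, this follows because the limit lies in the same connected Grassmannian $\Quot^\k_m$.

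The only step needing care is the choice of adapted basis and the verification that it spans. The rest is routine, and I do not expect a real obstacle.
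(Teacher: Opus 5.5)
Your proof is correct and takes essentially the same route as the paper. The paper disposes of this lemma by calling it a standard fact about Grassmannians of the graded vector space $\tl R^n/T^{m+c}\tl R^n=\bigoplus_{i}\k^nT^i$; you reduce to exactly that space (with $M=m+c$) and then supply the standard argument, namely the adapted basis, the rescaling by $t^{-d}$, the polynomial Pl\"ucker vector that is nonzero at $t=0$, and separatedness.
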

\begin{proof}
    This is a standard fact about classical Grassmannians of the graded vector space
    \[\tl R^n/T^{m+c}\tl R^n = \bigoplus_{i=0}^{m+c-1} \k^n T^i,\]
    where $\k^n T^i$ is the vector space $\k^n$ with degree $i$ (i.e., with $\Gm$-action of weight $i$). 
\end{proof}

\begin{lemma}\label{lem:ext-fiber}
    Let $L\in \Quot^R_m(\tl R^n)$ and $\pi(L)=(V_d)_{d\geq 0}$. Then $L\in E_R(\tl R^n;m)$ if and only if $V_0=\k^n$. 
\end{lemma}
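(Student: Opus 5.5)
We must show: for $L\in \Quot^R_m(\tl R^n)$ with $\pi(L)=(V_d)$, we have $\tl R L=\tl R^n$ iff $V_0=\init_0(L)=\k^n$, using $\pi(L)=\init(L)$ from Lemma~\ref{lem:BB-is-init}. The plan is to reduce both conditions to the single statement that the reduction map $L\to \tl R^n/T\tl R^n=\k^n$, $v\mapsto v_0$, is surjective.

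First observe that $\init_0(L)$ is by definition exactly the image of $L$ under $v\mapsto v_0$. Indeed, an element $v_0\in \init_0(L)$ is witnessed by some $v\in L$ with constant term $v_0$, and conversely every $v\in L$ witnesses its own constant term. So $V_0=\k^n$ if and only if $L+T\tl R^n=\tl R^n$.

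Next relate this to $\tl R L$. Since $\tl R L$ is an $\tl R$-submodule of the finitely generated $\tl R$-module $\tl R^n$, Nakayama's lemma over the local ring $\tl R=\k[\![T]\!]$ gives $\tl R L=\tl R^n$ if and only if $\tl R L+T\tl R^n=\tl R^n$. Because $\tl R=\k+T\tl R$, we have $\tl R L\subeq L+T\tl R L\subeq L+T\tl R^n$, so $\tl R L+T\tl R^n=L+T\tl R^n$. Combining, $\tl R L=\tl R^n$ iff $L+T\tl R^n=\tl R^n$ iff $V_0=\k^n$.

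As a sanity check, the forward direction can also be seen directly: if $\tl R L=\tl R^n$, then the images of $L$ in $\k^n$ must span, since $\tl R L$ modulo $T$ is the $\k$-span of the constant terms of elements of $L$. There is no real obstacle here; the only care needed is to justify that $\init_0(L)$ equals the image of $L$ modulo $T$, which is immediate from the definition of $\init_d$ at $d=0$ (there are no lower-degree terms to cancel).
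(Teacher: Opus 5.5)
Your proof is correct and follows essentially the same route as the paper: Nakayama over $\tl R$ reduces $\tl R L=\tl R^n$ to $L$ spanning $\tl R^n/T\tl R^n\simeq\k^n$, the image of $L$ there is exactly $\init_0(L)$, and $\init_0(L)=V_0$ by Lemma~\ref{lem:BB-is-init}. Your extra step checking $\tl R L+T\tl R^n=L+T\tl R^n$ (via $\tl R=\k+T\tl R$) just spells out what the paper leaves implicit.
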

\begin{proof}
    By Nakayama's lemma, elements of $L$ generate $\tl R^n$ if and only if they generate $\tl R^n/T\tl R^n$. But the image of an element $v$ in $\tl R^n/T\tl R^n\simeq \k^n$ is just the constant term of $v$, so the set-theoretical image of $L$ in $\tl R^n/T\tl R^n$ is precisely $\init_0(L)$, a subspace. As a result, $L$ spans $\tl R^n/T\tl R^n$ if and only if $\init_0(L)=\k^n$, so we are done because $V_0=\init_0(L)$ by Lemma~\ref{lem:BB-is-init}.
\end{proof}

\subsection{Fibers of the \BB map}
\label{sec:toric}
As our first step to understand the \BB retraction $\pi_{\mathbf{n}}$, we look at its fiber through the lens of Gr\"obner theory. 

Our strategy is analogous to the methods of \cite{OblomkovRasmussenShende12,Piontkowski06}. In particular, in the case \(n=1\)
we reprove the results of the above-mentioned papers. However, even in these cases the combinatorial part of the argument differs from the
argument from the mentioned papers. The main similarity is in our use of the deformation theory, as we detail below.

Fix $V\in \Fl_G(\mathbf{n};n)$, and let $m=\sum_{i\in G} (n-n_i)$. For clarity, we denote by $\pi_\k=\init_\k$ the \BB map on $\Quot^\k(\tl R^n)$, and by $\pi_R=\init_R$ its restriction on $\Quot^R(\tl R^n)$. By Lemma~\ref{lem:ext-fiber}, for $L\in \Quot^R(\tl R^n)$, $\pi(L)=V$ automatically implies $L\in E_R(\tl R^n)$, so
\[ \pi_{\mathbf{n}}^{-1}(V)=\pi_R^{-1}(V).\]
To set up our framework, we view
\[ \pi_R^{-1}(V)=\pi_\k^{-1}(V)\cap \Quot^R(\tl R^n),\]i.e., the set of $\k$-vector spaces in $\pi_\k^{-1}(V)$ that are $R$-submodules. Recall that $\pi_\k^{-1}(V)$ consists of vector spaces $L$ between $T^{M}\tl R^n$ and $\tl R^n$ such that $\init(L)=V$. We now describe a natural \defn{Gr\"obner coordinate system} of these subspaces $L$, identifying $\pi_\k^{-1}(V)$ as an affine space. This will be reminiscent of Haiman's description of Hilbert scheme of points on the plane \cite{haiman}. After that, we express the condition of being an $R$-module into equations among the Gr\"obner coordinates.

Now let $V\in \Quot^\k_m(\tl R^n)^{\Gm}$. The forthcoming construction depends on the choice of a collection of subspaces $W=(W_d)_{d\geq 0}$ such that $W_d$ is complementary to $V_d$, or $W_d\oplus V_d=\k^n$. Note that $W_d=0$ for $d\notin G$. The decomposition induces a natural isomorphism $W_d\simeq \k^n/V_d$, so the injective $\N_{a,b}$-flag structure of $V$ endows $W$ with a surjective $\N_{a,b}$-flag structure. More concretely, for $0\leq d\leq e$ such that $e-d\in \set{a,b}$, the natural surjection $W_d\onto W_e$ is given by the composition map
\begin{equation}\label{eq:w-surj-flag}
    W(d\to e): W_d\incl \k^n\onto W_e,
\end{equation}
where the last projection is with respect to $\k^n=W_e\oplus V_e$.

We also view $W$ as a finite-dimensional homogeneous subspace of $\tl R^n$ by $W=\oplus_d W_d T^d$. Note that $\tl R^n=V\oplus W$ and $\dim_\k W=m$. Define $U^\k_W$ to be the open subset of $\Quot^\k_m(\tl R^n)$ consisting of $L$ such that $W\oplus L=\tl R^n$. From now on, we fix $m$ and work modulo $T^{M}$ throughout. Then denoting $\bbar{\tl R^n}=\tl R^n/T^{M} \tl R^n$, $U^\k_W$ is identified with the space of $\bbar{L}\subeq_\k \bbar{\tl R^n}$ such that 
$W\oplus \bbar{L} = \bbar{\tl R^n}$. This is the biggest Schubert cell of a Grassmannian, and admits the following identification with an affine space: 
\begin{equation}\label{eq:u_w}
    U^\k_W\simeq \{ h\in \Hom_\k(\bbar{\tl R^n},W): h|_W=1\}
\end{equation}
where the correspondence $L\leftrightarrow h$ is given by $L=\ker h=\im(1-h)$ and $h$ is the projection map $\bbar{\tl R^n}=W\oplus \bbar L\to W$. We call $h$ the \defn{Gr\"obner coordinate} of $\bbar L$ with respect to $W$.

Consider an affine subspace $U^\k_{W,V}$ of $U^\k_W$, defined via the above identification by a triangularity condition
\begin{equation}\label{eq:u_wv}
    U^\k_{W,V}\coloneqq \{ h\in U^\k_W: h(V_d T^d)\subeq \sum_{e=d+1}^{M-1} W_e T^e \text{ for }0\leq d<M\}.
\end{equation}
To summarize the constructions so far, we have a locally closed embedding
\[
\begin{array}{rcccl}
    U^\k_{W,V} & \overset{\text{closed}}{\incl} & U^\k_W &\overset{\text{open}}{\incl} &\Quot^\k_m(\tl R^n),\\
    h & \mapsto & h &\mapsto &\ker(h).
\end{array}
\]

\begin{lemma}
    Let $V\in \Quot^\k_m(\tl R^n)^{\Gm}$, then for any choice of $W=(W_d)_{d\geq 0}$ with $W_d\oplus V_d=\k^n$, a subspace $L\in \Quot^\k_m(\tl R^n)$ satisfies $\init(L)=V$ if and only if $L\in U^\k_{W,V}$. Hence
    \[ \init_\k^{-1}(V)=U^\k_{W,V}.\]
\end{lemma}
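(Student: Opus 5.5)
We prove the two inclusions separately, working throughout in the finite-dimensional graded space $\bbar{\tl R^n}=\bigoplus_{d<M}\k^n T^d$. Recall that every $L\in\Quot^\k_m(\tl R^n)$ contains $T^M\tl R^n$, so $L$ is determined by $\bbar L$.

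\emph{First, $U^\k_{W,V}\subeq \init_\k^{-1}(V)$.} Let $h\in U^\k_{W,V}$ and put $L=\ker h=\im(1-h)$. Take $v\in V_d$. Then $(1-h)(vT^d)=vT^d-h(vT^d)$, and the triangularity condition \eqref{eq:u_wv} puts $h(vT^d)$ in degrees strictly above $d$. This element of $L$ therefore has leading term $vT^d$, which gives $V_d\subeq\init_d(L)$ for all $d$. For dimensions, note $\dim\bbar L=\dim\bbar{\tl R^n}-\dim W=\sum_d\dim V_d$. Filtering $\bbar L$ by degree gives $\dim\bbar L=\sum_d\dim\init_d(L)$, since each graded piece of the filtration is identified with $\init_d(L)$. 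Combining this with the inclusions $V_d\subeq \init_d(L)$ forces equality in every degree, so $\init(L)=V$.

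\emph{Second, $\init_\k^{-1}(V)\subeq U^\k_{W,V}$.} Suppose $\init(L)=V$. We first show $W\cap\bbar L=0$. A nonzero $w\in W\cap \bbar L$ has a lowest-degree component $w_dT^d$ with $0\neq w_d\in W_d$. Since $w\in L$, this forces $w_d\in\init_d(L)=V_d$, contradicting $W_d\cap V_d=0$. Lemma~\ref{lem:BB-is-init} gives $\dim\bbar L=\sum_d\dim V_d$, so $W\oplus\bbar L=\bbar{\tl R^n}$ by dimension count, and hence $L\in U^\k_W$. Let $h$ be the projection onto $W$ along $\bbar L$.

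It remains to check triangularity. For $v\in V_d=\init_d(L)$, choose $\ell=vT^d+(\text{higher terms})\in L$. Then $h(vT^d)=h(vT^d-\ell)$, and $vT^d-\ell$ is concentrated in degrees $>d$. So the claim reduces to showing that $h$ maps the subspace $F_{>d}=\bigoplus_{e>d}\k^nT^e$ into $\bigoplus_{e>d}W_eT^e$.

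This is the one step needing care, and we handle it by downward induction on $d$; alternatively, one can argue directly on filtrations. From the first part, $\bbar L\cap F_{\ge e}$ has dimension $\sum_{e'\ge e}\dim V_{e'}$. Together with $W\cap\bbar L=0$, this gives the decomposition $F_{\ge e}=(W\cap F_{\ge e})\oplus(\bbar L\cap F_{\ge e})$, again by dimension count: $\dim (W\cap F_{\ge e})=\sum_{e'\ge e}\dim W_{e'}$. Hence the projection $h$ preserves $F_{\ge e}$ for every $e$. Taking $e=d+1$ shows $h(F_{>d})\subeq W\cap F_{>d}=\sum_{e>d}W_eT^e$, which is exactly the triangularity condition. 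This completes the proof.
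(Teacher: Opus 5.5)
Your proof is correct, but it follows a somewhat different route from the paper's.

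\emph{First direction, $U^\k_{W,V}\subeq\init_\k^{-1}(V)$.} The paper gets $\init_d(L)=V_d$ in one stroke. Since $(1-h)|_W=0$, every element of $\bbar L=\im(1-h)$ has the form $(1-h)(\sum_d v_dT^d)$ with $v_d\in V_d$, so by triangularity its leading term lies in some $V_d$; this gives both inclusions at once. You prove only $V_d\subeq\init_d(L)$ and close the gap with the filtration identity $\dim\bbar L=\sum_{d<M}\dim\init_d(L)$.

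\emph{Converse direction.} The paper runs a division algorithm. It shows constructively that every $f\in\bbar{\tl R^n}$ is congruent modulo $\bbar L$ to some $w\in W$, with $w\in\sum_{e>d}W_eT^e$ when $f\in V_dT^d$. This yields surjectivity of $W\to\bbar{\tl R^n}/\bbar L$ (hence bijectivity by dimension) and triangularity simultaneously. You argue dually: leading terms give $W\cap\bbar L=0$, and dimension counts on each step of the degree filtration upgrade this to $F_{\ge e}=(W\cap F_{\ge e})\oplus(\bbar L\cap F_{\ge e})$. So $h$ is a filtered map, and subtracting a lift $\ell$ of $vT^d$ then kills the degree-$d$ component of $h(vT^d)$.

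\emph{What each buys.} The paper's reduction is an explicit normal-form procedure that actually computes $h$ and fits the Gr\"obner viewpoint used later in the section. Your argument is shorter and purely linear-algebraic. It also cleanly separates the two ingredients of triangularity: compatibility of the splitting $W\oplus\bbar L$ with the degree filtration, and the input $\init_d(L)=V_d$.

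Two cosmetic points. The sums $\sum_d\dim V_d$ and $\sum_d\dim\init_d(L)$ should be restricted to $d<M$. The mention of a downward induction is superfluous, since the filtration argument you give is already complete.
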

\begin{proof}
    Suppose $\bbar L=\ker(h)=\im(1-h)$ for $h\in U^\k_{W,V}$. Then $\im(1-h)$ consists of linear combinations of elements of the form
    \[ v_d T^d - h(v_d T^d): v_d \in V_d\setminus \{0\}, 0\leq d<M.\]
    By the triangularity condition, any such linear combination has initial term $v_d T^d$ for some $v_d\in V_d$. As a result, $\init_d(L)=V_d$, so $\init(L)=V$.

    Conversely, suppose $L\in \init_\k^{-1}(V)$. We claim that for any $f(T)\in \bbar{\tl R^n}$, there exists $w(T)\in W$ such that $f(T)\equiv w(T) \pmod{\bbar L}$, and if $f(T)\in V_dT^d$, then we can make $w(T)\in \sum_{e>d} W_e T^e$. If the claim is proved, then the composition $W\incl \bbar{\tl R^n}\onto \bbar{\tl R^n}/\bbar{L}$ is surjective, thus an isomorphism for dimension reasons. Hence $w(T)$ is uniquely determined by $f(T)$, $W\oplus \bbar{L}=\bbar{\tl R^n}$, and the corresponding map $h$ is given by $h(f(T))=w(T)$. The final assertion of the claim implies $h\in U^\k_{W,V}$.
    
    The proof of the claim is essentially the division algorithm: start with $f(T)=\sum_{d=0}^{M-1} f_d T^d$, and repeatedly do the following. If $f(T)\in W$, terminate. Otherwise, let 
    \[ e=\min \{ d<M: f_d \notin W_d\}.\]
    Decompose $f_{e}=v_{e}+w_{e}$ for $v_{e}\in V_{e}, w_{e}\in W_{e}$. Since $\init_e(L)=V_e$, there exists an element $r(T)=v_e T^e + O(T^{e+1})$ in $L$. Then replace $f(T)$ by $f(T)-r(T)$. 
    
    This process replaces $f(T)$ with an element of $f(T)+L$, kills the lowest-degree $V$-component of $f(T)$, and only introduces higher-degree terms. It is then obvious that the algorithm terminates (as $e$ strictly increases) and the resulting $f(T)$ when the algorithm terminates can be used as $w(T)$.
\end{proof}
\begin{remark}
    This lemma implies $U^\k_{W,V}$ is independent of the choice of $W$, when viewed as a subset of $\Quot^\k_m(\tl R^n)$. However, the identification with an affine space (the Gr\"obner coordinate) depends on the choice of $W$.
\end{remark}

Now assume $V\in \Quot^R_m(\tl R^n)^\Gm$. We shall write down the equations on Gr\"obner coordinates that cut out the locus $\init^{-1}_R(V) \subeq \init^{-1}_\k (V)$. This is essentially the Buchberger criterion.
\begin{lemma}
    Let $h\in U^\k_{W,V}$, viewed as an element of $\End_\k(\bbar{\tl R^n})$. Then the corresponding element $L\in \Quot^\k_m(\tl R^n)$ is an $R$-module if and only if
    \[ hT^\gamma(1-h)=0, \gamma=a,b,\]
    where $T^\gamma\in \End_\k(\bbar{\tl R^n})$ is multiplication by $T^\gamma$.
\end{lemma}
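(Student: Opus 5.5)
We argue directly from the identification $\bbar L=\ker(h)=\im(1-h)$ in \eqref{eq:u_w}, working throughout in $\End_\k(\bbar{\tl R^n})$, where $\bbar{\tl R^n}=\tl R^n/T^M\tl R^n$. Since $T^M\tl R^n$ is an $\tl R$-submodule, multiplication by $T^\gamma$ descends to $\bbar{\tl R^n}$. Moreover, $L\supeq T^M\tl R^n$, so $L$ is the full preimage of $\bbar L$. Hence $L$ is an $R$-submodule of $\tl R^n$ if and only if $\bbar L$ is stable under $T^\gamma$ for every $\gamma\in\pairing{a,b}$.

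First we reduce to the generators. The ring $R=\k[\![T^a,T^b]\!]$ acts on $\bbar{\tl R^n}$ through the finite-dimensional quotient $R/(R\cap T^M\tl R)$. This quotient is spanned by the images of monomials $T^{ia+jb}$, each of which is a composite of $T^a$ and $T^b$. A $\k$-subspace stable under $T^a$ and $T^b$ is therefore stable under all of $R$, so it suffices to test $\gamma=a,b$.

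Next we translate stability into the stated equation. Since $\bbar L=\im(1-h)$, the condition $T^\gamma\bbar L\subeq\bbar L$ says exactly that $T^\gamma(1-h)(x)\in\bbar L$ for all $x$. Since $\bbar L=\ker h$, this is equivalent to $hT^\gamma(1-h)(x)=0$ for all $x$, that is, $hT^\gamma(1-h)=0$. Both directions are immediate because $\ker h=\im(1-h)$, which holds since $h$ is a projection with $h|_W=1$, as encoded in \eqref{eq:u_w}.

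The only point needing care is that $T^\gamma$ is well defined on the truncation $\bbar{\tl R^n}$ and that nothing is lost by truncating. Both follow from $T^M\tl R^n\subeq L$, which holds by the definition of $\Quot^\k_m(\tl R^n)$ with $M=m+c$, consistent with Lemma~\ref{lem:bounded-truncation}. With that observed, the argument is formal, and I do not expect a genuine obstacle.
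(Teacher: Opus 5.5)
Your proposal is correct and is essentially the paper's argument: both use $\bbar L=\ker h=\im(1-h)$ to rewrite $T^\gamma\bbar L\subeq\bbar L$ as $hT^\gamma(1-h)=0$ for $\gamma=a,b$. The extra details you supply are sound, though the paper leaves them implicit: $T^\gamma$ descends to the truncation because $T^M\tl R^n\subeq L$, and stability under $T^a$ and $T^b$ gives stability under all of $R$.
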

\begin{proof}
    Recall $L=\ker(h)=\im(1-h)$. Hence, $L$ is an $R$-module if and only if $T^a L\subeq L$ and $T^b L\subeq L$, namely, 
    \[ T^\gamma \im(1-h)\subeq \ker(h) \text{ for }\gamma=a,b.\]
    This directly translates to the equations claimed.
\end{proof}

Hence, $\init^{-1}_R(V)$ is isomorphic to
\begin{equation}\label{eq:groebner-equation}
    U^R_{W,V}=\{ h\in U^\k_{W,V}: hT^a(1-h)=hT^b(1-h)=0\}.
\end{equation}

\subsection{Deformation argument}
Our main geometric argument to solve the equation \eqref{eq:groebner-equation} relies on the deformation theory construction. We first make a change of coordinates. Let $1_W, 1_V\in \End_\k(\bbar{\tl R^n})$ be the projections onto $W$ and $\bbar{V}$ with respect to the decomposition $\bbar{\tl R^n}=W\oplus \bbar{V}$. For a Gr\"obner coordinate $h\in U^\k_{W,V}$, define the \defn{centralized} Gr\"obner coordinate with respect to $W,V$ by
\[ \phi=h-1_W\in \Hom_\k(\bbar{\tl R^n},W),\]
then the defining condition \eqref{eq:u_w} for $U^\k_W$, namely, $h|_W=1$, is equivalent to $\phi|_W=0$. Identify $\phi$ with an element in $\Hom_\k(\bbar{V},W)$. The triangularity condition \eqref{eq:u_wv} defining $U^\k_{W,V}$ can be neatly translated as follows. Since $\bbar{V}$ and $W$ are $\Gm$-equivariant, there is a $\Gm$-action on $\Hom_\k(\bbar{V},W)$, inducing a $\Z$-grading. Then \eqref{eq:u_wv} is equivalent to
\[ \phi\in \Hom_\k(\bbar{V},W)_{>0},\]
the positive-degree part of $\Hom_\k(\bbar{V},W)$. Hence, noting that $1-h=(1-1_W)-\phi=1_V-\phi$, we may identify
\begin{equation}\label{eq:groebner-equation-centralized}
    U^R_{W,V}=\{ \phi\in \Hom_\k(\bbar{V},W)_{>0}: (1_W+\phi)T^\gamma (1_V-\phi)=0\in \End_\k(\bbar{\tl R^n})\text{ for }\gamma=a,b\}.
\end{equation}
A centralized Gr\"obner coordinate $\phi$ determines a submodule $L\in \Quot^R_m(\tl R^n)$ (identified modulo $T^M$ with $\bbar L\subeq \bbar{\tl R^n}$) by $\bbar L=\ker(1_W+\phi)=\im(1_V-\phi)$. 

To analyze the equation~\eqref{eq:groebner-equation-centralized}, we use a Koszul-type complex for spaces of $\k$-morphisms of $R$-modules. Given $R$-modules $V,W$, we define the Koszul complex $K^\bullet(V,W)$ (with respect to the generators $T^a,T^b$) by $K^i(V,W)=\Hom_\k(V,W)^{\oplus \binom{2}{i}}$, with differentials
\begin{equation}\label{eq:Kosz-vanishing}
0\to \Hom_\k(V,W)\xrightarrow{D_1}\Hom_\k(V,W)\oplus\Hom_\k(V,W)\xrightarrow{D_2}\Hom_\k(V,W)\to 0
\end{equation}
given by the formula
\begin{equation}\label{eq:differentials-formula}
D_1=\begin{bmatrix}d_a\\ d_b\end{bmatrix}, \quad D_2=\begin{bmatrix}-d_b& d_a\end{bmatrix}, \quad d_\gamma(\phi)=\phi \, T^\gamma|_V-T^\gamma|_W \, \phi,\quad \gamma=a,b.
\end{equation}
In the above formula $T^\gamma|_V, T^\gamma|_W$ are the multiplication by $T^\gamma$ endomorphisms. We note that $K^\bullet(V,W)$ is indeed a complex because $d_a$ and $d_b$ commute:
\[ d_a d_b(\phi)=d_a(\phi \, T^b|_V-T^b|_W \, \phi)=\phi \, T^{a+b}|_V - T^a|_W \, \phi \, T^b|_V-T^b|_W \, \phi \, T^a|_V + T^{a+b}|_W \, \phi=d_b d_a (\phi),\]
in which the rewriting $T^a T^b=T^{a+b}$ implicitly captures the fact that $T^a$ and $T^b$ commute.

Let us assume that $V,W$ are $\Gm$-equivariant with $\Gm$-action compatible with the $\Gm$-action on $R$; in other words, the endomorphisms $T^\gamma|_V$ and $T^\gamma|_W$ are homogeneous of degree $\gamma$. Then note that $d_\gamma$ is homogeneous of degree $\gamma$, so the complex $K^\bullet(V,W)$ breaks down degree-by-degree into
\begin{equation}
    K^\bullet(V,W)_r: 0\to \Hom_\k(V,W)_r \to \Hom_\k(V,W)_{r+a} \oplus \Hom_\k(V,W)_{r+b} \to \Hom_\k(V,W)_{r+a+b} \to 0.
\end{equation}

\begin{lemma}\label{lem:koszul-vs-fundamental}
    Let $V\in \Quot^R_m(\tl R^n)^\Gm$. Let $M=m+(a-1)(b-1)$ and $\bbar{V}=V/T^M \tl R^n$. Let $W=\oplus_{d=0}^{M-1} W_d T^d$ with $W_d\oplus V_d=\k^n$. Then for $r\geq 0$, the complex $K^\bullet(\bbar{V},\tl R^n/V)_r$ is isomorphic to the complex $C_{\N_{a,b}}^\bullet(V[-r],W;Q_2)$ in \eqref{eq:fundamental-cochain-complex}, where
    \begin{itemize}
        \item $V$ is viewed as an injective $\N_{a,b}$-flag \eqref{eq:torus-fixed-R}.
        \item $V[-r]$ is the shift defined by $V[-r]_d\coloneqq V_{d-r}$, where $V_{d-r}\coloneqq 0$ if $d-r<0$.
        \item $W$ is viewed as a surjective $\N_{a,b}$-flag \eqref{eq:w-surj-flag}.
        \item $Q_2$ consists of ``counterclockwise''\footnote{We visualize $+a$ as the east and $+b$ as the north.} relations
        \[ [d\to d+a\to d+a+b]-[d\to d+b\to d+a+b], d\geq 0.\]
    \end{itemize}
\end{lemma}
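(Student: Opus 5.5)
The plan is to compare the two complexes term by term, using the grading to split each $\Hom$-space into direct sums over degrees, and then to check that the differentials agree.

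\textbf{Terms.} Since $\bbar V=\bigoplus_{d} V_dT^d$ (truncated at $M$) and $\tl R^n/V\simeq W=\bigoplus_e W_eT^e$ as graded spaces, the degree-$s$ part of $\Hom_\k(\bbar V,W)$ is
\[\Hom_\k(\bbar V,W)_s=\bigoplus_{e}\Hom_\k(V_{e-s},W_e)=\bigoplus_e \Hom_\k(V[-s]_e,W_e),\]
with the convention $V_{e-s}=0$ for $e<s$. Terms with $e\geq M$ vanish because $W_e=0$ there, and terms with $e-s\geq M$ do not occur. Taking $s=r$ identifies $K^0_r$ with $C^0(V[-r],W)$. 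Taking $s=r+a$, the summand $\Hom_\k(V_{e-r-a},W_e)$ is $\Hom(V[-r]_d,W_{d+a})$ with $d=e-a$, which is the component of $C^1$ at the arrow $d\to d+a$. Note that $e-a$ can be negative, but then $V[-r]_{e-a}=0$ anyway, so it suffices to index arrows by $d\geq 0$; the terms with $e<a$ vanish since $e-r-a<0$. In the same way, $\Hom_\k(\bbar V,W)_{r+b}$ corresponds to the $b$-arrows and $\Hom_\k(\bbar V,W)_{r+a+b}$ to the relations $d\to d+a+b$ in $Q_2$. The arrows of the poset quiver of $\N_{a,b}$ should be taken to be the $\pm a,\pm b$ steps, as the lemma's choice of $Q_2$ indicates.

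\textbf{Differentials.} For $\phi=(\phi_d)$, the component of $d_a(\phi)=\phi T^a|_{\bbar V}-T^a|_W\phi$ landing in $\Hom(V[-r]_d,W_{d+a})$ is
\[\phi_{d+a}\circ(V_{d-r}\hookrightarrow V_{d+a-r})-\overline{T^a}\circ\phi_d .\]
Here $T^a|_W$ means multiplication by $T^a$ on $\tl R^n/V$ transported to $W$; under $W_e\simeq \k^n/V_e$ this is exactly the surjection $W(d\to d+a)$ of \eqref{eq:w-surj-flag}. This is precisely $(\partial\phi)_{d\to d+a}$, and likewise for $b$. For $D_2=[-d_b\ \ d_a]$, the component of $-d_b\rho_a+d_a\rho_b$ at $d\to d+a+b$ works out to
\[\rho_{d+b\to d+a+b}+\rho_{d\to d+b}-\rho_{d+a\to d+a+b}-\rho_{d\to d+a},\]
with the appropriate pre- and postcompositions. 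Up to an overall sign, this is the integral $\int_r\rho$ over the relation $[d\to d+a\to d+a+b]-[d\to d+b\to d+a+b]$. If the sign is opposite, I will compose the isomorphism on $K^2$ with $-1$, which still gives an isomorphism of complexes.

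\textbf{Main obstacle.} The delicate step is the bookkeeping: checking that $T^\gamma|_W$, defined by transporting $\tl R^n/V$ to $W$, really equals the map $W_d\hookrightarrow\k^n\twoheadrightarrow W_{d+\gamma}$, projecting along $V_{d+\gamma}$. This holds because $T^\gamma V_d T^d\subseteq V_{d+\gamma}T^{d+\gamma}$. I also need to confirm that truncation modulo $T^M$ introduces no spurious terms, which follows since $W_e=0$ for $e\geq M$.
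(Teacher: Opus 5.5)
Your proposal is correct and is essentially the paper's proof: identify the graded pieces $\Hom_\k(\bbar V,W)_{r}$, $\Hom_\k(\bbar V,W)_{r+a}\oplus\Hom_\k(\bbar V,W)_{r+b}$, and $\Hom_\k(\bbar V,W)_{r+a+b}$ with $C^0,C^1,C^2$ for $V[-r]$ and $W$. Here the arrows are only the upward $+a$ and $+b$ steps, $T^\gamma$ acts on $V[-r]$ by the inclusions, and $T^\gamma$ acts on $W$ by the projections $W(d\to d+\gamma)$ of \eqref{eq:w-surj-flag}. One small correction: with $D_2=[-d_b\ \ d_a]$, the component at $d\to d+a+b$ is $\rho_{d\to d+a}+\rho_{d+a\to d+a+b}-\rho_{d+b\to d+a+b}-\rho_{d\to d+b}$, which is exactly $\int_r\rho$ for the counterclockwise relation, so your displayed sign is flipped and the fallback negation on $K^2$, though harmless, is unnecessary.
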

\begin{proof}
    We equip $W$ with an $R$-module structure using the $\k$-linear isomorphism $W\simeq \tl R^n/V$. As such, $K^\bullet(\bbar{V},\tl R^n/V)_r$ is isomorphic to $K^\bullet(\bbar{V},W)_r$. 
    
    We first match $i$-th degrees of the complexes. For $i=0$, we have
    \[ \Hom_\k(\bbar{V},W)_r = \oplus_{r\leq d<M} \Hom_\k(V_{d-r},W_d) = \oplus_{d\geq 0} \Hom_\k(V_{d-r},W_d)=C^0(V[-r],W),\]
    where the second equality is because $\Hom_\k(V_{d-r},W_d)$ vanishes unless $r\leq d<M$. (Recall for $d\geq M$, $V_d=\k^n$, so $W_d=0$.) For $i=1$, we similarly have
    \[ \Hom_\k(\bbar{V},W)_{r+a}\oplus \Hom_\k(\bbar{V},W)_{r+b} = \oplus_{d\geq 0} \Hom_\k(V[-r]_d,W_{d+a}) \oplus \Hom_\k(V[-r]_d,W_{d+b}).\]
    Since the arrows of $\N_{a,b}$ consist of $d\to d+a$ and $d\to d+b$ for $d\geq 0$, this space is precisely $C^1(V[-r],W)$. For $i=2$, the identification $\Hom_\k(\bbar{V},W)_{r+a+b}=C^2(V[-r],W)$ follows analogously.

    We then match the differentials. For an element $\phi\in C^0(V[-r],W)$, we denote its components by $\phi_d\in \Hom_\k(V_{d-r},W)$. For an element $ \rho\in C^1(V[-r],W)$, we denote its components by $\rho_{d\to e} \in \Hom_\k(V_{d-r},W_e)$, where $e-d=a,b$. Isolating the parts of correct degrees, the differential $D_1$ of $K^\bullet(\bbar V,W)_r$ involves
    \[ d_\gamma: \oplus_{d\geq 0} \Hom_\k(V_{d-r},W_d) \to \oplus_{d\geq 0} \Hom_\k(V_{d-r},W_{d+\gamma}),\]
    with 
    \[(d_\gamma \phi)_{d\to d+\gamma}=\phi_{d+\gamma} \, T^\gamma|_{V[-r]_d}-T^\gamma|_{W_d} \, \phi_d\in \Hom_\k(V_{d-r},W_{d+\gamma}).\] 
    The map $T^\gamma$ on $V[-r]_d$ is simply the inclusion map $V[-r](d\to d+\gamma):V_{d-r}\incl V_{d+\gamma-r}$. Since the $R$-module structure of $W$ is defined through $\tl R^n/V$, the map $T^\gamma|_{W_d}$ is precisely the map $W(d\to d+\gamma)$ from the surjective flag structure of $W$, see \eqref{eq:w-surj-flag}. As a result, $D_1=d_a+d_b$ is the first differential of $C^\bullet(V[-r],W)$.

    Similarly, for $\rho\in C^1(V[-r],W)$, consider a component of $d_\gamma \rho \in \Hom_\k(\bbar{V},W)_{r+a+b}$, namely,
    \[ (d_\gamma \rho)_{d\to d+a+b} \in \Hom_\k(V_{d-r},W_{d+a+b}).\]
    We have
    \[ (d_\gamma \rho)_{d\to d+a+b} = \rho_{d+\gamma\to d+a+b} \, T^\gamma|_{V[-r]_d} - T^\gamma|_{W_{d+a+b-\gamma}} \, \rho_{d\to d+a+b-\gamma},\]
    so 
    \begin{align*}
        (D_2 \rho)_{d\to d+a+b} &=  (d_a \rho)_{d\to d+a+b} - (d_b \rho)_{d\to d+a+b}\\
        &=\rho_{d+a\to d+a+b} \, T^a|_{V[-r]_d} - T^a|_{W_{d+b}} \, \rho_{d\to d+b} - \rho_{d+b\to d+a+b} \, T^b|_{V[-r]_d} + T^b|_{W_{d+a}} \, \rho_{d\to d+a}.
    \end{align*}
    Again, since $T^\gamma$ on $V[-r]_d$ and $W_d$ correspond to the structure maps of the flags, this matches the second differential of $C^\bullet(V[-r],W)$ induced by the counterclockwise rule $Q_2$.
\end{proof}

\begin{theorem}\label{thm:bb-fiber}
    Let $V\in \Fl_G(\mathbf{n};n)$, identified with a point of $E_R(\tl R^n)^\Gm$ by \eqref{eq:fixed-locus}. Then $\init_R^{-1}(V)$ is an affine space of dimension $B(\mathbf{n};n)$, where
    \begin{equation}\label{eq:bb-fiber-dim}
        B(\mathbf{n};n)=\sum_{e\in G, d\in \Z} U(e-d)\, n_d (n-n_e),
    \end{equation}
    with $U(r)=1_{r>0}-1_{r>a}-1_{r>b}+1_{r>a+b}$, $n_d=0$ for $d<0$, and $n_d=n$ for $d\in \pairing{a,b}$.
\end{theorem}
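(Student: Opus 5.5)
The plan is to solve the defining equations \eqref{eq:groebner-equation-centralized} of $U^R_{W,V}\simeq \init_R^{-1}(V)$ degree by degree, using the $\Gm$-grading. We fix complements $W_d$ as above and write the centralized Gr\"obner coordinate as $\phi=\sum_{r>0}{}_r\phi$ with ${}_r\phi\in\Hom_\k(\bbar V,W)_r$. Expanding $(1_W+\phi)T^\gamma(1_V-\phi)$ and taking its degree-$(r+\gamma)$ part, the terms linear in $\phi$ give exactly $d_\gamma({}_r\phi)$. The quadratic term $-\phi T^\gamma\phi$ contributes only products ${}_s\phi\,T^\gamma\,{}_{s'}\phi$ with $s+s'=r$ and $s,s'\ge1$. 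So the system becomes the triangular family
\[ D_1({}_r\phi)={}_r\rho,\qquad {}_r\rho\in K^1(\bbar V,W)_r\otimes\k[{}_1\phi,\dots,{}_{r-1}\phi], \tag{$*_r$}\]
with ${}_r\rho$ given explicitly by the quadratic terms. By Lemma~\ref{lem:koszul-vs-fundamental}, $D_1$ on $K^\bullet(\bbar V,W)_r$ is the differential $C^0_{\N_{a,b}}(V[-r],W)\to C^1_{\N_{a,b}}(V[-r],W)$. This gives Proposition~\ref{prop:bb-fiber}.

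Next we show ${}_r\rho$ is a cocycle, i.e.\ $D_2({}_r\rho)=0$, once $(*_s)$ holds for all $s<r$. The idea is to consider the degree-$(r+a+b)$ part of the identity $d_a(E_b)-d_b(E_a)=(\text{terms quadratic in the } E_\gamma \text{ and } \phi)$, where $E_\gamma=(1_W+\phi)T^\gamma(1_V-\phi)$. Concretely, we expand $(1_W+\phi)T^a(1_V-\phi)T^b-(1_W+\phi)T^b(1_V-\phi)T^a$ using $T^aT^b=T^bT^a$ and $1_V+1_W=1$, so that it becomes a combination of the $E_\gamma$ multiplied by operators. In degree $r+a+b$, the only surviving contributions involve $E_\gamma$ in degrees $<r+\gamma$, which vanish by induction. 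This identity is exactly the Koszul relation $D_2D_1=0$ deformed by the quadratic term, the deformation-theoretic ``obstruction is a cocycle'' argument.

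Then exactness at $C^1$ yields a solution of $(*_r)$, which is Proposition~\ref{prop:solution-exists}. Here $V_0=\k^n$ forces $W_d=0$ for $d\in\pairing{a,b}$, so all components of $C^i(V[-r],W)$ have targets in $G$. Moreover, if $d+\gamma\in G$ then $d\in G$ or $d<0$. Embedding $G$ as in Example~\ref{eg:gap-young} into a rectangular grid $Q$, we transport $V[-r]$ to $Q$ via the order-preserving value map (it remains an injective flag, with $V_{d}=0$ for $d<0$) and $W$ by extension by zero. We then want Theorem~\ref{thm:grid-acyclicity} for $C^\bullet_Q$, which agrees with $C^\bullet_{\N_{a,b}}$ on the relevant components. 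The main obstacle I expect is here: extension by zero of $W$ off the upper set $G$ is not a surjective flag. I would first try the relaxation recorded in the remark after Theorem~\ref{thm:grid-acyclicity} (only rows of $W$ need be surjective, with zero rows or columns on the southwest side). Failing that, I would rerun the filling algorithm of Lemmas~\ref{lem:lift2} and \ref{lem:lift3} directly on the Young-diagram shape $G\cup(\text{sources})$, filling rows from the northeast corner in the order pictured in that proof.

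With existence in hand, the solution set of $(*_r)$ over any fixed $({}_s\phi)_{s<r}$ is a torsor under $\ker D_1=\Hom_Q(V[-r],W)$. This is a linear space whose dimension is independent of the earlier choices, and the solution can be chosen polynomially (via a fixed linear section of $D_1$ onto its image). Lemma~\ref{lem:trivial-aff-bundle} then makes $\init_R^{-1}(V)$ an iterated trivial affine bundle, hence an affine space. Its dimension is $\sum_{r>0}\bigl(\dim C^0-\dim C^1+\dim C^2\bigr)(V[-r],W)$ by Theorem~\ref{thm:acyclicity}. Writing $\dim\Hom(V_d,W_e)=n_d(n-n_e)$, the three summands contribute the pairs with $e-d>0$, $e-d>a$ or $e-d>b$ (with a minus sign), and $e-d>a+b$, respectively. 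Summing gives precisely $\sum U(e-d)\,n_d(n-n_e)=B(\mathbf n;n)$, with the stated conventions $n_d=0$ for $d<0$ and $n_d=n$ on $\pairing{a,b}$.
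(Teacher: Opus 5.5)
Your outline matches the paper's proof at every step but one. It has the same triangular system $D_1({}_r\phi)={}_r\rho$ and the same identification with $C^\bullet_{\N_{a,b}}(V[-r],W)$ via Lemma~\ref{lem:koszul-vs-fundamental}. Your cocycle check is also fine: expanding fully with $h=1_W+\phi$ and $h+(1-h)=1$ gives the exact identity $D_2(\rho)=E_aT^b\phi+\phi T^bE_a-E_bT^a\phi-\phi T^aE_b$, which is the paper's four-term computation. The ending with Lemma~\ref{lem:trivial-aff-bundle} and the dimension count is also the paper's.

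The one genuine gap is the step you flag yourself, and it comes entirely from extending $W$ by zero. The missing idea is to extend $W$ off $G$ by $W_x=\k^n$, using identity maps between new vertices and the projections $\k^n\onto W_e$ into $G$. Since $G$ is an upper set of $Q$, there are no arrows from $G$ to $Q\setminus G$, so this is a surjective $Q$-flag. Meanwhile $V[-r]$ extended by zero, which is exactly what your value-map transport produces, stays injective. Because $V[-r]_x=0$ for $x\notin G$, every summand $\Hom_\k(V[-r]_x,W_y)$ of $C^\bullet_Q$ with $x\notin G$ vanishes. So $C^\bullet_Q(V[-r],W)=C^\bullet_G(V[-r],W)$ whatever $W$ is off $G$, and Theorem~\ref{thm:grid-acyclicity} applies verbatim.

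Your first remedy does not work. With the zero extension, the zeros sit at the west end of every row and the south end of every column of the northeast-gravitating diagram $G$. The arrows $0\to W_g$ then make neither rows nor columns surjective. This is precisely the ``northeast block, zero elsewhere'' configuration that the remark after Theorem~\ref{thm:grid-acyclicity} warns about.

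Your second remedy can be made to work, but only in the order of the paper's picture, which starts at the \emph{southeast}, not the northeast. Begin at the east end of the lowest nonempty row of $G$, move west, then go up a row. Rows of $G$ are right-justified and $G$ is an upper set, so each new vertex has already-filled neighbours only to its east and to its south. These are handled by Lemma~\ref{lem:lift2}, using surjectivity of $W$ for the east neighbour and injectivity of $V[-r]$ for the south one. When both neighbours are present, the whole unit square lies in $G$, so Lemma~\ref{lem:lift3} applies. Exactness at $C^2$ goes through square-row by square-row with Lemma~\ref{lem:lift1}.

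Starting from the northeast would instead require lifting through $W_x\to W_{x+(1,0)}\times_{W_{x+(1,1)}}W_{x+(0,1)}$. That map need not be surjective: it is surjective iff $V_{x+(1,1)}=V_{x+(1,0)}+V_{x+(0,1)}$. With either fix in place, the rest of your argument goes through as written.
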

\begin{proof}
    We study the system \eqref{eq:groebner-equation-centralized} degree by degree; we use the notation ${}_r(\cdots)$ to extract the degree $r$ part of an element of a graded vector space. Note that $1_W, 1_V$ are homogeneous of degree $0$ and  $T^\gamma$ is homogeneous of degree $\gamma$. For $\gamma=a,b$, the lowest-degree part of the equation $(1_W+\phi)T^\gamma (1_V-\phi)=0$, namely, the degree-$\gamma$ part
    \[ 1_W T^\gamma 1_V=0,\]
    is a trivial consequence of the fact that $V$ is an $\N_{a,b}$-flag. The first nontrivial equation is the next degree
    \[ ({}_1 \phi) T^\gamma 1_V-1_W T^\gamma ({}_1 \phi)=0,\]
    and more generally for $r>0$,
    \begin{equation}\label{eq:groebner-equation-by-degree}
        ({}_r \phi) T^\gamma 1_V - 1_W T^\gamma ({}_r \phi)={}_{r+\gamma} (\phi T^\gamma \phi).
    \end{equation}
    Note that $\phi$ has only positive degree parts, so ${}_{r+\gamma} (\phi T^\gamma \phi)$ depends only on ${}_1 \phi,\dots,{}_{r-1} \phi$, and we may solve for ${}_r \phi$ inductively. We view ${}_r\phi$ in $\Hom_\k(\bbar{V},W)_r=K^0(\bbar{V},W)_r$, where $W\simeq \tl R^n/V$ is given the $R$-module structure as in the proof of Lemma~\ref{lem:koszul-vs-fundamental}. The $R$-module structure on $W$ can be concretely given by $T^\gamma|_W=1_W T^\gamma$.

    Now for each fixed $r>0$, consider the system of equations by putting two equations of \eqref{eq:groebner-equation-by-degree} for $\gamma=a,b$ together. The left-hand side is precisely $D_1 ({}_r \phi)$. We denote the right-hand side by ${}_r \rho\in K^1(\bbar{V},W)_r$, so that the system of equations \eqref{eq:groebner-equation-by-degree} for $\gamma=a,b$ becomes
    \begin{equation}\label{eq:groebner-equation-complex}
        D_1({}_r \phi)={}_r \rho.
    \end{equation}
    At this point, we have proved Proposition~\ref{prop:bb-fiber}, using the identification $K^\bullet(\bbar{V},W)_r=C_{\N_{a,b}}^\bullet(V[-r],W)$ in Lemma~\ref{lem:koszul-vs-fundamental}. 

    Let us verify that $D_2 ({}_r \rho)=0$ assuming
    \begin{equation}
        \label{eq:induction-step}
        D_1({}_s \phi)={}_s \rho\text{ for }1\leq s<r.
    \end{equation}
    If $r=1$, ${}_1\rho=0$ and this is trivially true. More generally, for $r>0$, it is more convenient to consider all degrees together up to a certain bound. Introduce the notation $O(T^r)$: an element is in $O(T^r)$ if it only has parts of degree at least $r$. Consider
    \[ \rho=(\phi T^a \phi, \phi T^b \phi)\in \Hom_\k(\bbar{V},W)^{\oplus 2}.\]
    Then to prove $D_2({}_r \rho)=0$ assuming  \eqref{eq:induction-step}, it suffices to prove
    \[ D_2(\rho)=O(T^{r+a+b+1})\]
    assuming
    \[ D_1(\phi)=\rho+(O(T^{r+a}),O(T^{r+b})),\]
    which means
    \begin{equation}\label{eq:commute}
        \phi T^\gamma 1_V- 1_W T^\gamma \, \phi=\phi T^\gamma \phi+O(T^{r+\gamma})\text{ for }\gamma=a,b.
    \end{equation}

    Now compute
    \begin{align*}
        D_2(\rho)&=d_a(\phi T^b \phi)-d_b(\phi T^a \phi)\\
        &=\phi T^b \phi T^a 1_V - 1_W T^a \phi T^b \phi - \phi T^a \phi T^b 1_V + 1_W T^b \phi T^a \phi.
    \end{align*}
    We apply the commutation \eqref{eq:commute} to all four terms of the right-hand side, such as 
    \[\phi T^b \phi T^a 1_V = \phi T^b (1_W T^a \phi+ \phi T^a \phi+ O(T^{r+a}))=\phi T^b 1_W T^a \phi + \phi T^b \phi T^a \phi + O(T^{r+a+b+1}).\]
    This yields
    \begin{align*}
        D_2(\rho)+O(T^{r+a+b+1})&=\phi T^b 1_W T^a \phi + \phi T^b \phi T^a \phi\\
        &-\phi T^a 1_V T^b \phi + \phi T^a \phi T^b \phi \\
        &-\phi T^a 1_W T^b \phi - \phi T^a \phi T^b \phi \\
        &+\phi T^b 1_V T^a \phi - \phi T^b \phi T^a \phi \\
        &=\phi T^b(1_W+1_V) T^a \phi - \phi T^a (1_W+1_V) T^b \phi\\
        &=\phi T^b T^a \phi- \phi T^a T^b \phi = 0,
    \end{align*}
    where the last line uses $1_W+1_V=1$ and that $T^a$ and $T^b$ commute. This verifies the claim.

    Now we invoke the assumption that $V\in E_R(\tl R^n;m)^{\Gm}$, so that the poset flag $W$ is supported on the gap set $G$. We identify $K^\bullet(\bbar{V},W)_r=C^\bullet_{\N_{a,b}}(V[-r],W)$ by Lemma~\ref{lem:koszul-vs-fundamental}. Further, we notice that $C^\bullet_{\N_{a,b}}(V[-r],W)$ is identical to the restricted complex $C^\bullet_{G}(V[-r],W)$. Indeed, a typical summand of $C^\bullet_{\N_{a,b}}(V[-r],W)$ is of the form $\Hom_\k(V[-r]_d,W_e)$, where $e-d\in \{0,a,b,a+b\}\subeq \pairing{a,b}$. If it does not vanish, then $d\geq r>0$ and $e\in G$, so the fact that $e-d\in \pairing{a,b}$ implies $d\in G$. 

    We further embed $G$ into a grid poset $Q$ such that $G$ is an upper set of $Q$ (Example~\ref{eg:gap-young}). Extend the $G$-flags $V[-r]$ and $W$ to $Q$-flags by $V[-r]_x=0$ and $W_x=\k^n$ for $x\in Q\setminus G$. Because $G$ is an upper set of $Q$, $V[-r]|_Q$ is an injective flag and $W|_Q$ is a surjective flag, whose additional structural maps are either the identity map on $\k^n$ or the natural projection $\k^n\to W_d$ with respect to $\k^n=W_d\oplus V_d$. We notice once again that $C^\bullet_G(V[-r],W)$ is identical to the extended complex $C^\bullet_Q(V[-r],W)$. Indeed, a typical summand of $C^\bullet_Q(V[-r],W)$ is of the form $\Hom_\k(V[-r]_x,W_y)$, where $y-x\in \{0,1\}^2$. If it does not vanish, then $x\in G$, which forces $y\in G$ because $G$ is an upper set of $Q$.

    We have thus established $K^\bullet(\bbar{V},W)_r\simeq C^\bullet_Q(V[-r],W)$, which is exact at degrees $1$ and $2$ by Theorem~\ref{thm:acyclicity}. As a result, the solution space of \eqref{eq:groebner-equation-complex} is nonempty and is an affine space of dimension
    \[ \dim \Hom_Q(V[-r],W) = \sum_{i=0}^2 (-1)^i \dim C^i_Q(V[-r],W) = \sum_{i=0}^2 (-1)^i \dim C^i_G(V[-r],W).\]
    Modulo the geometric detail (Lemma~\ref{lem:trivial-aff-bundle}), this implies that $\init^{-1}_R(V)$ is an affine space with dimension
    \begin{align*}
    \sum_{r>0} \dim \Hom_Q(V[-r],W) &= \sum_{r>0}\sum_{i=0}^2 (-1)^i \dim C^i_G(V[-r],W)\\
    &=\sum_{r>0} \sum_{e\in G} (n_{e-r}-n_{e-a-r}-n_{e-b-r}+n_{e-a-b-r}) (n-n_e),
    \end{align*}
    where $n_d=0$ for $d<0$ and $n_d=n$ for $d\in \pairing{a,b}$.
    
    This simplifies to the required expression because for each $e$, taking the second term as an example,
    \[ \sum_{r>0} n_{e-a-r}(n-n_e) = \sum_{d\in \Z} 1_{e-d>a}\, n_d\, (n-n_e).\qedhere\]
\end{proof}

\subsection{Final geometric details}
With the notation in the proof of Theorem~\ref{thm:bb-fiber}, consider the affine variety
\[ X_r\coloneqq \set*{ ({}_s\phi)_{s=1}^r: {}_s\phi \in C^0_{\N_{a,b}}(V[-s],W), \partial ({}_s \phi)={}_s \rho}.\]
We have established that each fiber of the forgetful map $X_r \to X_{r-1}$ is an affine space of dimension $\dim \Hom_Q(V[-r],W)$. To finish the proof that $\init^{-1}_R(V)$ is an affine space, we need to show that $X_r\to X_{r-1}$ is geometrically a trivial fiber bundle. This can be resolved elementarily, without invoking a Quillen--Suslin type theorem.
\begin{lemma}
    \label{lem:trivial-aff-bundle}
    Let $V,W$ be finite-dimensional vector spaces over an algebraically closed field $\bbar\k$ and $L:V\to W$ be a linear map. Suppose there is a variety $S$ and a regular map $\rho:S\to W$ whose image is contained in $\im(L)$. Then the forgetful map
    \[ T\coloneqq \set{(v,s)\in V\times S:Lv=\rho(s)} \to S\]
    is isomorphic to the trivial fibration $\A^{\dim \ker L}\times S\to S$. Moreover, if $V,W,L,S,\rho$ descend to a subfield $\k\subeq \bbar\k$, then so does this isomorphism.
\end{lemma}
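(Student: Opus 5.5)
The plan is to construct an explicit linear section of $L$ over $\k$ and use it to write down the trivialization directly. Since $V,W,L$ are defined over $\k$, I will choose a $\k$-subspace $C\subeq V$ complementary to $\ker L$, so $V=\ker L\oplus C$ and $L|_C:C\to \im(L)$ is an isomorphism of $\k$-vector spaces. Write $\sigma\coloneqq (L|_C)^{-1}:\im(L)\to C\subeq V$; this is a $\k$-linear map, hence a morphism of affine spaces defined over $\k$. After choosing a $\k$-basis of $\ker L$, identify $\ker L\simeq \A^{\dim\ker L}$.

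Next I define mutually inverse morphisms over $S$:
\[ \Psi:\A^{\dim\ker L}\times S\to T,\quad (k,s)\mapsto (k+\sigma(\rho(s)),s),\qquad \Theta:T\to \A^{\dim\ker L}\times S,\quad (v,s)\mapsto (v-\sigma(\rho(s)),s). \]
For $\Psi$ to be well defined and regular, $\sigma\circ\rho$ must make sense as a regular map $S\to V$. For this I will extend $\sigma$ to all of $W$ by choosing a $\k$-complement $D$ of $\im(L)$ in $W$ and setting $\tilde\sigma|_D=0$. Then $\tilde\sigma\circ\rho:S\to V$ is regular as a composition of regular maps, and it agrees with $\sigma\circ\rho$ because $\rho(S)\subeq \im(L)$.

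Then I will check the two properties. First, $\Psi$ lands in $T$, since $L(k+\tilde\sigma\rho(s))=0+L\tilde\sigma(\rho(s))=\rho(s)$ using $L\tilde\sigma=\mathrm{id}$ on $\im L$. Second, $\Theta$ lands in $\ker L\times S$, because $L(v-\tilde\sigma\rho(s))=\rho(s)-\rho(s)=0$. Both maps are compatible with the projection to $S$, and they are visibly inverse to each other. Since $T$ is the closed subvariety of $V\times S$ cut out by the regular equation $Lv=\rho(s)$, both maps are morphisms of varieties, and $\Theta$ is the restriction of a morphism $V\times S\to V\times S$ composed with the projection onto $\ker L$ along $C$.

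Descent is automatic, because every choice ($C$, $D$, the basis of $\ker L$) was made over $\k$, so $\Psi$ and $\Theta$ are defined over $\k$. The main subtlety is the regularity of $\sigma\circ\rho$, which the extension $\tilde\sigma$ handles. One remaining caveat concerns the reduced/scheme structure of $T$: I would take $T$ with its reduced structure, as the paper does throughout. The isomorphism also holds scheme-theoretically, provided the hypothesis "image contained in $\im L$" is read as $\rho$ factoring through the linear subspace $\im L$, which is automatic for reduced $S$.
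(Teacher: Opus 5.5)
Your proposal is correct and is essentially the paper's argument. Both choose $\k$-rational splittings $V=\ker L\oplus C$ and $W=\im(L)\oplus D$, and trivialize $T\simeq \ker L\times S$ using the regular map obtained by projecting $\rho$ onto $\im(L)$ along $D$: your $\tilde\sigma\circ\rho$ is the paper's $\bbar\rho$ composed with the identification of $\im(L)$ with the complement of $\ker L$.
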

\begin{proof}
    By choosing the splittings of $L:V\onto \im(L)$ and $\im(L)\incl W$, we get vector spaces $A,B,C$ such that $V=A\oplus B$, $W=B\oplus C$, and $L:A\oplus B\to B\oplus C$ is the map sending $(v_A,v_B)$ to $v_B$. The restriction of the codomain, $\bbar\rho:S\to B$, is still a regular map since it can be produced by composing $\rho$ with the projection map $W\onto B$. Now we have
    \[ T=\{((v_A,v_B),s):v_B=\bbar\rho(s)\}=\{((v_A,\bbar\rho(s)),s):v_A\in A, s\in S\}, \]
    so the forgetful map $T\to S$ is isomorphic to the trivial fibration $A\times S\to S$. This proves all of our claims, since $\dim A=\dim V-\dim \im(L)=\dim \ker(L)$ and since this construction works over any field.
\end{proof}

At this point, we have proved that for each admissible pair $(\mathbf{n};n)\in \N^G\times \N$, each fiber of the \BB map $\pi_{\mathbf{n}}:X_\mathbf{n}\to \Fl_G(\mathbf{n};n)$ is an affine space of constant dimension $B(\mathbf{n};n)$. To show that it is a Zariski-local fiber bundle, we explicitly display a Zariski-local trivialization. Recall the definition of $U^\k_W$ from the discussion before \eqref{eq:u_w}. For $V^0\in \Fl_G(\mathbf{n};n)\subeq E_R(\tl R^n)^\Gm$ and $W=(W_d)$ such that $W_d\oplus V^0_d=\k^n$, let
\[ U^R_W=U^\k_W\cap \Fl_G(\mathbf{n};n)=\{V\in \Fl_G(\mathbf{n};n): W_d\oplus V_d=\k^n\},\]
an open neighborhood of $V^0$ in $\Fl_G(\mathbf{n};n)$.

\begin{lemma}\label{lem:bb-loc-trivial}
    Assuming the above, the \BB map $\pi_{\mathbf{n}}$ restricted to $U^R_W$ is a trivial affine bundle.
\end{lemma}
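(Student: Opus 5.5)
The plan is to run the proof of Theorem~\ref{thm:bb-fiber} with the base point $V$ varying over $U^R_W$, keeping the complement $W=(W_d)$ fixed. The point of working on $U^R_W$ is that $W_d$ is a \emph{common} complement to $V_d$ for every $V\in U^R_W$. I would then exhibit $\pi_{\mathbf{n}}^{-1}(U^R_W)$ as an iterated family over $U^R_W$ of solution spaces of linear systems, and check that every bundle in sight is trivialized by this fixed choice.

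\textbf{Step 1 (uniform coordinates).} By the lemma identifying $\init_\k^{-1}(V)$ with $U^\k_{W,V}$, every $L$ with $\init(L)=V\in U^R_W$ lies in the single chart $U^\k_W$. In that chart $L$ is given by a centralized coordinate $\phi\in\Hom_\k(\bbar V,W)_{>0}$. To remove the dependence of the source on $V$, I would use the graph chart of the Grassmannian: write $V_d=\{v+A_d v: v\in V^0_d\}$ with $A_d\in\Hom(V^0_d,W_d)$ depending regularly on $V$. Then the projection along $W_d$ gives an isomorphism $V_d\cong V^0_d$, and $\k^n/V_d\cong W_d$ canonically. Precomposing $\phi$ with $V^0\to V$ identifies $\Hom(\bbar V,W)_{>0}$ with the fixed space $\Hom(\bbar{V^0},W)_{>0}$. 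After this, $\pi_{\mathbf{n}}^{-1}(U^R_W)$ becomes the closed subvariety of $U^R_W\times\Hom(\bbar{V^0},W)_{>0}$ cut out by \eqref{eq:groebner-equation-centralized}. The structure maps $T^\gamma|_V$ (inclusions) and $W(d\to e)$ (projection along $V_e$) become matrices depending regularly on $V$.

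\textbf{Step 2 (degree-by-degree induction).} As in the proof of Theorem~\ref{thm:bb-fiber}, the degree-$r$ equations read
\[ D_1^{(V)}({}_r\phi)={}_r\rho\bigl(V,{}_1\phi,\dots,{}_{r-1}\phi\bigr). \]
Here $D_1^{(V)}$ is a linear map between fixed vector spaces depending regularly on $V$. Theorem~\ref{thm:acyclicity} gives two facts for every $V\in U^R_W$: solvability, and that $\ker D_1^{(V)}=\Hom_Q(V[-r],W)$ has constant dimension. So each forgetful map $X_r\to X_{r-1}$, now over the base $U^R_W\times(\text{earlier }\phi)$, is an affine-space fibration. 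To prove triviality I would imitate Lemma~\ref{lem:trivial-aff-bundle}, which requires a \emph{regular} section and a \emph{regular} trivialization of the kernel over the base. For this I would make the acyclicity proof (Lemmas~\ref{lem:lift1}--\ref{lem:lift3} and the row-by-row filling) uniform in $V$.
\begin{itemize}
\item The extension step over $V_d\hookrightarrow V_{d+\gamma}$ needs a complement of $V_d$ in $V_{d+\gamma}$. I would take $V_{d+\gamma}\cap W_d$; this is a valid complement because $W_d\oplus V_d=\k^n$.
\item The lifting step along $W_d\twoheadrightarrow W_e$ needs a splitting. I would use the kernel $W_d\cap V_e$ together with a section depending regularly on the chart coordinates $A$.
\item All "arbitrary" choices are set to zero.
\end{itemize}
This produces a regular particular solution of the degree-$r$ system. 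It also expresses the kernel as the image of a regular injective map from a product of $\Hom$'s between these subquotient bundles, namely the free blocks $B$ of Lemma~\ref{lem:fill-triangle}.

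\textbf{Step 3 (main obstacle: triviality of the kernel bundle).} The hard point is that the kernel bundle $\ker D_1^{(V)}$ over $U^R_W$ must be trivial, not merely locally free. The subquotients $V_{d+\gamma}\cap W_d$ and $W_d\cap V_e$ are a priori only vector bundles on $U^R_W$. I would try to trivialize them through the fixed-complement identifications of Step~1. Under $V_e\cong V^0_e$ and $W_d\cong \k^n/V_d$, I expect each such subquotient to be identified with the corresponding fixed subquotient at $V^0$ via the graph maps $1+A$, which are regular and invertible on the chart. If that works, then every block is $\Hom$ between fixed spaces, and the fiberwise parametrization $(\text{free blocks})\mapsto({}_r\phi)$ is a regular isomorphism $\A^{\dim\Hom_Q(V[-r],W)}\times X_{r-1}\cong X_r$. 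Composing over $r$ and summing dimensions as in Theorem~\ref{thm:bb-fiber} gives
\[ \pi_{\mathbf{n}}^{-1}(U^R_W)\cong U^R_W\times\A^{B(\mathbf{n};n)}, \]
compatibly with the projection to $U^R_W$, and everything descends to $\k$. If the identification of subquotients via $1+A$ fails to be regular, the fallback is to shrink $U^R_W$. That fallback only yields Zariski-local triviality near each point, which suffices for Theorem~\ref{thm:bb} but would not prove the lemma as stated over all of $U^R_W$.
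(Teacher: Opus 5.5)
\paragraph{Verdict.} Your proposal has a genuine gap in Step~3: it does not establish triviality over all of $U^R_W$. The paper's own proof has the same gap.

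\paragraph{Steps 1--2 and the gap in Step~3.} Steps~1--2 are the paper's argument in different coordinates. The paper uses the non-centralized coordinate $h$. Its degree-$0$ part ${}_0h$ encodes $V$, so the base of the tower is $U^R_W$. Its positive-degree parts ${}_rh$, subject to ${}_rh|_W=0$, live in a fixed space $A$; this is exactly your graph-chart identification $\bbar V\cong\bbar{\tl R^n}/W$ by projection along $W$. The paper then solves $D({}_rh)={}_r\rho$ degree by degree, with solvability coming from the fixed-$V$ argument of Theorem~\ref{thm:bb-fiber}.

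The concrete identification you propose in Step~3 is false for a general choice of complements. For $v\in V^0_e\cap W_d$ you have $(1+A_e)v=v+A_ev$ with $A_ev\in W_e$. This lies in $W_d$ only if $A_ev\in W_d$, which fails in general once $W_e\not\subseteq W_d$.

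So for an arbitrary $W$, which is what the lemma allows, you only know that $\ker D^{(V)}$ is a vector bundle on $U^R_W$. It is stably free: in the exact sequence of Theorem~\ref{thm:acyclicity} the terms $C^i$ are trivial bundles on $U^R_W$, since $\calV_d\cong\k^n/W_d$ and $\k^n/\calV_e\cong W_e$ there. Since every stage of the tower is affine, the torsor part is harmless. But freeness of these kernel bundles is exactly what is missing. Your argument therefore gives only Zariski-local triviality, as you concede.

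\paragraph{The paper's proof.} The paper does not settle this point either. It invokes Lemma~\ref{lem:trivial-aff-bundle}, which is stated for a single fixed linear map $L$, for the map
\[ D({}_rh)=({}_rh)T^\gamma(1-{}_0h)-({}_0h)T^\gamma({}_rh). \]
This map depends on ${}_0h$, that is, on the point of $U^R_W$. Made rigorous, that argument also yields only local triviality. This is all Theorem~\ref{thm:bb-detailed} needs, since an affine bundle in this paper is by definition Zariski-locally trivial.

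\paragraph{How to complete your route.} Your approach works on a suitable cover if you choose $W$ \emph{compatible}: $W_e\subseteq W_d$ whenever $d\preceq e$.
\begin{itemize}
\item Projection along $W_e$ identifies $V_e\cap W_d$ with $W_d/W_e$.
\item The inclusion $W_e\subseteq W_d$ is a $V$-independent section of $W_d\twoheadrightarrow W_e$.
\item The kernel $W_d\cap V_e$ of that surjection is again identified with $W_d/W_e$.
\end{itemize}
Together with $\calV_d\cong\k^n/W_d$, every block in your uniform version of the acyclicity proof becomes canonically trivial. This gives $\pi_{\mathbf{n}}^{-1}(U^R_W)\cong U^R_W\times\A^{B(\mathbf{n};n)}$.

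Compatible $W$ exist around every $V^0\in\Fl_G(\mathbf{n};n)$. Go along a good traverse of $G$, with the conventions $W_\infty=0$, $V^0_\infty=\k^n$, $W_{-\infty}=\k^n$. Put $W_x=W_{x^+}\oplus C$, where $C$ is a complement of $V^0_x\cap W_{x^-}$ in $V^0_{x^+}\cap W_{x^-}$. Then $W_{x^+}\subseteq W_x\subseteq W_{x^-}$ and $W_x\oplus V^0_x=\k^n$. This proves the lemma for compatible $W$, which is enough for every later use, but not the statement as written for arbitrary $W$.
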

\begin{proof}
    We use the same strategy as in Theorem~\ref{thm:bb-fiber}, except that we need to take account of the fact that $V$ is varying. The key is to bring back the non-centralized Gr\"obner coordinate, $h$ in \eqref{eq:u_wv}, which recovers $L\in \pi_{\mathbf{n}}^{-1}(U^R_W)$ without reference to $V$; in fact, $L=\ker(h)=\im(1-h)$. 

    Recall $h=\phi+1_W^V\in \Hom_\k(\bbar{\tl R^n},W)$ with $\phi\in \Hom_\k(\bbar{\tl R^n},W)_{>0}$, where $1_W^V$ is the projection from $\bbar{\tl R^n}$ onto $W$ with respect to $W\oplus \bbar{V}=\bbar{\tl R^n}$. Then the degree $r$ part of $h$ satisfies ${}_0 h=1_W^V$ and ${}_r h={}_r \phi$ for $r>0$. In particular, the degree $0$ part of $h$ recovers $V$ by
    \[ \bbar{V}=\ker({}_0 h)=\im(1-{}_0 h).\]
    The only constraint on $h$ is that $h|_W=1$ and $\ker(h)$ is an $R$-module.  Thus, the defining equation for $\pi_{\mathbf{n}}^{-1}(U^R_W)$ is
    \begin{equation}
        \label{eq:groebner-equation-urw}
        \pi_{\mathbf{n}}^{-1}(U^R_W)=\set*{ h\in \Hom_\k(\bbar{\tl R^n},W): h|_W=1, hT^\gamma(1-h)=0\text{ for }\gamma=a,b}.
    \end{equation}
    Degree-wise, the condition $h|_W=1$ is just ${}_0 h|_W=1$ and ${}_r h|_W=0$ for $r>0$. The equation $h T^\gamma (1-h)=0$ reads
    \[ ({}_r h)  T^\gamma (1-{}_0 h) - ({}_0 h) T^\gamma  ({}_r h) = {}_{r+\gamma}(\phi T^\gamma \phi),\]
    for $r\geq 0$, where $\phi={}_{>0} h$ is the positive degree part of $h$. Note that it is exactly \eqref{eq:groebner-equation-by-degree}, except that the lowest degree $r=0$ is no longer trivial:
    \[ ({}_0 h)  T^\gamma (1-{}_0 h)=0,\]
    capturing the fact that $\bbar{V}=\ker({}_0 h)$ is an $R$-module.

    For $r\geq 0$, define
    \[ Y_r=\set*{({}_s h)_{s=0}^r: \begin{array}{l}
    {}_s h\in \Hom_\k(\bbar{\tl R^n},W)_s,\\
    {}_0 h|_W=1,\\
    {}_s h|_W=0\text{ for }0<s\leq r,\\
    ({}_s h)  T^\gamma (1-{}_0 h) - ({}_0 h) T^\gamma  ({}_s h) = {}_{s+\gamma}(({}_1 h+\dots+{}_{r-1} h) \cdot T^\gamma\cdot ({}_1 h+\dots+{}_{r-1} h)).
    \end{array}}
    \]
    In the formula above we assume \(\gamma=a,b\). Note that $Y_r$ stabilizes at $Y_\infty=Y_{ab-a-b}=\pi_{\mathbf{n}}^{-1}(U^R_W)$ and
    \[ Y_0=\set*{{}_0 h\in \Hom_\k(\bbar{\tl R^n},W)_0: {}_0 h|_W=1, ({}_0 h) T^\gamma (1-{}_0 h)=0}\]  
    is precisely $U^R_W$. Our goal is to show the forgetful map $Y_\infty\to Y_0$ is a trivial affine bundle. It suffices to show that for $r>0$, the forgetful map $Y_r\to Y_{r-1}$ is. 
    
    Let $A, B$ be the finite-dimensional vector spaces
    \[ A=\{ {}_r h\in \Hom_\k(\bbar{\tl R^n},W)_r: {}_r h|_W = 0\},\]
    \[ B=\Hom_\k(\bbar{\tl R^n},W)_{r+a}\oplus \Hom_\k(\bbar{\tl R^n},W)_{r+b},\]
    let $D:A\to B$ be the linear map
    \[ D({}_r h) = (({}_r h)  T^\gamma (1-{}_0 h) - ({}_0 h) T^\gamma  ({}_r h))_{\gamma=a,b},\]
    and let ${}_r \rho:Y_{r-1}\to B$ be the regular map defined by
    \[ {}_r \rho ({}_0 h, \dots, {}_{r-1}h)= ({}_{r+\gamma}(({}_1 h+\dots+{}_{r-1} h) \cdot T^\gamma\cdot ({}_1 h+\dots+{}_{r-1} h)))_{\gamma=a,b}.\]
    Then the forgetful map $Y_{r}\to Y_{r-1}$ is the same as the forgetful map
    \[ \{({}_r h, {}_{<r} h)\in A\times Y_{r-1}: D({}_r h)={}_r \rho ({}_{<r} h)\} \to Y_{r-1}.\]
    The image of ${}_r \rho$ is contained in the image of $D$ by the proof of Theorem~\ref{thm:bb-fiber} (note that this is a fixed ${}_0 h$, or fixed $V$, statement). Hence, by Lemma~\ref{lem:trivial-aff-bundle}, $Y_r\to Y_{r-1}$ is a trivial affine bundle, finishing the proof.
\end{proof}

We summarize all we know so far in the following theorem, which is the detailed statement of Theorem~\ref{thm:bb} and is the main theorem of the entire paper.

\begin{theorem}[$\subeq$ Equation \eqref{eq:fixed-locus}, Theorem~\ref{thm:poset-flag-tame}, Theorem~\ref{thm:bb-fiber}, and Lemma~\ref{lem:bb-loc-trivial}]
    Consider the $\Gm$-action on $X=E_R(\tl R^n;m)$. 
    \begin{enumerate}
        \item The connected components of the fixed point locus $X^\Gm$ are indexed by $\mathbf{n}\in \Z^G$ such that $(\mathbf{n};n)$ is an admissible pair (Proposition~\ref{prop:admissible}) and $\sum_{i\in G} (n-n_i)=m$.
        \item The component corresponding to $\mathbf{n}$, denoted by $F_\mathbf{n}$, is isomorphic to the poset flag variety $\Fl_G(\mathbf{n};n)$, which is smooth projective, is an iterated Grassmannian bundle over a point, admits an affine paving, and has motive $\qbinom{n}{\mathbf{n}}_{\L;G}$. 
        \item Let $X=\coprod_{\mathbf{n}} X_\mathbf{n}$ be the \BB decomposition, where $X_\mathbf{n}$ is the \BB stratum attracted to $F_\mathbf{n}$. Then the \BB map $X_\mathbf{n}\to F_\mathbf{n}$ is an affine bundle of rank $B(\mathbf{n};n)$ defined in \eqref{eq:bb-fiber-dim}.
    \end{enumerate}
    \label{thm:bb-detailed}
\end{theorem}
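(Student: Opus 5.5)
This theorem only assembles results already established, so the plan is to check that the pieces fit together for the fixed $m$.

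For (a), the starting point is \eqref{eq:fixed-locus}. It identifies $E_R(\tl R^n)^\Gm$ with $\coprod_{\mathbf{n}}\Fl_G(\mathbf{n};n)$. We restrict to the colength-$m$ part using Lemma~\ref{lem:BB-is-init}: for a fixed point $V$ the colength equals $\sum_d (n-\dim V_d)$. By Lemma~\ref{lem:ext-fiber}, $V_d=\k^n$ off $G$, so this sum reduces to $\sum_{i\in G}(n-n_i)$. Proposition~\ref{prop:admissible} shows that $\Fl_G(\mathbf{n};n)$ is nonempty exactly when $(\mathbf{n};n)$ is admissible. Connectedness of each nonempty piece comes from Theorem~\ref{thm:poset-flag-tame}, which applies because $G$ is tame (Example~\ref{eg:gap-young}). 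Distinct $\mathbf{n}$ give disjoint open-and-closed pieces, since the dimensions $\dim V_i$ are locally constant on the fixed locus. Hence the nonempty pieces are exactly the connected components.

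For (b), Theorem~\ref{thm:poset-flag-tame} applied to the tame poset $G$ gives smoothness, projectivity, the iterated Grassmannian bundle structure, the affine paving, and the motive $\qbinom{n}{\mathbf{n}}_{\L;G}$. Lemma~\ref{lem:q-binom-gap} confirms that this motive agrees with the explicit formula \eqref{eq:q-binom-gap}.

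For (c), there are three steps.
\begin{enumerate}
\item The strata $X_\mathbf{n}=\pi^{-1}(F_\mathbf{n})$ form a locally closed decomposition of $X$, and $\pi_\mathbf{n}$ is a morphism. This is the general \BB result quoted from \cite{alpermoduli}; it applies because $X$ embeds equivariantly into a Grassmannian and hence, via Plücker, into a projective space. The limit $\lim_{t\to0}t\cdot L$ stays inside $X$ because $\Quot^R_m(\tl R^n)$ is closed and, by Lemma~\ref{lem:ext-fiber}, the limit lies in $E_R$.
\item Each fiber $\pi_\mathbf{n}^{-1}(V)=\init_R^{-1}(V)$ is an affine space of dimension $B(\mathbf{n};n)$. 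This is Theorem~\ref{thm:bb-fiber}. Its key input is the exactness from Theorem~\ref{thm:acyclicity}, applied after identifying the Koszul complex $K^\bullet(\bbar V,W)_r$ with the grid complex $C^\bullet_Q(V[-r],W)$.
\item $\pi_\mathbf{n}$ is Zariski-locally trivial. The opens $U^R_W$, as $W$ ranges over complements of some $V^0$, cover $F_\mathbf{n}$, and Lemma~\ref{lem:bb-loc-trivial} shows that over each $U^R_W$ the map is a trivial affine bundle.
\end{enumerate}
Together these give an affine bundle of rank $B(\mathbf{n};n)$.

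The only real obstacle is already handled in Lemma~\ref{lem:bb-loc-trivial}. One must know that the compatibility $D_2({}_r\rho)=0$, and hence solvability, holds uniformly as ${}_0h$ (equivalently $V$) varies in $U^R_W$. This requires the pointwise argument of Theorem~\ref{thm:bb-fiber} to apply at every point of $U^R_W$, and Lemma~\ref{lem:trivial-aff-bundle} must be applied with a source $D$ that is independent of the base point. I would therefore check that, after fixing $W$, the map $D$ depends on ${}_0h$ only through a family whose kernel dimension is constant; constancy is guaranteed by the Euler characteristic formula. If needed, I would refine Lemma~\ref{lem:trivial-aff-bundle} to a constant-rank family of linear maps, trivialized locally.
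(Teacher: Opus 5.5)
Your proposal is correct and follows the paper's route. As in the paper, the theorem is just the assembly of \eqref{eq:fixed-locus}, Theorem~\ref{thm:poset-flag-tame}, the general Bia\l ynicki-Birula decomposition, Theorem~\ref{thm:bb-fiber}, and Lemma~\ref{lem:bb-loc-trivial}.

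Your caveat is also well founded. In the proof of Lemma~\ref{lem:bb-loc-trivial} the linear map $D$ depends on ${}_0h$, so Lemma~\ref{lem:trivial-aff-bundle}, which needs a fixed linear map, does not literally apply. Your repair closes this gap: by exactness the rank of $D$ is constant, equal to $\dim C^1(V[-r],W)-\dim C^2(V[-r],W)$, and local choices of splittings then give Zariski-local triviality. That is all that part (c) needs.
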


\begin{remark}
    \label{rmk:bb-upgrade}
    At this point, it is not hard to show that $X_\mathbf{n}$ has an affine paving and can be fairly explicitly constructed using widely accepted techniques in geometric representation theory \alert{[REF]}, thanks to the abundance of torus actions. We only sketch the ideas.

    There is a natural $\GL_n\times \Gm$ action on $\tl R^n=\tl R\otimes \k^n$: the $\GL_n$ acts on the $\k^n$ factor while $\Gm$ acts on $\tl R$. This induces an action of $\mathbb{T}'=\mathbb{T}_n\times \Gm$ on $X_\mathbf{n}$ and $F_\mathbf{n}$, and the \BB map $\pi_\mathbf{n}:X_\mathbf{n}\to F_\mathbf{n}$ with respect to the $\Gm$ factor is equivariant under $\mathbb{T}'$. Recall $F_\mathbf{n}$ is smooth projective and $\mathbb{T}_n$ has finitely many fixed points on $F_\mathbf{n}$ (see the proof of Theorem~\ref{thm:poset-flag-tame}). Then there is a one-parameter subgroup $\mu:\Gm\to \mathbb{T}_n$ such that $F_\mathbf{n}^{\mu(\Gm)}=F_\mathbf{n}^{\mathbb{T}_n}$. Consider a one-parameter subgroup $\mu_N:\Gm\to \mathbb{T}'$, $\mu_N(t)=(\mu(t),t^N)$ for $N\gg 0$; intuitively, it is a combination of the action $\mu$ on the base $F_\mathbf{n}$ and the action $t^N$ that rapidly retracts the fibers. By choosing $N$ sufficiently large, we ensure the fiber action strictly dominates any negative weight from the base action $\mu$, forcing the limit $\lim_{t\to 0} \mu_N(t)\cdot x$ for all $x\in X_\mathbf{n}$ to exist and land in the base fixed point locus $F_\mathbf{n}^{\mathbb{T}_n}$. As a result, the non-complete smooth variety $X_\mathbf{n}$ has a locally closed decomposition into \BB cells with respect to $\mu_N$. 
\end{remark}

\subsection{Final combinatorial details}
Theorem~\ref{thm:bb-detailed} already entails a motivic generating function:
\[ \sum_{m\geq 0} [E_R(\tl R^n;m)]\, t^m = \sum_{\mathbf{n}\in \Z^G} \qbinom{n}{\mathbf{n}}_{\L;G} \L^{B(\mathbf{n};n)} t^{\sum_{i\in G} (n-n_i)},\]
noting that the right-hand side is a finite sum because $\qbinom{n}{\mathbf{n}}_{\L;G}=0$ unless $(\mathbf{n};n)$ is admissible.

However, the current form fails to decouple $B(\mathbf{n};n)$ and masks an extra combinatorial structure that will amount to the fact that $N_{a,b;n}(q,t)$ in \eqref{eq:q,t-sum-finite} is an honest, non-Laurent polynomial in $q,t$, which ultimately stems from the positive definiteness result \cite{huang2026dinv}. Therefore, we rewrite the motivic generating function in terms of $N_{a,b;n}(q,t)$ now. This combinatorial consideration will become crucial in proving that the groupoid volume $S_1$ in Theorem~\ref{thm:groupoid-vol} is finite. 

The first step requires rewriting everything in $\L^{-1}$ variable.
\begin{lemma}
    Let $P$ be a nonempty tame poset, and let $x^-, x^+\in P\cup\{\pm \infty\}$ be defined for $x\in P$ with respect to any good traverse (Definition~\ref{def:tame}). Then
    \[ \qbinom{n}{\mathbf{n}}_{q;P}=q^{D_P(\mathbf{n};n)}\qbinom{n}{\mathbf{n}}_{q^{-1};P},\]
    where
    \begin{equation}
        D_P(\mathbf{n};n)=\sum_{x\in P} (n_{x^+}-n_x)(n_x-n_{x^-})
    \end{equation}
    with $n_\infty=n$ and $n_{-\infty}=0$.
\end{lemma}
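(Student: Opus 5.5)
We will reduce to the single $q$-binomial symmetry via the factorization \eqref{eq:q-binom-tame} of Lemma~\ref{lem:q-binom-tame}. For a good traverse, whenever $(\mathbf{n};n)$ is admissible we have
\[ \qbinom{n}{\mathbf{n}}_{q;P}=\prod_{x\in P}\qbinom{n_{x^+}-n_{x^-}}{n_{x^+}-n_x}_q. \]
The same identity holds with $q$ replaced by $q^{-1}$, since \eqref{eq:q-binom-tame} is an identity of rational functions in $q$. It comes from the Laurent-monomial identity in the symbols $(q;q)_{n_y-n_x}$, which is valid after any substitution.

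If $(\mathbf{n};n)$ is not admissible, both sides of the claimed identity vanish by definition, since the coefficient is defined to be zero. So we may assume admissibility.

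The key one-variable input is the classical symmetry: for $0\le k\le N$,
\[ \qbinom{N}{k}_q=q^{k(N-k)}\qbinom{N}{k}_{q^{-1}}. \]
This follows from $(q^{-1};q^{-1})_m=(-1)^m q^{-m(m+1)/2}(q;q)_m$. Substituting this into the ratio $(q;q)_N/((q;q)_k(q;q)_{N-k})$, the signs cancel because $m=N$ on top and $k+(N-k)=N$ on the bottom. The exponent becomes
\[ \tfrac{N(N+1)}{2}-\tfrac{k(k+1)}{2}-\tfrac{(N-k)(N-k+1)}{2}=k(N-k). \]

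Apply this with $N=n_{x^+}-n_{x^-}$ and $k=n_{x^+}-n_x$ for each factor. Then $N-k=n_x-n_{x^-}$, so the exponent is $(n_{x^+}-n_x)(n_x-n_{x^-})$. Both $k$ and $N-k$ are nonnegative by admissibility, because $x^-\prec x\prec x^+$ in $\hhat P$. Taking the product over $x\in P$ gives exactly $q^{D_P(\mathbf{n};n)}$ times the factorization evaluated at $q^{-1}$. Converting back via \eqref{eq:q-binom-tame} at $q^{-1}$ finishes the argument.

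The only subtle point is that $D_P$ is stated in terms of a chosen traverse, while the left side is traverse-independent. This causes no difficulty: the identity holds for every good traverse, and as a byproduct $D_P(\mathbf{n};n)$ itself is traverse-independent on admissible vectors. This could also be seen from Lemma~\ref{lem:mobius}, but it is not needed here. I expect no real obstacle beyond checking the sign and exponent bookkeeping in the one-variable symmetry.
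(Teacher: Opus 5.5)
Your proposal is correct and follows the paper's proof exactly: factor the coefficient via \eqref{eq:q-binom-tame} and apply the symmetry $\qbinom{N}{k}_q=q^{k(N-k)}\qbinom{N}{k}_{q^{-1}}$ to each factor, which produces the exponent $(n_{x^+}-n_x)(n_x-n_{x^-})$. You also verify the one-variable symmetry and handle the non-admissible case explicitly; the paper's one-line proof leaves both implicit.
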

\begin{proof}
    This follows directly from the elementary identity $\qbinom{n}{r}_q = q^{r(n-r)} \qbinom{n}{r}_{q^{-1}}$ and \eqref{eq:q-binom-tame}.
\end{proof}
\begin{remark}
    For the gap poset $G=\N\setminus \pairing{a,b}$, we may embed it into a grid poset $Q$ using Example~\ref{eg:gap-young}. The grid poset has a good traverse (Example~\ref{eg:grid-tame}) and by Lemma~\ref{lem:traverse-restrict}, it restricts to a good traverse of $G$ with
    \[ x^+ = x+b, x^-=x-a,\]
    where a number in $\pairing{a,b}$ is treated as $\infty$ and a negative number is treated as $-\infty$. It follows that
    \begin{equation}\label{eq:D_G}
        D_G(\mathbf{n};n)=\sum_{d\in G} (n_{d+b}-n_d)(n_d-n_{d-a}),
    \end{equation}
    with $n_d\coloneqq n$ if $d\in \pairing{a,b}$ and $n_d\coloneqq 0$ if $d<0$. 
\end{remark}

Let $\delta=\abs{G}=(a-1)(b-1)/2$. We have reached
\[ \sum_{m\geq 0} [E_R(\tl R^n;m)]\, t^m = t^{n\delta}\sum_{\mathbf{n}\in \Z^G} \qbinom{n}{\mathbf{n}}_{\L^{-1};G} \L^{B(\mathbf{n};n)+D_G(\mathbf{n};n)} t^{-\abs{\mathbf{n}}}, \]
where $\abs{\mathbf{n}}=\sum_{i\in G} n_i$. It turns out that the sum $B(\mathbf{n};n)+D_G(\mathbf{n};n)$ decouples nicely.

\begin{lemma}\label{lem:dinv-simplification}
    We have
    \[  B(\mathbf{n};n)+D_G(\mathbf{n};n) = n^2 \abs{G}- \dinv(\mathbf{n}).\]
\end{lemma}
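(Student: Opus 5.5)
The statement is an identity between quadratic polynomials in the variables $n$ and $(n_i)_{i\in G}$. I would prove it by expanding both sides and comparing the three kinds of monomials: $n^2$, $n\,n_i$, and $n_in_j$ for $i,j\in G$.

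\emph{Setup.} Use throughout the convention of \eqref{eq:bb-fiber-dim} and \eqref{eq:D_G}: $n_d=0$ for $d<0$ and $n_d=n$ for $d\in\pairing{a,b}$. Two elementary facts about gaps carry the bookkeeping.
\begin{itemize}
\item If $d\in G$, then $d-a\notin\pairing{a,b}$ and $d-b\notin\pairing{a,b}$. Hence $d-a$ and $d-b$ are either gaps or negative.
\item If $d\in G$, then $d+a$ and $d+b$ are either gaps or lie in $\pairing{a,b}$.
\end{itemize}
Note that $U(r)=K(r-1)$. In $B$, split the sum over $d\in\Z$ into three ranges: $d<0$ contributes nothing, $d\in G$ gives honest variables, and $d\in\pairing{a,b}$ gives factors of $n$. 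In $D_G$, the factors $n_{d+b}$ and $n_{d-a}$ are either variables or constants, according to the facts above.

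\emph{The constant ($n^2$) part.} Set all $n_i=0$ for $i\in G$. Then $D_G=0$, because $n_d=0$ and $n_{d-a}=0$ for every gap $d$. So it remains to show
\[
\sum_{e\in G}\sum_{s\in\pairing{a,b}}U(e-s)=\abs{G}.
\]
Equivalently, for each gap $e$,
\[
\#\bigl(\pairing{a,b}\cap[e-a,e-1]\bigr)-\#\bigl(\pairing{a,b}\cap[e-b,e-1]\bigr)+\#\bigl(\pairing{a,b}\cap[e-a-b,e-1]\bigr)-\dots=1,
\]
written properly as the second difference $S(e-1)-S(e-1-a)-S(e-1-b)+S(e-1-a-b)$ of the counting function $S(k)=\#\bigl(\pairing{a,b}\cap[0,k]\bigr)$. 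I would verify this using the parametrization $e=ab-ax-by$ from \eqref{eq:gap-young}, together with the fact that every integer $k$ is uniquely $k=ax+by$ with $0\le y<a$, and $k\in\pairing{a,b}$ iff $x\ge0$. The alternating count then collapses to a single lattice point.

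\emph{The linear part and the quadratic part.}
\begin{itemize}
\item \emph{Linear terms $n\,n_i$.} In $B$, these come from pairs with $d\in\pairing{a,b}, e\in G$ and from pairs with $d\in G$. In $D_G$, they come from $d+b\in\pairing{a,b}$ and from the $n_d^2$-adjacent terms. The same counting device as in the constant part shows the coefficient of $n\,n_i$ vanishes on both sides.
\item \emph{Quadratic terms in the gap variables.} $B$ contributes $-\sum_{d,e\in G}U(e-d)\,n_dn_e$. $D_G$ contributes the expansion of $(n_{d+b}-n_d)(n_d-n_{d-a})$, keeping only the factors that are gaps. Symmetrizing, I need for each pair of gaps $i,j$ that
\[
U(j-i)+U(i-j)-(\text{contribution of }D_G)=K(j-i)+K(i-j).
\]
On the diagonal this reads $1$ from $-n_d^2$ against $K(0)=1$. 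Off the diagonal, the only nonzero contributions from $D_G$ are at $j-i\in\{a,b,a+b\}$, which matches the jumps of $U$ versus $K$ at exactly those differences.
\end{itemize}

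I expect the main obstacle to be the boundary bookkeeping: deciding, for each gap $d$, whether $d\pm a$ and $d\pm b$ are gaps, semigroup elements, or negative. If the coefficient comparison becomes messy, I would instead embed $G$ as an upper set of a grid $Q$ as in Example~\ref{eg:gap-young}. The extended vector is then $n$ on the rest of the semigroup side and $0$ on the negative side, and the identity becomes a uniform second-difference computation on the grid $\Z^2$. There the four-term operators $U$ and $K$ are discrete Laplacian-type stencils, and the identity follows by summation by parts, with boundary terms killed by the constancy of $n_d$ outside $G$.
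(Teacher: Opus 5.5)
Your plan works, and its skeleton is the paper's: the paper also splits $B+D_G+\dinv$ by degree in $n$ and checks the $n^2$, $n^1$, $n^0$ parts separately. What differs is how the checks are carried out. The paper encodes $\hhat{\mathbf n}=\mathbf n+n\mathbf 1_{\pairing{a,b}}$, $\mathbf 1_G$, $K$ and $U$ as Laurent series. It then uses
\[ \mathbf 1_{\pairing{a,b}}=\frac{1-T^{ab}}{(1-T^a)(1-T^b)},\qquad K=\frac{(1-T^a)(1-T^b)}{1-T},\qquad U=TK, \]
together with adjointness, so each part becomes a short rational-function identity. One never has to decide whether $d\pm a$, $d\pm b$ are gaps, semigroup elements or negative. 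You do that case analysis by hand, which is more elementary but is exactly the boundary analysis the paper chooses to avoid. Your two worked parts are the paper's identities read coefficientwise:
\begin{itemize}
\item $\sum_{s\in\pairing{a,b}}U(e-s)=1$ is the $T^e$-coefficient of $U\cdot\mathbf 1_{\pairing{a,b}}=T(1-T^{ab})/(1-T)$;
\item your quadratic matching is $K(r)-U(r)=\mathbf 1_{r=0}-\mathbf 1_{r=a}-\mathbf 1_{r=b}+\mathbf 1_{r=a+b}$, i.e.\ $K-U=(1-T^a)(1-T^b)$.
\end{itemize}
Both are correct. For the quadratic part, note that every pair of gaps $i,i+a+b$ really does occur in $D_G$, since $i+a+b\in G$ forces $i+a\in G$.

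The linear part is where the paper's computation of $F(T)$ sits, and you only assert it. Your account of where its terms come from is also off: $D_G$ has no linear terms other than those from $\hhat n_{d+b}=n$, because $d-a\notin\pairing{a,b}$ for $d\in G$. For $i\in G$, the coefficient of $n\,n_i$ in $B+D_G$ is
\[ \sum_{e\in G}U(e-i)\;-\;1\;+\;\mathbf 1_{\pairing{a,b}}(i+b)\;-\;\mathbf 1_G(i+a)\,\mathbf 1_{\pairing{a,b}}(i+a+b). \]
The $-1$ comes from your per-gap count, and the last two terms come from the summands $d=i$ and $d=i+a$ of $D_G$. Write $i=ab-ax-by$ with $x,y\geq 1$. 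Then:
\begin{itemize}
\item $i+a\in\pairing{a,b}$ iff $x=1$;
\item $i+b\in\pairing{a,b}$ iff $y=1$;
\item $i+a+b\in\pairing{a,b}$ iff $x=1$ or $y=1$;
\item $\sum_{e\in G}U(e-i)=\#(G\cap(i,i+a])-\#(G\cap(i+b,i+a+b])$.
\end{itemize}
By your residue-class count, now applied to windows above $i$, this last difference is the number of $0\le y'<a$ with $i+a<by'\le i+a+b$. That number is $1$ for $i<f$ and $0$ for $i=f$. The four cases $x=y=1$, $x=1<y$, $y=1<x$ and $x,y\geq 2$ give $0-1+1-0$, $1-1+0-0$, $1-1+1-1$ and $1-1+0-0$, so the step does go through. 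You do need this second computation, though; it is not literally the constant-term count.

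Finally, drop the grid fallback. $K$ and $U$ are not local stencils in grid coordinates: for example $K(e-d)=1$ for all $0<e-d<\min(a,b)$, which couples gaps that are far apart on the grid. Only $K-U=(1-T^a)(1-T^b)$ is a four-term stencil, so summation by parts on $\Z^2$ would not dispose of $B$ or $\dinv$.
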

\begin{proof}
    For the reader's convenience, we recall 
    $$B(\mathbf{n};n)=\sum_{e\in G, d\in \Z} U(e-d)\, \hhat n_d (n-\hhat n_e),$$
    $$D_G(\mathbf{n};n)=\sum_{d\in G} (\hhat n_{d+b}-n_d)(n_d-n_{d-a}),$$
    \[
    \dinv(\mathbf{n})=\sum_{d,e\in G}  K(e-d)\,n_dn_e,\]
    with
    \[ \hhat{n}_d=\begin{cases}
        n_d, & d\in G,\\
        n, & d\in \Gamma=\N\setminus G,\\
        0, & d<0,
    \end{cases} \quad K=1_{\geq 0}-1_{\geq a}-1_{\geq b}+1_{\geq a+b}, \quad U(r)=K(r-1).\]
    The main challenge of the proof is to treat the extended vector $\hhat{\mathbf{n}}$, and a direct combinatorial approach would involve heavy boundary analysis. Our idea is to exploit the observation
    \[ 1_\Gamma(d)-1_\Gamma(d-a)-1_\Gamma(d-b)+1_\Gamma(d-a-b) = 1_{d=0}-1_{d=ab},\]
    cf.~\eqref{eq:1_Gamma}. To systematically apply it to our setting, we work with the abelian group of integer-valued functions on $\Z$ with support bounded below. We identify it with the Laurent series ring $\Z[T^{-1}][\![T]\!]$, with $f=(f_d)_{d\in \Z}$ corresponding to $f(T)=\sum_{d\in \Z} f_d T^d$. For such functions $f,g$ such that $g$ has finite support, namely, $f\in \Z[T^{-1}][\![T]\!]$ and $g\in \Z[T^{\pm 1}]$, define the dot product
    \[ \pairing{f,g}=\sum_{d\in \Z} f_d g_d.\]
    Any $H(T)\in \Z[T^{-1}][\![T]\!]$ induces a linear operator on $\Z[T^{-1}][\![T]\!]$ by $f(T)\mapsto H(T)f(T)$. In particular, $T^r$ is the shifting operator
    \[ (T^r f)_d = f_{d-r}.\]
    If $H(T)\in \Z[T^{\pm 1}]$, then $H(T)$ has an adjoint $H(T^{-1})$:
    \[ \pairing{f,H(T)g}=\pairing{H(T^{-1})f,g}\]
    if either $f$ or $g$ has finite support. With this language, the ``convolution'' operation can be interpreted as
    \[ \sum_{d,e\in \Z} v_d H(e-d) w_e = \pairing{H(T)v,w}=\pairing{v,H(T^{-1})w}.\]

    We collect the expressions of the relevant vectors and convolution kernels:
    \[ 1_r = T^r, \quad 1_{\geq r} = \frac{T^r}{1-T},  \]
    \[ 1_G = \frac{1}{1-T}-\frac{1-T^{ab}}{(1-T^a)(1-T^b)}\text{ (see \cite{brownshiue})},\]
    \begin{equation}
        \label{eq:1_Gamma}
        1_\Gamma = 1_{\N}-1_G = \frac{1-T^{ab}}{(1-T^a)(1-T^b)},
    \end{equation}
    \[ K = 1_{\geq 0}-1_{\geq a}-1_{\geq b}+1_{\geq a+b} = \frac{(1-T^a)(1-T^b)}{1-T},\]
    a polynomial in $T$, and
    \[ U = TK = T\frac{(1-T^a)(1-T^b)}{1-T}.\]

    Now we restate the relevant functions in this language. Let $n\in \Z$ and $\mathbf{n}$ be a function supported in $G$ (so that $n_d=0$ for $d\notin G$). The extended function is $\hhat{\mathbf{n}}=\mathbf{n}+n 1_\Gamma$. Then
    \[ B(\mathbf{n};n)=\pairing{U(T) \hhat{\mathbf{n}}, n 1_G - \mathbf{n}}\]
    and
    \[ \dinv(\mathbf{n})=\pairing{K(T)\mathbf{n},\mathbf{n}}.\]
    We now rewrite $D_G$ into a more symmetric form. First, we note that
    \[ D_G(\mathbf{n};n)=\sum_{d\in \Z}(\hhat n_{d+b}-\hhat n_d)(n_d - n_{d-a}).\]
    Indeed, if $(\hhat n_{d+b}-\hhat n_d)(n_d - n_{d-a})\neq 0$, then we must have $d\notin \Gamma$ (otherwise the first factor vanishes) and $d\geq 0$ (otherwise the second factor vanishes), so $d\in G$. 
    As a result, 
    \[ D_G(\mathbf{n};n) = \pairing{(T^{-b}-1)\hhat{\mathbf{n}},(1-T^a)\mathbf{n}}=-\pairing{(1-T^{-a})(1-T^{-b})\hhat{\mathbf{n}},\mathbf{n}}.\] 
    
    The rest reduces to a routine computation. Let us compute $S(\mathbf{n};n)\coloneqq B(\mathbf{n};n)+\dinv(\mathbf{n})+D_G(\mathbf{n};n)$ in each degree of $n$ separately. Let $S(\mathbf{n};n)=\sum_{i=0}^2 S_i(\mathbf{n}) \, n^i$. 

    The constant term is contributed by
    \begin{align*}
        S_0(\mathbf{n})&=-\pairing{U(T)\mathbf{n},\mathbf{n}}  + \pairing{K(T)\mathbf{n},\mathbf{n}}-\pairing{(1-T^{-a})(1-T^{-b})\mathbf{n},\mathbf{n}} \\
        &= \pairing*{(-U(T)+K(T)-(1-T^a)(1-T^b))\,\mathbf{n},\mathbf{n}} = 0,
    \end{align*}
    where we have used the adjointness and symmetry.

    The only quadratic contribution is from $B(\mathbf{n};n)$, giving
    \begin{align*}
        S_2(\mathbf{n})&=\pairing{U(T) 1_\Gamma,1_G} =\pairing*{T\frac{(1-T^a)(1-T^b)}{1-T}\cdot \frac{1-T^{ab}}{(1-T^a)(1-T^b)}, 1_G}\\
        &=\pairing*{T\frac{1-T^{ab}}{1-T},1_G} =\pairing{T^1+T^2+\dots+T^{ab},1_G}.
    \end{align*}
    Since $G\subeq [1,ab-a-b]\subeq [1,ab]$, we have $S_2(\mathbf{n})=\abs{G}=\delta$. 

    Finally, we compute the linear term. We have
    \begin{equation*}
        S_1(\mathbf{n})=\pairing{U(T) 1_\Gamma,-\mathbf{n}}+\pairing{U(T) \mathbf{n},1_G}-\pairing{(1-T^{-a})(1-T^{-b})1_\Gamma,\mathbf{n}} =\pairing{F(T),\mathbf{n}},
    \end{equation*}
    where
    \begin{equation*}
        F(T)=-U(T)1_\Gamma + U(T^{-1})1_G-(1-T^{-a})(1-T^{-b})1_\Gamma .
    \end{equation*}
    We claim that the support of $F(T)$ is disjoint from $G$. A direct computation verifies that
    \begin{align*}
        F(T)&=T^{-(a+b)} \parens*{T(1-T^{ab})\frac{1-T^{a+b}}{1-T} - \frac{1-T^a}{1-T}\frac{1-T^b}{1-T}}\\
        &=T^{-(a+b)} \parens*{(1-T^{ab})(T+T^2+\dots+T^{a+b}) - (1+T+\dots+T^{a-1})(1+T+\dots+T^{b-1})}.
    \end{align*}
    By inspection, $F(T)$ does not contain nonzero terms with degree in $[1,ab-a-b]$, and the claim is proved.
    Since $\mathbf{n}$ is supported in $G$, we conclude that $S_1(\mathbf{n})=0$.

    Putting everything together, $B(\mathbf{n};n)+D_G(\mathbf{n};n)+\dinv(\mathbf{n})=n^2\delta$, as required.
\end{proof}

Hence, we reach the motivic generating function in our desired form:
\begin{equation}\label{eq:ext-fiber-motive}
    \sum_{m\geq 0} [E_R(\tl R^n;m)]\, t^m = \L^{n^2 \delta} t^{n\delta}\sum_{\mathbf{n}\in \Z^G} \qbinom{n}{\mathbf{n}}_{\L^{-1};G} \L^{-\dinv(\mathbf{n})} t^{-\abs{\mathbf{n}}}.
\end{equation}

\section{From extension fiber to Quot scheme and proof of Theorems~\ref{thm:paving} and \ref{thm:quot-zeta-tilde}}\label{sec:ext-to-quot}
In this section, we draw on results of \cite{huangjiang2023torsionfree} to prove results about $\Quot^R(\tl R^n)$, Theorems~\ref{thm:paving} and \ref{thm:quot-zeta-tilde}, from results about $E_R(\tl R^n)$, Theorem~\ref{thm:bb-detailed}. Intuitively, the extension fiber $E_R(\tl R^n)$ carries full information of the Quot scheme $\Quot^R(\tl R^n)$ because to each $L\in \Quot^R(\tl R^n)$ we can associate a unique $\tl L\in \Quot^{\tl R}(\tl R^n)$ by $\tl L=\tl R L$, so that $L\in E_R(\tl L)$. Because every full-rank $\tl R$-submodule of $\tl R^n$ is isomorphic to $\tl R^n$, the space of choices of $L$ once $\tl L$ is given is a copy of $E_R(\tl R^n)$. 

Geometrically, this is made precise as follows. We recall an explicit parametrization of the Schubert cell decomposition of $\Quot^{\tl R}_m(\tl R^n)$ from \cite[\S 6.1]{huangjiang2023torsionfree}. A \defn{nonnegative signature of size $m$} is a vector $\mu\in \N^n$ such that $\abs{\mu}=\sum_{i=1}^n \mu_i=m$. The \defn{Iwahori cell} of $\mu$ is an affine space consisting of matrices of certain form in $\GL_n(\tl R)$:
\begin{equation*}
    I_\mu := \set*{[\iota_{ij}]_{i,j=1}^n: \begin{matrix}\iota_{ii}=1 \\
     \iota_{ij}\in \mathrm{span}_\k \set{T^s: 0\leq s < \mu_i-\mu_j}, \text{ if $i>j$}\\
     \iota_{ij}\in \mathrm{span}_\k \set{T^s: 0< s < \mu_i-\mu_j}, \text{ if $i<j$}
    \end{matrix}
    }. 
\end{equation*}
There is a locally closed decomposition
\[ \Quot^{\tl R}_m(\tl R^n)=\bigsqcup_{\substack{\mu\in \N^n\\ \abs{\mu}=m}} X_\mu^\circ,\]
with affine parametrization
\[ I_\mu \xrightarrow{\simeq} X_\mu^\circ, \quad \iota \mapsto \iota \, \diag(T^{\mu_1},\dots,T^{\mu_n}) \tl R^n,\]
whose inverse is denoted by $\tl L\mapsto \iota_{\tl L}$.
The dimension of $I_\mu$ as well as $X_\mu^\circ$ is 
\[ \ell(\mu)=\sum_{i,j=1}^n \max\set*{0,\mu_j-\mu_i+\floor*{\frac{j-i}{n}}}.\]

Let
\[ \Quot^R_m(\tl R^n;l) = \set*{L\in \Quot^R_m(\tl R^n): \dim_\k \tl R L/L=l},\]
giving a locally closed decomposition $\Quot^R_m(\tl R^n)=\bigsqcup_{l=0}^m \Quot^R_m(\tl R^n;l)$. 
\begin{lemma}
    [{Baby version of \cite[Lemma~6.9]{huangjiang2023punctual}}]
    There is a locally closed decomposition
    \[  \Quot^R_m(\tl R^n;l) = \bigsqcup_{\abs{\mu}=m-l} \Quot^R_m(\tl R^n;l,\mu)\]
    with $\Quot^R_m(\tl R^n;l,\mu) \simeq E_R(\tl R^n;l)\times I_\mu$. \label{lem:ext-fiber-to-quot}
\end{lemma}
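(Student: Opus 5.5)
The plan is to stratify by the saturation $\tl L=\tl R L$ and then trivialize the resulting fibration over each Iwahori cell by an explicit change of basis.

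First, observe that $L\mapsto \tl L$ sends $\Quot^R_m(\tl R^n;l)$ to $\Quot^{\tl R}_{m-l}(\tl R^n)$. Indeed,
\[ \dim_\k \tl R^n/\tl L=\dim_\k \tl R^n/L-\dim_\k \tl L/L=m-l. \]
I would first check that this map is a morphism on each stratum, at least constructibly. Working modulo $T^M$ with $M=m+(a-1)(b-1)$, $\tl L$ is the span of $L$ together with $T L$. On the locus where $\dim \tl L$ is fixed (namely, $\dim \tl L/L=l$), this span is a morphism to a Grassmannian. This is exactly why the strata are indexed by $l$.

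Define $\Quot^R_m(\tl R^n;l,\mu)$ as the preimage of the Iwahori cell $X^\circ_\mu$, where $\abs{\mu}=m-l$. These preimages are locally closed because the $X^\circ_\mu$ give a locally closed decomposition of the target. They are disjoint and cover the stratum.

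Next, trivialize over $X^\circ_\mu$ using the section $g_\mu(\tl L)=\iota_{\tl L}\,\diag(T^{\mu_1},\dots,T^{\mu_n})\in \GL_n(\k(\!(T)\!))$. This is a regular function of $\tl L$, since $\iota_{\tl L}$ is the inverse of the affine parametrization $I_\mu\simeq X^\circ_\mu$, and it satisfies $g_\mu(\tl L)\tl R^n=\tl L$. Define
\[ \Quot^R_m(\tl R^n;l,\mu)\to E_R(\tl R^n;l)\times I_\mu,\qquad L\mapsto \bigl(g_\mu(\tl L)^{-1}L,\ \iota_{\tl L}\bigr), \]
with inverse $(L',\iota)\mapsto \iota\,\diag(T^{\mu_i})\,L'$. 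The required checks are as follows.
\begin{enumerate}
\item $g^{-1}L$ is an $R$-submodule of $\tl R^n$, since $g$ is $\tl R$-linear and hence $R$-linear.
\item Its $\tl R$-span is $g^{-1}\tl L=\tl R^n$, and its colength is $\dim\tl L/L=l$. So $g^{-1}L\in E_R(\tl R^n;l)$.
\item Conversely, $\iota\,\diag(T^{\mu_i})\,L'$ is an $R$-module whose saturation is $\iota\,\diag(T^{\mu_i})\tl R^n\in X^\circ_\mu$. Its total colength is $(m-l)+l=m$.
\end{enumerate}
The two maps are mutually inverse, giving a bijection on points.

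The main obstacle is showing that these maps are morphisms of the reduced varieties, not just bijections. My approach is to realize everything inside Grassmannians of $\tl R^n/T^{N}\tl R^n$ for $N$ large. The entries of $\iota$ are polynomials in $T$ of bounded degree, and the entries of $\diag(T^{\mu_i})$ are monomials. Hence multiplication by $g_\mu$ is a linear map $T^{N'}\tl R^n/T^{N'+N}\to \tl R^n/T^N$ whose matrix entries depend polynomially on the coordinates of $I_\mu$. Likewise, $g_\mu^{-1}$ has entries in $T^{-\max\mu}\tl R$ that are polynomial in $\iota$, since $\det\iota=1$. This uses the bound $\mu_i\le m$ and the fact that everything contains $T^{m+c}\tl R^n$ (Lemma~\ref{lem:bounded-truncation}).

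Applying a linear map whose entries depend regularly on parameters to a family of subspaces of constant dimension gives a morphism into the Grassmannian. Hence both directions are morphisms, which yields the isomorphism of reduced schemes. If needed, I would fall back on the cited \cite[Lemma~6.9]{huangjiang2023punctual} for the scheme-theoretic details.
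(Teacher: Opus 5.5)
Your proposal is correct and takes the same route as the paper. You stratify by $l$ and by the Iwahori cell containing $\tl R L$, then trivialize using $\iota_{\tl L}$. Your map $\iota_{\tl L}\,\diag(T^{\mu_1},\dots,T^{\mu_n})$ is in fact the correct isomorphism $\tl R^n\to\tl L$; the paper's displayed maps write $\iota N$ and drop the diagonal factor. Your constant-rank argument for regularity also supplies detail the paper leaves implicit.

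One small slip: $\tl R L\neq L+TL$ in general when $\min(a,b)\geq 3$. For example, with $L=\k[\![T^3,T^4]\!]$ the sum $L+TL$ misses $T^2$. Use $\tl R L=\sum_{i=0}^{a-1}T^iL$ instead, which leaves the rest of your argument unchanged.
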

\begin{proof}
    For $\mu\in \N^n$ with $\abs{\mu}=m-l$, set the strata as
    \[ \Quot^R_m(\tl R^n;l,\mu) = \set*{L\in \Quot^R_m(\tl R^n): \dim_\k \tl R L/L=l, \;\tl R L \in X_\mu^\circ},\]
    then selecting $L\in \Quot^R_m(\tl R^n;l,\mu)$ amounts to choosing $\tl L\in X_\mu^\circ$ together with $L\in E_R(\tl L;l)$, but using the isomorphism $\iota_{\tl L}:\tl R^n\to \tl L$, choosing $L$ amounts to specifying $N=\iota_{\tl L}^{-1}L\in E_R(\tl R^n;l)$. We have thus a pair of morphisms that are inverses to each other:
    \begin{align*}
        E_R(\tl R^n;l)\times I_\mu &\to \Quot^R_m(\tl R^n;l,\mu),\\
        (N, \iota) &\mapsto \iota \, N,\\
        (\iota_{\tl RL}^{-1} L, \iota_{\tl RL})&\mapsfrom L,
    \end{align*}
    giving the required isomorphism.
\end{proof}
\begin{corollary}\label{cor:paving-ext-fiber-to-quot}
    If $E_R(\tl R^n;m)$ has an affine paving for all $m$, then $\Quot^R_m(\tl R^n)$ has an affine paving for all $m$. \qed
\end{corollary}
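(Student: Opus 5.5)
The corollary should follow by combining Lemma~\ref{lem:ext-fiber-to-quot} with two stratifications. The first is the colength stratification $\Quot^R_m(\tl R^n)=\bigsqcup_{l=0}^m \Quot^R_m(\tl R^n;l)$. The second is the Iwahori decomposition of each stratum into the pieces $\Quot^R_m(\tl R^n;l,\mu)$ with $\abs{\mu}=m-l$.

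First, I would record that each $\Quot^R_m(\tl R^n;l)$ is locally closed in $\Quot^R_m(\tl R^n)$. The function $L\mapsto \dim_\k \tl R L/L$ is constructible; equivalently, $L\mapsto \dim_\k \tl R^n/\tl RL$ is upper/lower semicontinuous. The reason is that $\tl R L$ is the image of $\tl R\otimes L$, or more concretely the span of $L, TL,\dots$ modulo $T^M$, so its rank is semicontinuous. The lemma further decomposes each such stratum into finitely many locally closed pieces, one for each $\mu\in\N^n$ with $\abs{\mu}=m-l$. This gives a finite locally closed decomposition of $\Quot^R_m(\tl R^n)$ into pieces isomorphic to $E_R(\tl R^n;l)\times I_\mu$.

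Next, I would use the hypothesis that $E_R(\tl R^n;l)$ has an affine paving, say $E_R(\tl R^n;l)=\bigsqcup_k C_k$ with $C_k\simeq \A^{d_k}$ locally closed. Then $C_k\times I_\mu\simeq \A^{d_k+\ell(\mu)}$ is locally closed in $E_R(\tl R^n;l)\times I_\mu$. Transporting these cells through the isomorphism of Lemma~\ref{lem:ext-fiber-to-quot} gives a finite decomposition of each piece into affine cells.

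Finally, a locally closed subset of a locally closed subset is locally closed, so these cells are locally closed in $\Quot^R_m(\tl R^n)$. Since the paper explicitly does not require the paving to be filtered, this gives the required affine paving.

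The only point needing care is local closedness of the strata $\Quot^R_m(\tl R^n;l,\mu)$ and the fact that the maps in Lemma~\ref{lem:ext-fiber-to-quot} are morphisms. I take both as given by that lemma's statement, which asserts a locally closed decomposition together with an isomorphism. If extra justification were needed, I would first stratify by $l$, on which $L\mapsto \tl RL$ is a morphism to $\Quot^{\tl R}_{m-l}(\tl R^n)$. I would then pull back the Schubert cells $X_\mu^\circ$, which are locally closed.
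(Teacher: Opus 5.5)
Your proposal is correct and takes the same approach as the paper. The paper states the corollary with \qed as an immediate consequence of Lemma~\ref{lem:ext-fiber-to-quot}, the finite locally closed stratification of $\Quot^R_m(\tl R^n)$ by $l$ and $\mu$, and the observation that the cells $C_k\times I_\mu\simeq\A^{d_k+\ell(\mu)}$ are affine. One small point: constructibility of $l$ alone would not make the $l$-strata locally closed; it is the semicontinuity of $\dim_\k\tl R^n/\tl RL$, which you also cite, that does the work.
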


\begin{proof}
    [Proof of Theorem~\ref{thm:paving}]
    By Theorem~\ref{thm:bb-detailed} and Remark~\ref{rmk:bb-upgrade}, $E_R(\tl R^n;m)$ has an affine paving for all $m$, hence by Corollary~\ref{cor:paving-ext-fiber-to-quot}, $\Quot^R_m(\tl R^n)$ has an affine paving.
\end{proof}

Using the stratification in Lemma~\ref{lem:ext-fiber-to-quot} and a combinatorial summation formula \cite[Equation~(6.20)]{huangjiang2023torsionfree} involving $\ell(\mu)$, we get the motivic version:
\begin{lemma}
    [{cf.~\cite[Corollary~6.3]{huangjiang2023torsionfree}}]
    In $\KVar{\k}[\![t]\!]$, we have
    \[ \sum_{m\geq 0} [\Quot_m^R(\tl R^n)]\, t^m = \frac{1}{(t;\L)_n} \sum_{m\geq 0} [E_R(\tl R^n;m)]\, t^m.\]
    \label{lem:ext-fiber-to-quot-motivic}
\end{lemma}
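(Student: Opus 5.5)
We plan to sum the stratification of Lemma~\ref{lem:ext-fiber-to-quot} over all strata. Concretely, $\Quot^R_m(\tl R^n)=\bigsqcup_{l=0}^m\bigsqcup_{\abs{\mu}=m-l}\Quot^R_m(\tl R^n;l,\mu)$ is a finite locally closed decomposition. Each piece is isomorphic to $E_R(\tl R^n;l)\times I_\mu$, and $I_\mu\simeq\A^{\ell(\mu)}$. The scissor and product relations in $\KVar{\k}$ therefore give
\[ [\Quot^R_m(\tl R^n)]=\sum_{l=0}^m [E_R(\tl R^n;l)]\sum_{\substack{\mu\in\N^n\\ \abs{\mu}=m-l}}\L^{\ell(\mu)}. \]
Multiplying by $t^m$ and summing over $m$, the double sum factors as a Cauchy product:
\[ \sum_{m\geq 0}[\Quot^R_m(\tl R^n)]\,t^m=\Big(\sum_{l\geq0}[E_R(\tl R^n;l)]\,t^l\Big)\cdot\Big(\sum_{\mu\in\N^n}\L^{\ell(\mu)}t^{\abs{\mu}}\Big). \]

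The remaining task is to identify the second factor with $1/(t;\L)_n$. We would avoid the combinatorics of $\ell(\mu)$ directly. Instead, apply the same computation with $R$ replaced by $\tl R$ itself. The decomposition $\Quot^{\tl R}_m(\tl R^n)=\bigsqcup_{\abs{\mu}=m}X^\circ_\mu$ with $X^\circ_\mu\simeq\A^{\ell(\mu)}$ gives
\[ \sum_{\mu}\L^{\ell(\mu)}t^{\abs{\mu}}=\sum_m[\Quot^{\tl R}_m(\tl R^n)]\,t^m. \]
The right-hand side is the motivic Quot zeta function of a smooth curve germ in rank $n$. It is classical that this equals $\prod_{i=0}^{n-1}(1-\L^it)^{-1}$. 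This is the content of \cite[Equation~(6.20)]{huangjiang2023torsionfree}, which we would cite.

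If a self-contained check is wanted, note first that $\ell(\mu)$ is symmetric under permuting $\mu$ up to the tie-breaking term $\floor{(j-i)/n}$. Grouping signatures by their sorted partition $\lambda$, the sum over rearrangements of a fixed $\lambda$ produces $\L^{n(\lambda')}$ times a $q$-multinomial. This is the standard Hall-polynomial count of $\tl R$-lattices of cotype $\lambda$. One then uses the identity $\sum_\lambda$ (number of cotype-$\lambda$ sublattices) $t^{\abs\lambda}=1/(t;\L)_n$. That identity follows, for example, from counting $\Fq$-points via $\sum_{M}\#\mathrm{Surj}(\tl R^n,M)/\abs{\Aut M}$ together with the $\GL_n$-Schubert paving, which upgrades the point count to a motivic identity.

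The only delicate step is this last identification with $1/(t;\L)_n$, specifically the bookkeeping of $\ell(\mu)$ with the floor term. Since it is a statement purely about $\tl R$ and already recorded in \cite{huangjiang2023torsionfree}, we would simply invoke it. The rest of the argument is formal.
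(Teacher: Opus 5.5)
Your proposal is correct and follows the paper's own argument. You sum the strata $\Quot^R_m(\tl R^n;l,\mu)\simeq E_R(\tl R^n;l)\times I_\mu$ from Lemma~\ref{lem:ext-fiber-to-quot}, factor the double sum as a Cauchy product, and identify $\sum_{\mu\in\N^n}\L^{\ell(\mu)}t^{\abs{\mu}}$ with $1/(t;\L)_n$ via \cite[Equation~(6.20)]{huangjiang2023torsionfree}.

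The only slip is in your optional aside. There the power of $\L$ attached to a cotype $\lambda$ is not $\L^{n(\lambda')}$ in the usual notation: for $n=1$ each colength contributes exactly $1$. The main argument does not depend on this.
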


This proves Theorem~\ref{thm:quot-zeta-tilde}.
\begin{proof}
    [Proof of Theorem~\ref{thm:quot-zeta-tilde}]
    Just combine \eqref{eq:ext-fiber-motive} and Lemma~\ref{lem:ext-fiber-to-quot-motivic}.
\end{proof}

\section{Coh zeta functions and the proof of Theorems~\ref{thm:quot-zeta-special} and \ref{thm:groupoid-vol}}
The goal of this section is to partially understand $\Quot^R_m(R^n)$ for $R=\k[\![T^a,T^b]\!]$ and prove Theorems~\ref{thm:quot-zeta-special} and \ref{thm:groupoid-vol}. To apply the knowledge of $\Quot^R_m(\tl R^n)$ to $\Quot^R_m(R^n)$, we follow a path that has been successfully carried out in \cite{huang2025inert,chernhuang2025}, which requires two more ingredients from \cite{huangjiang2023torsionfree}. 

The first is the rationality. For any finitely generated $R$-module $M$, consider the \defn{motivic Quot zeta function}
\[ \calZ^R_{M}(t)\coloneqq \sum_{m\geq 0} [\Quot_m^R(M)]\, t^m \in \KVar{\k}[\![t]\!].\]
If $M=E^n$ with $E=R$ or $\tl R$, define the \defn{normalized} motivic Quot zeta function
\[ \mathcal{N}^R_M(t)\coloneqq (t;\L)_n \calZ^R_M(t).\]
By Theorem~\ref{thm:quot-zeta-tilde},
\[ \mathcal{N}^R_{\tl R^n}(t) =  \L^{n^2\delta} \, t^{n\delta} N_{a,b;n}(\L^{-1},t^{-1}).\]
\begin{theorem}
    [{Special case of \cite[Theorem~6.2 and Corollary~6.4]{huangjiang2023torsionfree}}]
    The power series $\mathcal{N}^R_{R^n}(t)$ is a polynomial in $\KVar{\k}[t]$. Moreover, we have
    \begin{equation}
        \label{eq:match-at-center}\mathcal{N}^R_{R^n}(1)=\mathcal{N}^R_{\tl R^n}(1).
    \end{equation}
    \label{thm:rationality}
\end{theorem}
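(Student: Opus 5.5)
The plan is to obtain both statements from the rank-$n$ machinery of \cite{huangjiang2023torsionfree}, by comparing $R^n$ and $\tl R^n$ through the extension-fiber stratification of \S\ref{sec:ext-to-quot}.

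\textbf{Polynomiality.} Stratify $\Quot^R_m(R^n)$ by the $\tl R$-lattice $\tl R L\subeq \tl R^n$ generated by $L$. In the singular affine Grassmannian language of \S\ref{sub:singaffinegrass}, the submodules $L\subeq R^n$ with fixed saturation $\tl L=\tl R L$ form a constructible piece. Via the Iwahori parametrization $\iota_{\tl L}$, this piece is identified with a subvariety of $E_R(\tl R^n)$ cut out by the condition $\iota_{\tl L}N\subeq R^n$. Summing over Iwahori cells $X^\circ_\mu$ of the affine Grassmannian (now allowing $\tl L\not\subeq \tl R^n$, with $\tl R^n/T^{c}\tl R^n$ controlling the condition) expresses $\calZ^R_{R^n}(t)$ as a finite combination of terms of the form $[\text{cell}]\,t^{\bullet}\cdot(\text{pieces of }E_R)$. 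The factor $1/(t;\L)_n$ then arises exactly as in Lemma~\ref{lem:ext-fiber-to-quot-motivic}, from the summation formula \cite[Eq.~(6.20)]{huangjiang2023torsionfree}.

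The key finiteness input is that the condition $\tl L\supeq T^{c}\tl R^n$, together with boundedness of $\tl L$ relative to $R^n$, leaves only finitely many relevant lattice types modulo the $\GL_n(\tl R)$-orbit structure. After multiplying by $(t;\L)_n$, only finitely many powers of $t$ therefore survive. This gives $\mathcal N^R_{R^n}(t)\in\KVar{\k}[t]$.

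\textbf{The identity at $t=1$.} Evaluating at $t=1$ is legitimate once polynomiality is known. I would prove \eqref{eq:match-at-center} by showing that both sides equal a lattice-independent quantity. Write
\[
\mathcal N^R_M(t)=(t;\L)_n\sum_{L\subeq M}[\cdot]\,t^{\dim M/L},
\]
and regroup by the isomorphism class of the $\tl R$-span. The change of ambient module $M=R^n$ versus $M=\tl R^n$ only shifts colength by $\dim_\k(\tl R^n/R^n)=n\delta$ and reindexes over lattices $\tl L$ between $T^{c}\tl R^n$ and $T^{-c}\tl R^n$. The $t$-shift disappears at $t=1$. The remaining discrepancy is a difference of sums over the affine Grassmannian that telescopes: the affine Grassmannian motive is "translation invariant" under $\tl L\mapsto g\tl L$, and the defining sum is independent of the base lattice. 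Concretely, this is the statement that $\sum_{\tl L}[E_R(\tl L)]$ weighted by $(t;\L)_n$ converges at $t=1$ to the same value for both base points.

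\textbf{Main obstacle.} The hard step is the polynomiality, specifically controlling the $R^n$-side strata, whose fibers are not the full $E_R(\tl R^n)$ but the subvarieties of it contained in $R^n$. Here I would first try the functional-equation / duality approach: apply $\Hom_R(-,R)$, using Gorensteinness of the plane curve germ $R$, to exchange submodules of $R^n$ of colength $m$ with overmodules. This pairs the $R^n$ series with an $\tl R^n$-type series up to $t\mapsto \L^{-n}t^{-1}$, which yields both rationality with bounded degree and the center identity.
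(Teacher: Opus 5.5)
The paper does not prove this statement; it imports it from \cite[Theorem~6.2 and Corollary~6.4]{huangjiang2023torsionfree}. Judged as a self-contained argument, your sketch has genuine gaps in both parts.

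\textbf{Polynomiality.} Your finiteness input is false. For $L\subeq R^n$ one always has $\tl L=\tl RL\subeq\tl R^n$. What the conductor gives is $L\supeq T^{c}\tl L$, not $\tl L\supeq T^{c}\tl R^n$. In fact $\tl L$ ranges over \emph{all} $\tl R$-sublattices of $\tl R^n$, e.g.\ $\tl Re_1\oplus T^k\tl Re_2$ with $k$ arbitrarily large.

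Nor can you pass to $\GL_n(\tl R)$-orbit types. The constraint $\iota_{\tl L}N\subeq R^n$ is not $\GL_n(\tl R)$-invariant, since $gL\subeq R^n$ iff $L\subeq g^{-1}R^n\neq R^n$.

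What is true is that the constraint is vacuous when $\tl L\subeq T^c\tl R^n$, because then $L\subeq T^c\tl R^n\subeq R^n$. The complementary strata, where some elementary divisor of $\tl L$ is $<c$, form an infinite family, and controlling them is the whole content of the theorem. Your assertion that ``only finitely many powers of $t$ survive'' is just the conclusion restated.

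The missing analysis runs as follows. Fix an Iwahori cell $I_\mu$ and put $S=\{i:\mu_i\geq c\}$.
\begin{enumerate}
\item Columns $j\in S$ of $\iota\,\mathrm{diag}(T^{\mu})$ lie in $T^c\tl R^n\subeq R^n$ and are irrelevant. The constraint therefore depends only on $\mu_{S^c}$, on $N$, and on the entries $\iota_{rj}$ with $j\notin S$, truncated below degree $c-\mu_j$.
\item For each pair $i\in S$, $j\notin S$, the remaining coefficients of $\iota_{ij}$ are free. They contribute exactly $\L^{\mu_i-c}$.
\item Summing over $\mu_S\in[c,\infty)^S$, together with the deep--deep Iwahori coordinates, is the $\GL_{|S|}$ Solomon sum evaluated at $\L^{n-|S|}t$.
\end{enumerate}
So each of the finitely many pairs $(S,\mu_{S^c})$ contributes a polynomial divided by $(\L^{n-|S|}t;\L)_{|S|}$, which is a divisor of $(t;\L)_n$. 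This, not a finiteness of lattice types, is what makes $\mathcal N^R_{R^n}(t)$ a polynomial.

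\textbf{The identity at $t=1$.} The same computation proves \eqref{eq:match-at-center}. For $S\neq[n]$ the denominator has no factor $1-t$, so after multiplying by $(t;\L)_n$ these strata vanish at $t=1$. Only $S=[n]$ survives; there the constraint is vacuous, giving $t^{n\delta}\sum_m[E_R(\tl R^n;m)]t^m=t^{n\delta}\mathcal N^R_{\tl R^n}(t)$.

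Your ``translation invariance'' cannot replace this. The equality $\calZ_{gM}=\calZ_M$ for $g\in\GL_n(K)$ only compares isomorphic modules, and $R^n\not\cong\tl R^n$. The discrepancy between the two series is not confined between $T^c\tl R^n$ and $T^{-c}\tl R^n$, and nothing telescopes. You also cannot bound the discrepancy by a soft argument: $\KVar{\k}$ has no positivity, and even for point counts the series have radius $q^{1-n}<1$ when $n\geq2$.

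\textbf{The duality fallback.} This route supplies neither part. $\Hom_R(-,R)$ merely trades sublattices of $R^n$ for overlattices. The functional equation (Theorem~\ref{thm:pointcount-func-eq}) has four problems here:
\begin{enumerate}
\item it is known only for point counts;
\item it is a statement about the already-rational $N^R_{R^n}(t)$;
\item it relates the $R^n$-series to itself, not to the $\tl R^n$-series;
\item it is centred at $t=q^{-n/2}$, not at $t=1$.
\end{enumerate}
In this paper the logic runs the other way: \eqref{eq:match-at-center} plus the functional equation is what yields Theorem~\ref{thm:quot-zeta-special}.
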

Combining the results, we get
\begin{equation}\label{eq:normalized-quot-at-1}
    \mathcal{N}^R_{\tl R^n}(1) =  \L^{n^2\delta}  N_{a,b;n}(\L^{-1},1).
\end{equation}
Note that $(1;\L)_n=0$ for $n>0$, so the evaluation at $t=1$ would not make sense on $\calZ^R_{R^n}(t)$. That is why the normalization is needed even to state \eqref{eq:match-at-center}.

The second ingredient is a point counting functional equation. Suppose $\k=\Fq$ is a finite field. Denote by $Z^R_M(t)$ and $N^R_M(t)$ the point counting specialization of $\calZ^R_M(t)$ and $\mathcal N^R_M(t)$, respectively; this means we apply the ring homomorphism
\[ \#_q: \KVar{\Fq}\to \Z, \quad [X]\mapsto \abs{X(\Fq)}\]
to each coefficient of $\calZ^R_M(t)$ and $\mathcal{N}^R_M(t)$. Note $\#_q$ maps $\L$ to $q$.
\begin{theorem}[{Special case of \cite[Theorem~1.5]{huangjiang2023torsionfree}}]
    \label{thm:pointcount-func-eq}
    Let $\gcd(a,b)=1, R=\Fq[\![T^a,T^b]\!]$, and $\delta=(a-1)(b-1)/2=\dim_{\Fq} \tl R/R$. Then the polynomial $N^R_{R^n}(t)\in \Z[t]$ satisfies the functional equation
    \begin{equation}\label{eq:func-eq}
        N^R_{R^n}(t) = q^{n^2\delta} t^{2n\delta} N^R_{R^n}(q^{-n}t^{-1}).
    \end{equation}
\end{theorem}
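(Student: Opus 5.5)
The plan is to obtain \eqref{eq:func-eq} directly from the general functional equation of \cite[\funceq]{huangjiang2023torsionfree}. That result is stated for a reduced complete local curve germ $R$ over $\Fq$ that is Gorenstein, with residue field $\Fq$ and finite normalization $\tl R$. It asserts that the normalized point-count Quot zeta function $N^R_{R^n}(t)=(t;q)_n Z^R_{R^n}(t)$ is a polynomial of degree at most $2n\delta$ satisfying
\[ N^R_{R^n}(t)=q^{n^2\delta}t^{2n\delta}N^R_{R^n}(q^{-n}t^{-1}), \qquad \delta=\dim_{\Fq}\tl R/R. \]
So the proof is a matter of checking the hypotheses for $R=\Fq[\![T^a,T^b]\!]$ and identifying $\delta$. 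I would carry this out in three steps.

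\emph{Step 1 (hypotheses).} The presentation $R\simeq\Fq[\![X,Y]\!]/(Y^a-X^b)$ shows $R$ is a plane curve germ, hence a complete intersection and in particular Gorenstein. Because $\gcd(a,b)=1$, the polynomial $Y^a-X^b$ is irreducible, so $R$ is a reduced (indeed integral) unibranch domain. Its residue field is $\Fq$, and its normalization is $\tl R=\Fq[\![T]\!]$, which is finite over $R$.

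\emph{Step 2 (the invariant $\delta$).} The quotient $\tl R/R$ has an $\Fq$-basis given by the monomials $T^g$ with $g$ in the gap set $G=\N\setminus\pairing{a,b}$. The classical count of gaps of a two-generator numerical semigroup (Sylvester) gives $\abs{G}=(a-1)(b-1)/2$. This agrees with the $\delta$ used throughout the paper, and it is also the value one checks against the identity $S_2=\abs G$ in the proof of Lemma~\ref{lem:dinv-simplification}.

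\emph{Step 3 (specialization).} Apply $\#_q$ to the motivic statements: polynomiality of $\mathcal N^R_{R^n}(t)$ (Theorem~\ref{thm:rationality}) then yields $N^R_{R^n}(t)\in\Z[t]$, and substituting into the cited theorem gives \eqref{eq:func-eq}.

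The only real obstacle is making sure the normalizations match. The cited theorem might be stated in the variable $q^{-s}$, or with $t^m$ indexing colength, or with $(t;q)_n$ replaced by the zeta function of $\tl R^n$. I would first rewrite its statement in terms of $Z^R_{R^n}(t)=\sum_m\#\Quot_m(R^n)(\Fq)t^m$, and check that the normalizing factor $1/Z^{\tl R}_{\tl R^n}(t)$ for $\tl R=\Fq[\![T]\!]$ is exactly $(t;q)_n$.

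As a sanity check, I would verify the $n=1$ case against Gorenstein duality $\Hilb_m(R)\leftrightarrow\Hilb_{2\delta-m}$-type symmetry (the Kivinen--Trinh/ORS rank-one symmetry), and verify the case $a=2$ against the explicit formulas of \cite{huangjiang2023torsionfree}. If a self-contained argument were wanted instead of the citation, the route would be $p$-adic integration over torsion-free modules of $R$: for Gorenstein $R$, the duality $M\mapsto\Hom_R(M,R)$ exchanges colength $m$ in $R^n$ with colength $2n\delta-m$ up to the twist by $q^{-n}$. However, I would not reprove that here.
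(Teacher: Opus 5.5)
Your proposal is correct and matches the paper. The paper gives no independent argument; it records the statement as a direct special case of \cite[\funceq]{huangjiang2023torsionfree}, and your hypothesis checks are exactly what that specialization needs: a plane curve germ (hence Gorenstein), unibranch with $\tl R=\Fq[\![T]\!]$, and $\delta=\abs{G}=(a-1)(b-1)/2$.

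Your concern about normalizations is well placed. With the Dirichlet-series convention $\abs{R^n/E}^{-s-n}$ used in the paper, one has $N_{R;n}(t)=N^R_{R^n}(q^{-n}t)$ and $Z^{\tl R}_{\tl R^n}(t)=1/(t;q)_n$. The cited equation must therefore be translated through this shift before it takes the form \eqref{eq:func-eq}.
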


This gives enough to prove Theorem~\ref{thm:quot-zeta-special}.
\begin{proof}
    [Proof of Theorem~\ref{thm:quot-zeta-special}]
    Combining \eqref{eq:normalized-quot-at-1} and \eqref{eq:func-eq}, we get
    \[ N^R_{R^n}(q^{-n})=q^{n^2\delta}q^{-2n^2\delta} N^R_{R^n}(1)=N_{a,b;n}(q^{-1},1),\]
    so that \[Z^R_{R^n}(q^{-n})=(q^{-n};q)_n^{-1} N^R_{R^n}(q^{-n})=(q^{-1};q^{-1})_n^{-1} N_{a,b;n}(q^{-1},1),\]
    which is exactly the claim of Theorem~\ref{thm:quot-zeta-special}. 
\end{proof}

\subsection{Coh zeta functions} \label{sub:coh-zeta}
To prove Theorem~\ref{thm:groupoid-vol}, we recall some ingredients of \cite{huangjiang2023torsionfree,huang2024commuting} about the (arithmetic) Coh zeta functions.

We work in a general setting where $R$ is a \defn{local arithmetic order}, i.e., a local $\O$-algebra (with $\O=\Zp$ or $\Fp[\![T]\!]$) that is a free module of finite rank over $\O$. Let $\tl R$ be the \defn{normalization} of $R$, defined as the integral closure of $R$ in the total fraction ring of $R$. The normalization $\tl R$ may not be local, but is always a finite product of local arithmetic orders that are complete DVRs with finite residue fields. See also \cite[\S 2]{yun2013orbital} and \cite[\S 3]{huangjiang2023torsionfree}.

For $R$ a finite product of local arithmetic orders\footnote{We make the assumption slightly more general to include $\tl R$.}, define Dirichlet series
\[ \zeta_{R;n}(s)=\sum_{E\subeq_R R^n} \abs{R^n/E}^{-s-n}\]
and
\[ \zeta_{R;\infty}(s)=\sum_M \frac{1}{\abs{\Aut_R M}} \, \abs{M}^{-s},\]
where $E$ ranges over finite-index submodules of $R^n$ and $M$ ranges over isomorphism classes of finite-cardinality $R$-modules. Define the normalized versions by
\[ \nu_{R;n}(s)=\frac{\zeta_{R;n}(s)}{\zeta_{\tl R;n}(s)}, \quad \nu_{R;\infty}(s)=\frac{\zeta_{R;\infty}(s)}{\zeta_{\tl R;\infty}(s)}.\]
By \cite[Theorem~5.11]{huangjiang2023torsionfree}, $\nu_{R;n}(s)$ is a Dirichlet polynomial. By \cite[Proposition~3.15]{huang2024commuting}, 
\[ \lim_{n\to \infty}\zeta_{R;n}(s)=\zeta_{R;\infty}(s)\]
coefficient-wise. 

If every residue field of $R$ contains a common finite field $\Fq$, then the size of every finite $R$-module $M$ is a power of $q$, so $\zeta_{R;n}(s),\zeta_{R;\infty}(s),\nu_{R;n}(s), \nu_{R;\infty}(s)$ are power series in $t=q^{-s}$. We denote the power series by $Z_{R;n}(t),Z_{R;\infty}(t),N_{R;n}(t), N_{R;\infty}(t)$, respectively; note that they technically depend on the choice of $\Fq$, but we suppress it from the notation when it is clear from the context.

In the proof of Theorem~\ref{thm:groupoid-vol}, it is important to understand how the values of $N_{R;n}(t)$ at $t=1$ control the convergence of the coefficient-wise limit $N_{R;\infty}(t)$ at $t=1$. An additional analytic observation is needed.

First, we organize the main idea of \cite[Proposition~3.15]{huang2024commuting} and \cite[Theorem~1.1]{huangjiang2023torsionfree} into an equivalent definition of $\zeta_{R;n}(s)$.
\begin{lemma}\label{lem:span-prob}
    Let $R$ be any ring such that there are only finitely many $R$-modules up to isomorphism of a given finite cardinality. Then
    \begin{equation}\label{eq:def-quot-alternative}
    \zeta_{R;n}(s)=\sum_M \frac{\abs{M}^{-s}}{\abs{\Aut(M)}} P_n(M),
    \end{equation}
    where $M$ ranges over all isomorphism classes of finite modules over $R$ and 
    $$P_n(M)=\frac{\abs{\mathrm{Surj}_R(R^n,M)}}{\abs{M}^n}$$ is the probability that a random homomorphism from $R^n$ to $M$ is surjective. 
\end{lemma}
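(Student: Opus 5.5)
This is a regrouping of the sum defining $\zeta_{R;n}(s)$ according to the isomorphism class of the quotient $R^n/E$. Finite-index submodules $E\subeq R^n$ correspond to pairs $(M,[f])$, where $M$ is a finite $R$-module and $[f]$ is the orbit of a surjection $f\colon R^n\onto M$ under $\Aut_R(M)$. The correspondence sends $E$ to $M=R^n/E$ with its quotient map, and sends $f$ to $E=\ker f$. Two surjections have the same kernel exactly when they differ by post-composition with an automorphism of the target. Also, $\Aut_R(M)$ acts freely on $\mathrm{Surj}_R(R^n,M)$, because a surjection is an epimorphism: if $\sigma f=f$ then $\sigma=\mathrm{id}$.

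Hence, for each isomorphism class $M$, the number of $E$ with $R^n/E\simeq M$ equals $\abs{\mathrm{Surj}_R(R^n,M)}/\abs{\Aut(M)}$. Each such $E$ contributes $\abs{R^n/E}^{-s-n}=\abs{M}^{-s}\abs{M}^{-n}$. Summing over $M$ gives
\[ \zeta_{R;n}(s)=\sum_M \frac{\abs{\mathrm{Surj}_R(R^n,M)}}{\abs{\Aut(M)}}\abs{M}^{-s-n}=\sum_M \frac{\abs{M}^{-s}}{\abs{\Aut(M)}}P_n(M). \]

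The finiteness hypothesis makes this rearrangement legitimate as an identity of formal Dirichlet series. For each cardinality $N$, only finitely many classes $M$ have $\abs{M}=N$. Each such $M$ has finitely many surjections from $R^n$, since a homomorphism $R^n\to M$ is determined by the images of the $n$ basis vectors in the finite set $M$. So the coefficient of $N^{-s}$ is a finite sum, and the identity can be checked cardinality by cardinality.

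There is no real obstacle here. The only points needing a sentence of justification are the freeness of the $\Aut(M)$-action on surjections and the termwise finiteness just described.
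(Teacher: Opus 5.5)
Your proposal is correct and follows essentially the same route as the paper: both let $\Aut_R(M)$ act freely on $\mathrm{Surj}_R(R^n,M)$, identify the orbits with the submodules $E$ having $R^n/E\simeq M$, and multiply by $\abs{M}^{-s-n}$ before summing over $M$. Your extra remark that each coefficient is a finite sum is a harmless addition the paper leaves implicit.
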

\begin{proof}
    Let $\Aut_R M$ act on $\mathrm{Surj}_R(R^n,M)$ by composition. Then the action is free due to the surjectivity requirement, and the orbit space is identified with the set of $E\subeq_R R^n$ such that $R^n/E\simeq M$. Hence,
    \[ \abs{\set*{E\subeq_R R^n:R^n/E\simeq M}}= \abs{\mathrm{Surj}_R(R^n,M)}/\abs{\Aut_R M}.\]
    Multiplying both sides by $\abs{M}^{-s-n}$ and taking the summation with $M$ ranging over all isomorphism classes of finite $R$-modules, the desired equality follows.
\end{proof}

The following observation is elementary.
\begin{lemma}\label{lem:monotone}
    For any finite $R$-module $M$, we have
    \begin{equation}
        P_n(M)\leq P_{n+1}(M),
    \end{equation}
    and $\lim_{n\to \infty} P_n(M)=1$.
\end{lemma}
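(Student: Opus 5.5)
The plan is to analyze surjectivity from $R^n$ onto $M$ by reducing to the top of $M$. Let $\mathfrak m$ be the maximal ideal of the local ring $R$, with residue field $\Fq$, and put $d=\dim_{\Fq} M/\mathfrak m M$.

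First, I will show that surjectivity is detected modulo $\mathfrak m$. By Nakayama's lemma, a homomorphism $f:R^n\to M$ is surjective if and only if the composite $R^n\to M\to M/\mathfrak m M$ is surjective. Since $f$ is determined by the images $m_1,\dots,m_n$ of the basis vectors, I can describe a uniformly random $f$ as a uniformly random $n$-tuple $(m_1,\dots,m_n)\in M^n$. The reduction map $M\to M/\mathfrak m M\simeq \Fq^d$ is a surjective group homomorphism. Therefore the images $\bar m_i$ are independent and uniform in $\Fq^d$.

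It follows that $P_n(M)$ is the probability that $n$ independent uniform vectors span $\Fq^d$. The standard count gives
\[P_n(M)=\prod_{i=0}^{d-1}(1-q^{i-n})\quad\text{for }n\geq d,\]
and $P_n(M)=0$ for $n<d$.

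Second, I will deduce the two claims from this formula. For monotonicity, a spanning $n$-tuple stays spanning after a random vector is appended, so the event for $n$ is contained in the event for $n+1$. This gives $P_n(M)\leq P_{n+1}(M)$, which can also be read off the product formula, since each factor increases with $n$. For the limit, $d$ is fixed, so each of the $d$ factors tends to $1$ as $n\to\infty$. Hence $\lim_{n\to\infty} P_n(M)=1$.

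There is no real obstacle here. The only points needing care are the two justifications of uniformity and independence: Nakayama's lemma applies because $M$ is finitely generated, and the reduction of a uniform element is uniform because the reduction map is a surjective homomorphism. In the paper's generality, where $R$ is a finite product of local rings, I will apply the same argument factor by factor: $M$ decomposes accordingly, $P_n(M)$ is the product of the factor probabilities, and each factor probability is monotone with limit $1$.
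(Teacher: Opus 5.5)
Your proof is correct, but for the limit it takes a different route from the paper. The paper never uses Nakayama's lemma. Monotonicity is your coupling observation applied directly to tuples in $M$: if $m_1,\dots,m_n$ generate $M$, so do $m_1,\dots,m_{n+1}$. The limit comes from the crude bound $P_n(M)\geq 1-\abs{M}\bigl(\frac{\abs{M}-1}{\abs{M}}\bigr)^n$, since any $n$-tuple that hits every element of $M$ certainly generates it.

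That argument uses nothing about $R$, which matches the generality of Lemma~\ref{lem:span-prob} (an arbitrary ring with finitely many modules of each finite cardinality). Your reduction to $M/\mathfrak m M$ needs $R$ local, or a finite product of local rings handled factor by factor. In the product case the factors may have residue fields $\F_{q^r}$, so $q$ must be replaced by $q^r$ in those factors. For an arbitrary commutative ring you could recover your setting by passing to the finite ring $R/\mathrm{Ann}_R(M)$, which is a product of finite local rings and has the same homomorphisms $R^n\to M$.

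In exchange your route gives strictly more. You get the exact formula $P_n(M)=\prod_{i=0}^{d-1}(1-q^{i-n})$, which depends only on the minimal number of generators $d$. This yields an explicit convergence rate $1-P_n(M)=O(q^{d-n})$, uniform in $M$ for bounded $d$; the paper's bound depends on $\abs{M}$. Only monotonicity and the pointwise limit are used in the monotone convergence argument of Lemma~\ref{lem:zeta-at-positive}, so either proof suffices there.
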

\begin{proof}
    For $m_1,\dots,m_{n+1}\in M$, since $m_1,\dots,m_n$ generate $M$ implies that $m_1,\dots,m_{n+1}$ generate $M$, we have
    \begin{align}
        P_{n+1}(M)&=\Prob_{m_1,\dots,m_{n+1}\in M}(R\langle m_1,\dots,m_{n+1}\rangle=M)\\
        &=\E_{m_{n+1}\in M}\Prob_{m_1,\dots,m_n\in M}(R\langle m_1,\dots,m_{n+1}\rangle=M) \\
        &\geq \E_{m_{n+1}\in M}\Prob_{m_1,\dots,m_n\in M}(R\langle m_1,\dots,m_n\rangle=M) \\
        &=\E_{m_{n+1}\in M} P_n(M)=P_n(M).
    \end{align}
    Finally, $P_n(M)$ is trivially bounded below by the probability that a random set map $f:[n]\to M$ is surjective, so
    \[ P_n(M)\geq 1-\sum_{m\in M} \Prob_f(\im(f)\not\ni m) = 1-\abs{M} \parens*{\frac{\abs{M}-1}{\abs{M}}}^n,\]
    proving the claim $\lim_{n\to \infty} P_n(M)=1$.
\end{proof}

As a result, we may commute certain limits using the monotone convergence theorem.
\begin{lemma}\label{lem:zeta-at-positive}
    For any real number $\sigma$, consider $\zeta_{R;n}(\sigma)$ and $\zeta_{R;\infty}(\sigma)$, which are well-defined in $[0,\infty]$. Then we have $\lim_{n\to \infty} \zeta_{R;n}(\sigma) = \zeta_{R;\infty}(\sigma)$.
\end{lemma}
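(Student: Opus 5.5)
The plan is to apply the monotone convergence theorem to the series representation in Lemma~\ref{lem:span-prob}. Fix a real number $\sigma$. By \eqref{eq:def-quot-alternative},
\[ \zeta_{R;n}(\sigma)=\sum_M \frac{\abs{M}^{-\sigma}}{\abs{\Aut(M)}}\, P_n(M), \]
where the sum runs over the countably many isomorphism classes of finite $R$-modules. Every term is a nonnegative real number, so each $\zeta_{R;n}(\sigma)$ is a well-defined element of $[0,\infty]$, and the same holds for $\zeta_{R;\infty}(\sigma)=\sum_M \abs{M}^{-\sigma}/\abs{\Aut(M)}$.

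Next I view the sum over $M$ as an integral against counting measure on the set of isomorphism classes. The integrands are $f_n(M)=\abs{M}^{-\sigma}\abs{\Aut M}^{-1}P_n(M)$. By Lemma~\ref{lem:monotone}, for each fixed $M$ the sequence $f_n(M)$ is nondecreasing in $n$, and it converges to $f_\infty(M)=\abs{M}^{-\sigma}\abs{\Aut M}^{-1}$ because $P_n(M)\to 1$. The monotone convergence theorem then gives $\lim_n \sum_M f_n(M)=\sum_M f_\infty(M)$ in $[0,\infty]$, which is exactly $\lim_{n\to\infty}\zeta_{R;n}(\sigma)=\zeta_{R;\infty}(\sigma)$.

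If I want to avoid measure theory, the same interchange can be checked directly. Nonnegativity gives $\zeta_{R;n}(\sigma)\le \zeta_{R;\infty}(\sigma)$ for every $n$, and monotonicity of $P_n$ shows that $\zeta_{R;n}(\sigma)$ is nondecreasing in $n$, so the limit exists and is at most $\zeta_{R;\infty}(\sigma)$. For the reverse inequality, take any finite set $S$ of isomorphism classes. Then $\lim_n \zeta_{R;n}(\sigma)\ge \lim_n \sum_{M\in S} f_n(M)=\sum_{M\in S} f_\infty(M)$, and taking the supremum over $S$ gives the bound.

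I do not expect a real obstacle. The only points to check are that the hypothesis of Lemma~\ref{lem:span-prob} holds, namely that there are finitely many modules of each cardinality, which is true for the local arithmetic orders under consideration. The argument also requires no convergence assumption on $\sigma$, since everything takes values in $[0,\infty]$.
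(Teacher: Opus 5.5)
Your proof is correct and is essentially the paper's argument: monotone convergence applied to the representation in Lemma~\ref{lem:span-prob}, with monotonicity and the limit supplied by Lemma~\ref{lem:monotone}. The paper first groups terms by cardinality $d$ into coefficients $a_{d,n}$ before applying monotone convergence on $\Z_{\geq 1}$, while you apply it directly over isomorphism classes, which changes nothing of substance. One small correction to your elementary variant: the bound $\zeta_{R;n}(\sigma)\le \zeta_{R;\infty}(\sigma)$ comes from $P_n(M)\le 1$, not from nonnegativity.
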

\begin{proof}
    Write
    \[ \zeta_{R;n}(s)=\sum_{d\geq 1} a_{d,n} d^{-s}, \quad \zeta_{R;\infty}(s)=\sum_{d\geq 1} a_{d} d^{-s}.\]
    By Lemmas~\ref{lem:span-prob} and \ref{lem:monotone}, we have $a_{d,n}\geq 0$, $a_{d,n}\leq a_{d,n+1}$, and $\lim_{n\to \infty} a_{d,n}=a_d$. For any substitution $s=\sigma\in \R$, $(a_{d,n} d^{-\sigma})_{d,n}$ is a sequence (indexed by $n$) of nonnegative functions on $d\in \Z_{\geq 1}$ that is pointwise increasing, $a_{d,n} d^{-\sigma}\leq a_{d,n+1} d^{-\sigma}$. By the monotone convergence theorem on the counting measure on $\Z_{\geq 1}$, we have
    \[ \lim_{n\to \infty}\sum_{d\geq 1} a_{d,n} d^{-\sigma} = \sum_{d\geq 1} \lim_{n\to \infty} a_{d,n} d^{-\sigma} = \sum_{d\geq 1} a_{d} d^{-\sigma},\]
    which is exactly our claim.
\end{proof}

To make an analogue of Lemma~\ref{lem:zeta-at-positive}, we recall Mertens' theorem on Cauchy products of sequences \cite[Theorem~3.50]{babyrudin}: if $c_n=\sum_{i=0}^n a_i b_{n-i}$, the Cauchy product of $a_n$ and $b_n$, $\sum_{n=0}^\infty a_n=A$ converges absolutely, and $\sum_{n=0}^\infty b_n=B$ converges, then $\sum_{n=0}^\infty c_n$ converges to $AB$.

\begin{lemma}\label{lem:merten}
    Suppose power series $f(t),g(t),h(t)\in \C[\![t]\!]$ satisfy 
    \begin{itemize}
        \item  $f(t)=g(t)h(t)$;
        \item $h(t)$ converges and is nonvanishing on the disc $\abs{z}<M$.
    \end{itemize}
    Then for $\abs{z}<M$, $f(z)$ converges if and only if $g(z)$ converges, and in the case of convergence $f(z)=g(z)h(z)$.
\end{lemma}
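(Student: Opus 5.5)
We reduce the statement to Mertens' theorem on Cauchy products, applied with the two factors in either order. Write $f(t)=\sum_k f_k t^k$, $g(t)=\sum_k g_k t^k$, $h(t)=\sum_k h_k t^k$, so that $f_k=\sum_{i=0}^k h_i g_{k-i}$ for every $k$. Fix $z$ with $\abs{z}<M$. Since $h$ is a power series converging on the open disc $\abs{z}<M$, its radius of convergence is at least $M$. Hence $\sum_k h_k z^k$ converges \emph{absolutely} at every point with $\abs{z}<M$. This absolute convergence is exactly the hypothesis Mertens' theorem asks of one factor.

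For the direction ``$g(z)$ converges $\Rightarrow$ $f(z)$ converges'', set $a_i=h_iz^i$ and $b_j=g_jz^j$. Their Cauchy product is $c_k=f_kz^k$. Mertens' theorem then gives that $\sum_k f_kz^k$ converges to $h(z)g(z)$, which is the claimed identity.

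For the converse, we use the nonvanishing of $h$. Because $h$ is holomorphic and nowhere zero on $\abs{z}<M$, the function $1/h$ is holomorphic there. Its Taylor series at $0$ is the formal inverse $h^{-1}(t)\in\C[\![t]\!]$; this inverse exists since $h(0)\neq 0$. That series has radius of convergence at least $M$, so it too converges absolutely for $\abs{z}<M$. In $\C[\![t]\!]$ we have $g(t)=h^{-1}(t)f(t)$. Applying Mertens' theorem again, now with $a_i$ the coefficients of $h^{-1}$ times $z^i$ and $b_j=f_jz^j$, shows that convergence of $f(z)$ implies convergence of $g(z)$, with value $h(z)^{-1}f(z)$. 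Rearranging gives $f(z)=g(z)h(z)$.

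The only point needing care is identifying the formal inverse $h^{-1}(t)$ with the Taylor expansion of the holomorphic function $1/h$. Both are power series $u$ with $u\cdot h=1$ near $0$, and by uniqueness of Taylor coefficients they coincide. Their radius is at least $M$ because a function holomorphic on a disc has Taylor series converging on that whole disc. We also need the standard fact that a power series converges absolutely strictly inside its radius of convergence. Beyond these facts the argument is routine.
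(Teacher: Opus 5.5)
Your proposal is correct and follows the paper's proof: apply Mertens' theorem with the absolutely convergent factor $h(z)$ for one direction, then swap the roles of $f$ and $g$ and replace $h$ by $h^{-1}$ for the converse. You also justify why $h^{-1}$ has radius of convergence at least $M$ (as the Taylor series of the holomorphic function $1/h$), a point the paper leaves implicit.
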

\begin{proof}
    Assume $g(z)$ converges. Since the radius of convergence of $h(t)$ is strictly larger than $\abs{z}$, $h(z)$ is absolutely convergent. Thus Mertens' theorem implies $f(z)$ converges and $f(z)=g(z)h(z)$. 

    For the converse, just switch the roles of $f(t)$ and $g(t)$ and replace $h(t)$ by $h(t)^{-1}$.
\end{proof}

\begin{lemma}\label{lem:merten-power-series}
    Let $R$ be a finite product of local arithmetic orders such that every residue field contains $\Fq$. Then for any $z$ with $\abs{z}<q$, 
    \begin{enumerate}
        \item $Z_{\tl R;\infty}(z)$ converges absolutely,
        \item $Z_{R;\infty}(z)$ converges if and only if $N_{R;\infty}(z)$ converges, and in the case of convergence
        \[ Z_{R;\infty}(z)=Z_{\tl R;\infty}(z) N_{R;\infty}(z).\]
    \end{enumerate}
\end{lemma}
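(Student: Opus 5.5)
Part (b) will come from Lemma~\ref{lem:merten} with $f=Z_{R;\infty}$, $g=N_{R;\infty}$, $h=Z_{\tl R;\infty}$ on the disc $\abs{z}<q$. We need three things: the identity $Z_{R;\infty}(t)=Z_{\tl R;\infty}(t)N_{R;\infty}(t)$ in $\C[\![t]\!]$, which is just the definition of $\nu_{R;\infty}$ read as power series in $t=q^{-s}$; convergence of $Z_{\tl R;\infty}$ on $\abs{z}<q$, which is part (a); and nonvanishing of $Z_{\tl R;\infty}$ on $\abs{z}<q$. So everything reduces to an explicit description of $Z_{\tl R;\infty}$.

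For (a), write $\tl R=\prod_j \O_j$ with each $\O_j$ a complete DVR whose residue field is $\F_{q^{f_j}}$, $f_j\geq 1$. Finite $\tl R$-modules split as products over the factors, and so do their automorphism groups, so $Z_{\tl R;\infty}(t)=\prod_j Z_{\O_j;\infty}(t)$. For a complete DVR with residue field of size $Q$, I will use the classical Cohen--Lenstra identity
\[ \sum_{M}\frac{u^{\log_Q\abs{M}}}{\abs{\Aut M}}=\prod_{i\geq 1}(1-Q^{-i}u)^{-1}. \]
This is a standard $q$-series identity for modules over DVRs; alternatively, it follows from Lemma~\ref{lem:span-prob} together with $\zeta_{\O;n}$, whose value comes from counting sublattices via Hermite normal form. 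With $Q=q^{f_j}$ and $u=t^{f_j}$, since $\abs{M}=Q^{\log_Q\abs M}=q^{f_j\log_Q\abs M}$, we get
\[ Z_{\tl R;\infty}(t)=\prod_j\prod_{i\geq 1}\bigl(1-q^{-if_j}t^{f_j}\bigr)^{-1}. \]
Each coefficient in this expansion is a nonnegative real number.

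Now take $\abs{z}<q$. Then $\abs{q^{-if_j}z^{f_j}}\leq (\abs z/q)^{f_j}q^{-(i-1)f_j}$, and summing over $i$ shows that $\sum_{i}\abs{q^{-if_j}z^{f_j}}<\infty$. Hence the infinite product converges absolutely to a nonzero number, since no factor $1-q^{-if_j}z^{f_j}$ vanishes when $\abs{q^{-if_j}z^{f_j}}<1$. Because the series has nonnegative coefficients, absolute convergence of the series at $z$ is the same as convergence at $\abs z$. At $\abs z$ the product of the geometric series $\prod(1-q^{-if_j}\abs z^{f_j})^{-1}$ converges, and this gives absolute convergence, which is (a). The same computation gives nonvanishing and analyticity on the open disc, so $h$ satisfies the hypotheses of Lemma~\ref{lem:merten} with $M=q$. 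That lemma then yields (b).

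The main obstacle is justifying the DVR Cohen--Lenstra product formula within the paper's framework. If it is not citable directly, I would derive it from Lemma~\ref{lem:span-prob}: $\zeta_{\O;n}$ equals $\prod_{i=1}^n(1-Q^{-i}u)^{-1}$ in suitable normalization, by the standard count of finite-index sublattices of $\O^n$. The formula for $Z_{\O;\infty}$ then follows by letting $n\to\infty$ coefficientwise, using \cite[Proposition~3.15]{huang2024commuting}.
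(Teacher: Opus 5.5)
Your proposal is correct and follows the paper's proof. The paper also writes $Z_{\tl R;\infty}(t)$ as a finite product of factors $1/(q^{-r}t^r;q^{-r})_\infty$, obtained from Solomon's formula for $\zeta_{V;n}$ and the coefficientwise limit $n\to\infty$ (your fallback route to the Cohen--Lenstra identity). It then reads off convergence and nonvanishing on $\abs{z}<q$ and applies Lemma~\ref{lem:merten}; your argument via nonnegative coefficients and absolute convergence of the product spells out what the paper states simply as ``in particular.''
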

\begin{proof}
    For any complete DVR $V$ with residue field $\F_{q^r}$, recall Solomon's formula \cite{solomon1977zeta}
    \[ \zeta_{V;n}(s)=1/(q^{-r}q^{-rs};q^{-r})_n,\]
    so $\zeta_{V;n}(s)=1/(q^{-r}t^r;q^{-r})_n$, where $t=q^{-s}$. It follows that $\zeta_{V;\infty}(s)=1/(q^{-r}t^r;q^{-r})_\infty$. 
    
    Since $\tl R$ is a finite product of complete DVRs with residue fields $\F_{q^r}$ for various $r$, $Z_{\tl R;\infty}(t)$ is a finite product of series of the form $1/(q^{-r}t^r;q^{-r})_\infty$. In particular, $Z_{\tl R;\infty}(z)$ converges and is nonvanishing on $\abs{z}<q$. 
    This proves (a), and part (b) follows from Lemma~\ref{lem:merten}.
\end{proof}

Combining Lemma~\ref{lem:zeta-at-positive} and Lemma~\ref{lem:merten-power-series}, we get the following general-purpose criterion to test convergence of $N_{R;\infty}(t)$ up to radius $q$.

\begin{theorem}\label{thm:convergence-pipeline}
    Let $R$ be a finite product of local arithmetic orders such that every residue field contains $\Fq$, and let $\sigma$ be any real number in $[0,q)$. Then $N_{R;\infty}(t)$ converges at $t=\sigma$ if and only if the sequence $N_{R;n}(\sigma)$ converges as $n\to \infty$. Moreover, in the case of convergence
    \[ N_{R;\infty}(\sigma)=\lim_{n\to \infty} N_{R;n}(\sigma).\]
\end{theorem}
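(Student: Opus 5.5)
The plan is to pass everything through the unnormalized series $Z_{R;n}$ and $Z_{R;\infty}$. These have nonnegative coefficients, so the monotone convergence statement of Lemma~\ref{lem:zeta-at-positive} applies to them, and we then divide by the explicit normalization factor $Z_{\tl R;n}$.

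Fix a real $\sigma\in[0,q)$. First I would record the behavior of the normalizing factor. By Solomon's formula, as used in the proof of Lemma~\ref{lem:merten-power-series}, $Z_{\tl R;n}(t)$ is a finite product of factors $1/(q^{-r}t^r;q^{-r})_n$. Each factor $1-q^{-r(k+1)}\sigma^r$ lies in $(0,1]$ because $\sigma<q$. Hence $Z_{\tl R;n}(\sigma)$ is a finite positive real number. Moreover $Z_{\tl R;n}(\sigma)\to Z_{\tl R;\infty}(\sigma)$ as $n\to\infty$, and the limit is a finite positive real number (a convergent infinite product with nonvanishing factors). The same nonvanishing holds on the complex disc $\abs{z}<q$ for $1/Z_{\tl R;n}$, which is a polynomial.

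Next, for finite $n$, I would apply Lemma~\ref{lem:merten} with $g=N_{R;n}$ and $h=Z_{\tl R;n}$. Here $N_{R;n}$ is a polynomial by \cite[Theorem~5.11]{huangjiang2023torsionfree}, so it converges everywhere. The lemma gives that $Z_{R;n}(\sigma)$ converges and
\[ Z_{R;n}(\sigma)=Z_{\tl R;n}(\sigma)\,N_{R;n}(\sigma), \qquad\text{so}\qquad N_{R;n}(\sigma)=\frac{Z_{R;n}(\sigma)}{Z_{\tl R;n}(\sigma)}. \]
By Lemma~\ref{lem:zeta-at-positive}, $Z_{R;n}(\sigma)$ increases to $Z_{R;\infty}(\sigma)\in[0,\infty]$. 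Combined with the first step, $N_{R;n}(\sigma)$ converges to a finite limit if and only if $Z_{R;\infty}(\sigma)<\infty$. If $Z_{R;\infty}(\sigma)=\infty$, the numerator tends to $\infty$ while the denominator tends to a finite positive number, so $N_{R;n}(\sigma)\to\infty$ and the sequence diverges. When the limit is finite, it equals $Z_{R;\infty}(\sigma)/Z_{\tl R;\infty}(\sigma)$.

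Finally, $Z_{R;\infty}(\sigma)$ is a series with nonnegative terms, so it converges exactly when it is finite. By Lemma~\ref{lem:merten-power-series}(b), this happens if and only if $N_{R;\infty}(\sigma)$ converges, and in that case $N_{R;\infty}(\sigma)=Z_{R;\infty}(\sigma)/Z_{\tl R;\infty}(\sigma)=\lim_n N_{R;n}(\sigma)$. This gives both the equivalence and the equality.

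The main point requiring care is the second step. Identifying $N_{R;n}(\sigma)$ with the numerical ratio $Z_{R;n}(\sigma)/Z_{\tl R;n}(\sigma)$ needs the polynomiality of $N_{R;n}$ together with Mertens' theorem; a formal identity of power series alone would not give it. One must also check that $Z_{\tl R;\infty}(\sigma)$ is finite and nonzero, which is the only place where the hypothesis $\sigma<q$ enters.
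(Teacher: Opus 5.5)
Your proposal is correct and follows essentially the same route as the paper. Both arguments pass through $Z_{R;\infty}$ via Lemma~\ref{lem:merten-power-series}, use monotone convergence (Lemma~\ref{lem:zeta-at-positive}) to identify $Z_{R;\infty}(\sigma)$ with $\lim_n Z_{R;n}(\sigma)$, and then divide by $Z_{\tl R;n}(\sigma)$, which tends to a finite positive limit. Your explicit appeal to Lemma~\ref{lem:merten} for finite $n$ only spells out the paper's remark that $Z_{R;n}$ has radius of convergence at least $q$, since $N_{R;n}$ is a polynomial.
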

\begin{remark}
    A particularly useful case will be $\sigma=1$. Note that $N_{R;n}(\sigma)$ is guaranteed to converge because $N_{R;n}(t)$ is a polynomial.
\end{remark}
\begin{proof}
    By Lemma~\ref{lem:merten-power-series} and Lemma~\ref{lem:zeta-at-positive}, 
    \begin{align*}
        &\text{$N_{R;\infty}(t)$ converges at $t=\sigma$}\\
        \iff &\text{$Z_{R;\infty}(t)$ converges at $t=\sigma$} \\
        \iff &\text{$Z_{R;n}(\sigma)\in [0,\infty]$ converges as $n\to\infty$}.
    \end{align*}
    Recall $Z_{R;n}(t)=Z_{\tl R;n}(t) N_{R;n}(t)$, and $N_{R;n}(t)$ is a polynomial. Thus, the radius of convergence of $Z_{R;n}(t)$ is at least the radius of convergence of $Z_{\tl R;n}(t)$, which is at least $q$. In particular, we always have $Z_{R;n}(\sigma)<\infty$ and $Z_{R;n}(\sigma)=Z_{\tl R;n}(\sigma) N_{R;n}(\sigma)$ as real numbers for all finite $n$. Because $\lim_{n\to \infty} Z_{\tl R;n}(\sigma)$ exists in $(0,\infty)$, the convergence of $\lim_{n\to \infty} Z_{R;n}(\sigma)$ and the convergence of $\lim_{n\to \infty} N_{R;n}(\sigma)$ are equivalent. This completes the proof of the desired equivalence. The required equality in the convergence case follows easily through tracing the steps of the proofs.
\end{proof}

\begin{theorem}
    \label{thm:converge-at-1}
    For $R=\Fq[\![T^a,T^b]\!]$, the infinite-rank Coh zeta functions $Z_{R;\infty}(t)$ and $N_{R;\infty}(t)$ have radius of convergence at least $1$. Moreover, they converge at $t=1$ with values
    \[ Z_{R;\infty}(1) = (q^{-1};q^{-1})_\infty^{-1} N_{a,b;\infty}(q^{-1},1), \quad N_{R;\infty}(1) = N_{a,b;\infty}(q^{-1},1).\]
\end{theorem}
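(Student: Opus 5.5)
The plan is to apply Theorem~\ref{thm:convergence-pipeline} with $R=\Fq[\![T^a,T^b]\!]$ (a local arithmetic order whose residue field is $\Fq$) and $\sigma=1<q$. It then suffices to show that $\lim_{n\to\infty} N_{R;n}(1)$ exists and equals $N_{a,b;\infty}(q^{-1},1)$. The theorem will then give both the convergence of $N_{R;\infty}$ at $t=1$ and the value $N_{R;\infty}(1)=N_{a,b;\infty}(q^{-1},1)$.

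First, identify $N_{R;n}(t)$ with the paper's Quot normalization. By definition $\zeta_{R;n}(s)=\sum_E |R^n/E|^{-s-n}$, so $Z_{R;n}(t)=Z^R_{R^n}(q^{-n}t)$, where $Z^R_{R^n}$ is the point count of $\calZ^R_{R^n}$. Solomon's formula gives $Z_{\tl R;n}(t)=1/(q^{-1}t;q^{-1})_n$. Dividing, we get $N_{R;n}(t)=N^R_{R^n}(q^{-n}t)$, up to the harmless identification $(q^{-n}t;q)_n=(q^{-1}t;q^{-1})_n$. Evaluating at $t=1$, the proof of Theorem~\ref{thm:quot-zeta-special} (functional equation \eqref{eq:func-eq} combined with \eqref{eq:normalized-quot-at-1}) gives
\[ N_{R;n}(1)=N^R_{R^n}(q^{-n})=N_{a,b;n}(q^{-1},1). \]

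Second, show $\lim_{n\to\infty}N_{a,b;n}(q^{-1},1)=N_{a,b;\infty}(q^{-1},1)$ as real numbers. This is the main analytic step. Write $x=q^{-1}\in(0,1)$. Each summand of $N_{a,b;n}(x,1)$ is nonzero only for $\mathbf{n}\in C$ with $n_f\le n$, and is equal to
\[ \frac{(x;x)_n}{(x;x)_{n-n_f}}\qbinom{\infty}{\mathbf{n}}_{x;G}x^{\dinv(\mathbf{n})}. \]
The prefactor $(x;x)_n/(x;x)_{n-n_f}$ is a product of $n_f$ factors $(1-x^{j})$ with $j>n-n_f$, so it lies in $(0,1]$ and tends to $1$ as $n\to\infty$ for fixed $\mathbf{n}$. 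The remaining factor $\qbinom{\infty}{\mathbf{n}}_{x;G}$ is a ratio of finite $(x;x)_k$'s, each in $[(x;x)_\infty,1]$, so it is bounded uniformly in $\mathbf{n}$ by a constant $c$ depending only on $x$ and $|G|$. By the positive definiteness bound \eqref{eq:pos-def-eff}, $x^{\dinv(\mathbf{n})}\le x^{\lambda|\mathbf{n}|^2}$, and $\sum_{\mathbf{n}\in C}x^{\lambda|\mathbf{n}|^2}<\infty$ because the number of $\mathbf{n}\in C$ with $|\mathbf{n}|=k$ grows only polynomially in $k$. Dominated convergence over the counting measure on $C$ then gives the limit. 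The same bound shows $N_{a,b;\infty}(q^{-1},1)$ converges absolutely (Remark~\ref{rmk:pos-def}), so the limit is finite.

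Finally, Theorem~\ref{thm:convergence-pipeline} yields convergence of $N_{R;\infty}$ at $t=1$ with the stated value. Lemma~\ref{lem:merten-power-series} transfers this to $Z_{R;\infty}(1)=Z_{\tl R;\infty}(1)N_{R;\infty}(1)$ with $Z_{\tl R;\infty}(1)=1/(q^{-1};q^{-1})_\infty$. For the radius of convergence, note that both series have nonnegative coefficients: $Z_{R;\infty}$ clearly does. Convergence at $t=1$ therefore forces absolute convergence on $|t|\le 1$, giving radius at least $1$ for $Z_{R;\infty}$. For $N_{R;\infty}$, write $N_{R;\infty}=Z_{R;\infty}\cdot (q^{-1}t;q^{-1})_\infty$, where the second factor is entire, so its radius is also at least $1$. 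The step I expect to need the most care is the uniform domination in the second step, namely checking that the bound on $\qbinom{\infty}{\mathbf{n}}_{x;G}$ is uniform and that positive definiteness really gives summable Gaussian-type decay on the cone $C$.
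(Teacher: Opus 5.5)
Your proposal is correct and follows the paper's proof. You identify $N_{R;n}(1)=N_{a,b;n}(q^{-1},1)$ via Theorem~\ref{thm:quot-zeta-special}, pass to the limit $n\to\infty$ using positive definiteness (Remark~\ref{rmk:pos-def}), and conclude with Theorem~\ref{thm:convergence-pipeline}. Your dominated-convergence step, and the radius-of-convergence argument (nonnegative coefficients of $Z_{R;\infty}$, which is the only series whose nonnegativity you need or justify, then the entire factor $(q^{-1}t;q^{-1})_\infty$ for $N_{R;\infty}$), simply make explicit what the paper leaves implicit.
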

\begin{proof}
    Theorem~\ref{thm:quot-zeta-special} really says
    \[ N_{R;n}(1) =  N_{a,b;n}(q^{-1},1),\]
    and by Remark~\ref{rmk:pos-def}, 
    \[ \lim_{n\to \infty} N_{R;n}(1)=N_{a,b;\infty}(q^{-1},1)\]
    converges. By Theorem~\ref{thm:convergence-pipeline}, both $Z_{R;\infty}(1)$ and $N_{R;\infty}(1)$ converge, with
    \[ N_{R;\infty}(1) = \lim_{n\to \infty} N_{R;n}(1).\]
    This completes the proof.
\end{proof}

\begin{proof}
    [Proof of Theorem~\ref{thm:groupoid-vol}]
    For $R=\Fq[\![T^a,T^b]\!]\simeq \Fq[\![X,Y]\!]/(X^a-Y^b)$, the quantities $S_1$ and $S_2$ are two equivalent interpretations of $Z_{R;\infty}(1)$: $S_1$ is just the definition, while $S_2$ is the well-known commuting-matrix interpretation (see \cite{huang2023mutually,huangjiang2023punctual,huang2024commuting}). Hence, Theorem~\ref{thm:groupoid-vol} follows from the formula of $Z_{R;\infty}(1)$ in Theorem~\ref{thm:converge-at-1}. 
\end{proof}

\section{Further discussions and future directions}\label{sec:further}

\subsection{The product side and $W$-algebra characters}\label{subsec:product}

We first make the Rogers--Ramanujan prediction in part~(5) of Conjecture~\ref{conj:three-var} explicit for torus knot singularities. Assume throughout this subsection that $a<b$ are coprime, put $d=a+b$, and let $R=\k[\![T^a,T^b]\!]$. Since
\[
  H_{R,\infty}(q,1,1)=N_{a,b;\infty}(q,1),
\]
the proposed product is
\begin{equation}\label{eq:RR-product}
  N_{a,b;\infty}(q,1)=P_{a,b}(q)\coloneqq\prod_{i\geq1}(1-q^i)^{-r_{a,b}(i)}.
\end{equation}
\begin{remark}\label{rmk:charge}
The exponent $r_{a,b}(i)$ is $d$-periodic, symmetric ($r_{a,b}(i)=r_{a,b}(d-i)$), nonnegative, and satisfies
\[
  r_{a,b}(0)=0,
  \qquad
  \sum_{i=1}^{d}r_{a,b}(i)=(a-1)(b-1).
\]
Writing $s_{a,b}(i)=r_{a,b}(i)+1$, one has
\[
  s_{a,b}(i)=\mathbf 1_{d\Z}(i)+\min\{a,b,\operatorname{dist}(ai,d\Z)\}.
\]
Geometrically, after a common rescaling, the values $s_{a,b}(1),\dots,s_{a,b}(d-1)$ are the lengths of the segments into which the grid lines of an $a\times b$ rectangle divide its diagonal; see Figure~\ref{fig:product-side-grid} for the example for $(a,b)=(4,5)$. We thank Peng Zhou for pointing this out to us.
\end{remark}

\begin{figure}[ht]
\begin{center}

\tikzset{every picture/.style={line width=0.75pt}} 

\begin{tikzpicture}[x=0.75pt,y=0.75pt,yscale=-1,xscale=1]

\draw  [draw opacity=0] (124,39) -- (275.5,39) -- (275.5,160) -- (124,160) -- cycle ; \draw   (124,39) -- (124,160)(154,39) -- (154,160)(184,39) -- (184,160)(214,39) -- (214,160)(244,39) -- (244,160)(274,39) -- (274,160) ; \draw   (124,39) -- (275.5,39)(124,69) -- (275.5,69)(124,99) -- (275.5,99)(124,129) -- (275.5,129)(124,159) -- (275.5,159) ; \draw    ;
\draw    (124,159) -- (274,39) ;

\draw (132,133) node [anchor=north west][inner sep=0.75pt]   [align=left] {4};
\draw (156,132) node [anchor=north west][inner sep=0.75pt]   [align=left] {1};
\draw (163,107) node [anchor=north west][inner sep=0.75pt]   [align=left] {3};
\draw (186,102) node [anchor=north west][inner sep=0.75pt]   [align=left] {2};
\draw (200,79) node [anchor=north west][inner sep=0.75pt]   [align=left] {2};
\draw (222,76) node [anchor=north west][inner sep=0.75pt]   [align=left] {3};
\draw (232,51) node [anchor=north west][inner sep=0.75pt]   [align=left] {1};
\draw (255,48) node [anchor=north west][inner sep=0.75pt]   [align=left] {4};

\end{tikzpicture}

\end{center}
\caption{The sequence $s_{4,5}(1),\dots,s_{4,5}(8)$ from a $4\times 5$ rectangle}
\label{fig:product-side-grid}
\end{figure}

We use only the following standard family of $W$-algebra characters. The irreducible modules of the minimal model $\cW_a(a,d)$ are indexed, up to cyclic rotation, by $a$-tuples of positive integers $(j_0,\dots,j_{a-1})$ with sum $d$. The $A_{a-1}$ Macdonald formula gives the normalized character
\begin{equation}\label{eq:product_VOA}
  \bbar\chi^{a,a,d}_{(j_0,\dots,j_{a-1})}(q)
  =\frac{(q^d;q^d)_\infty^{a-1}}{(q;q)_\infty^{a-1}}
    \prod_{s=1}^{a-1}\prod_{v=0}^{a-1}
    (q^{j_v+\cdots+j_{v+s-1}};q^d)_\infty,
\end{equation}
where the subscripts of $j$ are read modulo $a$; see \cite[(5.2)]{asw1999}.

Write $d=Qa+r$ with $0<r<a$. The \defn{Euclidean rhythm} $\mathrm{ER}(a,b)$ is the cyclic $a$-tuple with $r$ entries equal to $Q+1$ and the remaining entries equal to $Q$, the larger entries being placed as evenly as possible. Equivalently, one may take
\[
  j_v=Q+1
  \quad\Longleftrightarrow\quad
  v=\left\lfloor\frac{ka}{r}\right\rfloor\pmod a
  \quad\text{for some }k\in\Z.
\]
The \defn{Dirac rhythm} $\mathrm{DR}(a,b)$ is the cyclic tuple $(b+1,1,\dots,1)$. These are respectively the maximally and minimally even distributions of $d$ among $a$ slots. For example, $\mathrm{ER}(7,9)=(3,2,2,3,2,2,2)$ and $\mathrm{DR}(7,9)=(10,1,1,1,1,1,1)$.

\begin{lemma}\label{lm:rhythm_lemma}
Let $(j_0,\dots,j_{a-1})=\mathrm{ER}(a,b)$. For $1\leq s\leq a$, the multiset
\[
  \{j_v+\cdots+j_{v+s-1}:0\leq v<a\}
\]
contains only
\[
  sQ+\left\lfloor\frac{sr}{a}\right\rfloor
  \quad\text{and}\quad
  sQ+\left\lfloor\frac{sr}{a}\right\rfloor+1;
\]
the second occurs $sr\bmod a$ times.
\end{lemma}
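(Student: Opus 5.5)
The plan is to use the explicit characterization of $\mathrm{ER}(a,b)$: the big entries sit at positions $v\equiv\lfloor ka/r\rfloor \pmod a$. I will identify the tuple with a balanced (Sturmian/Christoffel) word, and then count how many large entries a cyclic window of length $s$ can contain.

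First I write $j_v=Q+\epsilon_v$ with $\epsilon_v\in\{0,1\}$ and $\sum_v\epsilon_v=r$. Then the window sum is
\[
j_v+\dots+j_{v+s-1}=sQ+\sum_{u=v}^{v+s-1}\epsilon_u ,
\]
so it suffices to show that the window count $c_v(s)\coloneqq\sum_{u=v}^{v+s-1}\epsilon_u$ takes only the values $\lfloor sr/a\rfloor$ and $\lfloor sr/a\rfloor+1$, the latter exactly $sr\bmod a$ times.

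The second, counting, part is immediate once the first is known. Each $\epsilon_u$ lies in exactly $s$ of the $a$ cyclic windows, so $\sum_{v=0}^{a-1}c_v(s)=sr$. If every $c_v(s)\in\{m,m+1\}$ with $m=\lfloor sr/a\rfloor$, then the number of windows equal to $m+1$ is $sr-am=sr\bmod a$. (For $s=a$ this correctly gives the single value $aQ+r=d$, occurring $a$ times.)

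The main step is the balance property, and I will prove it via the cutting-sequence description. The set $S=\{\lfloor ka/r\rfloor \bmod a : k\in\Z\}$ has exactly $r$ elements. Since $\gcd(a,r)=\gcd(a,d)=1$, the $\lfloor ka/r\rfloor$ for $k=0,\dots,r-1$ are distinct in $[0,a)$, and shifting $k$ by $r$ shifts the value by $a$. Lifting to $\Z$, the indicator of $\{\lfloor ka/r\rfloor:k\in\Z\}$ satisfies
\[
\#\{k:\ v\le \lfloor ka/r\rfloor\le v+s-1\}=\#\{k:\ v\le ka/r< v+s\}=\#\bigl(\Z\cap[vr/a,(v+s)r/a)\bigr).
\]
This equals $\lceil (v+s)r/a\rceil-\lceil vr/a\rceil$. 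That quantity is $\lfloor sr/a\rfloor$ or $\lfloor sr/a\rfloor+1$, because it is a difference of ceilings of two reals whose difference is $sr/a$.

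The only care needed is the cyclic wrap: the window in $\Z/a$ must be matched with the window in $\Z$. This works because the lifted indicator is $a$-periodic (as $k\mapsto k+r$ adds $a$), and $s\le a$ ensures the window does not double-count. I expect this identification of the cyclic placement with the Beatty-type set to be the only real obstacle; the rest is the elementary ceiling estimate above.
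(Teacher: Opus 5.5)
Your proof is correct and complete. The paper omits the proof of this lemma, so there is no argument of the authors to compare with, and yours supplies it.

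\begin{itemize}
\item The double-counting identity $\sum_v c_v(s)=sr$, valid because $s\le a$, correctly reduces everything to the balance property.
\item The formula $c_v(s)=\lceil (v+s)r/a\rceil-\lceil vr/a\rceil$, read off from the $a$-periodic lift $\{\lfloor ka/r\rfloor:k\in\Z\}$, then gives the balance property directly.
\end{itemize}

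One small correction concerns your justification for the distinctness of $\lfloor ka/r\rfloor$ for $0\le k<r$. The same fact also gives the injectivity of $k\mapsto\lfloor ka/r\rfloor$ on all of $\Z$. You use that injectivity implicitly when you equate the number of large entries in a window with the number of admissible $k$. Both facts follow from $a/r>1$: consecutive values $\lfloor (k+1)a/r\rfloor-\lfloor ka/r\rfloor$ are at least $1$. They do not follow from $\gcd(a,r)=1$, and coprimality is not needed anywhere in your argument.
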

We omit its proof.

\begin{proposition}\label{prop:Cohchar}
For $(j_0,\dots,j_{a-1})=\mathrm{ER}(a,b)$,
\begin{equation}\label{eq:RR-character}
  P_{a,b}(q)=\bbar\chi^{a,a,d}_{\mathrm{ER}(a,b)}(q).
\end{equation}
\end{proposition}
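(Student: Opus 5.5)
The plan is to compare exponents of $(1-q^i)$ on both sides, residue class by residue class modulo $d=a+b$. Every factor on the right of \eqref{eq:product_VOA} is a product over an arithmetic progression with difference $d$, so write
\[\bbar\chi^{a,a,d}_{\mathrm{ER}(a,b)}(q)=\prod_{i\geq1}(1-q^i)^{-c(i)}\]
with $c$ $d$-periodic. Expanding the three kinds of factors gives
\[
c(i)=(a-1)-(a-1)\mathbf 1_{d\Z}(i)-\#\{(s,v):1\le s\le a-1,\ 0\le v<a,\ j_v+\dots+j_{v+s-1}\equiv i \ (\mathrm{mod}\ d)\}.
\]
It then suffices to show $c(i)=r_{a,b}(i)$ for $0\le i<d$.

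For $i\equiv0$ we need $c(0)=0=r_{a,b}(0)$. Since $1\le s\le a-1$, every partial sum lies strictly between $0$ and $d$, so none is $\equiv0$ and $c(0)=0$.

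For $1\le i\le d-1$, the target is $r_{a,b}(i)=\min\{a,b,\operatorname{dist}(ai,d\Z)\}-1$. Equivalently, we must show that the number $m(i)$ of pairs $(s,v)$ with partial sum equal to $i$ satisfies
\[ m(i)=a-\min\{a,b,\operatorname{dist}(ai,d\Z)\}. \]
Lemma~\ref{lm:rhythm_lemma} computes $m(i)$: for each $s$, the values $sQ+\lfloor sr/a\rfloor$ and $sQ+\lfloor sr/a\rfloor+1$ occur with multiplicities $a-(sr\bmod a)$ and $sr\bmod a$. Writing $\lfloor sd/a\rfloor=sQ+\lfloor sr/a\rfloor$, this says
\[
m(i)=\sum_{s=1}^{a-1}\Bigl[(a-\{sd\}_a)\,\mathbf 1_{i=\lfloor sd/a\rfloor}+\{sd\}_a\,\mathbf 1_{i=\lfloor sd/a\rfloor+1}\Bigr],
\]
where $\{x\}_a=x \bmod a$. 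This has a clean interpretation as a piecewise-linear "hat" function. The points $sd/a$ are spaced $d/a>1$ apart, and each point distributes weight $a$ between its two neighboring integers by linear interpolation. Hence
\[ m(i)=\sum_s \max\{0,\ a-|ai-sd|\}. \]
Since the $sd$ are spaced $d>a$ apart, at most one term is nonzero, namely the one with $sd$ nearest to $ai$, giving $m(i)=\max\{0,\ a-\operatorname{dist}(ai,d\Z)\}$. One point needs checking: the nearest multiple of $d$ must correspond to some $s\in[1,a-1]$. It does, because $1\le i\le d-1$ forces $a\le ai\le ad-a$, and the endpoints $s=0,a$ contribute nothing since $|ai-0|\ge a$ and $|ai-ad|\ge a$.

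The remaining step, and the main bookkeeping obstacle, is reconciling $\max\{0,a-\operatorname{dist}\}$ with $a-\min\{a,b,\operatorname{dist}\}$. Since $a<b$, we have $\min\{a,b,\operatorname{dist}\}=\min\{a,\operatorname{dist}\}$, and then
\[ a-\min\{a,\operatorname{dist}\}=\max\{0,\ a-\operatorname{dist}\}, \]
which closes the proof. Two things need care: the boundary behaviour of the floor conventions in Lemma~\ref{lm:rhythm_lemma}, where one should verify the case $sr\bmod a=0$ cannot occur for $1\le s<a$ because $\gcd(a,d)=1$; and the count of the factor $(q^d;q^d)^{a-1}_\infty/(q;q)^{a-1}_\infty$. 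If the lemma's statement is used as given, everything else is elementary.
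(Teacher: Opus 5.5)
Your proof is correct and follows essentially the same route as the paper: you compare the exponents of $(1-q^i)$ using Lemma~\ref{lm:rhythm_lemma} and check $s_{a,b}(i)+n_i=a$ for $1\le i<d$, and your hat-function formula $m(i)=\sum_{s}\max\{0,a-|ai-sd|\}$ is a uniform repackaging of the paper's explicit check at $i=sQ+u$ and $i=sQ+u+1$. One small correction: the claim ``at most one term is nonzero'' needs the multiples $sd$ to be at least $2a$ apart, not merely $d>a$ apart, since two hats of half-width $a$ can overlap when $a<d<2a$; this holds here because $a<b$ gives $d>2a$.
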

\begin{proof}
Insert Lemma~\ref{lm:rhythm_lemma} into \eqref{eq:product_VOA}. If $n_i$ is the multiplicity of $i$ among all cyclic partial sums $j_v+\cdots+j_{v+s-1}$, it is enough to prove
\[
  a=s_{a,b}(i)+n_i,
  \qquad 1\leq i<d.
\]
Write $sr=au+v$, $0\leq v<a$. At $i=sQ+u$ the two terms on the right are $v$ and $a-v$, and at $i=sQ+u+1$ they are $a-v$ and $v$. At every other $i$, one has $n_i=0$ and $s_{a,b}(i)=a$. This proves the required equality of the exponents in each residue class modulo $d$.
\end{proof}

Consequently, Conjecture~\ref{conj:rr-type} may be written
\begin{equation}\label{eq:RR variant2}
  N_{a,b;\infty}(q,1)
  \stackrel{?}{=}\bbar\chi^{a,a,d}_{\mathrm{ER}(a,b)}(q).
\end{equation}
The specialization that lies in the opposite extreme of Conjecture~\ref{conj:RR_general} is experimentally
\begin{equation}\label{eq:RR variant3}
  N_{a,b;\infty}(q,q)
  \stackrel{?}{=}\bbar\chi^{a,a,d}_{\mathrm{DR}(a,b)}(q).
\end{equation}
\begin{example}\label{ex:twoclassicalRRs}
For $(a,b)=(2,3)$, the rhythms are $(3,2)$ and $(4,1)$, and \eqref{eq:RR variant2}--\eqref{eq:RR variant3} become the two classical Rogers--Ramanujan identities
\[
  \sum_{m\geq0}\frac{q^{m^2}}{(q;q)_m}=\frac1{(q,q^4;q^5)_\infty},
  \qquad
  \sum_{m\geq0}\frac{q^{m^2+m}}{(q;q)_m}=\frac1{(q^2,q^3;q^5)_\infty}.
\]
\end{example}

\begin{remark}[Comparison with colored Jones limits]\label{subsec:jones}
Kanade's colored Jones limits for $T(a,d)$ produce the character labelled by $\mathrm{DR}(a,b)$ rather than $\mathrm{ER}(a,b)$ \cite{Kanade_old}. Thus the Coh specialization $N_{a,b;\infty}(q,1)$ and the usual colored-Jones limit already give the two different Rogers--Ramanujan products when $(a,b)=(2,3)$. It remains natural to ask whether the $\mathrm{ER}(a,b)$ character occurs as a colored Jones limit for another knot, Lie algebra, or color.
\end{remark}

\subsection{Geometric consequences of the master polynomial}\label{subsec:master-geometry}

We now spell out the geometric statements contained in Conjecture~\ref{conj:colorHOMFLYconj}. They are recorded as consequences rather than as additional conjectures.

Let $f\in\C[\![X,Y]\!]$ be reduced, put
\[
  R=\C[\![X,Y]\!]/(f),
  \qquad
  R_f^{(n)}=\C[\![X,Y]\!]/(f^n),
\]
and write $b(R)$ for the number of branches. Thus $\tl R\simeq\prod_{j=1}^{b(R)}\C[\![T_j]\!]$. Besides the normalized Quot numerator
\[
  \mathcal N^R_{\tl R^n}(t)
  =(t;\L)_n^{b(R)}
    \sum_{m\geq0}[\Quot^R_m(\tl R^n)]t^m,
\]
consider
\begin{align}
  \mathcal N^R_{R^n}(t)
    &=(t;\L)_n^{b(R)} \calZ^{\Quot}_{f;n}(t)\coloneqq 
      (t;\L)_n^{b(R)}\sum_{m\geq0}[\Quot^R_m(R^n)]t^m,
      \label{eq:quot-numerator}\\
  \cH_{f;n}(t)
    &=(t;\L t)_n^{b(R)} \calZ^{\Hilb}_{f;n}(t)\coloneqq 
      (t;\L t)_n^{b(R)}\sum_{m\geq0}[\Hilb_m(R_f^{(n)})]t^m.
      \label{eq:H-normalized}
\end{align}
We write $Z^{\Quot}_{f;n}(q,t)$, $Z^{\Hilb}_{f;n}(q,t)$, $N_{\tl R^n}(q,t)$, $N_{R^n}(q,t)$, $\cH_{f;n}(q,t)$ for the virtual weight realizations of the corresponding motivic expressions; they are integer-coefficient polynomials or series in $q^{1/2}$ and $t$, and whenever the relevant motives are polynomials in $\L$, they are obtained from the virtual-weight expressions with $q$ specializing to $\L$. 

Setting the perverse variable equal to $1$ in Conjecture~\ref{conj:colorHOMFLYconj} first gives, without any purity assumption, the virtual-weight identity
\begin{equation}\label{eq:HQ}
  \cH_{f;n}(q,t)=N_{\tl R^n}(tq,t).
\end{equation}
Equivalently, before normalization,
\begin{equation}\label{eq:HQ-unnormalized}
  Z^{\Hilb}_{f;n}(q,t)=Z^{\Quot}_{f;n}(tq,t).
\end{equation}
Indeed, $q\mapsto tq$ carries $(t;q)_n^{b(R)}$ to $(t;tq)_n^{b(R)}$. At $n=1$, \eqref{eq:HQ-unnormalized} is the Hilb-vs-Quot conjecture of Kivinen--Trinh \cite{kivinentrinh2023}. Once both motivic numerators are known to lie in $\Z[\L,t]$, the virtual-weight identity determines the corresponding polynomial representatives and hence gives the stated motivic formula. Purity and evenness instead provide an honest Poincar\'e-theoretic interpretation, while a compatible affine paving would give a stronger geometric explanation of motivic polynomiality.

If $R$ is unibranched, let $K$ be its algebraic knot. Recall the three slices of \eqref{eq:master-knot}:
\begin{align}
  \mathcal N^R_{R^n}(t)
    &=t^{n\delta}\Bot\tcP^K_{S^n}
      \bigl(t^{1/2},1,\L^{1/2}\bigr),
      \label{eq:spec-quot-R}\\
  \mathcal N^R_{\tl R^n}(t)
    &=t^{n\delta}\Bot\tcP^K_{S^n}
      \bigl(\L^{-n/2},t^{-1/2},\L^{1/2}\bigr),
      \label{eq:spec-quot-Rt}\\
  \cH_{f;n}(\L,t)
    &=t^{n\delta}\Bot\tcP^K_{S^n}
      \bigl((t\L)^{-n/2},t^{-1/2},(t\L)^{1/2}\bigr).
      \label{eq:spec-hilb}
\end{align}
The first lies on $\tr=1$, while the second and third lie on
\begin{equation}\label{eq:subtorus}
  \qtil\tc^n=1.
\end{equation}
The substitution $\L\mapsto t\L$ carries \eqref{eq:spec-quot-Rt} to \eqref{eq:spec-hilb}, including the normalizing factor, and hence recovers \eqref{eq:HQ}.

Two further consistency checks are built into these formulas. First, GGS self-symmetry on \eqref{eq:spec-quot-R} is exactly the functional equation
\[
  \mathcal N^R_{R^n}(t)
  =\L^{n^2\delta}t^{2n\delta}
    \mathcal N^R_{R^n}(\L^{-n}t^{-1}),
\]
the conjectural motivic lift of the point-count functional equation (Theorem~\ref{thm:pointcount-func-eq}). Second, at $n=1$ one has $R_f^{(1)}=R$ and $\Hilb_m(R)=\Quot^R_m(R)$, so \eqref{eq:spec-quot-R} and \eqref{eq:spec-hilb} agree. In knot-theoretic language this is the uncolored bottom-row symmetry underlying ORS.

For the torus knot $K=T(a,b)$, Theorem~\ref{thm:quot-zeta-tilde} gives
\[
  \mathcal N^R_{\tl R^n}(t)
  =\L^{n^2\delta}t^{n\delta}
    N_{a,b;n}(\L^{-1},t^{-1}).
\]
Comparing with \eqref{eq:spec-quot-Rt}, or equivalently applying GGS self-symmetry, yields
\begin{equation}\label{eq:quad-N}
  N_{a,b;n}(q,t)
  =q^{n^2\delta}\Bot\tcP^{T(a,b)}_{S^n}
    \bigl(q^{n/2},t^{1/2},q^{-1/2}\bigr).
\end{equation}
Thus \eqref{eq:quad-N} is precisely the $H_{R,n}(q,1,t)$ slice of the master conjecture. In particular,
\begin{align}
  N_{a,b;n}(q,q)
    &=q^{n^2\delta}\Bot H^{T(a,b)}_{S^n}(q^{-1/2}),
      \label{eq:diagonal}\\
  N_{a,b;n}(1,t)
    &=N_{a,b;1}(1,t)^n,
      \label{eq:q1-growth}
\end{align}
where the second identity follows from refined exponential growth in the cases where that property holds. It is not asserted for arbitrary knots.

The Hilbert-side consequence is the explicit prediction
\begin{equation}\label{eq:hilb-N}
  Z^{\Hilb}_{a,b;n}(q,t)
  =\frac{q^{n^2\delta}t^{n(n+1)\delta}
          N_{a,b;n}((tq)^{-1},t^{-1})}
         {(t;tq)_n},
\end{equation}
or, after normalization,
\begin{equation}\label{eq:H-N}
  \cH_{a,b;n}(q,t)
  =q^{n^2\delta}t^{n(n+1)\delta}
    N_{a,b;n}((tq)^{-1},t^{-1}).
\end{equation}

\begin{remark}[Evidence for the geometric slices]\label{rmk:HQ-evidence}
The identity \eqref{eq:HQ-unnormalized} was checked by finite-field enumeration for $f=Y^a-X^b$ with $(a,b)=(2,3),(2,5),(3,4)$, for $n\leq3$ and colength at most $6$, over $\F_q$ with $q=3,5,7$. For a smooth germ $f=X$ (i.e., the unknot), it was checked for $n\leq2$ and colength at most $4$; here it predicts the Hilbert zeta function of the thickened line $\C[\![X,Y]\!]/(X^n)$ is
\[
  Z^{\Hilb}_{X;n}(q,t)=\prod_{i=1}^n(1-q^{i-1}t^i)^{-1}.
\]
At $n=1$, it was also checked through colength $4$ for the multibranch germs $XY$, $Y^2-X^4$, and $X^3-XY^2$, over the finite fields in the original computation.

For the knot-theoretic slices, the normalized $\tl R^n$ formula is supplied exactly by Theorem~\ref{thm:quot-zeta-tilde}. The independently computed $R^n$ coefficients agree with the KRSS prediction through the available colength ranges: through $t^4$ for $T(3,4)$ at $n=1,2,3$ and for $T(3,5)$ at $n=1,2$, and through $t^5$ for $T(3,7)$ at $n=1,2$. The $R^n$ side was obtained by Gr\"obner stratification, while the knot side was computed from the quivers of \cite[\S5.5]{KRSS}. Thus these are finite-order checks of \eqref{eq:spec-quot-R}--\eqref{eq:spec-hilb}, not complete computations of the $R^n$ series.
\end{remark}

\subsection{The knots--quivers model and computational evidence}\label{subsec:Nhat}

The knots--quivers correspondence is not a third definition of the master polynomial. Rather, it is a conjectural finite presentation of the colored knot invariant appearing in \eqref{eq:master-knot}. We recall only the extremal, or bottom-row, form that is used here.

Let $K$ be a knot whose bottom-row colored invariants admit a standard knots--quivers presentation. Its quiver has one vertex for each generator of the uncolored bottom row. If $(q_i,t_i)$ are the $\qtil$- and $\tr$-degrees of these generators, the KRSS formula \cite[Conjectures~4.2 and 4.4]{KRSS} asserts that there is a symmetric integral matrix $C=(C_{ij})$ with
\begin{equation}\label{eq:KRSS-diagonal}
  C_{ii}=t_i
\end{equation}
such that
\begin{equation}\label{eq:KRSS-bottom}
  \Bot\tcP^K_{S^n}(\qtil,\tr,\tc)
  =\sum_{d_1+\cdots+d_M=n}
    \qtil^{\sum_iq_id_i}\tr^{\sum_it_id_i}
    \tc^{\sum_{i,j}C_{ij}d_id_j}
    \frac{(\tc^2;\tc^2)_n}{\prod_i(\tc^2;\tc^2)_{d_i}}.
\end{equation}
Thus, conditional on KRSS, \eqref{eq:master-knot} becomes an explicit finite quiver sum for $H_{R,n}$. The original knots--quivers correspondence was proposed well beyond torus knots, but generalized forms are needed for some knots with super-exponential growth. None of this is required in the statement of Conjecture~\ref{conj:colorHOMFLYconj}; in this paper KRSS is used only for the torus-knot computations below.

We now compare the quiver parameters with the geometry of the gap poset. Let
\[
  G=\N\setminus\langle a,b\rangle,
  \qquad
  \mathcal J(G)=\{F\subseteq G:F\text{ is an order filter}\}.
\]
The indicator vectors $\mathbf1_F$, $F\in\mathcal J(G)$, are exactly the $0,1$-vectors in the cone appearing in \eqref{eq:dinv}. Put
\begin{equation}\label{eq:filter-stats}
  \ell(F)=\dinv(\mathbf1_F),
  \qquad
  s(F)=|F|+\ell(F).
\end{equation}
Then
\begin{equation}\label{eq:rank-one-filter-sum}
  N_{a,b;1}(q,t)=\sum_{F\in\mathcal J(G)}q^{\ell(F)}t^{|F|}.
\end{equation}

An \defn{atom vector of size $n$} is a tuple $\mathbf d=(d_F)_{F\in\mathcal J(G)}$ of nonnegative integers with $\sum_Fd_F=n$. It determines
\[
  \mathbf n(\mathbf d)=\sum_Fd_F\mathbf1_F,
  \qquad
  |\mathbf n(\mathbf d)|=\sum_F|F|d_F.
\]
These vectors have a direct geometric meaning. If $\mathbb T_n\subseteq\GL_n$ is the diagonal torus, then
\[
  \Fl_G(\mathbf n;n)^{\mathbb T_n}
  \longleftrightarrow
  \left\{(F_1,\dots,F_n)\in\mathcal J(G)^n:
  \sum_{i=1}^n\mathbf1_{F_i}=\mathbf n\right\}.
\]
Recording only the multiplicities of the filters gives the atom vector $\mathbf d$, and the corresponding block has $\binom n{\mathbf d}$ fixed points. Thus atom vectors on the Quot side are the same combinatorial objects as dimension vectors in \eqref{eq:KRSS-bottom}.

\begin{proposition}[Rank-one dictionary]\label{prop:rank-one-filters}
Assume the rank-one case of Conjecture~\ref{conj:colorHOMFLYconj} for $T(a,b)$. The bottom row has one generator for each $F\in\mathcal J(G)$, of degrees
\begin{equation}\label{eq:rank-one-degrees}
  \deg_\alpha=2\delta,
  \qquad
  \deg_\qtil=2(s(F)-\delta),
  \qquad
  \deg_\tr=\deg_\tc=2|F|.
\end{equation}
Consequently the extremal quiver vertices are indexed by $\mathcal J(G)$ and
\begin{equation}\label{eq:node-dictionary}
  q_F=2(s(F)-\delta),
  \qquad
  t_F=C_{FF}=2|F|.
\end{equation}
\end{proposition}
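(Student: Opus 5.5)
The plan is to specialize the knot-theoretic formula \eqref{eq:master-knot} to $n=1$. I would combine it with the rank-one degeneracy in Conjecture~\ref{conj:three-var}(d) and the explicit rank-one formula \eqref{eq:rank-one-filter-sum}. The goal is to compute $\Bot\,\tcP^{T(a,b)}_{S^1}$ as an explicit sum of monomials, one for each order filter.

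\emph{Step 1: determine $H_{R,1}$.} By Conjecture~\ref{conj:three-var}(d), $H_{R,1}(q,t,u)$ lies in $\Z[qt,u]$. Hence it is determined by its slice at $t=1$. That slice is $H_{R,1}(q,1,u)=N_{a,b;1}(q,u)$, as recorded in \S\ref{subsec:intro-knot}. Using \eqref{eq:rank-one-filter-sum}, this gives
\[
  H_{R,1}(q,t,u)=\sum_{F\in\mathcal J(G)}(qt)^{\ell(F)}u^{|F|}.
\]

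\emph{Step 2: invert the change of variables.} At $n=1$, formula \eqref{eq:master-knot} substitutes $\qtil=u^{-1/2}$, $\tr=u^{1/2}t^{-1/2}$, $\tc=q^{-1/2}$. Inverting gives $u=\qtil^{-2}$, $t=\qtil^{-2}\tr^{-2}$, $q=\tc^{-2}$. Then $\Bot\,\tcP$ equals $q^{-\delta}t^{-\delta}H_{R,1}$ rewritten in $(\qtil,\tr,\tc)$. Done naively, this produces monomials with the wrong signs of exponents.

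To fix this, I would first reindex the filter sum using the $q,t$-symmetry of the rational Catalan polynomial. By Remark~\ref{rmk:catalan} we have $C_{a,b}(q,t)=t^{\delta}N_{a,b;1}(q,t^{-1})$. The symmetry $C_{a,b}(q,t)=C_{a,b}(t,q)$ then says that the multiset $\{(\ell(F),|F|)\}_{F\in\mathcal J(G)}$ is invariant under the involution $(\ell,|F|)\mapsto(\delta-|F|,\delta-\ell)$. Applying this involution and then substituting, the exponents should become
\[
  \qtil^{2(|F|+\ell(F)-\delta)}\,\tr^{2|F|}\,\tc^{2|F|}=\qtil^{2(s(F)-\delta)}\,\tr^{2|F|}\,\tc^{2|F|}.
\]
The $\alpha$-degree $2\delta$ is automatic from the definition of $\Bot$. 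This yields the degrees in \eqref{eq:rank-one-degrees}.

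\emph{Step 3: pass from the polynomial to generators.} Conjecture~\ref{conj:colorHOMFLYconj} asserts purity and even concentration. Therefore the bottom-row Poincaré polynomial has no cancellation, and the positive sum above, in which each filter contributes coefficient $1$, counts homogeneous generators with multiplicity. This gives one generator for each $F\in\mathcal J(G)$.

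\emph{Step 4: read off the quiver data.} Setting $n=1$ in \eqref{eq:KRSS-bottom}, only the dimension vectors $d=e_i$ contribute. Each vertex $i$ then gives the monomial $\qtil^{q_i}\tr^{t_i}\tc^{C_{ii}}$. Matching vertices with generators gives \eqref{eq:node-dictionary}, including $C_{FF}=2|F|$. This agrees with the $\tc$-degree of the corresponding generator, consistent with \eqref{eq:KRSS-diagonal}.

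I expect Step 2 to be the main obstacle. The bookkeeping of normalizations (the tilde convention, the half-integer exponents, and the prefactor $q^{\delta}t^{\delta}$) must come out exactly so that the Catalan involution lands on $s(F)-\delta$ rather than on its negative. If the direct route fails, I would instead derive the $n=1$ formula from \eqref{eq:quad-N} combined with GGS self-symmetry, and compare.
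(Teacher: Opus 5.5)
Your proposal is correct, but it reaches the degrees by a different route from the paper's.

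\textbf{The paper's route.} It uses the two-variable slice \eqref{eq:quad-N} at $n=1$, namely $N_{a,b;1}(q,t)=q^{\delta}\Bot\tcP^{T(a,b)}_{S^1}(q^{1/2},t^{1/2},q^{-1/2})$. This substitution sends both $\qtil$ and $\tc$ to powers of $q$, so by itself it cannot separate three gradings. The paper therefore imports the GGS fact that $\deg_\tr=\deg_\tc$ in rank one. A generator of degrees $(A,B,B)$ then goes to $q^{\delta+(A-B)/2}t^{B/2}$, and comparison with \eqref{eq:rank-one-filter-sum} attaches each monomial to its filter directly.

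\textbf{Your route.} You use the full formula \eqref{eq:master-knot} at $n=1$, which is an invertible monomial change of variables. You reconstruct $H_{R,1}(q,t,u)=\sum_F (qt)^{\ell(F)}u^{|F|}$ from the rank-one degeneracy (d) of Conjecture~\ref{conj:three-var}.
\begin{itemize}
\item \emph{Gain.} The coincidence of the two homological gradings becomes an output rather than an outside input. Indeed, under \eqref{eq:master-knot}, property (d) is equivalent to every bottom-row monomial having equal $\tr$- and $\tc$-degrees. So you use only what is literally contained in the hypothesis.
\item \emph{Cost.} You substitute into the unsymmetrized interpolation $H_{R,1}(q,1,u)=N_{a,b;1}(q,u)$, whereas \eqref{eq:quad-N} already has the functional equation (equivalently, GGS self-symmetry) built in. You therefore need the $q,t$-symmetry of the rational Catalan polynomial from Remark~\ref{rmk:catalan} to play that role, and that symmetry is a nontrivial theorem.
\end{itemize}

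\textbf{Two small corrections.}
\begin{itemize}
\item The naive substitution does not produce wrong exponents. It already yields the correct polynomial, $\sum_F \qtil^{2(\delta-s(F))}(\tr\tc)^{2(\delta-\ell(F))}$. The involution $(\ell,|F|)\mapsto(\delta-|F|,\delta-\ell)$ is needed only to reattach each monomial to the filter the proposition assigns to it.
\item In Step~3 the appeal to purity is misplaced. The purity and evenness clause of Conjecture~\ref{conj:colorHOMFLYconj} concerns the geometric side. $\tcP$ is a Poincar\'e polynomial, so its coefficients are dimensions, and equality of polynomials already determines the multiset of generator degrees.
\end{itemize}
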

\begin{proof}
Equation \eqref{eq:rank-one-filter-sum} is immediate from the definition of $N_{a,b;1}$. In rank one the two GGS homological gradings coincide. Substitution into \eqref{eq:quad-N} then sends a generator of degrees $(A,B,B)$ to $q^{\delta+(A-B)/2}t^{B/2}$. Comparing with \eqref{eq:rank-one-filter-sum} gives \eqref{eq:rank-one-degrees}. Formula \eqref{eq:node-dictionary} now follows from \eqref{eq:KRSS-diagonal}.
\end{proof}

For $\langle3,4\rangle$, the gap poset is $G=\{1,2,5\}$, with $1,2\prec5$, and the dictionary is displayed by
\[
\begin{array}{c|ccccc}
F&\varnothing&\{5\}&\{1,5\}&\{2,5\}&G\\ \hline
|F|&0&1&2&2&3\\
\ell(F)&0&1&1&2&3\\
s(F)&0&2&3&4&6
\end{array}
\]
It gives the bottom row
\begin{equation}\label{eq:B341}
  \Bot\tcP^{T(3,4)}_{\square}
  =\qtil^{-6}+\qtil^{-2}\tr^2\tc^2+\tr^4\tc^4
   +\qtil^2\tr^4\tc^4+\qtil^6\tr^6\tc^6,
\end{equation}
in agreement with \cite[\S4.3]{GGS}. For $T(3,5)$, the two degree vectors $(q_F)_F$ and $(t_F)_F$ are
\[
  (-8,-4,-2,0,2,4,8),
  \qquad
  (0,2,4,4,6,6,8),
\]
and for $T(3,7)$ they are
\begin{equation}\label{eq:37-node-data}
  (-12,-8,-6,-4,-2,0,0,2,4,6,8,12),
  \qquad
  (0,2,4,4,6,6,8,8,8,10,10,12).
\end{equation}
These agree entry by entry with the extremal quiver data in \cite[\S5.5]{KRSS}. For $T(3,4)$, the $t$-vector printed in \cite[Eq.~(5.40)]{KRSS} is inconsistent with the diagonal of the adjacent matrix; the diagonal $(0,2,4,4,6)$ agrees with \eqref{eq:node-dictionary} and reproduces the colored polynomial.

The rank-one data determine the vertices, their two linear gradings, and the diagonal of $C$, but not its off-diagonal entries. This is substantive: for $(a,b)=(3,5)$ and $n=2$, a finite search finds quadratic forms that refine the fibers of the generalized multinomial in $N_{a,b;n}$, but none is compatible with GGS self-symmetry in all ranks tested. Hence the full matrix cannot be recovered from the two-variable polynomial $N_{a,b;n}$ alone.

\begin{remark}[The $(3,4)$ master polynomial and the KRSS matrix]\label{rmk:H-34-KRSS}
The explicit proposal for the full master polynomial of $R_{3,4}=\C[\![T^3,T^4]\!]$ contains precisely the quadratic information missing from the two-variable slice $N_{3,4;n}$. Write
\[
  \mathbf a=(a_0,a_1,a_2,a_3,a_4)
\]
for the atom multiplicities indexed, in this order, by
\[
  \varnothing,\quad \{5\},\quad \{1,5\},\quad \{2,5\},\quad G=\{1,2,5\},
\]
so that $a_0+\cdots+a_4=n$. In the variable convention of the present paper, the proposal is
\[
  H_{R_{3,4},n}(q,t,u)
  =
  \sum_{\mathbf a}
  q^{\widehat Q(\mathbf a)}
  t^{T(\mathbf a)}
  u^{U(\mathbf a)}
  \qbinom{n}{\mathbf a}_q,
\]
where
\begin{align*}
  U(\mathbf a)&=a_1+2a_2+2a_3+3a_4,\\
  T(\mathbf a)&=a_1+a_2+2a_3+3a_4,
\end{align*}
and
\begin{align*}
  \widehat Q(\mathbf a)
  ={}&
  a_1^2+a_1a_2+2a_1a_3+2a_1a_4+a_2^2
  +2a_2a_3+3a_2a_4\\
  &\qquad
  +2a_3^2+4a_3a_4+3a_4^2.
\end{align*}

To compare this formula with the KRSS presentation, order the KRSS vertices as
\[
  \varnothing,\quad \{5\},\quad \{1,5\},\quad \{2,5\},\quad G
\]
and put
\[
  \mathbf d=(a_4,a_3,a_2,a_1,a_0).
\]
The reversal is the permutation of the rank-one generators induced by GGS self-symmetry. Using
\[
  \qbinom{n}{\mathbf d}_{q^{-1}}
  =
  q^{-\sum_{i<j}d_id_j}
  \qbinom{n}{\mathbf d}_q,
\]
the quadratic exponent produced by \eqref{eq:master-knot} and \eqref{eq:KRSS-bottom} is
\[
  3n^2-\frac12\mathbf d^{\mathsf T}C\mathbf d-\sum_{i<j}d_id_j.
\]
Comparison with $\widehat Q(\mathbf a)$ therefore gives
\[
  C=
  \begin{pmatrix}
    0&1&2&3&5\\
    1&2&3&3&5\\
    2&3&4&4&5\\
    3&3&4&4&5\\
    5&5&5&5&6
  \end{pmatrix},
\]
which is the matrix $C^{T(3,4)}$ of \cite[Eq.~(5.41)]{KRSS}. The linear exponents $T(\mathbf a)$ and $U(\mathbf a)$ likewise agree with the $\tr$-degree and the $\qtil$-degree under the substitution \eqref{eq:master-knot}. Thus the explicit $(3,4)$ proposal is termwise identical to the KRSS formula for this representative of the quiver matrix.

This does not contradict the nonuniqueness of knots--quivers presentations: the master polynomial determines the displayed matrix only after the atom coordinates and their rank-one identification with the KRSS vertices have been fixed. It nevertheless supplies all off-diagonal entries that are invisible in $N_{3,4;n}$.
\end{remark}

More generally, the master framework predicts that the $R^n$ Quot slice supplies this missing quadratic data. Combining the first line of \eqref{eq:spec-quot-R} with the KRSS formula gives
\begin{equation}\label{eq:quot-quiver}
  \mathcal N^R_{R^n}(q,t)
  =\sum_{\sum_Fd_F=n}
    t^{n\delta+\frac12\sum_Fq_Fd_F}
    q^{\frac12\sum_{F,F'}C_{FF'}d_Fd_{F'}}
    \qbinom{n}{\mathbf d}_{q}.
\end{equation}
For a torus knot, \eqref{eq:node-dictionary} simplifies the exponent of $t$ to $\sum_Fs(F)d_F$. Thus the complete matrix $C$, including its off-diagonal entries, would be encoded by the motives of $\Quot^R_m(R^n)$.

\begin{remark}[Independent checks]\label{rmk:quot-quiver-evidence}
The left-hand side of \eqref{eq:quot-quiver} was computed by Gr\"obner stratification and compared with the KRSS matrices through $t^4$ for $T(3,4)$ at $n=2,3$ and $T(3,5)$ at $n=2$, and through $t^5$ for $T(3,7)$ at $n=2$. Rank three for $T(3,4)$ is the first test for which $2n\delta\neq n^2\delta$. The corresponding numbers of possible ordered atoms are $5^3=125$ and $12^2=144$ for $T(3,4)$ at $n=3$ and $T(3,7)$ at $n=2$, respectively. Every stratum encountered within these ranges was an affine space over $\Z$, so each verified coefficient agrees simultaneously as a point count and as a motivic identity. At $n=1$, the complete rank-one comparison was made for $T(2,3)$, $T(2,5)$, $T(2,7)$, $T(3,4)$, $T(3,5)$, $T(3,7)$, and $T(4,5)$.

For $T(3,4)$ at $n=2$, \eqref{eq:KRSS-bottom} gives $25$ bottom-row generators and agrees in all three gradings with the coefficient of $\alpha^{12}$ in the $121$-generator colored polynomial of \cite[\S4.3]{GGS}. This checks the KRSS presentation directly against GGS and agrees with every independently computed geometric coefficient.
\end{remark}

For the family $T(2,b)$ with $b$ odd, the gap poset is a chain and the complete extremal matrix has the particularly simple form
\begin{equation}\label{eq:C-2b}
  C^{T(2,b)}_{ij}=
  \begin{cases}
    2i,&i=j,\\
    2\max(i,j)-1,&i\neq j,
  \end{cases}
\qquad 0\leq i,j\leq\delta=\frac{b-1}{2}.
\end{equation}
It reproduces \eqref{eq:quad-N} for $b=3,5,7,9,11$ and $n\leq4$, and for $b=3,5,7$ it is the unique symmetric integral matrix in the tested range with the prescribed diagonal. Since $s(F)=2|F|$ for a chain, the perverse grading is a function of the other two in this family; this is the degenerate case in which the complete matrix is easiest to recover.

\begin{problem}\label{prob:C-geometry}
Construct a distinguished extremal matrix $C$ directly from Quot geometry. The diagonal $C_{FF}=2|F|$ and the linear gradings are already visible in the Bia\l ynicki--Birula geometry of $\Quot^R_m(\tl R^n)$. Formula \eqref{eq:quot-quiver} predicts that the off-diagonal entries are encoded by $\Quot^R_m(R^n)$, but it does not explain them geometrically. One expects them to arise from the geometry of $\Fl_G(\mathbf n;n)$ or from a filtration on $E_R(\tl R^n;m)$. Since equivalent quivers need not have the same matrix \cite{KRSS,PanfilStosicSulkowski}, such a construction should select a representative rather than prove uniqueness.
\end{problem}

\subsection{The perverse refinement}\label{sss:perv}

The three unrefined slices lie on the two subtori $\tr=1$ and $\qtil\tc^n=1$. We now describe the grading transverse to the latter.

Choose a family of curves $\mathcal C\to B$ in a smooth surface whose central fiber is the germ under study, such that the relevant relative Hilbert or Quot scheme is proper with smooth total space\footnote{See Proposition~\ref{prop:hilb-smooth}}. Let $\mathsf P_\bullet$ be the resulting perverse Leray filtration. For a mixed Hodge structure $V$, write $W(V;q)\in \Z[q^{1/2}]$ for its virtual weight polynomial, normalized by $W(\Q(-1);q)=q$. For $X=\Quot$ or $\Hilb$, let $X_m=\Quot^R_m(\tl R^n)$ and $\Hilb_m(R_f^{(n)})$, respectively, and define
\begin{equation}\label{eq:perv-series}
  \mathrm P\!\calZ_X(q,t,u)
  =\sum_{m\geq0}\sum_{i\in\Z}
    W\bigl(\gr_i^{\mathsf P}H_c^*(X_m);q\bigr)t^mu^i.
\end{equation}
This definition is virtual: it includes all cohomological degrees with their usual signs and does not assume purity or parity vanishing. Since the perverse filtration is exhaustive, setting $u=1$ gives the ordinary virtual weight series.

Normalize by
\begin{align}
  \mathcal N^{\mathrm p}_{\tl R^n}(q,t,u)
    &\coloneqq(t;q)_n^{b(R)}
      \mathrm P\!\calZ_{\Quot}(q,t,u),\label{eq:perv-normalized-quot}\\
  \cH^{\mathrm p}_{f;n}(q,t,u)
    &\coloneqq(t;qt)_n^{b(R)}
      \mathrm P\!\calZ_{\Hilb}(q,t,u).
      \label{eq:perv-normalized}
\end{align}
For $n=1$ the filtration is the one used in ORS and is canonical by the support theorem of \cite{migliorinishende2013}. For $n\geq2$, independence of the chosen family is part of the geometric prediction.

For an algebraic knot, Conjecture~\ref{conj:colorHOMFLYconj} is equivalently
\begin{align}
  \mathcal N^{\mathrm p}_{\tl R^n}(q,t,u)
    &=t^{n\delta}\Bot\tcP^K_{S^n}
      \bigl(u^{-1/2}q^{-n/2},t^{-1/2},q^{1/2}\bigr),
      \label{eq:perv-quot}\\
  \cH^{\mathrm p}_{f;n}(q,t,u)
    &=t^{n\delta}\Bot\tcP^K_{S^n}
      \bigl(u^{-1/2}(tq)^{-n/2},t^{-1/2},(tq)^{1/2}\bigr).
      \label{eq:perv-hilb}
\end{align}
The second formula is obtained from the first by $q\mapsto tq$; geometrically, the expected identity is therefore
\begin{equation}\label{eq:perv-HQ}
  \cH^{\mathrm p}_{f;n}(q,t,u)
  =\mathcal N^{\mathrm p}_{\tl R^n}(tq,t,u),
\end{equation}
which makes sense for an arbitrary reduced germ, including a multibranch germ.

Inverting the substitution in \eqref{eq:perv-hilb} gives
\begin{equation}\label{eq:perv-dictionary}
  t=\tr^{-2},
  \qquad
  q=(\tr\tc)^2,
  \qquad
  u=(\qtil\tc^n)^{-2}.
\end{equation}
Thus $u$ is precisely the coordinate transverse to \eqref{eq:subtorus}. Unlike any one of the unrefined formulas \eqref{eq:spec-quot-R}--\eqref{eq:spec-hilb}, either \eqref{eq:perv-quot} or \eqref{eq:perv-hilb} determines the full trigraded bottom row.

The following specializations are useful when testing the formula:
\begin{enumerate}
\item At $u=1$, equations \eqref{eq:perv-quot} and \eqref{eq:perv-hilb} reduce to \eqref{eq:spec-quot-Rt} and \eqref{eq:spec-hilb} at the level of virtual weight polynomials.

\item At $t=1$, the Hilbert-side formula and GGS self-symmetry give
  \begin{equation}\label{eq:perv-at-t1}
    \cH^{\mathrm p}_{f;n}(q,1,u)
    =u^{-n\delta}N_{R^n}(q,u).
  \end{equation}
Thus, up to the shift $n\delta$, the perverse grading on the thickened-Hilbert side becomes the colength grading on $\Quot^R(R^n)$.

\item At $n=1$, the generator indexed by $F\in\mathcal J(G)$ has perverse index $s(F)-\delta$ and cohomological degree $2|F|$, in agreement with the ORS description through the perverse filtration on the compactified Jacobian \cite[Proposition~4]{OblomkovRasmussenShende12}.
\end{enumerate}

\begin{remark}[Computational evidence]\label{rmk:perv-evidence}
For each available torus-knot example, the right-hand sides of \eqref{eq:perv-quot}--\eqref{eq:perv-hilb} have integral exponents, symmetric $u$-degree in $[-n\delta,n\delta]$, the correct $u=1$ specialization, the substitution symmetry $q\mapsto tq$, and the specialization \eqref{eq:perv-at-t1}. These checks were made for $T(3,4)$ at $n=1,2,3$, for $T(3,5)$ and $T(3,7)$ at $n=1,2$, for $T(2,3)$ and $T(2,5)$ at $n=2,3$, and for $T(2,7)$, $T(2,9)$, and $T(2,11)$ at $n=2$. They test compatibility with all previously computed slices, but not the existence of the predicted geometric filtration.

The exponents in \eqref{eq:perv-hilb} are strongly constrained. The rank-one ORS grading fixes the exponent of $u$ in $\qtil$ up to overall normalization; compatibility with \eqref{eq:perv-at-t1} rules out the alternative placements tested in every example with $n\geq2$. What remains undetermined by rank one alone is the scale of the perverse index, because rank one cannot distinguish $u$ from $u^n$.
\end{remark}

Because \eqref{eq:perv-series} is virtual, its specialization at $u=1$ is formally well-defined without any purity assumption. Conjecture~\ref{conj:colorHOMFLYconj} predicts more: the underlying cohomology is pure and concentrated in even degrees, and the normalized virtual series has integral weight degrees and the polynomiality required by the master formula. Purity identifies the unnormalized virtual weight series with an ordinary Poincar\'e series, but it does not force positivity after normalization. A compatible affine paving would further explain the motivic polynomiality used in \eqref{eq:spec-hilb} and \eqref{eq:HQ}.

Recall the whole construction relies on a family $\mathcal{C}\to B$ such that the relative Hilbert or Quot scheme has smooth total space. This requirement is not vacuous: on the Hilbert side, smoothness of the total space can always be achieved locally through any prescribed finite colength by adding sufficiently many deformation parameters.

\begin{proposition}\label{prop:hilb-smooth}
Let $g\in\k[\![x,y]\!]$ be nonzero and fix $M\geq1$. Put
\[
  F=g+\sum_{i+j<M}s_{ij}x^iy^j,
  \qquad
  B=\A^{\binom{M+1}{2}},
  \qquad
  \mathcal C=V(F)\subseteq\A^2\times B.
\]
Then $\Hilb^{m}(\mathcal C/B)$ is smooth of dimension $m+\dim B$ for every $m\leq M$.
\end{proposition}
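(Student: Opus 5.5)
The plan is to realize $\Hilb^m(\mathcal C/B)$ as the zero locus of a section of a vector bundle on a smooth ambient space, and then show the section is transverse using only the $B$-directions. Concretely, let $H_m$ denote the Hilbert scheme of length-$m$ subschemes of the germ $\Spec \k[\![x,y]\!]$, i.e.\ those supported in the formal neighborhood of the origin. Equivalently, $H_m$ is the open part of $\Hilb^m(\A^2)$ relevant to the germ. $H_m$ is smooth of dimension $2m$ by Fogarty's theorem, since it is open in the smooth $2m$-dimensional Hilbert scheme of the plane. On $H_m\times B$ we have the pulled-back tautological bundle $\mathcal O^{[m]}$, of rank $m$, whose fiber at $(Z,s)$ is $\mathcal O_Z$. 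The function $F$ defines a section $\sigma(Z,s)=F_s|_Z\in \mathcal O_Z$, and by definition $\Hilb^m(\mathcal C/B)=\{(Z,s): F_s\in I_Z\}=\sigma^{-1}(0)$, scheme-theoretically.

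The key step is to show that $d\sigma$ is surjective onto the fiber $\mathcal O_Z$ at every zero $(Z,s)$. Then the zero locus is smooth of codimension $m$, i.e.\ of dimension $2m+\dim B-m=m+\dim B$, as claimed. Because $F$ is affine-linear in the parameters $s_{ij}$, the derivative of $\sigma$ in the direction $\partial/\partial s_{ij}$ is simply the image of the monomial $x^iy^j$ in $\mathcal O_Z$. It therefore suffices to prove that the monomials $x^iy^j$ with $i+j<M$ span $\mathcal O_Z$ for every $Z$ of length $m\le M$.

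This spanning statement is the main point, and I would prove it by a degree-filtration argument. Let $E_d\subseteq \mathcal O_Z$ be the image of the polynomials of degree $\le d$. These form an increasing chain with $E_0=\k\cdot 1\neq 0$. If $E_d=E_{d+1}$, then multiplying by $x$ and by $y$ maps $E_d$ into $E_{d+1}=E_d$, so $E_d$ is an ideal of $\mathcal O_Z$ containing $1$, and hence $E_d=\mathcal O_Z$. Thus the dimension strictly increases until the chain stabilizes at $\mathcal O_Z$. Consequently $E_{m-1}=\mathcal O_Z$, and since $m-1<M$ the monomials of degree $<M$ suffice. For $Z$ supported at the origin one can see this even more directly, since $\mathfrak m^m\subseteq I_Z$.

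The step I expect to need the most care is foundational rather than computational, because $g$ is a power series rather than a polynomial. I would handle this by noting that any $Z$ of length $m$ supported at (or near) the origin satisfies $I_Z\supseteq \mathfrak m^{m}$. Hence $F_s|_Z$ depends only on $g \bmod \mathfrak m^{m}$, and we may replace $g$ by its truncation, a polynomial, without changing $\Hilb^m(\mathcal C/B)$ over the germ. The transversality argument then runs verbatim on $\Hilb^m(\A^2)\times B$, restricted to the open locus of subschemes supported where the truncation agrees with $g$ to order $m$. In particular, smoothness and the dimension count $m+\dim B$ hold at every point, and they hold for all $m\le M$ simultaneously.
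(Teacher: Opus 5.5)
Your core argument is the paper's. The paper cites Shende's tangent sequence for relative Hilbert schemes of planar families. That sequence is exactly your description of $\Hilb^m(\mathcal C/B)$ as the zero locus of the section of $\O^{[m]}$ cut out by $F$, which is smooth of dimension $m+\dim B$ wherever $T_sB\to\O_Z$ is onto. The paper then gets surjectivity from $\mathfrak m^m\subseteq I$ for a colength-$m$ ideal of $\k[\![x,y]\!]$.

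Your filtration lemma is a mild but genuine strengthening. It shows $\k[x,y]_{\le m-1}\to\O_Z$ is onto for every length-$m$ subscheme $Z\subseteq\A^2$, not only for punctual ones. So for polynomial $g$ you get smoothness at every point of the relative Hilbert scheme. The paper's argument only checks points over the punctual locus and gets a neighborhood from openness of the smooth locus.

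Two claims in your setup are wrong as written, although the proof does not need them.

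First, the length-$m$ subschemes supported at the origin form the punctual Hilbert scheme. It is closed in $\Hilb^m(\A^2)$, of dimension $m-1$, not an open smooth $2m$-dimensional piece. Taken literally as the ambient space, it breaks the count: for $m=1$ it is a point, and the zero locus is $\{s_{00}=-g(0)\}$, of dimension $\dim B-1$. The right ambient space is $\Hilb^m(\A^2)$ itself when $g$ is a polynomial. For a genuine power series $g$, use the formal completion of $\Hilb^m(\A^2)$ along the punctual locus. It is formally smooth with tangent space $\Hom(I,\O/I)$ of dimension $2m$, and $F_s$ acts on $\O_Z$ there because $x,y$ are topologically nilpotent.

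Second, $I_Z\supseteq\mathfrak m^m$ holds only for $Z$ supported at the origin, not ``near'' it. Truncating $g$ also preserves only the $\k$-points of the relative Hilbert scheme, not its scheme structure. For example, the first-order deformation $(x-\epsilon,y)$ over $\k[\epsilon]/(\epsilon^2)$ does not contain $\mathfrak m$. Along it, $g$ and $g(0)$ impose conditions differing by $\epsilon\,\partial_x g(0)$. The ``open locus where the truncation agrees with $g$ to order $m$'' is also not meaningful, since $g$ is not a function away from the origin.

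The clean fix is to drop the truncation entirely. The $s_{ij}$-derivatives of your section are the classes of $x^iy^j$ and do not involve $g$. So your transversality argument runs verbatim on the formal completion for arbitrary $g$.
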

\begin{proof}
By the tangent sequence of \cite[Eq.~(6)]{shende2012severi}, smoothness at $(s,I)$ follows if the map $T_sB\to\O/I$ is surjective. Every colength-$m$ ideal with $m\leq M$ contains $\mathfrak m^m\supseteq\mathfrak m^M$, while $T_sB$ is spanned by the monomials of degree $<M$. Hence $T_sB\to\O/\mathfrak m^M\to\O/I$ is surjective.
\end{proof}
Thus, for every fixed colength range, the local smoothness required for the perverse construction can be arranged. Properness requires a global compactification, and independence of the chosen family remains part of Conjecture~\ref{conj:colorHOMFLYconj}.

For a smooth germ, \eqref{eq:perv-hilb} predicts $\cH^{\mathrm p}_{X;n}=1$: the entire normalized contribution of the punctual Hilbert schemes of the thickened line (a.k.a.~ribbon) $X^n=0$ should lie in one perverse degree. The following calculation verifies the first nontrivial case.

\begin{proposition}[The ribbon at $n=2$]\label{prop:ribbon-n2}
Let $L=\mathcal O_{\mathbb P^1}(1)$, let $S=\operatorname{Tot}(L)$ with tautological
fiber coordinate $x$, and consider the rank-two spectral-curve family
\[
 \mathcal C=\{x^2+a_1x+a_2=0\}\subset S\times B,
 \qquad
 B=H^0(L)\oplus H^0(L^2)\simeq \mathbb A^5.
\]
Its fiber over the origin is the split ribbon
$R=\{x^2=0\}$ with reduced curve $R_{\mathrm{red}}=\mathbb P^1$.
For $0\le m\le 3$, the relative Hilbert scheme
\[
 X_m:=\operatorname{Hilb}^m(\mathcal C/B)
\]
is smooth of dimension $m+5$, and the structure morphism
$\pi_m:X_m\to B$ is projective.  With the perverse indexing used in
\eqref{eq:perv-series}, one has
\[
 \sum_{m\ge0}t^m\sum_{i,j}
 \dim \operatorname{gr}^P_i H^{2j}(\operatorname{Hilb}^m(R),\mathbb Q)
 q^j u^i
 =
 \frac{1}{(1-t/u)(1-tqu)}
 \frac{1}{(1-qt^2/u)(1-q^2t^2u)}+O(t^4).
\]
\end{proposition}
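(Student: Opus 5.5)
The plan is to split the statement into three parts: smoothness and projectivity of $X_m$, reduction of the fiber cohomology to the cohomology of the total space, and the decomposition theorem for $\pi_m$ computed stratum by stratum over $B$, read off at the central fiber.

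\textbf{Smoothness and projectivity.} I would first prove smoothness exactly as in Proposition~\ref{prop:hilb-smooth}, using the tangent sequence of \cite[Eq.~(6)]{shende2012severi}. At a point $(s,Z)$ with $Z\subseteq\mathcal C_s$ of length $m\le 3$, it suffices that the map
\[
T_sB=H^0(L)\oplus H^0(L^2)\to H^0(\mathcal O_Z\otimes L^2),\qquad (a_1,a_2)\mapsto a_1x+a_2,
\]
is surjective. I will split into cases according to the length of the scheme-theoretic image $p(Z)\subseteq\mathbb P^1$, where $p:S\to\mathbb P^1$ is the projection.
\begin{itemize}
\item If $p|_Z$ is a closed embedding, then $a_2\in H^0(\mathcal O(2))$ alone interpolates any length $\le 3$ subscheme of $\mathbb P^1$.
\item If $Z$ has length $\ge 2$ in some fiber direction, I will use that $Z$ lies on the monic double cover. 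Then $\mathcal O_Z$ is a quotient of $\mathcal O_{p(Z)}\oplus x\,\mathcal O_{p(Z)}$ with $\deg p(Z)\le 2$, which is interpolated by $a_2+a_1x$ since $\deg p(Z)\le 2\le\dim H^0(\mathcal O(1))$.
\end{itemize}
This gives smoothness of dimension $m+5$.

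For projectivity, I will compactify $S\subset\mathbb P(\mathcal O\oplus L)$. Because the defining equation is monic in $x$, every fiber $\mathcal C_s$ is disjoint from the section at infinity. Hence $\mathcal C\to B$ is a projective family of curves, and its relative Hilbert scheme is projective over $B$.

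\textbf{Reducing to the total space and stratifying $B$.} The scaling action $t\cdot(x,a_1,a_2)=(tx,ta_1,t^2a_2)$ preserves $\mathcal C$, acts on $X_m$ compatibly with a contracting action on $B$, and fixes $0$. By the standard contraction argument (as in Migliorini--Shende), this gives
\[
H^*(X_m)\simeq H^*(\pi_m^{-1}(0))=H^*(\operatorname{Hilb}^m(R)).
\]
Moreover, this isomorphism carries the perverse Leray filtration on $H^*(X_m)$ to the filtration being computed.

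I would then stratify $B$ by the isomorphism type of $\mathcal C_s$:
\begin{itemize}
\item the open stratum of smooth conics, which are rational curves;
\item the discriminant hypersurface $\{a_1^2-4a_2=0\}$ minus $\{0\}$, where the cover is a pair of sections meeting at a node or is tangent, i.e.\ $a_1^2-4a_2$ has a double root; I would refine this by the root structure of the discriminant section;
\item the ribbon locus $\{0\}$ itself, after quotienting by the translation $x\mapsto x+a_1/2$.
\end{itemize}
Over each stratum, the fibers $\operatorname{Hilb}^m(\mathcal C_s)$ for $m\le 3$ are explicit. They are symmetric products of $\mathbb P^1$, or Hilbert schemes of nodal or reducible curves, which are paved using the torus $\Gm\times\Gm$ (scaling $x$ and acting on $\mathbb P^1$). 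This gives their Betti and weight data.

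\textbf{Decomposition theorem and the main obstacle.} The main step is the decomposition theorem for $R\pi_{m*}\Q$. I will write it as a sum of shifted $IC$ complexes supported on stratum closures, and determine the summands by comparing stalk cohomology at each stratum with fiber cohomology, starting from the open stratum and peeling off corrections. Relative hard Lefschetz forces the perverse-degree symmetry. The key difficulty is deciding whether there are new summands supported on the discriminant locus and at the origin, and in which perverse degree they sit.

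To control this, I would first try to establish a support inequality (Ngô/Migliorini--Shende type) from the $\delta$-invariant versus codimension. This bounds which supports can occur. Then I would fix the remaining multiplicities by matching the Euler characteristics and Poincaré polynomials of $\operatorname{Hilb}^m(R)$ computed from the torus fixed points. Once the summands are known, restricting to the stalk at $0$ and summing over $m\le3$ should produce the coefficients of the stated product $\frac{1}{(1-t/u)(1-tqu)(1-qt^2/u)(1-q^2t^2u)}$ through order $t^3$. I expect this stalk bookkeeping at the ribbon point to be the bulk of the computation.
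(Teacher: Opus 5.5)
Your smoothness and projectivity arguments are correct. Your dichotomy works uniformly over all of $B$. If $p|_Z$ is a closed embedding, then $\deg p(Z)=m\le 3$ and $H^0(\mathcal O(2))$ alone suffices. If it is not, then $\deg p(Z)\le m-1\le 2$, and $(a_1,a_2)\mapsto a_2+a_1x$ surjects onto $\mathcal O_{p^{-1}p(Z)}\twoheadrightarrow\mathcal O_Z$. The paper instead argues via $A/J\twoheadrightarrow J$ on the ribbon and handles the other fibers separately. The contraction argument is also fine.

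\textbf{The gap is in the decomposition-theorem step.} That step is the actual content of the proposition, namely the $u$-grading, and you defer it as ``stalk bookkeeping''. Neither tool you propose determines it.
\begin{itemize}
\item A $\delta$-type support inequality is not available at the origin. The Ng\^o and Migliorini--Shende(--Viviani) support theorems concern reduced curves, and the ribbon has no finite $\delta$-invariant. Moreover, for a relative Hilbert scheme the discriminant genuinely is a support. So the real question is not which supports occur, but what the discriminant summands contribute to the stalk at $0$.
\item Matching Poincar\'e polynomials of $\Hilb^m(R)$ is underdetermined. At $m=2$, $H^*(\Hilb^2(C_+\cup C_-))=1+3q+3q^2$, so the discriminant local systems have rank $2$ in degrees $2$ and $4$. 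Yet the ribbon has room only for $q+q^2$ beyond $H^*(\mathbb P^2)$, so half of them must die at the origin, and nothing in your plan says which. At $m=3$, the second class in $H^6$ could a priori be a perverse-degree-$0$ skyscraper or a perverse-degree-$2$ discriminant class.
\end{itemize}

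\textbf{What is missing is the geometry at the cone point.} After removing the trace, the singular locus is the cone $\Delta=\{c=\ell^2\}\simeq\mathbb A^2/\pm1$ of quadrics with a double root. There is no tangency stratum, and $\{a_1^2-4a_2=0\}$ is the ribbon locus, not the discriminant. Over $\Delta^\circ$, swapping the branches $C_\pm=\{x=\pm\ell\}$ acts by the sign $\varepsilon$ on the node class.

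The paper proceeds in three steps:
\begin{itemize}
\item It uses Migliorini--Shende--Viviani over $\Delta^\circ$ to obtain the local systems $\varepsilon\otimes H^{2k}(\Hilb^{m-1}(C_+\sqcup C_-))(-1)$ in perverse degree $2k+1-m$.
\item It uses that the link of the vertex is $\mathbb RP^3$. Hence $IC_\Delta(\mathcal L)_0=\mathcal L^{\Z/2}$, placed in the generic degree; in particular $IC_\Delta(\varepsilon)_0=0$. This gives $D_m$.
\item It observes $A_m+D_m=P_q(\Hilb^m R)$, which excludes skyscraper summands at $0$.
\end{itemize}

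Alternatively, closer to your outline, you only need the vertex-stalk fact. Once you know that stalks of $\Delta$-supported summands at $0$ sit only in the generic degree, each class of $H^{2k}(\Hilb^mR)$ beyond the single full-support class is one of two types: a discriminant class of perverse degree $2k-m-1$, or a skyscraper of perverse degree $2k-m-3$. Two facts then exclude skyscrapers for $m\le 3$: the perverse amplitude $[-m,m]$ (all fibers have dimension $m$) and relative hard Lefschetz. For $m=3$, the extra class in $H^2$ is forced to be discriminant-type. Its hard Lefschetz partner then fills the extra class in $H^6$. This recovers the stated series.

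Either way, you must add the local-system computation at the cone point, namely the sign twist together with the $\mathbb RP^3$ link.
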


\begin{proof}
We separate the support-theoretic input from the low-degree stalk computation.

\smallskip
\noindent\emph{1. The relevant Hitchin support theorem.}
The base $B$ is exactly the $\mathrm{GL}_2$ Hitchin base for the
$L$-twisted Hitchin system on $\mathbb P^1$:
\[
 \mathcal A_2(L)=H^0(L)\oplus H^0(L^2).
\]
Choose Higgs-bundle degree $e=-1$ (in particular, $e$ is coprime to $2$); for this degree the structure sheaf of every spectral curve gives the canonical stable section of the Hitchin map.  Since
$\deg L=1>2g(\mathbb P^1)-2=-2$, the support theorem of
Chaudouard--Laumon applies.  It states that every simple perverse summand
of the direct image of the stable $\mathrm{GL}_2$ Hitchin map has full support
on $B$; the theorem includes the global nilpotent cone and therefore the
nonreduced spectral curve $x^2=0$ \cite[Th.~9.1]{CL16}; see also the correction
in \cite{CL17}.

In the present numerical situation the spectral curves have arithmetic genus zero:
\[
 \chi(\mathcal O_{C_b})
 =2(1-g)-\frac{2\cdot1}{2}\deg L=1.
\]
Hence the generic compactified Jacobian is a point, and the full-support theorem
reduces the primitive Hitchin contribution to the constant intersection complex
of the base.  On the reduced part of the spectral-curve family, the
Migliorini--Shende--Viviani support formula expresses the relative-Hilbert
complex in terms of the compactified-Jacobian complexes of the connected partial
normalizations.  For the trivial partition, the compactified-Jacobian complex is
precisely this Hitchin complex; Chaudouard--Laumon therefore prevents that
primitive term from acquiring a new support at the ribbon point.  This is the
support input used below.

It is important here that the twist is $L=\mathcal O(1)$.  The canonical
Hitchin system behaves differently: de Cataldo--Heinloth--Migliorini find
proper supports on the reducible spectral loci \cite{dCHM21}.

\smallskip
\noindent\emph{2. Removing the trace and versality of the ribbon deformation.}
Completing the square gives
\[
 z=x+\frac{a_1}{2},\qquad c=a_2-\frac{a_1^2}{4},
 \qquad z^2+c=0.
\]
Thus, after the algebraic change of coordinates
$(a_1,a_2)\mapsto(a_1,c)$,
\[
 B\simeq V_1\times V_2,
 \qquad V_1=H^0(L),\quad V_2=H^0(L^2)\simeq\mathbb A^3,
\]
and the family, as well as every relative Hilbert scheme, is the product of
$V_1$ with the trace-free family over $V_2$.  External product with
$\mathbb Q_{V_1}[2]$ does not change perverse degrees, so from now on we work over
\[
 B_0:=V_2=H^0(\mathcal O_{\mathbb P^1}(2)),
 \qquad \mathcal C_0=\{x^2+c=0\}.
\]

The trace-free family is the miniversal deformation of the embedded ribbon.
Indeed, for the hypersurface $R=\{x^2=0\}\subset S$,
\[
 \mathcal T^1_R
 =\operatorname{coker}\bigl(T_S|_R\xrightarrow{d(x^2)}
 \mathcal O_R(2)\bigr)
 \simeq \mathcal O_{\mathbb P^1}(2),
\]
and the Kodaira--Spencer map
\[
 T_0B_0=H^0(\mathcal O(2))\longrightarrow H^0(\mathcal T^1_R)
\]
is the identity.  Moreover $H^1(\mathcal O(2))=0$.

\smallskip
\noindent\emph{3. Smoothness through length three.}
Projectivity is immediate: $\mathcal C\to\mathbb P^1\times B$ is finite, hence
$\mathcal C\to B$ is projective, and so is its relative Hilbert scheme.

For smoothness we use the standard tangent sequence for relative Hilbert schemes
of planar curves \cite[Eq.~(6)]{shende2012severi}.  The only new point is the central
ribbon.  Let $Z\subset R$ have length $m\le3$, put
$A=H^0(\mathcal O_Z)$ and $J=xA$.  Since $x^2=0$, multiplication by $x$ induces
a surjection
\[
 A/J\twoheadrightarrow J.
\]
Writing $r=\dim(A/J)$ and $s=\dim J$, we have $r+s=m$ and $s\le r$; hence
$r\le3$ and $s\le1$.  The differential of the original five-dimensional base is
\[
 H^0(\mathcal O(1))\oplus H^0(\mathcal O(2))\longrightarrow A,
 \qquad (\dot a_1,\dot a_2)\longmapsto \dot a_1x+\dot a_2.
\]
The second summand surjects onto $A/J$, because $\mathcal O(2)$ separates every
length-$r\le3$ subscheme of $\mathbb P^1$.  The first summand surjects onto $J$,
because $s\le1$ and $\mathcal O(1)$ is globally generated.  Therefore the displayed
map is surjective.  The tangent sequence gives the smoothness of $X_m$ along the
central fiber.  Away from the origin the spectral curve is either smooth or has one
ordinary node, and the family supplies a smoothing parameter for that node; the same
tangent criterion gives smoothness there.  Thus $X_m$ is smooth for $m\le3$.

\smallskip
\noindent\emph{4. Cohomology of the central ribbon.}
Let $C=R_{\mathrm{red}}=\mathbb P^1$ and let
$\mathcal N=(x)\simeq\mathcal O_C(-1)$, so that
$\mathcal O_R=\mathcal O_C\oplus\mathcal N$ and $\mathcal N^2=0$.
For an ideal $\mathcal I\subset\mathcal O_R$, define effective divisors $D,E$ by
\[
 (\mathcal I+\mathcal N)/\mathcal N=\mathcal O_C(-D),
 \qquad
 \mathcal I\cap\mathcal N=\mathcal N(-E).
\]
The ideal condition gives $E\le D$.  If $a=\deg D$ and $b=\deg E$, then
$a+b=m$.  After writing $D=E+F$, the remaining datum is a lift
\[
 \phi\in\operatorname{Hom}
 \bigl(\mathcal O_C(-D),\mathcal N/\mathcal N(-E)\bigr),
\]
a vector space of dimension $b$.  Thus the locus with fixed $(a,b)$ is the
total space of a rank-$b$ vector bundle over
\[
 C^{(b)}\times C^{(a-b)}\simeq\mathbb P^b\times\mathbb P^{a-b}.
\]
For $m\le3$ only $b=0,1$ occur.  The $b=0$ locus is the closed subscheme
$\operatorname{Hilb}^m(C)=\mathbb P^m$, and its complement (when nonempty) is
a line bundle over $\mathbb P^1\times\mathbb P^{m-2}$.  The localization long
exact sequence therefore splits into short exact sequences in even degrees; all
terms are pure Tate.  Hence $H^*(\operatorname{Hilb}^mR)$ is pure and even for
$m\le3$, and
\[
 P_q(\operatorname{Hilb}^mR)
 =P_q(\mathbb P^m)+qP_q(\mathbb P^1)P_q(\mathbb P^{m-2})
 \quad (m=2,3).
\]
Explicitly,
\begin{align*}
 P_q(\operatorname{Hilb}^0R)&=1,\\
 P_q(\operatorname{Hilb}^1R)&=1+q,\\
 P_q(\operatorname{Hilb}^2R)&=1+2q+2q^2,\\
 P_q(\operatorname{Hilb}^3R)&=1+2q+3q^2+2q^3.
\end{align*}

\smallskip
\noindent\emph{5. The decomposition over the reduced locus.}
Put
\[
 \Delta=\{c=\ell^2:\ell\in H^0(\mathcal O(1))\}\subset B_0,
 \qquad U=B_0\setminus\Delta,
 \qquad \Delta^\circ=\Delta\setminus\{0\}.
\]
Thus $\Delta$ is the cone of rank-one binary quadrics, hence
$\Delta\simeq\mathbb A^2/\{\pm1\}$.  Over $U$ the spectral curve is a smooth
rational curve, so $\operatorname{Hilb}^m(C_c)=\mathbb P^m$.  The monodromy on
its cohomology is trivial.  Therefore the full-support contribution to the stalk at
the origin is
\[
 A_m(q,u)=\sum_{j=0}^m q^j u^{2j-m}.
\]

Over $\Delta^\circ$, writing $c=\ell^2$ gives
\[
 C_c=C_+\cup C_-,\qquad C_\pm=\{x=\pm\ell\},
\]
two copies of $\mathbb P^1$ meeting transversely in one point.  The double cover
\[
 H^0(\mathcal O(1))\setminus\{0\}\longrightarrow\Delta^\circ,
 \qquad \ell\longmapsto\ell^2,
\]
has sign local system $\varepsilon$.  The support formula of
Migliorini--Shende--Viviani for a one-node reducible curve
\cite[Th.~5.10 and Ex.~5.11]{MSV21} shows that the discriminant contribution is
the node class tensored with the cohomology of
\[
 Y_{m-1}:=\operatorname{Hilb}^{m-1}(C_+\sqcup C_-)
 =\coprod_{a+b=m-1}\mathbb P^a\times\mathbb P^b.
\]
The codimension-one term also contains the orientation line of the
smoothed node.  In the local model $uv=s$, interchange of the two branches
$u\leftrightarrow v$ reverses the vanishing-cycle orientation; consequently this
line is the sign local system $\varepsilon$.  Equivalently, its generator is the
node class $[C_+]-[C_-]$.  Thus the local system in cohomological degree
$2k+2$ is
\[
 \varepsilon\otimes H^{2k}(Y_{m-1},\mathbb Q)(-1),
\]
and it occurs in perverse degree $2k+1-m$.

At the vertex of the cone, $IC_\Delta(\mathbb Q)$ has a one-dimensional stalk,
whereas $IC_\Delta(\varepsilon)$ has zero stalk.  Indeed, the link is
$\mathbb RP^3$, and its rational cohomology with coefficients in the sign local
system vanishes.  It follows that the contribution at the ribbon is
\[
 D_m(q,u)=q\sum_{k=0}^{m-1}
 \dim H^{2k}(Y_{m-1},\mathbb Q)^{-}\,q^k u^{2k+1-m},
\]
where the superscript $-$ denotes the anti-invariant part for interchanging the two
components.

For $m\le3$ the relevant anti-invariant multiplicities are
\[
\begin{array}{c|ccc}
 r & \dim H^0(Y_r)^- & \dim H^2(Y_r)^- & \dim H^4(Y_r)^-\\ \hline
 0&0&0&0\\
 1&1&1&0\\
 2&1&2&1.
\end{array}
\]
Hence
\begin{align*}
 D_0&=D_1=0,\\
 D_2&=qu^{-1}+q^2u,\\
 D_3&=qu^{-2}+2q^2+q^3u^2.
\end{align*}

The decomposition theorem allows, a priori, additional summands supported at
$\{0\}$.  They do not occur: the stalks already supplied by the full-support and
discriminant terms exhaust the cohomology computed in Step~4.  Explicitly,
\begin{align*}
 A_1+D_1&=u^{-1}+qu,\\
 A_2+D_2&=u^{-2}+q+qu^{-1}+q^2u+q^2u^2,\\
 A_3+D_3&=u^{-3}+qu^{-1}+qu^{-2}+q^2u+2q^2+q^3u^2+q^3u^3.
\end{align*}
At $u=1$ these are respectively
$1+q$, $1+2q+2q^2$, and $1+2q+3q^2+2q^3$.
Since the decomposition is semisimple, any nonzero skyscraper summand would add a
nonzero stalk cohomology group, which is impossible.

Collecting the coefficients for $m=0,1,2,3$ gives exactly
\[
 \frac{1}{(1-t/u)(1-tqu)(1-qt^2/u)(1-q^2t^2u)}+O(t^4),
\]
as asserted.
\end{proof}

\begin{remark}\label{rem:ribbon-m4}
The restriction $m\le3$ in the smoothness statement is necessary.  At a point
$p\in\mathbb P^1$ with local coordinate $y$, consider the length-four subscheme of
the ribbon defined by $(x,y^4)$.  For every tangent vector
$\phi\in\operatorname{Hom}((x,y^4),\mathbb C[[x,y]]/(x,y^4))$ one has
$\phi(x^2)=x\phi(x)=0$, while the base variation
$\dot a_1x+\dot a_2$ has image contained in
$\langle1,y,y^2\rangle$.  Thus the differential of the tautological section has
rank at most three instead of four, so the relative Hilbert scheme is singular at
this point.  In particular, the unqualified assertion that $\operatorname{Hilb}^m
(\mathcal C/B)$ is smooth for every $m$ should not be retained.
\end{remark}

\renewcommand{\thesection}{A}
\section{Appendix: Schubert cells on singular affine Grassmannians}\label{sec:append}

\subsection{Basic properties} In this section we review the framework of affine Grassmannians for reduced curve germs laid out in \cite[Appendix A]{huangjiang2023torsionfree}. We will fix a field $\k$. 
\begin{notation}~\begin{enumerate}
    \item Let $G=\GL_n$ be defined over $\k$, with a fixed split maximal torus $\mathbb{T}$. After fixing a presentation of $G$ by invertible $n\times n$-matrices, we can take $\mathbb{T}$ to be the diagonal torus $\diag(t_1,...,t_n)$.  
    \item Let $(R,\mathfrak{m})$ be a reduced complete local ring over $\k$, with residue field $\k$ and ring of total fractions $K$, and let $\tl R\supseteq R$ be its normalization. For simplicity we will assume that $\tl R=\k[\![T]\!]$.\footnote{ There are finitely many topological generators $f_1,...,f_n\in T\k[\![T]\!]$, so that $R=\k[\![f_1,f_2,...,f_n]\!]\subseteq \k[\![T]\!]$. This follows, for example, from Cohen's structure theorem.} For example, the toric knot germs considered in this paper are of this type.    
    \item Let $\textbf{Alg}_\k$ be the category of $\k$-algebras.
    \item Let $S$ be a $\k$-algebra. We write $S \cotimes R$ \resp $S\cotimes K$ as the completion of $S\otimes R$ \resp $S\otimes K$  with respect to the $1\otimes \mathfrak{m}$-adic topology. For example, $S\cotimes \k[\![T]\!]=S[\![T]\!]$ and $S\cotimes \k(\!(T)\!)=S(\!(T)\!)$.
\end{enumerate}
\end{notation}
\begin{definition}[{\cite[Appendix A]{huangjiang2023torsionfree}}]~\begin{enumerate}
\item Let $S$ be a $\k$-algebra. An \textbf{$S$-family of rank $n$ $R$-lattices} (or informally, a \textbf{rank $n$ $R$-lattice over $S$}) is a finitely generated $S\cotimes R$-submodule $\mathcal{L}\subseteq S\cotimes K^n$, such that $\mathcal{L}\otimes (S\cotimes K)=S\cotimes K^n$ and $S\cotimes K^n/\mathcal{L}$ is  flat over $S$. When $S=\k$, we recover the notion of $R$-lattices. 
\item The \textbf{affine Grassmannian} $\Gr_{G,R}$ is the moduli space of rank $n$ $R$-lattices in $K^n$. More precisely, it is a functor over $\textbf{Alg}_\k$ sending $S$ to $S$-families of rank $n$ $R$-lattices. It is known that $\Gr_{G,R}$  is ind-projective. 

When $R=\widetilde{R}$, $\Gr_{G,\tl R}$ is the classical affine Grassmannian, and will just be denoted by $\Gr_{G}$. 
\item  Let $M\subseteq K^n$ be a rank $n$ $R$-lattice, let $\Gr_{G,R}(M)\subseteq \Gr_{G,R}$ be the closed ind-subscheme parametrizing lattices that are contained in $M$. More precisely, $\Gr_{G,R}(M)$ is the functor over $\textbf{Alg}_\k$ sending $S$ to $S$-families of rank $n$ $R$-lattices that are contained in $S\cotimes M$. The cases $M=R^n$ and $\tl R^n$ will be the ones of major interest.
\item Notation as above. There is a splitting of $\Gr_{G,R}(M)$ into connected components $\Gr_{G,R}(M)=\bigsqcup_{i\geq 0}\Gr_{G,R}^i(M)$, where each $\Gr_{G,R}^i(M)$ parametrizes sublattices of $M$ with colength $i$ in $M$.
    \end{enumerate}
\end{definition}
Let $M\subseteq K^n$ be a rank $n$ $R$-lattice. By  \cite[\S A.1.1]{huangjiang2023torsionfree}, the Quot scheme $\Quot_i(M)$ parametrizing length-$i$ quotients of $M$ is canonically identified with $\Gr_{G,R}^i(M)$ (up to reduced structure). 
\subsection{The extension map} The formulation of the  affine Grassmannians allows us to travel between different base rings $R$. More precisely, we have a natural finite map $\pi: \Spec \tl R\rightarrow \Spec R$. Following \cite[\S A.2]{huangjiang2023torsionfree}, there is a set-theoretic map \begin{equation}
\underline{\vec{\pi}^*}:  \Gr_{G,R} \rightarrow \Gr_{G},
\end{equation}
which sends a point of $\Gr_{G,R}$ (which corresponds to a lattice $L$) to its lattice extension $\tl RL$. The map $\underline{\vec{\pi}^*}$ has more structure than merely a set theoretic map. In fact, it is a \textbf{constructible morphism} in the sense of \cite[Notation A.14]{huangjiang2023torsionfree}. Roughly speaking, this means that there is a stratification $\Gr_{G,R}=\bigsqcup X_\alpha$ into locally closed subschemes, such that $\underline{\vec{\pi}^*}|_{X_\alpha}$ is the underlying set theoretic map of a genuine (ind-)scheme theoretic morphism $ X_\alpha\rightarrow \Gr_{G}$. We will call $\underline{\vec{\pi}^*}$ the \textbf{extension map}. 

By definition, the fiber of a $\k$-point $[\tl L]\in 
\Gr_{G}(\k)$ is the extension fiber $E_R(\tl L)$. The constructible structure of $\underline{\vec{\pi}^*}$ endows $E_R(\tl L)$ with the structure of a constructible set. When $\k=\overline{\k}$, we have a decomposition after passing to Iwahoric Schubert cells $\Gr_{G}=\bigsqcup_{\omega\in X_*(\mathbb{T})}X_\omega^\circ $: 
\begin{equation}\label{eq:constructibleforGrR}
\Gr_{G,R}=\bigsqcup_{\omega\in X_*(\mathbb{T})} X_\omega^\circ\times E_R(\tl R^{\oplus n} ).
\end{equation}
This enables us to think of $\Gr_{G,R}$ as a ``constructible bundle'' over $\Gr_{G}$ with fiber $E_R(\tl R^{\oplus n} )$. 
\subsection{Spherical Schubert cells} Recall that in the classical case (i.e., $R=\k[\![T]\!]$), there is a natural decomposition of 
$\Gr_{G}$ into \(G(R)\)-orbits. Let $\mathbb{T}$ be a maximal torus of $G=\GL_n$. The
orbit  $\Gr_{G,\mu}:=G(R)\mu$ can be understood as all lattices that admit a Smith normal form $\diag (t^{a_1},...,t^{a_n})$. These are called (spherical) Schubert cells; cf. \cite[\S2.1]{zhu2016}. Each Schubert cell is finite-dimensional
and \(G(R)\mu=G(R)\mu'\) if and only if
\(\mu=g(\mu')\) for some permutation \(g\in S_n\). Let $X_*(\mathbb{T})^+\subseteq X_*(\mathbb{T})$ be the subset of elements that have non-negative pairing with positive roots. Then in each orbit $S_n\cdot \mu$ there is a unique dominant weight in $X_*(\mathbb{T})^+$ and
the closure of the stratification is determined by the Bruhat order on $X_*(\mathbb{T})^+$. Let \(P_\mu\subset G\) be the parabolic subgroup associated to $\mu$. Then there is a Bia\l ynicki-Birula morphism 
$$\mathrm{b}_\mu:\Gr_{G,\mu}\rightarrow G/P_\mu$$
sending a lattice to its flag of initials, which is an isomorphism when $\mu$ is minuscule.  Recall that there is a natural $\bG_m$-action on $\k[\![T]\!]$ (which is of weight $d$ on $T^d$). This induces a $\mathbb{T}$-action on $\Gr_{G}$. The map $\mathrm{b}_\mu$ admits a canonical section realizing $G/P_{\mu}$ as the $\mathbb{T}$-contraction locus of $\Gr_{G,\mu}$. 

\subsubsection{Generalization to the  singular setting} The story can be partially generalized to $\Gr_{G,R}$. Recall our assumption that $R\subseteq \tl R=\k[\![T]\!]$, so every element of $R$ can be written as a power series in $T$ with non-negative degree. Define a semigroup $$\Gamma_R:=\{\deg f:f\in R\}\subseteq \mathbb{Z}^{\geq 0}.$$
The set $\Delta_R:=\mathbb{Z}\setminus\Gamma_R$ is called the standard set. We then associate a poset $P_R:=(\Z,\preceq_R)$ to 
$R$ with partial order  defined by 
$$ i\preceq_R j \Leftrightarrow j-i\in \Gamma_R.$$
When the context is clear, $\Gamma_R$ and $P_R$ will simply be abbreviated as $\Gamma$ and $P$. 


\begin{example}
    $\Gamma_R=\mathbb{Z}^{\geq 0}$ if and only if $R=\k[\![T]\!]$. For the toric knot germ $R=\k[\![T^a,T^b]\!]$ where $\gcd(a,b)=1$, $\Gamma_R$ is the semigroup $\{xa+yb: x,y\in \mathbb{Z}^{\geq 0}\}$. 
\end{example}

For $d\in \mathbb{Z}$ and an $R$-lattice $L\in \Gr_{G,R}(\k)$, the $d$-the initial term $in_d(L)$  is a $\k$-point of the Grassmanian $\Gr(T^d\k^n)$ classifying $\k$-subspaces of $\k^n$, defined explicitly as $$in_d(L)= \frac{(T^{d+1})\k[\![T]\!]^n+L\cap (T^d)\k[\![T]\!]^n}{(T^{d+1})\k[\![T]\!]^n} \subseteq T^d\k^n.$$ 
This coincides with the definition given in \S\ref{sub:Torus action and initial terms}. It is clear that $in_d$ extends to a constructible morphism from $\Gr_{G,R}$ to $\Gr(T^d\k^n)$ (i.e. $in_d$ is a morphism up to passing to a stratification of $\Gr_{G,R}$).

In the following we form the singular analogue of the Schubert decomposition: \begin{definition}~\label{def:singularschubert}  \begin{enumerate}
 \item \textbf{(Dimension vector)}  Fix a positive integer $n$, let \begin{equation}
    \Lambda(n)\coloneqq\set{(n_d)_{d\in \Z}:0\leq n_d\leq n, \lim_{d\to \infty}n_d=n, \lim_{d\to -\infty} n_d=0}.
  \end{equation}
 An element $\mathbf{n}\in  \Lambda(n)$ is called a dimension vector. 
    \item \textbf{(Schubert cell)} Let $\mathbf{n}\in  \Lambda(n)$ be a dimension vector. We 
define 
the locally closed subsets:
\begin{equation}
\Gr_{G,R,\mathbf{n}}\coloneqq\set{L\in \Gr_{G,R}: \dim \init_d(L)=n_d}. 
\end{equation}
 This induces a decomposition
\[\Gr_{G,R}=\bigsqcup_{\mathbf{n}\in \Lambda(n)}\Gr_{G,R,\mathbf{n}}.\]
\item \textbf{(Poset flag variety)}
To each $\mathbf{n}=(n_d)_d$ we associate a \textit{poset} flag variety:
\begin{equation}
\mathrm{Fl}_P(\mathbf{n};n)\coloneqq\set{(V_d)_{d\in \Z}:V_d\subeq \k^n, \dim V_d=n_d, V_i\subeq V_j \mbox{ if } i\preceq_R j}.
\end{equation}
\item \textbf{(Bia\l ynicki-Birula morphism)}
There is a natural map 
\begin{equation}
   \mathrm{b}_\mathbf{n}:\Gr_{G,R,\mathbf{n}}\to  \mathrm{Fl}_{P}(\mathbf{n};n).
\end{equation}
 sending $L$ to $(T^{-d}\init_d(L))_d$. 

 \item \textbf{(Standard Schubert cell)}  Let $\Lambda(n)^\circ\subseteq \Lambda(n)$ be the finite subset consisting of dimension vectors $\mathbf{n}=(n_d)_{d\in \mathbb{Z}}$ with the property that 
    \begin{enumerate}
        \item $d<0\Rightarrow n_d=0$, 
        \item  $d\in \Gamma_R \Rightarrow n_d=n$. 
    \end{enumerate}
Schubert cells $\Gr_{G,R,\mathbf{n}}$ with $\mathbf{n}\in \Lambda(n)^\circ$ are called standard Schubert cells. It is clear that we have a decomposition \begin{equation}\label{eq:extfiberinschubert}
    E_R({\tl{R}^{\oplus n}})=\bigsqcup_{\mathbf{n}\in \Lambda(n)^\circ}\Gr_{G,R,\mathbf{n}}.
\end{equation}
\end{enumerate}

\end{definition}
\begin{example}
Let's explain why Definition~\ref{def:singularschubert} serves as a generalization of the classical theory. Let $R=\tl R=\k[\![T]\!]$. Then the set $\Lambda(n)$ plays a role similar to that of $X_*(\mathbb{T})^+$. In fact, there is  an embedding $$\mathbf{V}:X_*(\mathbb{T})^+\hookrightarrow \Lambda(n),$$
sending $\mu= (t^{a_1},...,t^{a_n})\in X_*(\mathbb{T})^+$ to an element  $\mathbf{V}(\mu)\in \Lambda(n)$ where $n_d=\#\{a_i: a_i\leq d\}$. This map identifies $X_*(\mathbb{T})^+$ as the subset of $\Lambda(n)$ that consists of non-decreasing dimension vectors, i.e., those with $n_d\leq n_{d+1}$ for any $d\in \mathbb{Z}$. Then we have\begin{equation}
  \Gr_{G,R,\mathbf{n}}=\begin{cases}
  \Gr_{G,\mu},&\mathbf{n}=\mathbf{V}(\mu), \\
  \varnothing,&\mathbf{n}\notin\mathrm{im}\mathbf{V}.
  \end{cases}
\end{equation}
We recover the classical Schubert decomposition. We will leave the reader to check that the Bia\l ynicki-Birula morphism is also the classical one. 
\end{example}
\begin{question}
    Classically, the closure of a Spherical Schubert cell in $\Gr_{G}$ is determined by the Bruhat order. Does there exist a good story analogous to this for $\Gr_{G,R}$? 
\end{question}


\subsection{The case of toric knots} In this section we fix $R=\k[\![T^a,T^b]\!]$, where $\gcd(a,b)=1$. In this section we translate the results established in the main body of the paper to topological properties of $\Gr_{G,R}$. We note that in this case there is a natural $\bG_m$-action on $R$ restricted from its action on $\k[\![T]\!]$, which induces a $\mathbb{T}$-action on $\Gr_{G,R}$. Note that the $\mathbb{T}$-action maps each Schubert cell to itself.  

Note that from (\ref{eq:constructibleforGrR}), $\Gr_{G,R}$ is constructibly a product $\Gr_{G}\times E_R(\tl{R}^{\oplus n})$; and from (\ref{eq:extfiberinschubert}), $E_R(\tl{R}^{\oplus n})$ is a disjoint union of standard Schubert cells. 

The following theorem is just a restatement of Theorems~\ref{thm:bb} and \ref{thm:poset-flag} in the new language: 

\begin{theorem} Let $\mathbf{n}\in \Lambda(n)^\circ$. The following are true: 
\begin{enumerate}
    \item The Bia\l ynicki-Birula morphism $\mathrm{b}_\mathbf{n}:\Gr_{G,R,\mathbf{n}}\to  \mathrm{Fl}_{P}(\mathbf{n};n)$ is an affine bundle that admits a canonical section realizing $ \mathrm{Fl}_{P}(\mathbf{n};n)$ as the $\mathbb{T}$-contraction locus of $\Gr_{G,R,\mathbf{n}}$. 
    \item $\mathrm{Fl}_{P}(\mathbf{n};n)$ is an iterated Grassmannian bundle over a point.
\end{enumerate}
\end{theorem}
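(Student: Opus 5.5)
The plan is to translate the statement into the language of Section~\ref{sec:bb} and then invoke Theorem~\ref{thm:bb-detailed} together with Theorem~\ref{thm:poset-flag-tame}. First I will identify the standard Schubert cell $\Gr_{G,R,\mathbf{n}}$ for $\mathbf{n}\in\Lambda(n)^\circ$ with the Bia\l ynicki-Birula stratum $X_{\mathbf{n}|_G}$ of $E_R(\tl R^n;m)$, where $m=\sum_{i\in G}(n-n_i)$. Since $n_d=0$ for $d<0$ and $n_d=n$ for $d\in\Gamma_R$, the dimension vector is determined by its restriction to the gap set $G$. By Lemma~\ref{lem:BB-is-init} the retraction $\pi$ coincides with $\init$, and by Lemma~\ref{lem:ext-fiber} a lattice $L$ has $\dim\init_d(L)=n_d$ for all $d$ (with $n_0=n$) exactly when $L\in E_R(\tl R^n)$ and $\pi(L)\in\Fl_G(\mathbf{n}|_G;n)$.

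For the same reason, on $\Fl_P(\mathbf{n};n)$ the conditions force $V_d=0$ for $d<0$ and $V_d=\k^n$ for $d\in\Gamma_R$. The inclusion conditions among the remaining $V_d$, $d\in G$, are exactly those of the gap poset $G$. So $\Fl_P(\mathbf{n};n)\simeq\Fl_G(\mathbf{n}|_G;n)$, and under this identification $\mathrm b_\mathbf{n}$ is the map $\pi_\mathbf{n}$ of Theorem~\ref{thm:bb-detailed}.

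Part (b) then follows directly from Theorem~\ref{thm:poset-flag-tame}, since $G$ is tame by Example~\ref{eg:gap-young}. For part (a), Theorem~\ref{thm:bb-detailed}(c) gives that $\mathrm b_\mathbf{n}$ is an affine bundle of rank $B(\mathbf{n};n)$, locally trivial by Lemma~\ref{lem:bb-loc-trivial}.

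For the canonical section, I will send a flag $V=(V_d)$ to the homogeneous lattice $\hhat\bigoplus_d V_dT^d$. This lattice lies in $\Gr_{G,R,\mathbf{n}}$ because $V$ is an $\N_{a,b}$-flag (by \eqref{eq:torus-fixed-R}), and its initial data are $V$ itself, so it is a section of $\mathrm b_\mathbf{n}$. Its image is the $\mathbb T$-fixed locus, and every point $L$ contracts to $\mathrm b_\mathbf{n}(L)$ as $t\to0$, again by Lemma~\ref{lem:BB-is-init}. Hence the image of the section is the contraction locus of $\Gr_{G,R,\mathbf{n}}$. In the Gr\"obner coordinates of Lemma~\ref{lem:bb-loc-trivial}, this section is $\phi=0$, i.e.\ the zero section of the affine bundle.

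The main obstacle is bookkeeping rather than substance. The appendix defines $\Gr_{G,R}$ as an ind-scheme of lattices in $K^n$ carrying constructible maps, whereas Section~\ref{sec:bb} works inside a finite Grassmannian $\Quot^\k_m(\tl R^n)$. I need to check that the locally closed structure on $\Gr_{G,R,\mathbf{n}}$ matches the stratum structure on $X_\mathbf{n}$, at least after passing to reduced structures. The plan is to use $\Gr^m_{G,R}(\tl R^n)=\Quot_m(\tl R^n)$ from \cite[\S A.1.1]{huangjiang2023torsionfree}, together with the observation that lattices in $E_R(\tl R^n)$ lie between $T^{m+c}\tl R^n$ and $\tl R^n$ (Lemma~\ref{lem:bounded-truncation}). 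This reduces everything to the finite-dimensional setting already treated in Section~\ref{sec:bb}.
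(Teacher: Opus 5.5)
Your proposal is correct and follows the paper's route. The paper presents this theorem as a direct restatement of Theorems~\ref{thm:bb} and \ref{thm:poset-flag}, namely Theorem~\ref{thm:bb-detailed} with Lemma~\ref{lem:bb-loc-trivial}, and Theorem~\ref{thm:poset-flag-tame} applied to the tame gap poset. Your identifications $\Gr_{G,R,\mathbf{n}}=X_{\mathbf{n}|_G}$ and $\Fl_P(\mathbf{n};n)\simeq\Fl_G(\mathbf{n}|_G;n)$, together with the section $V\mapsto\hhat\bigoplus_d V_dT^d$, make explicit the dictionary the paper leaves implicit. The one step worth stating outright is that $n_d=0$ for $d<0$ forces $L\subseteq\tl R^n$, which you need before Lemma~\ref{lem:ext-fiber} can be applied.
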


\bibliographystyle{alpha}
\bibliography{bibliography}

\end{document}